\newtheorem{thm}{Theorem}[section]
\newtheorem{prop}{Proposition}[section]
\newtheorem{defi}{Definition}[section]
\newtheorem{lem}{Lemma}[section]
\newtheorem{cor}{Corollary}[section]
\newtheorem{rem}{Remark}[section]
\theoremstyle{notation}
\newtheorem*{notation}{Notation}
\newcommand{\R}{\mathbb{R}}
\numberwithin{equation}{section}
\newcommand{\N}{\mathbb{N}}
\newcommand{\eps}{\epsilon}
\newcommand{\wto}{\rightharpoonup}
\newcommand{\vertiii}[1]{{\left\vert\kern-0.25ex\left\vert\kern-0.25ex\left\vert #1
\right\vert\kern-0.25ex\right\vert\kern-0.25ex\right\vert}}
\newcommand{\leqnomode}{\tagsleft@true}
\newcommand{\reqnomode}{\tagsleft@false}
\begin{document}

\reqnomode

\title{Higher topological type semiclassical states for fractional nonlinear elliptic equations}

\author{Shaowei Chen \and Tianxiang Gou ${}^*$}

\address{Shaowei Chen
\newline \indent School of Mathematical Sciences, Huaqiao University,
\newline \indent Quanzhou 362021, People's Republic of China.}
\email{swchen6@163.com}

\address{Tianxiang Gou
\newline \indent School of Mathematics and Statistics, Xi'an Jiaotong University,
\newline \indent Xi'an, Shaanxi 710049, People's Republic of China.}
\email{tianxiang.gou@xjtu.edu.cn}
\thanks{ ${}^*$ Corresponding author. E-mail address: tianxiang.gou@xjtu.edu.cn.}

\begin{abstract} 
In this paper, we are concerned with semiclassical states to the following fractional nonlinear elliptic equation,
\begin{align*}
\eps^{2s}(-\Delta)^s u + V(x) u=\mathcal{N}(|u|)u \quad \mbox{in}
\,\,\, \R^N,
\end{align*}
where $0<s <1$, $\eps>0$ is a small parameter, $N>2s$, $V \in
C^1(\R^N, \R^+)$ and $\mathcal{N}\in C(\R, \R)$. The nonlinearity has Sobolev subcritical,
critical or supercritical growth. The fractional Laplacian
$(-\Delta)^s$ is characterized as $\mathcal{F}((-\Delta)^{s}u)(\xi)=|\xi|^{2s} \mathcal{F}(u)(\xi)$
for $\xi \in \R^N$, where $\mathcal{F}$ denotes the Fourier transform. We construct positive semiclassical states and an infinite sequence of sign-changing semiclassical states with higher energies clustering near the local minimum points of the potential $V$. The solutions are of higher topological type, which are obtained from a minimax characterization of higher dimensional symmetric linking structure via the symmetric mountain pass theorem. They correspond to critical points of the underlying energy functional at energy levels where compactness condition breaks down. The proofs are mainly based on penalization methods, s-harmonic extension theories and blow-up arguments along with local type Pohozaev identities.

\medskip
{\noindent \textsc{Keywords}:} Fractional nonlinear elliptic equations; Semiclassical states; Higher topological type solutions; Pohozaev identity; Variational methods.

\medskip
{\noindent \textsc{AMS subject classifications:}} 35B25; 35J15; 35J20.
\end{abstract}

\maketitle

\tableofcontents

\section{Introduction}\label{intro}

\reqnomode

In this paper, we are interested in semiclassical states to the following nonlinear Schr\"odinger
equation,
\begin{align}\label{fequ}
\eps^{2s}(-\Delta)^s u+V(x) u=\mathcal{N}(|u|)u \quad \mbox{in}\,\,
\mathbb{R}^N,
\end{align}
where $0<s <1$, $\eps>0$ is a small parameter, $N>2s$, $V \in C^1(\R^N, \R^+)$ and $\mathcal{N} \in C(\R, \R)$. The problem under consideration arises in the study of standing waves to the following time-dependent nonlinear Schr\"odinger equation,
\begin{align} \label{tequ}
\textnormal{i} \hbar \partial_t \psi=\hbar^{2s} (-\Delta)^s \psi + M(x) \psi-\mathcal{N}(|\psi|)\psi  \quad \mbox{in} \,\,\, \R \times \R^N,
\end{align}
where $\textnormal{i}$ is the imaginary unit, $\hbar$ is the Planck constant, $\psi$ represents wave function of particle states and $M(x)$ represents external field. Here a standing wave to \eqref{tequ} is a solution of the form
$$
\psi(x, t)=e^{\frac{\textnormal{i} \mu t}{\hbar}}u(x).
$$
It is clear that $\psi$ is a solution to \eqref{tequ} if and only if $u$ is a solution to \eqref{fequ} with $\eps=\hbar$ and $V(x)=M(x)+\mu$. The equation \eqref{tequ} was introduced by Laskin in \cite{La0} as an extension of the classical nonlinear Schr\"odinger equation for $s = 1$. It is of particular interest in the fractional quantum mechanics and used to describe many physical problems, such as phase transitions and conservation laws, see for example \cite{La1, La2} and references therein. When $\eps>0$ sufficiently small, solutions to \eqref{fequ} are often referred to as semiclassical states. Concentration phenomenon of semiclassical states as $\eps \to 0^+$ reflects the transition from quantum mechanics to classical mechanics and it gives rise to significant physical insights.

The aim of the present paper is to construct positive semiclassical states and an unbounded sequence of sign-changing semiclassical states to \eqref{fequ} concentrating around the trapping region of the potential $V$ in the Sobolev subcritical, critical and supercritical cases. The solutions we construct are of higher topological type and possess higher energies, which correspond to critical points of the underlying energy functional at energy levels where compactness condition breaks down. Indeed, energies of the solutions can tend to infinity. For the potential $V$, we formulate the following assumptions.
\begin{enumerate}
\item [($V_1$)] $V \in C^1(\R^N, \R^+)$ and there exist $a>0$ and $ b>0$
such that $a \leq  V(x) \leq b$
for any $x \in \R^N$.
\item [($V_2$)] There exists a bounded domain
$\Lambda \subset \R^N$ with smooth boundary $\partial \Lambda$
such that
$$
\nabla V(x) \cdot {\bf{n}}(x) >0 \quad \mbox{for any} \, \, x \in \partial \Lambda,
$$
where ${\bf{n}}(x)$ denotes the unit outward normal vector to $\partial \Lambda$ at $x$.
\end{enumerate}

\begin{rem}
Such assumptions on the potential $V$ seem initially addressed by the first author and Wang in \cite{ChWa} to consider semiclassical states to nonlinear Schr\"odinger equations, see also \cite{CLW}.
\end{rem}

\begin{rem}
Note that $(V_2)$ is fulfilled if $V$ has an isolated local minimum set, i.e. $V$ has a local trapping potential well. This shows that $(V_2)$ is more general than usual global or local conditions imposed on potentials to discuss semiclassical states, which actually causes the study of the concentration of semiclassical states to become difficult.
\end{rem}

Let us now fix some notations for further clarifications. Under the assumption $(V_2)$, we define the set of critical points of $V$ by
\begin{align} \label{defv}
\mathcal{V}:=\{x \in \R^N\ |\  \nabla V(x)=0\}.
\end{align}
Clearly, $\mathcal{V}$ is a nonempty compact subset of $\Lambda$. Without loss of generality, hereafter we shall assume that $0 \in \mathcal{V}$. 
For any $\Omega \subset \R^N$, $\delta>0$ and $\eps>0$, we define
$$
\Omega_{\eps}:=\left\{x \in \R^N\ |\  \eps x \in \Omega\right\}
$$
and
$$
\Omega^{\delta}:=\left\{x \in \R^N \ |\ \mbox{dist}(x, \Omega):=\inf_{y \in \Omega}|x-y| <\delta\right\}.
$$

Firstly, we consider semiclassical states to \eqref{fequ} with
the nonlinearity having Sobolev subcritical growth, i.e.
$\mathcal{N}(|u|)u=|u|^{p-2}u$ for $2<p<2^*_s:=\frac{2N}{N-2s}$ and $N>2s$.
In this situation, our main result reads as follows.

\begin{thm}\label{wejgh77rtff111}
Assume  $N \geq 2$,  $\mathcal{N}(|u|)u=|u|^{p-2}u$ with
$2<p<2^*_s,$ $(V_1)$ and $(V_2)$ hold. Then for any positive
integer $k$, there exists $\epsilon_k>0$ such that, for any
$0<\epsilon<\epsilon_k$, \eqref{fequ}
 has at least $k$ pairs of solutions $\pm
u_{j, \eps}$, $j=1,2\cdots, k,$ satisfying that, for any
$\delta>0$, there exists $C=C(\delta,k)>0$ such that
\begin{align*}
|u_{j,\epsilon}(x)|\leq \frac{C}{1+\left|\epsilon^{-1}\mbox{\textnormal{dist}}(x,\mathcal{V}^\delta)\right|^{N+2s}}, \quad x \in 
\R^N.
\end{align*}
Moreover, $u_{j,\epsilon}$ is positive solution for $j=1$ and $u_{j,\epsilon}$ is sign-changing solution for any $j \geq 2$.
\end{thm}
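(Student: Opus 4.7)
The plan is to combine a Byeon--Wang/del Pino--Felmer style penalization, adapted to the nonlocal setting through the Caffarelli--Silvestre $s$-harmonic extension, with a higher-dimensional symmetric linking structure and the symmetric mountain pass theorem; compactness at the resulting high energy levels will then be recovered through local Pohozaev identities coupled with a blow-up analysis. First, I would enlarge $\Lambda$ slightly and introduce a truncated nonlinearity $g_\eps(x,t)$ which coincides with $|t|^{p-2}t$ on (a rescaled neighborhood of) $\Lambda$ and is dominated by $\theta V(x) t/\eps^{2s}$ outside for some $\theta \in (0,1)$. After the change of variables $x \mapsto \eps x$ and lifting to the $s$-harmonic extension on $\R^{N+1}_+$ with weight $y^{1-2s}$, one obtains an even, $C^1$ penalized functional $I_\eps$ on a weighted Sobolev space, now satisfying the (PS) condition at every level because the truncation kills the nonlinear growth away from $\Lambda_\eps$.

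Second, to manufacture $k$ pairs of critical points I would build a symmetric $k$-dimensional linking: take $k$ linearly independent translates, centered at a grid inside $\mathcal{V}^\delta$, of a ground state of the limit equation $(-\Delta)^s w + V(0) w = |w|^{p-2} w$, and parameterize an odd $(k-1)$-sphere of test functions concentrated near $\mathcal{V}$. The symmetric mountain pass theorem then produces $k$ pairs $\pm u_{j,\eps}$ of critical points of $I_\eps$ at minimax levels $c_{1,\eps} \le \cdots \le c_{k,\eps}$ which remain uniformly bounded as $\eps \to 0^+$. The lowest level $c_{1,\eps}$ is the classical mountain pass value and, restricting the minimax to nonnegative cones and invoking the strong maximum principle for the extension, is realized by a strictly positive critical point; for $j \ge 2$, the strict separation $c_{j,\eps} > c_{1,\eps}$ together with the uniqueness of the ground-state profile forces $u_{j,\eps}$ to change sign.

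The main technical obstacle is to upgrade these critical points of the \emph{penalized} functional to genuine solutions of \eqref{fequ}, i.e.\ to show that the truncation region is never activated for small $\eps$. I would argue by contradiction: if concentration of $u_{j,\eps_n}$ along some sequence $\eps_n \to 0^+$ occurs at a point $x_0 \notin \mathcal{V}^\delta$, rescale by $\eps_n$, pass to the extension, and extract in the limit a nontrivial bounded-energy solution $U_\infty$ of the extended constant-coefficient problem with potential $V(x_0)$ in $\R^{N+1}_+$. Applying a localized Pohozaev identity (multiplying the extended equation by $(x-y_0)\cdot\nabla_x U_{j,\eps_n}$ on a half-cylinder, integrating, and passing to the limit) yields an identity of the form
\begin{equation*}
\int_{\R^N} \nabla V(x_0) \cdot (x_0 - y_0)\, |u_\infty|^2\, dx \le 0,
\end{equation*}
for an appropriately chosen $y_0$, which together with $\nabla V(x_0) = 0$ (forced again by the same identity with $y_0 = x_0$) places $x_0$ in $\mathcal{V}$; the outward-pointing condition $(V_2)$ on $\partial\Lambda$ then rules out escape across $\partial\Lambda$. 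Hence for small $\eps$ the penalization is inactive and each $u_{j,\eps}$ solves \eqref{fequ}.

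Finally, the pointwise decay estimate follows from a uniform-in-$\eps$ $L^\infty$ bound obtained by Moser iteration on the extension problem, combined with a comparison argument against the explicit supersolution $C(1+|x|^{N+2s})^{-1}$, which is known to be comparable to the Riesz kernel for $(-\Delta)^s + a$ with $a>0$ and admissible thanks to $(V_1)$. Once concentration in $\mathcal{V}^\delta$ is established, this comparison in the rescaled variable gives the claimed bound in terms of $\eps^{-1}\mathrm{dist}(x,\mathcal{V}^\delta)$ after undoing the rescaling.
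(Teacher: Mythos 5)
Your overall architecture (penalization compatible with $(V_1)$--$(V_2)$, Caffarelli--Silvestre extension, uniform decay plus a local Pohozaev identity to force concentration into $\mathcal{V}$, then deactivating the truncation) is the same as the paper's. But there is a genuine gap in the multiplicity/sign-changing step. The symmetric mountain pass theorem you invoke produces $k$ pairs of critical points with no information whatsoever about their sign, and your claimed remedy --- ``strict separation $c_{j,\eps}>c_{1,\eps}$ together with uniqueness of the ground-state profile forces $u_{j,\eps}$ to change sign'' --- does not work. First, the minimax levels need not be strictly separated (when they coincide one only gets a genus estimate on the critical set, not distinct solutions at distinct levels). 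Second, and more fundamentally, a higher critical level does not preclude positivity: in exactly this kind of problem one expects \emph{positive} multi-peak solutions concentrating at several points of $\mathcal{V}$, whose energies sit above $c_{1,\eps}$; uniqueness of the single-bump profile says nothing against them. This is precisely why the paper does not use the plain symmetric MPT but the invariant-cone machinery of Theorem \ref{exist}: one introduces the enlarged cones $P_\pm^{\sigma}$, shows via the auxiliary operator $A_\eps$ (and its odd locally Lipschitz perturbation $B_\eps$) that they are admissible invariant sets for a deformation, and then the minimax classes are taken \emph{relative to} $W=P_+^{\sigma}\cup P_-^{\sigma}$, so that the critical points at levels $c_{j,\eps}$, $j\ge 2$, are produced outside $W$ and are therefore automatically sign-changing; positivity at $j=1$ is then proved by a separate energy comparison ($\Phi_\eps(w_{1,\eps}^{\pm})<c_{1,\eps}$ would contradict the definition of $c_{1,\eps}$ via an explicit admissible path), not by restricting the minimax to a cone as you suggest.

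A secondary, fixable weakness: in your blow-up/Pohozaev step the boundary terms of the localized identity are only negligible because one already has uniform polynomial decay of $u_{j,\eps}$, $\nabla u_{j,\eps}$ and of the weighted normal derivative $y^{1-2s}\partial_y w_{j,\eps}$ of the extension away from the concentration points (the paper's profile decomposition and the comparison estimates of Lemmas \ref{mcnvbhhfyfufuu}--\ref{ooo9dudyttdtrrrr} come \emph{before} the Pohozaev argument, and the balance of exponents $\min\{N+2s-1,\,N-2s+1\}>1$ is exactly where $N\ge 2$ is used). Your sketch defers all decay to the end and does not explain how the surface integrals on the half-cylinder are controlled, nor where the restriction $N\ge 2$ enters, so as written the contradiction argument is not yet a proof.
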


\begin{rem}
The assumption $N \geq 2$ is only applied to derive the concentration of semiclassical states to \eqref{fequ} with the help of a local type Pohozaev identity, see Lemma \ref{cnvbghyyt7888sdd}. It would be interesting to know whether Theorem \ref{wejgh77rtff111} still holds for the case $N=1$.
\end{rem}

Secondly, we study semiclassical states to \eqref{fequ} with
the nonlinearity having Sobolev critical growth, i.e.
$\mathcal{N}(|u|)u=\mu |u|^{p-2} u +|u|^{2^*_s-2} u$ for
$2<p<2^*_s$ and some $\mu>0$. In this situation, our main result
reads as follows.

\begin{thm}\label{wejgh77rtff11} 
Assume $N \geq 2$, $\mathcal{N}(|u|)u=\mu |u|^{p-2} u
+|u|^{2^*_s-2} u$ with $\mu>0$ and $\max\{2,
(N+2s)/(N-2s)\}<p<2^*_s$, $(V_1)$ and $(V_2)$ hold. Then for any
positive integer $k$, there exists $\epsilon_k>0$ such that, for any
$0<\epsilon<\epsilon_k$, \eqref{fequ} has at least $k$ pairs of
solutions $\pm u_{j, \eps}$, $j=1,2\cdots, k,$ satisfying that,
for any $\delta>0$, there exists $C=C(\delta,k)>0$ such
that
$$
|u_{j, \eps}(x)|\leq \frac{C}{1+\left|\epsilon^{-1}\mbox{\textnormal{dist}}(x,\mathcal{V}^\delta)\right|^{N+2s}}, \quad x \in \R^N.
$$
Moreover, $u_{j,\epsilon}$ is positive solution for $j=1$ and $u_{j,\epsilon}$ is sign-changing solution for any $j \geq 2$.
\end{thm}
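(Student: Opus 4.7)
The plan is to establish Theorem~\ref{wejgh77rtff11} by adapting the strategy behind Theorem~\ref{wejgh77rtff111} and grafting onto it the additional ingredients demanded by the Sobolev critical term. First I would localize the nonlocal problem via the Caffarelli--Silvestre $s$-harmonic extension to $\R^{N+1}_+$: writing $U$ for the extension of $u$, equation \eqref{fequ} becomes a degenerate-elliptic problem in $\R^{N+1}_+$ with a nonlinear Neumann-type trace on $\R^N\times\{0\}$. Since $(V_2)$ gives information only inside $\Lambda$, I would then penalize $\cN(|u|)u=\mu|u|^{p-2}u+|u|^{2^*_s-2}u$ outside $\Lambda_{\eps}$, replacing it by a modified nonlinearity which suppresses the critical growth in the exterior while preserving the original structure inside the trapping region. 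The resulting modified energy functional $I_{\eps}$ on the extension space is then even and satisfies a Palais--Smale condition below a suitable compactness threshold.

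With $I_{\eps}$ in hand, I would invoke the symmetric mountain pass theorem together with a higher-dimensional symmetric linking structure, paralleling the subcritical case. For each $k$ one selects $k$ linearly independent trial profiles supported near $0\in\cV$, built from rescalings and translations of ground state extensions of the limiting autonomous problem, and uses a genus-based argument to produce $k$ pairs $\pm U_{j,\eps}$ of critical points of $I_{\eps}$. Positivity of $u_{1,\eps}$ is secured by running a standard mountain pass over the cone of nonnegative traces, while the remaining $u_{j,\eps}$, $j\geq 2$, are forced to change sign by the higher symmetric linking geometry, exactly as in the subcritical setting.

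The principal new difficulty is the fine control of the $k$-th minimax level in the presence of the critical term $|u|^{2^*_s-2}u$. Compactness at each level requires that the level stay strictly below an appropriate multi-bubble threshold determined by the fractional Sobolev constant $S_s$; otherwise one loses mass through bubbling in the Struwe decomposition. Using dilations of Talenti-type extremals for $S_s$ concentrated near $\cV$ as test profiles within the symmetric linking sets, one must show via a Brezis--Nirenberg-type expansion that the interaction with the subcritical perturbation $\mu|u|^{p-2}u$ produces a strictly negative correction exceeding the critical concentration cost. It is precisely here that the hypothesis $p>(N+2s)/(N-2s)$ becomes indispensable, and I expect this level-control step to be the main technical obstacle.

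Finally, to promote critical points of the penalized functional to genuine solutions of \eqref{fequ}, I would perform a blow-up analysis centered at maximum points of $|u_{j,\eps}|$: any rescaled limit solves the limiting autonomous critical problem on $\R^N$, and invoking local-type Pohozaev identities in the extension setting (cf. Lemma~\ref{cnvbghyyt7888sdd}) rules out concentration at points outside $\cV$, so the concentration set must be contained in $\cV$. The uniform decay estimate $|u_{j,\eps}(x)|\leq C/(1+|\eps^{-1}\mbox{dist}(x,\cV^{\de})|^{N+2s})$ then follows from comparison-principle arguments adapted to $(-\Delta)^s$, together with the far-field behavior of $s$-harmonic Green kernels. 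This decay in turn guarantees that $u_{j,\eps}$ falls into the regime where the penalized nonlinearity coincides with $\cN$, so the minimax critical points so constructed are genuine solutions of \eqref{fequ}, completing the argument.
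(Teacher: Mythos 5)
Your overall skeleton (extension, penalization outside $\Lambda$, symmetric linking for $k$ pairs, Pohozaev-based concentration, decay, removal of the penalization) matches the paper, but the way you propose to handle the critical term contains a genuine gap. You keep the critical nonlinearity intact and plan to recover compactness by showing that each minimax level stays strictly below a multi-bubble threshold $\tfrac{s}{N}S(s,N)^{N/2s}$-type bound, via Talenti extremals and a Brezis--Nirenberg expansion, attributing the hypothesis $p>(N+2s)/(N-2s)$ to this level estimate. This cannot work for the solutions sought here: the levels $c_{j,\eps}$ are of higher topological type, bounded below by the mountain-pass values of a fixed subcritical comparison functional and unbounded in $j$, so for $k$ large they necessarily exceed any threshold below which a Palais--Smale or Struwe-decomposition argument restores compactness. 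The paper stresses exactly this point (compactness fails at the levels in question, and by the global compactness result one cannot expect PS above a certain energy), so a level-control strategy in the spirit of Brezis--Nirenberg is not available; no choice of test bubbles can push $c_{k,\eps}$ below the concentration cost for all $k$.

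What the paper does instead is to truncate the critical term itself through $b_\eps$, $m_\eps$ (the functions $h_\eps$, $g_\eps$ in Section 4), so that for each fixed $\eps$ the modified functional has effectively subcritical growth and satisfies the Palais--Smale condition at \emph{every} level (Lemma \ref{ps}); the abstract theorem then yields $k$ pairs of sign-changing/positive solutions with $\Phi_\eps(w_{j,\eps})\le\widetilde c_k$ with no level restriction. Compactness is restored only a posteriori: a profile decomposition (Lemma \ref{cmvnvhhguf7fuufss}) isolates possible bubbles $\Lambda_\infty$, sharp estimates on annuli around the smallest-scale bubble (Lemmas \ref{esta2}--\ref{esta4}) feed into a local Pohozaev identity (Lemma \ref{ph}), and it is precisely there — in Lemma \ref{lames}, comparing $\sigma_n^{\frac{N-2s}{2}p-N}$ with $\sigma_n^{-\frac{N-2s}{2}}$ — that $p>(N+2s)/(N-2s)$ is used, to conclude $\Lambda_\infty=\emptyset$ and hence uniform $L^\infty$ bounds (Proposition \ref{bcbvhfyfyufuadx}). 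Only then does $m_\eps(|u_{j,\eps}|^2)=|u_{j,\eps}|^2$, the truncation disappears, and the problem reduces to the subcritical machinery of Section 3 for the decay and concentration near $\mathcal{V}$. So your final blow-up/Pohozaev step is close in spirit to the paper's, but your compactness step (and your placement of the restriction on $p$) would fail and needs to be replaced by the truncation-plus-exclusion-of-blow-up scheme.
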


\begin{rem}
The restriction on $p$ is technical, which comes from Lemma \ref{lames} employed to exclude blow-up of semiclassical states. It is unknown whether Theorem \ref{wejgh77rtff11} remains valid for the case $2<p \leq \max\{2,
(N+2s)/(N-2s)\}$.
\end{rem}

For the Sobolev critical case, it was found in \cite{HZ} that the Palais-Smale condition holds at the energy level $c<\frac s N {S(s, N)^{N/2s}}$, where $S(s, N)>0$ is the Sobolev embedding constant. Actually, one can further verify that $\frac s N {S(s, N)^{N/2s}}$ is the least energy level at which the compactness condition fails. 
Moreover, a global compactness result, see \cite[Theorem 3.1]{CF}, which describes the obstacles of the compactness for problems having Sobolev critical growth, reveals that it is unlikely to prove the Palais-Smale condition at energy level above certain energy level. This indicates that the study of the existence and concentration of multiple semiclassical states to \eqref{fequ} with higher energies is considerably complex in the Sobolev critical case. To overcome the difficulty of lack of compactness, we require to propose new ingredients in the framework of fractional Laplacian and carry out delicate and involved analysis to deduce sharp decay estimates of solutions in certain domains. Roughly speaking, in this case, by properly modifying the critical nonlinearity, the problem under consideration can be regarded as subcritical one. After that, by carefully establishing uniform $L^{\infty}$ estimate of semiclassical states, we can derive the desired conclusion.

Finally, we investigate semiclassical states to \eqref{fequ} with
the nonlinearity having Sobolev supercritical growth, i.e.
$\mathcal{N}(|u|)u= |u|^{p-2} u + \lambda |u|^{r-2}u$ for
$2<p<2^*_s<r$ and $\lambda>0$. In this situation, our main result
reads as follows.

\begin{thm}\label{wejgh77rtff1}
Assume $N \geq 2$,  $\mathcal{N}(|u|)u= |u|^{p-2} u +
\lambda |u|^{r-2}u$ with
 $2<p<2^*_s<r,$ $(V_1)$ and $(V_2)$ hold.
Then for any positive integer $k$, there exists $\lambda_k>0$ such
that, for any $0<\lambda<\lambda_k$, there exists $\eps_{k,
\lambda}>0$ such that, for any $0<\epsilon<\epsilon_{k, \lambda}$,
\eqref{fequ} has at least $k$ pairs of solutions $\pm u_{j, \eps,
\lambda}$, $j=1,2\cdots, k,$ satisfying that, for any $\delta>0$,
there exists $C=C(\delta,k, \lambda)>0$ such that
$$
|u_{j,\epsilon, \lambda}(x)|\leq \frac{C}{1+\left|\epsilon^{-1}\mbox{\textnormal{dist}}(x,\mathcal{V}^\delta)\right|^{N+2s}},\quad x\in \R^N.
$$
Moreover, $u_{j,\epsilon, \lambda}$ is positive solution for $j=1$ and $u_{j,\epsilon, \lambda}$ is sign-changing solution for any $j \geq 2$.
\end{thm}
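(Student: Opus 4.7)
Since the nonlinearity $|u|^{p-2}u + \lambda|u|^{r-2}u$ is Sobolev supercritical, the natural variational setting fails in $H^s(\R^N)$. The plan is to truncate the nonlinearity for $|u|$ exceeding a large threshold $M$, reducing to a subcritical problem to which Theorem \ref{wejgh77rtff111} applies directly, and then to exploit the pointwise decay estimate obtained there together with smallness of $\lambda$ to conclude that the truncation is, in the end, inactive.

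\textbf{Truncated problem.} Fix $q$ with $\max\{2,(N+2s)/(N-2s)\} < q < 2^*_s$ and a cutoff $M>0$ to be chosen. For $\lambda>0$, define an odd Carath\'eodory nonlinearity $\tilde{\mathcal{N}}_{\lambda,M}$ by setting, for $t\geq 0$,
$$\tilde{\mathcal{N}}_{\lambda,M}(t)\,t = \begin{cases} t^{p-1}+\lambda t^{r-1}, & 0\leq t\leq M,\\ \bigl(M^{p-q}+\lambda M^{r-q}\bigr)\, t^{q-1}, & t>M, \end{cases}$$
and extending oddly for $t<0$. Then $\tilde{\mathcal{N}}_{\lambda,M}(|u|)u$ has subcritical growth at infinity, and Theorem \ref{wejgh77rtff111} applied to
$$\epsilon^{2s}(-\Delta)^s u+V(x)u=\tilde{\mathcal{N}}_{\lambda,M}(|u|)u \quad \text{in } \R^N$$
produces, for every $k$ and every sufficiently small $\epsilon$, $k$ pairs of solutions $\pm u_{j,\epsilon,\lambda}$, with $u_{1,\epsilon,\lambda}$ positive and $u_{j,\epsilon,\lambda}$ sign-changing for $j\geq 2$, together with the required decay bound centred on $\mathcal{V}^\delta$.

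\textbf{Removing the truncation.} The decay bound implies $\|u_{j,\epsilon,\lambda}\|_{L^\infty(\R^N)}\leq C(\delta,k,\lambda,M)$. The core step is to prove that this $L^\infty$ norm is bounded by a constant $C_0=C_0(k)$ which is \emph{uniform} in $\lambda\in(0,\lambda_0]$ and $M\geq M_0$. I would argue as follows: first, the symmetric mountain pass levels built from the higher-dimensional linking structure underlying Theorem \ref{wejgh77rtff111} are dominated, for $\lambda$ small, by the minimax levels of the pure subcritical problem with nonlinearity $|u|^{p-2}u$, hence are bounded by a constant $E_k$ depending only on $k$. Second, given such uniform energy bounds, a fractional Moser iteration on the Caffarelli--Silvestre $s$-harmonic extension, carried out inside $\Lambda_\epsilon$ together with the Pohozaev localisation of Lemma \ref{cnvbghyyt7888sdd}, yields a uniform pointwise bound $C_0(k)$. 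Choosing $M=M_k>C_0(k)+1$ and $\lambda_k\in(0,\lambda_0]$ small enough, for every $0<\lambda<\lambda_k$ the truncation is never activated, so $u_{j,\epsilon,\lambda}$ solves \eqref{fequ} with the original supercritical nonlinearity.

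\textbf{Main obstacle.} The crux is the $\lambda$-uniform $L^\infty$ estimate. The fractional nonlocality precludes a direct Moser iteration on the equation itself, so one must work with the extension problem and carefully track dependence of constants on $\lambda$ and on the supercritical exponent $r$; the supercritical piece is only tame once one already has a pointwise bound. One therefore sets up a bootstrap in which the energy bound and the subcritical behaviour of $\tilde{\mathcal{N}}_{\lambda,M}$ produce a first $L^\infty$ bound, and smallness of $\lambda$ then closes the loop uniformly across the family $\{u_{j,\epsilon,\lambda}\}_{j,\epsilon,\lambda}$.
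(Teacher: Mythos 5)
Your route is the paper's: truncate the supercritical term above a level so the nonlinearity is subcritical, run the penalization/symmetric-linking machinery to get $k$ pairs of solutions of the truncated problem with minimax levels bounded by the $\lambda$-independent constants $\widetilde c_k$, prove an $L^\infty$ bound by Moser iteration on the $s$-harmonic extension, choose the truncation level and $\lambda$ so the truncation is never active, and then argue as in the subcritical section. This is exactly what Section \ref{supercritical} does (with truncation to $p$-growth with coefficient $1+\lambda K^{r-p}$ rather than to a $q$-power, an immaterial difference; your lower bound $q>\max\{2,(N+2s)/(N-2s)\}$ is not needed here).

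Two points need repair. First, your claimed uniformity of the $L^\infty$ bound is wrong as stated: the Moser constant necessarily carries the growth coefficient of the truncated nonlinearity, which in your notation is of size $1+\lambda M^{r-p}$ (equivalently $M^{p-q}+\lambda M^{r-q}$ on the truncated range), so no constant $C_0(k)$ can be uniform in $M\ge M_0$ for a fixed $\lambda_0$. The correct logic, used in Lemma \ref{moseriteration}, is that the iteration gives $\|u\|_{L^\infty}\le \left(C_k(1+\lambda K^{r-p})\right)^{\beta_1}\beta^{\beta_2}$ with $C_k$ controlled by the energy bound $\widetilde c_k$; one fixes $K$ large first and only then takes $\lambda$ small depending on $K$ so that $\lambda K^{r-p}$ is of order one and the bound drops below $K$. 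Your remark that "smallness of $\lambda$ closes the loop" points at this, but the quantifier order in your core claim must be reversed ($M$ first, then $\lambda=\lambda(M)$), which is precisely what the statement of the theorem ($\lambda_k$ before $\eps_{k,\lambda}$) allows. Second, you cannot apply Theorem \ref{wejgh77rtff111} "directly": it concerns the pure power $|u|^{p-2}u$, whereas $\tilde{\mathcal N}_{\lambda,M}$ is a different, nonhomogeneous subcritical nonlinearity, so the existence step (with the penalization $g_{\lambda,K}=\min\{h_{\lambda,K},a/4\}$ outside $\Lambda$), the decay estimates, the Pohozaev argument (which uses $\tfrac1p|t|^p-G(|t|)\ge 0$ and the explicit form of $F_\eps$) and the positivity of the first solution must be re-verified for the truncated nonlinearity; the paper does this via Theorem \ref{cvnbmiif9f8ufjhfy111} and reduces to the subcritical analysis only after the $L^\infty$ bound shows $f_{\eps,\lambda,K}(x,|u|)\le C|u|^{p-2}$ on the solutions. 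A minor mis-attribution: the Pohozaev identity of Lemma \ref{cnvbghyyt7888sdd} plays no role in the Moser iteration; it enters afterwards, for the concentration of the solutions near $\mathcal V$.
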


Since the problem is treated in $H^s(\R^N)$, which is continuously embedded into $L^p(\R^N)$ for any $2 \leq p \leq 2^*_s$, then we first require to truncate the supercritical nonlinearity. By deducing uniform $L^{\infty}$ estimate of semiclassical states, we then obtain the conclusion of Theorem \ref{wejgh77rtff1}.

\begin{rem}
The strategies we present here to study semiclassical states to \eqref{fequ} are flexible, which can be adapted to consider semiclassical states to other types of fractional nonlinear Schr\"odinger equations.
\end{rem}

For the case $s=1$, the existence and concentration of semiclassical states to \eqref{fequ} have been extensively explored during recent decades and there already exists a great deal of literature, see for example \cite{BJ, BW1, BW2, CLW, ChWa, DF1, DF, FS, FW,Oh1, Oh2, Ra, Wang} and references therein. However, for the case $0<s<1$, as far as we know there exist relatively few papers treating the existence and concentration of semiclassical states to \eqref{fequ}. Let us now briefly recall some related results in this direction. For the Sobolev subcritical case, Figueiredo and Siciliano in \cite{FS} considered the existence and concentration of semiclassical states to \eqref{fequ} under the following global assumption on $V$,
\begin{align} \label{gv}
0<\inf_{x \in \R^N} V(x) < \liminf_{|x| \to \infty} V(x).
\end{align}
Later, the authors in \cite{OO, A1, CG} further investigated the existence and concentration of semiclassical states to \eqref{fequ} under the following local assumption on $V$,
\begin{align} \label{lv}
0<\inf_{x \in \Lambda} V(x) < \inf_{x \in \partial \Lambda} V(x),
\end{align}
where $\Lambda \subset \R^N$ is a bounded domain. Let us also refer the readers to \cite{Chen, CZ, DdW,MMV}, where the Lyapunov-Schmidt reduction method was adapted to discuss the existence and concentration of semiclassical states to \eqref{fequ}. For the Sobolev critical case, Shang et al. in \cite{SZY} obtained the existence and concentration of semiclassical states to \eqref{fequ} under the assumption \eqref{gv}. Subsequently, the authors in \cite{A2, AF, He, HZ} studied the existence and concentration of semiclassical states to \eqref{fequ} under the assumption \eqref{lv}.  Let us point out that these papers only concerned positive semiclassical states having energies below certain value where compactness condition holds true. In addition, the number of the solutions are finite. Inspired by the previous works, it would be interesting to question whether there exist (even infinitely many) sign-changing semiclassical states to \eqref{fequ} in the Sobolev subcritical, critical and supercritical cases.
This is the motivation of our present survey and Theorems \ref{wejgh77rtff111}-\ref{wejgh77rtff1} give an affirmative answer, which also complement and extend the results before.

Let us now highlight a few features of our study. Firstly, we deduce the existence and concentration of semiclassical states to \eqref{fequ} under the assumptions $(V_1)$ and $(V_2)$. In our situation, there do not exist limit problems to use, which causes the discussion of the concentration of semiclassical states more complex. 
This is in striking contrast with the preceding works under the assumptions \eqref{gv} and \eqref{lv}, where limit problems do exist and play an essential role in the study. Secondly, we derive the existence and concentration of positive semiclassical states and infinitely many sign-changing semiclassical states to \eqref{fequ} without applying any non-degeneracy condition. Indeed, as far as we know, non-degeneracy of sign-changing solutions to fractional nonlinear elliptic equations is unknown up to now. Lastly, the solutions we construct possess higher energies, which are of higher topological type.
They correspond to critical points of the underlying energy functionals at energy levels where compactness conditions fail. To our knowledge, such results are new and have not been established previously for fractional nonlinear elliptic equations. And our Theorem \ref{wejgh77rtff1} is new even for the case $s=1$. 

It is worth mentioning that our Theorems \ref{wejgh77rtff111} and \ref{wejgh77rtff11} can be regarded as counterparts of the results obtained in \cite{CLW,ChWa} for the case $s=1$. However, they are not straightforward and standard generalizations, the proofs of which are highly nontrivial. Due to the nonlocal nature of fractional Laplacian, it is quite hard to deduce the desired decay estimates of semiclassical states to \eqref{fequ} by directly using \eqref{fequ}. Therefore, in the present paper, we take advantage of the s-harmonic extension arguments from Caffarelli and Silvestre in \cite{CS} to transform our problem to a local one defined in the upper half space with nonlinear Neumann boundary condition. This transformation actually creates additional difficulties coming from the fact that the extended problem is not homogeneous and solutions of which only enjoy polynomial decay.
As a consequence, techniques employed to investigate semiclassical states to the local problems in \cite{CLW,ChWa} are no longer suitable to our problem. For this reason, we need to develop new elements in order to capture sharp interior and boundary estimates of solutions to the extended problem in weighted Sobolev spaces. 

Let us now outline the strategies to establish Theorems \ref{wejgh77rtff111}-\ref{wejgh77rtff1}. We first make use of the s-harmonic extension arguments originating from \cite{CS} to transform our problem to a local one. Notice that the solutions we construct admit higher energies, which may go beyond the least energy threshold for guaranteeing compactness condition. Thus we next need to introduce modified problems. In the Sobolev subcritical case, we adapt truncation technique from Wang and Zhang in \cite{WZ} to modify the nonlinearity. In the Sobolev critical case, our study becomes more complicated. We need to combine the approaches from Chen et al in \cite{CLW} and Zhao et al in \cite{ZLL} to modify the nonlinearity, because of the presence of the critical nonlinearity. When it comes to the Sobolev supercritical case, we need to combine the approach from Chabrowski and Yang in \cite{CY} to modify the nonlinearity, because the supercritical nonlinearity comes into being. By doing this, we are able to demonstrate that the underlying energy functionals satisfy the Palais-Smale condition, from which we immediately derive the existence of an unbounded sequence of semiclassical states to the modified problems. The last and crucial step is to demonstrate that the solutions to the modified problems are indeed ones to the original problems. In the Sobolev subcritical case, this is done by conducting a finer asymptotic analysis to the solutions and utilizing a local type Pohozaev identity. In the Sobolev critical case, things become tough and more delicate and involved analysis is required. Relying on a profile decomposition result, we first derive sharp decay estimates of the solutions in certain domains which do not contain any blow-up point, but are close to some blow-up points. We next apply a local type Pohozaev identity to rule out the possibility of blow-up of the solutions, which in turns leads to the uniform boundedness of the solutions. At this point, we can follow the way to deal with the Sobolev subcritical case to accomplish the proof. In the Sobolev supercritical case, the key argument is to deduce uniform boundedness of the solutions through Moser iterative technique, by which the problem under consideration is reduced to the Sobolev subcritical case and the proof is completed.

The remainder of the paper is laid out as follows. In Section \ref{pre}, we present some preliminary results used to porve our main results. In Section \ref{subcritical}, we treat the Sobolev subcritical case and show the proof of Theorem \ref{wejgh77rtff111}. In Section \ref{critical}, we handle the Sobolev critical case and give the proof of Theorem \ref{wejgh77rtff11}. Section \ref{supercritical} is devoted to the study of the Sobolev supercritical case and the proof of Theorem \ref{wejgh77rtff1}. In Appendix, we establish a local boundedness estimate of solutions to a type of degenerate nonlinear elliptic equations, which may be of independent interest.

\begin{notation}
Throughout the paper, for any $1 \leq q < \infty$, we denote by $L^{q}(\R^N)$ the Lebesgue space equipped with the norm
$$
\|u\|_{L^q(\R^N)}:=\left(\int_{\R^N} |u|^q \, dx \right)^{\frac 1q} \quad \mbox{for any} \,\, u \in L^q(\R^N).
$$
We use notations $\to$ and $\wto$ to denote strong convergence and weak convergence of sequences in associated spaces, respectively. We use letter $C$ to denote generic positive constants, whose values may change from line to line. In addition, we use $o_n(1)$ to denote quantities which tend to zero as $n \to \infty$.
\end{notation}

\section{Preliminaries} \label{pre}

In this section, we shall present some preliminary results used to establish our main results. First of all, by making a change of variable $x \to \eps x$, we may write \eqref{fequ} as
\begin{align} \label{equ1}
(-\Delta)^s u + V_{\eps}(x) u=\mathcal{N}(|u|) u\quad \mbox{in}
\,\, \R^N,
\end{align}
where $V_{\eps}(x):=V(\eps x)$ for $x \in \R^N$. From now on, we shall consider semiclassical states to \eqref{equ1}. To treat nonlocal problem \eqref{equ1}, we need to introduce the $s$-harmonic extension arguments developed in \cite{CS}, which allow us to transform nonlocal problems into degenerate elliptic problems in the upper half space $\R^{N+1}_+$ with nonlinear Neumann boundary condition. For $0<s<1$, the fractional Sobolev space $H^s(\R^N)$ is defined by
$$
H^s(\R^N):=\left\{u\in L^2(\R^N) \, \big | \, \int_{\R^N}\left(1+|\xi|^{2s}\right) |\mathcal{F}(u)|^2 \, d\xi<\infty\right\}
$$
equipped with the norm
$$
\|u\|^2_{H^s(\R^N)}=\int_{\R^N}\left(1+|\xi|^{2s}\right) |\mathcal{F}(u)|^2 \, d\xi,
$$
where $\mathcal{F}(u)$ stands for the Fourier transform of $u$.
For $u \in H^s(\R^N)$, we denote by $w \in X^s(\R^{N+1}_+)$ its $s$-harmonic extension to the upper half space $\R^{N+1}_+$, where $w$ is the solution to the equation
\begin{align} \label{extension}
\left\{
\begin{aligned}
-\mbox{div}(y^{1-2s} \nabla w)&=0 \quad \,\,\, \mbox{in} \,\, \R^{N+1}_+,\\
w&=u \quad  \,\,\mbox{on} \,\, \R^N \times \{0\}.
\end{aligned}
\right.
\end{align}
Here the space $X^s(\R^{N+1}_+)$ is defined by the completion of $C^{\infty}_0(\overline{\R^{N+1}_+})$ under the norm
$$
\|w\|_{s}^2:=k_s \int_{\R^{N+1}_+} y^{1-2s} |\nabla w|^2 \, dxdy
$$
and the constant $k_s$ is given by
$$
k_s=2^{1-2s} \frac{\Gamma(1-s)} {\Gamma(s)}.
$$
Indeed, the solution $w \in X^s(\R^{N+1}_+)$ to \eqref{extension} has an explicit expression given by
 \begin{align} \label{convolution}
w(x, y)=P^s_y  * u(x) =\int_{\R^N} P_y^s(x-\zeta, y) u(\zeta) \, d\zeta,
\end{align}
where
\begin{align} \label{defpys}
P_y^s(x):=c_{N, s} \frac{y^{2s}}{\left(|x|^2 + y^2\right)^{\frac{N+2s}{2}}},\quad \int_{\R^N} P^s_y(x) \, dx =1.
\end{align}

\begin{lem} \cite[Theorem 2.1]{BCdS}\label{embedding1}
There exists a constant $S=S(s, N)>0$ such that, for any $w \in X^s(\R^{N+1}_+)$, there holds that
$$
\left(\int_{\R^N}  |w(x, 0)|^{2^*_s}\, dx\right)^{\frac{1}{2^*_s}} \leq S \left( \int_{\R^{N+1}_+} y^{1-2s} |\nabla w|^2 \, dxdy\right)^{\frac 12}.
$$
\end{lem}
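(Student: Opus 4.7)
The plan is to reduce this trace-type inequality to the classical fractional Sobolev inequality $\|u\|_{L^{2^*_s}(\R^N)} \leq C[u]_{H^s(\R^N)}$ by invoking the energy identity for the $s$-harmonic extension together with its variational (minimization) characterization. The only two non-trivial ingredients are the Caffarelli--Silvestre energy identity on $\R^{N+1}_+$ and the Hardy--Littlewood--Sobolev inequality on $\R^N$.

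First I would establish the extension identity. For $u$ Schwartz, taking the Fourier transform in the $x$ variable of \eqref{extension} produces, at each frequency $\xi$, the ODE $(y^{1-2s}\hat w_y)_y = |\xi|^2 y^{1-2s}\hat w$ with $\hat w(\xi,0)=\hat u(\xi)$ and decay as $y\to\infty$. The bounded solution has the self-similar form $\hat w(\xi,y)=\hat u(\xi)\,\vphi(|\xi|y)$, where $\vphi$ is the Bessel profile solving $(t^{1-2s}\vphi'(t))'=t^{1-2s}\vphi(t)$ with $\vphi(0)=1$ and $\vphi(\infty)=0$. A direct computation of $\int_0^\infty t^{1-2s}(|\vphi'|^2+|\vphi|^2)\,dt$ evaluates to $k_s^{-1}$, which is exactly why the constant $k_s=2^{1-2s}\Ga(1-s)/\Ga(s)$ appears. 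Using Plancherel in $x$ then yields
\begin{align*}
\int_{\R^{N+1}_+} y^{1-2s}|\nabla w|^2\,dxdy \;=\; k_s^{-1}\int_{\R^N}|\xi|^{2s}|\hat u(\xi)|^2\,d\xi \;=\; k_s^{-1}[u]^2_{H^s(\R^N)}.
\end{align*}
By density this identity persists for all $u\in H^s(\R^N)$ with extension defined via the Poisson kernel \eqref{convolution}--\eqref{defpys}.

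Second I would establish the minimization property. For any $w\in X^s(\R^{N+1}_+)$, the trace $u:=w(\cdot,0)$ lies in $H^s(\R^N)$ with $[u]_{H^s}\leq C\|w\|_s$; this trace theorem follows from the fact that $y^{1-2s}$ is a Muckenhoupt $A_2$ weight and can be proved by approximating $w$ by $C^\infty_0(\overline{\R^{N+1}_+})$ functions and testing against the Poisson kernel. Let $w_*$ be the $s$-harmonic extension of $u$. Writing $w=w_*+(w-w_*)$, expanding the weighted Dirichlet integral, and integrating by parts the cross term against the divergence-free equation satisfied by $w_*$ (the boundary term on $\{y=0\}$ vanishes because $w-w_*$ has zero trace) gives
\begin{align*}
\int_{\R^{N+1}_+} y^{1-2s}|\nabla w|^2\,dxdy \;\geq\; \int_{\R^{N+1}_+} y^{1-2s}|\nabla w_*|^2\,dxdy \;=\; k_s^{-1}[u]^2_{H^s(\R^N)}.
\end{align*}

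Finally I would combine these with the classical fractional Sobolev embedding $\|u\|_{L^{2^*_s}(\R^N)}\leq C_{s,N}[u]_{H^s(\R^N)}$ (a consequence of the Hardy--Littlewood--Sobolev inequality applied to the Riesz potential representation $u=I_s\bigl((-\De)^{s/2}u\bigr)$) to obtain
\begin{align*}
\|w(\cdot,0)\|_{L^{2^*_s}(\R^N)} \;\leq\; C_{s,N}\,[u]_{H^s(\R^N)} \;\leq\; C_{s,N}\sqrt{k_s}\left(\int_{\R^{N+1}_+} y^{1-2s}|\nabla w|^2\,dxdy\right)^{1/2},
\end{align*}
which is the claim with $S:=C_{s,N}\sqrt{k_s}$. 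The main obstacle is the first step, namely the Fourier-side ODE analysis that pins down the constant $k_s$: one must verify the Bessel profile carries finite weighted energy, handle boundary contributions carefully at $y=0$ and $y=\infty$, and check that the normalization matches the one coming from the Poisson kernel $P^s_y$ in \eqref{defpys}. Once this identity is in hand, the trace theorem, the minimization, and the classical Sobolev inequality plug together in a routine fashion.
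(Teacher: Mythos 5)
The paper does not prove this lemma itself; it simply cites \cite[Theorem 2.1]{BCdS}, and your argument is a correct reconstruction of the standard proof behind that citation: the Fourier-side energy identity for the Poisson extension, minimality of the extension among all $w$ with the same trace, and the fractional Sobolev inequality via Hardy--Littlewood--Sobolev. Two small points: the trace of a general $w\in X^s(\R^{N+1}_+)$ lies only in the homogeneous space $\dot{H}^s(\R^N)$ (there is no $L^2$ control in the $\|\cdot\|_s$ norm), which is all your seminorm estimate actually uses; and since $S$ is not specified, the exact value $k_s^{-1}$ of the profile energy is never needed --- only its finiteness and positivity, which follow from the behaviour of the Bessel profile near $0$ and its exponential decay at infinity --- after which the inequality for $C^\infty_0(\overline{\R^{N+1}_+})$ functions passes to all of $X^s(\R^{N+1}_+)$ by density.
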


With the s-harmonic extension arguments, we are now able to transform \eqref{equ1} into the following equation,
\begin{align}\label{equ11}
\left\{
\begin{aligned}
-\mbox{div}(y^{1-2s} \nabla w)&=0 \hspace{4cm}\,\,\mbox{in} \,\, \R^{N+1}_+,\\
-k_s \frac{\partial w}{\partial {\nu}}&=-V_{\eps}(x) w +
\mathcal{N}(|w|) w \, \qquad  \mbox{on} \,\, \R^N \times \{0\},
\end{aligned}
\right.
\end{align}
where
$$
\frac{\partial w}{\partial {\nu}}:= \lim_{y \to 0^+} y^{1-2s}
\frac{\partial w}{\partial y}(x, y) =- \frac{1}{k_s} (- \Delta )^s
u(x).
$$
We shall investigate semiclassical states to \eqref{equ11} in $X^{1, s}(\R^{N+1}_+)$, where $X^{1, s}(\R^{N+1}_+)$ is defined by the completion of $C^{\infty}_0(\overline{\R^{N+1}_+})$ under the norm
$$
\|w\|^2_{1, s}:=k_s\int_{\R^{N+1}_+} y^{1-2s} |\nabla w|^2 \, dxdy + \int_{\R^N} |w(x, 0)|^2 \, dx.
$$

\begin{lem}  \label{embedding}
The embedding $X^{1, s}(\R^{N+1}_+) \hookrightarrow L^q(\R^N)$ is continuous for any $2 \leq q \leq 2^*_s$. Moreover, the embedding $X^{1, s}(\R^{N+1}_+) \hookrightarrow L^q(\R^N)$ is locally compact for any $2 \leq q <2^*_s$.
\end{lem}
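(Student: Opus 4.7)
The plan is to reduce both assertions to known facts about the fractional Sobolev space $H^s(\R^N)$ via the trace/extension correspondence of Caffarelli-Silvestre, handling continuity and compactness separately. For the continuous embedding, the two endpoints are immediate: for $q=2$ the definition of $\|\cdot\|_{1,s}$ gives $\|w(\cdot,0)\|_{L^2(\R^N)} \leq \|w\|_{1,s}$, and for $q=2^*_s$ Lemma \ref{embedding1} yields
$$\|w(\cdot,0)\|_{L^{2^*_s}(\R^N)} \leq S \left(\int_{\R^{N+1}_+} y^{1-2s}|\nabla w|^2\,dxdy\right)^{1/2} \leq \frac{S}{\sqrt{k_s}} \|w\|_{1,s}.$$
For $q \in (2, 2^*_s)$, I would interpolate: writing $\frac{1}{q} = \frac{\theta}{2} + \frac{1-\theta}{2^*_s}$ with $\theta \in (0,1)$, log-convexity of $L^p$-norms gives $\|w(\cdot,0)\|_{L^q} \leq \|w(\cdot,0)\|_{L^2}^{\theta} \|w(\cdot,0)\|_{L^{2^*_s}}^{1-\theta}$, and the two endpoint bounds close the argument.

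For local compactness, fix a bounded domain $\Omega \subset \R^N$ and let $(w_n) \subset X^{1,s}(\R^{N+1}_+)$ be bounded in norm. The key claim is that the traces $u_n := w_n(\cdot,0)$ form a bounded sequence in $H^s(\R^N)$. The $L^2$ component is immediate from the norm; the seminorm bound
$$\int_{\R^N} |\xi|^{2s} |\mathcal{F}(u_n)|^2\,d\xi \leq k_s \int_{\R^{N+1}_+} y^{1-2s}|\nabla w_n|^2\,dxdy$$
follows from the minimizing property of the $s$-harmonic extension: among all $X^{1,s}$-extensions of a given trace $u$, the Poisson extension $P^s_y \ast u$ from \eqref{convolution}-\eqref{defpys} minimizes the weighted Dirichlet energy and realizes equality via its Fourier representation. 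Once $(u_n)$ is identified as a bounded sequence in $H^s(\R^N)$, the fractional Rellich-Kondrachov theorem (cf.\ Di Nezza-Palatucci-Valdinoci) supplies a subsequence converging strongly in $L^q(\Omega)$ for every fixed $q \in [2, 2^*_s)$; a diagonal extraction over a countable dense family of such exponents yields a single subsequence valid for all $q \in [2, 2^*_s)$ simultaneously.

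The continuous part is essentially routine once Lemma \ref{embedding1} is in hand, so the real content is the compactness statement. The delicate step there is establishing the trace inequality $[u_n]_{H^s(\R^N)} \leq C\|w_n\|_{1,s}$ for arbitrary $w_n \in X^{1,s}$ rather than only for $s$-harmonic functions, where the Fourier computation is immediate. I would handle this by density: approximate $w_n$ by $C^\infty_0(\overline{\R^{N+1}_+})$ elements (which is permitted by the definition of $X^{1,s}$ as a completion), and use that replacing any such smooth approximant by its Poisson extension preserves the trace on $\R^N \times \{0\}$ while not increasing the weighted Dirichlet energy. Passing to the limit gives the trace bound in the full space $X^{1,s}(\R^{N+1}_+)$, after which local compactness is a textbook application of the fractional Rellich-Kondrachov theorem.
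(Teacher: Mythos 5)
Your proof is correct. The paper states Lemma \ref{embedding} without giving any proof (it is treated as a standard consequence of the trace inequality of Lemma \ref{embedding1}), so there is no in-paper argument to compare against; your route --- the two endpoint bounds plus interpolation for continuity, and the energy-minimality of the $s$-harmonic extension to transfer boundedness of $\|w_n\|_{1,s}$ to boundedness of the traces in $H^s(\R^N)$, followed by the fractional Rellich--Kondrachov theorem on bounded domains --- is exactly the standard way to supply the missing details. Two small remarks. First, the minimality step is made rigorous by the orthogonality identity $\|w\|_{s}^2=\|P^s_y*u\|_{s}^2+\|w-P^s_y*u\|_{s}^2$, which holds because $w-P^s_y*u$ has zero trace and the extension solves the degenerate equation with vanishing conormal derivative tested against zero-trace competitors; together with the Fourier identity $k_s\int_{\R^{N+1}_+}y^{1-2s}|\nabla (P^s_y*u)|^2\,dxdy=\int_{\R^N}|\xi|^{2s}|\mathcal{F}(u)|^2\,d\xi$ this gives the trace bound you want, and your density argument is the right way to extend it to all of $X^{1,s}(\R^{N+1}_+)$. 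Second, the diagonal extraction over a countable family of exponents is unnecessary: once a subsequence of the traces converges strongly in $L^2(\Omega)$ and is bounded in $L^{2^*_s}(\Omega)$, interpolation of $L^p$-norms gives convergence in every $L^q(\Omega)$ with $2\leq q<2^*_s$ along that same subsequence. A diagonal argument is only needed over an exhaustion of $\R^N$ by balls if one wants a single subsequence converging in $L^q_{\textnormal{loc}}(\R^N)$, which is the sense in which the lemma is used later in the paper.
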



To establish the existence of an infinite sequence of semiclassical states to \eqref{equ11}, we shall take advantage of \cite[Theorem 3.2]{ChWa}. Before stating this abstract theorem, let us introduce some notations and definitions. Let $X$ be a Banach space. For a set $P \subset X$, we define
$-P:=\{-u\ |\  u \in P\}$. We denote by $\gamma(A)$ the genus of a
closed symmetric subset $A \subset X$. For a functional $J \in
C^1(X, \R)$ and a constant $c \in \R$, we define
$$
J^c:=\{u \in X \ |\  J(u) \leq c\}, \quad K_c:=\{u \in X\ |\
J(u)=c, J'(u)=0\}.
$$

\begin{defi}
Let $J \in C^1(X, \R)$ be an even functional, $P \subset X$ be a nonempty open set and $W:=P \cup (-P)$. We say that $P$ is an admissible invariant set
with respect to $J$ at level $c \in \R$, if there is a constant $\tau_0>0$ and a symmetric open neighborhood
 $\mathcal{O}$ of $W$ with $\gamma (\overline{\mathcal{O}}) < \infty$ such that, for $0<\tau<\tau_0$, there exists $\eta \in C(X, X)$ satisfying
\begin{enumerate}
\item[$(\textnormal{i})$] $\eta(\partial P) \subset P, \,\,\,
\eta{(\partial (-P))} \subset -P, \,\,\,\eta{(P)} \subset P,
\,\,\,\eta{(-P)} \subset -P$; \item[$(\textnormal{ii})$]
$\eta(-u)=-\eta(u)$ for any $u \in X$; \item[$(\textnormal{iii})$]
$\eta_{\mid_{J^{c-2\tau}}} =id$; \item[$(\textnormal{iv})$]
$\eta(J^{c + 2 \tau} \backslash (\mathcal{O} \cup W)) \subset
J^{c- \tau}$.
\end{enumerate}
\end{defi}

\begin{thm} \cite[Theorem 3.2]{ChWa}\label{exist}
Let $J \in C^1(X, \R)$ be an even functional, $P \subset X$ be a
nonempty open set, $M:=P \cap (-P)$, $W:=P \cup (-P)$ and
$\Sigma:=\partial P \cap \partial (-P)$. Assume $P$ is an
admissible invariant set with respect to $J$ at level $c $ for $ c
\geq c^*:= \inf_{u \in \Sigma} J(u)$. Assume for any $n \in
\mathbb{N}$, there exists a continuous map $\varphi_n : B_n:=\{x
\in \R^n\ |\  |x| \leq 1\} \to X$ satisfying
\begin{enumerate}
\item[$(\textnormal{i})$] $\varphi_n(0) \in M$, $\varphi_n(-x)=-\varphi_n(x)$ for any $x \in B_n$;
\item[$(\textnormal{ii})$] $\varphi_n(\partial B_n) \cap M=\emptyset$;
\item[$(\textnormal{iii})$] $\max\{J(0), \sup_{u\in \varphi_n(\partial B_n)}J(u)\} < c^*$.
\end{enumerate}
For $j \in \mathbb{N}$, define
$$
c_j:=\left\{
\begin{array}
[c]{ll} \displaystyle\inf_{B\in \Lambda_j}\sup_{u\in B\setminus
W} J(u),& \mbox{if} \ j\geq 2,\\
\displaystyle\inf_{B\in \Lambda_1}\sup_{u\in
B}J(u), & \mbox{if} \ j= 1,
\end{array}
\right.
$$
where
\begin{align*} 
\begin{split}
&\Lambda_j:=\left\{B \subset X \ |\ B=\varphi (B_n \backslash Y) \,\, \mbox{for some} \,\, \varphi \in G_n, n \geq j \,\, \mbox{and} \right.\\
& \hspace{2.5cm} \left. \,\,\, Y \subset B_n  \,\, \mbox{is a symmetric open set with }\,\,  \gamma(\overline{Y}) \leq n-j \right \}
\end{split}
\end{align*}
and
$$
G_n:=\{\varphi \in C(B_n, X) \ |\ \varphi(-x)=-\varphi(x) \, \,
\mbox{for any} \,\, x \in B_n, \varphi_{\mid_{\partial
B_n}}={\varphi_n}_{|_{\partial B_n}}\}.
$$
Then for any $j \geq 2$,
$$
K_{c_j} \backslash W \neq \emptyset.
$$
Moreover, if $j \geq 2$ and $ c:=c_j= \cdots =c_{j+m} \geq c^* $, then
$$
\gamma(K_c\backslash W) \geq m+1.
$$
\end{thm}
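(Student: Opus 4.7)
The plan is to prove Theorem \ref{exist} by the standard minimax/deformation scheme adapted to the invariant-set framework. I would first check that the levels $c_j$ are well-defined, monotone in $j$, and at least $c^*$ for $j \geq 2$; then run a deformation argument, using the $\eta$ furnished by admissibility, to displace an almost-optimal set of $\Lambda_{j+m}$ into $\Lambda_j$ at a strictly lower level, contradicting $c_j = c_{j+m}$. The existence statement $K_{c_j} \setminus W \neq \emptyset$ is the case $m = 0$ of the multiplicity bound.

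For the preliminaries, $\varphi_n(B_n) \in \Lambda_j$ whenever $n \geq j$ (take $Y = \emptyset$), so each $\Lambda_j$ is nonempty and $c_j < \infty$ by continuity, and the inclusion $\Lambda_{j+1} \subset \Lambda_j$ gives $c_j \leq c_{j+1}$. For $j \geq 2$, I would establish $c_j \geq c^*$ by a genus-theoretic intersection argument: for any $B = \varphi(B_n \setminus Y) \in \Lambda_j$, the symmetric open sets $U_\pm := \varphi^{-1}(\pm P)$ in $B_n$ satisfy $U_- = -U_+$ by oddness of $\varphi$, and the bound $\gamma(\overline{Y}) \leq n - j$ together with $j \geq 2$ forces $(B_n \setminus Y) \setminus (U_+ \cup U_-)$ to contain an antipodal pair whose image under $\varphi$ lies in $\Sigma = \partial P \cap \partial (-P)$; hence $\sup_{B \setminus W} J \geq c^*$.

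For the main step, assume for contradiction that $c := c_j = c_{j+1} = \cdots = c_{j+m} \geq c^*$ and $\gamma(K_c \setminus W) \leq m$. Using the admissibility structure, fix a symmetric open $\cO \supset K_c \setminus W$ with $\gamma(\overline{\cO}) \leq m$, and choose $\tau > 0$ small enough that $\tau < \tau_0$, that admissibility yields a continuous $\eta$ satisfying (i)--(iv), and that $\sup_{\partial B_n} J \circ \varphi_n \leq c - 2\tau$ (possible since assumption (iii) of the theorem is strict and $c \geq c^*$). By definition of $c_{j+m}$, pick $B = \varphi(B_n \setminus Y) \in \Lambda_{j+m}$ with $n \geq j + m$, $\gamma(\overline{Y}) \leq n - j - m$, and $\sup_{u \in B \setminus W} J(u) < c + 2\tau$. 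Set $Y' := Y \cup \varphi^{-1}(\cO)$ (symmetric open) and $\varphi' := \eta \circ \varphi$. Subadditivity and monotonicity of $\gamma$, combined with equivariance under the odd map $\varphi$, yield $\gamma(\overline{Y'}) \leq (n - j - m) + m = n - j$. Oddness of $\eta$ (by (ii)) and of $\varphi$ make $\varphi'$ odd, and (iii) together with the choice of $\tau$ makes $\eta = \mathrm{id}$ on $\varphi_n(\partial B_n) \subset J^{c - 2\tau}$, so $\varphi'|_{\partial B_n} = \varphi_n|_{\partial B_n}$ and thus $\varphi' \in G_n$; hence $B' := \varphi'(B_n \setminus Y') \in \Lambda_j$.

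Finally, take any $u' = \eta(\varphi(x)) \in B' \setminus W$ with $x \in B_n \setminus Y'$. Property (i) of admissibility forces $\varphi(x) \notin W$ (otherwise $\eta(\varphi(x)) \in W$), and $x \notin \varphi^{-1}(\cO)$ gives $\varphi(x) \notin \cO$, so $\varphi(x) \in B \setminus (W \cup \cO)$ with $J(\varphi(x)) < c + 2\tau$. Property (iv) then yields $J(u') \leq c - \tau$; taking the supremum over $B' \setminus W$, we obtain $c_j \leq \sup_{B' \setminus W} J \leq c - \tau < c = c_j$, the desired contradiction. The main obstacle I anticipate is the intersection argument underlying $c_j \geq c^*$, which rests on a delicate topological separation of $P$ and $-P$ by $\Sigma$ via genus (and on a careful check that $\eta$ preserves $X \setminus W$, so that $u' \notin W$ indeed propagates back to $\varphi(x) \notin W$); the remaining deformation and genus bookkeeping is mechanical once the admissibility structure is in place.
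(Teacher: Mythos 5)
First, a point of reference: the paper does not prove Theorem \ref{exist} at all --- it is imported verbatim from \cite[Theorem 3.2]{ChWa} --- so your proposal can only be compared with that reference's argument. Your deformation half reproduces it faithfully: choosing $\tau$ via hypothesis (iii), replacing an almost-optimal $B=\varphi(B_n\setminus Y)\in\Lambda_{j+m}$ by $B'=\varphi'(B_n\setminus Y')$ with $\varphi'=\eta\circ\varphi$, $Y'=Y\cup\varphi^{-1}(\cO)$, the genus count $\gamma(\overline{Y'})\le (n-j-m)+m=n-j$, and the use of properties (i), (iii), (iv) of admissibility to force $\sup_{B'\setminus W}J\le c-\tau<c_j$ is exactly the standard scheme. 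One caveat there: your ``fix a symmetric open $\cO\supset K_c\setminus W$ with $\gamma(\overline{\cO})\le m$ for which admissibility yields $\eta$'' is not licensed by the definition as literally stated, which supplies a single fixed neighborhood of finite genus; you need either the deformation property for an arbitrarily prescribed neighborhood (this is what Lemma \ref{deformation} actually provides when admissibility is verified) or closedness/compactness of $K_c\setminus W$ together with the continuity property of the genus. This should be made explicit rather than absorbed into ``using the admissibility structure.''

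The genuine gap is the inequality $c_j\ge c^*$ for $j\ge 2$, equivalently the linking property $\varphi(B_n\setminus Y)\cap\Sigma\neq\emptyset$ for every $\varphi\in G_n$ and every symmetric open $Y$ with $\gamma(\overline{Y})\le n-j$; note this is not a dispensable preliminary, since even the existence statement ($m=0$) needs $c_j\ge c^*$ to invoke admissibility at level $c_j$. Your justification --- that the genus bound forces $(B_n\setminus Y)\setminus(U_+\cup U_-)$ to contain an antipodal pair whose image lies in $\Sigma$ --- rests on a faulty inference: a point outside $U_+\cup U_-=\varphi^{-1}(P)\cup\varphi^{-1}(-P)$ only has image outside the open set $W$, and such a point need not lie on $\partial P\cap\partial(-P)$; moreover $J$ admits no lower bound $c^*$ on $X\setminus W$ (hypothesis (iii) even produces image points with $J<c^*$), so ``outside $W$'' is useless and you must genuinely hit $\Sigma$. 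The correct route, which is the real content of \cite{ChWa}, is a separation argument: since $\varphi_n$ is odd, $0=\varphi_n(0)\in M$, and for $\varphi\in G_n$ one lets $U$ be the (symmetric) connected component of $\varphi^{-1}(M)$ containing $0$; because $\varphi(\partial B_n)=\varphi_n(\partial B_n)$ misses $M$, $U$ is a bounded symmetric open neighborhood of the origin in $\R^n$, so $\gamma(\partial U)=n$ by Borsuk--Ulam, whence $\gamma(\partial U\setminus Y)\ge j\ge 2$; maximality of the component gives $\varphi(\partial U)\subset\overline{M}\setminus M\subset\partial P\cup\partial(-P)$, and if $\varphi(\partial U\setminus Y)$ missed $\Sigma$, then $\partial U\setminus Y$ would split into two disjoint closed sets exchanged by the antipodal map, forcing $\gamma(\partial U\setminus Y)\le 1$, a contradiction. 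Hence some point of $B\setminus W$ lies on $\Sigma$ (recall $\Sigma\cap W=\emptyset$ since $P$ is open), giving $\sup_{B\setminus W}J\ge c^*$. Without this component/boundary construction your outline does not close, so as written the proposal is incomplete at its most delicate point, which you correctly identified but did not resolve.
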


\section{Sobolev subcritical growth} \label{subcritical}

In this section, we shall consider semiclassical states to
\eqref{equ11} in the Sobolev subcritical case, i.e.
$\mathcal{N}(u)u=|u|^{p-2} u$ for $2<p<2^*_s$. To begin with, let us introduce some notations. Let
$\mu:=2^{-1}(a/4)^{1/(p-2)}$, $$
\kappa<-\left(1+(p-2)\mu^{p-4}+(p-2)|p-3|\mu^{p-4} \right)$$ and
$\phi \in C(\R^+, \R)$ be such that
\begin{align*}
{\phi}(t) =\left\{
\begin{aligned}
&(p-2)t^{p-3}, \, \, \, \,\, \hspace{2.5cm} 0< t \leq \mu,\\
&\kappa(t-\mu)+(p-2)\mu^{p-3}, \,\,\,\,\,\, \qquad \mu< t
\leq\mu-\frac{p-2}{\kappa}\mu^{p-3},\\
&0,  \hspace{5cm} t\geq \mu-\frac{p-2}{\kappa}\mu^{p-3}.
\end{aligned}
\right.
\end{align*}
Define
\begin{align*}
 \quad g(t):=\int_{0}^t \phi(s) \, ds, \quad  G(t):=\int_{0}^t g(s)s \, ds \quad \mbox{for} \,\, t \geq 0.
\end{align*}
In view of the definition of $g$, we see that $g \in C^1(\R^+, \R)$ and
$$g(t)=\left\{
\begin{aligned}
&t^{p-2},  \,\quad t \leq (a/4)^{1/(p-2)},\\
&c_*, \qquad t > 2(a/4)^{1/(p-2)},
\end{aligned}
\right.$$ where
$$c_*:=\int^{\mu-\frac{p-2}{\kappa}\mu^{p-3}}_0\phi(t)\, dt\leq \int^{2\mu}_0\phi(t)\,dt\leq\int^{2\mu}_0(p-2)t^{p-3}\,dt= \frac a 4.$$
In addition, it holds that, if $-\kappa$ is large enough, then
\begin{align} \label{arg}
\frac 12 g(|t|)t^2 -G(|t|) \geq 0 \quad \mbox{for any}\,\, t \in \R.
\end{align}
Hereafter, we shall choose $\kappa<-\left(1+(p-2)\mu^{p-4}+(p-2)|p-3|\mu^{p-4} \right)$ such
that \eqref{arg} holds true. According to the assumption $(V_2)$, we know that there exists $\delta_0 >0$
such that, for any $y \in \Lambda^{\delta_0}$, if $B_{\delta_0}(y) \backslash \Lambda \neq \emptyset$, then
\begin{align} \label{vloc}
\inf_{x \in B_{\delta_0}(y) \backslash \Lambda} \nabla V(x)\cdot
\nabla \mbox{dist}(x, \Lambda)>0,
\end{align}
where $B_r(z):=\{x \in \R^N\ |\  |x-z|<r\}$ with $r>0$ and $z\in\R^N.$ Let $\eta\in C^{\infty}(\R^+, [0, 1])$ be
a cut-off function such that $\eta(t)=0$ for $t \leq 0$, $\eta(t)
>0$ for $t>0$, $\eta(t) =1$ for $t \geq \delta_0$ and $\eta'(t) \geq 0$
for $t \geq 0$, where the constant $\delta_0>0$ is determined by \eqref{vloc}. Define $\chi(x):=\eta(\mbox{dist}(x, \,
\Lambda))$ and
\begin{align*}
f_{\eps}(x, t):=\left(1- \chi(\eps x)\right) t^{p-2}+\chi(\eps x){g}(t),\quad F_{\eps}(x, t):=\int_{0}^t f_{\eps}(x, s)s \, ds \quad \mbox{for}\,\, x \in \R^N, t \in \R.
\end{align*}
Let us now introduce a modified equation as
\begin{align}\label{equ21}
\left\{
\begin{aligned}
-\mbox{div}(y^{1-2s} \nabla w)&=0 \hspace{4cm}\,\,\mbox{in} \,\, \R^{N+1}_+,\\
-k_s \frac{\partial w}{\partial {\nu}}&=-V_{\eps}(x) w + f_{\eps}(x, |w|) w \, \,\quad  \mbox{on} \,\, \R^N \times \{0\}.
\end{aligned}
\right.
\end{align}
The energy functional associated to \eqref{equ21} is defined by
$$
\Phi_{\eps}(w):=\frac {k_s}{2} \int_{\R^{N+1}_+} y^{1-2s} |\nabla w|^2 \, dxdy + \frac 12 \int_{\R^N} V_{\eps}(x)|w(x,0)|^2 \, dx - \int_{\R^N} F_{\eps}(x, |w(x, 0)|) \, dx.
$$
It is standard to check that $\Phi_{\eps} \in C^1(X^{1, s}(\R^{N+1}_+), \R)$ and
\begin{align*}
\Phi_{\eps}'(w) \psi&=k_s\int_{\R^{N+1}_+} y^{1-2s} \nabla w \cdot \nabla \psi \, dxdy + \int_{\R^N} V_{\eps}(x) w(x, 0) \psi(x, 0) \, dx \\
& \quad - \int_{\R^N} f_{\eps}(x, |w(x, 0)|) w(x, 0) \psi(x, 0) \, dx
\end{align*}
for any $\psi \in X^{1, s}(\R^{N+1}_+)$. Consequently, any critical point to $\Phi_{\eps}$ corresponds to a solution to \eqref{equ21}.

\subsection{Existence of semiclassical states}

In the following, we shall adapt Theorem \ref{exist} to discuss the existence of semiclassical states to \eqref{equ21}. To do this,  let us first demonstrate that the energy functional $\Phi_{\eps}$ satisfies the Palais-Smale sequence in $X^{1, s}(\R^{N+1}_+)$.

\begin{lem} \label{ps1}
For any $\eps>0$, the energy functional $\Phi_{\eps}$ satisfies the Palais-Smale condition in $X^{1, s}(\R^{N+1}_+)$.
\end{lem}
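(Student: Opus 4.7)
The approach is the standard two-step scheme: (i) show any Palais--Smale sequence is bounded in $X^{1,s}(\R^{N+1}_+)$, and (ii) upgrade the resulting weak limit to a strong limit using local compactness together with the truncation $\chi\cdot g$. The penalization has been crafted precisely so that both steps go through, the key facts being the inequality~\eqref{arg}, which makes the ``outside'' contribution to $\tfrac12 f_\eps w^2 - F_\eps$ nonnegative, and the uniform bound $g(t)\leq c_*\leq a/4\leq V_\eps(x)/4$, which keeps the nonlinearity dominated by the potential on the support of $\chi(\eps\cdot)$.

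\textbf{Step 1 (Boundedness).} Let $\{w_n\}\subset X^{1,s}(\R^{N+1}_+)$ satisfy $\Phi_\eps(w_n)\to c$ and $\Phi'_\eps(w_n)\to 0$. Computing $\Phi_\eps(w_n)-\tfrac12\Phi'_\eps(w_n)w_n$ kills the quadratic part and leaves
$$
\Phi_\eps(w_n)-\tfrac12\Phi'_\eps(w_n)w_n=\int_{\R^N}\!\Bigl[\tfrac12 f_\eps(x,|w_n|)w_n^2-F_\eps(x,|w_n|)\Bigr]dx.
$$
On $\{\chi(\eps x)=0\}$ the integrand equals $(\tfrac12-\tfrac1p)|w_n|^p\geq 0$, and on $\{\chi(\eps x)>0\}$ it is nonnegative by \eqref{arg}. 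This yields the one-sided estimate $\int(1-\chi(\eps x))|w_n|^p\,dx\leq C(c+o(\|w_n\|_{1,s}))$. Plugging the pointwise bound $\chi(\eps x)g(|w_n|)\leq V_\eps(x)/4$ into the identity $\Phi'_\eps(w_n)w_n=o(\|w_n\|_{1,s})$ then gives
$$
k_s\!\!\int_{\R^{N+1}_+}\!\! y^{1-2s}|\nabla w_n|^2\,dxdy+\tfrac34\!\int_{\R^N}\! V_\eps w_n^2\,dx\leq \int_{\R^N}(1-\chi(\eps x))|w_n|^p\,dx+o(\|w_n\|_{1,s}),
$$
and since $V_\eps\geq a>0$, boundedness of $\|w_n\|_{1,s}$ follows.

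\textbf{Step 2 (Weak limit and splitting).} Pass to a subsequence $w_n\rightharpoonup w$ in $X^{1,s}$; by Lemma \ref{embedding}, $w_n(\cdot,0)\to w(\cdot,0)$ in $L^q_{\mathrm{loc}}(\R^N)$ for $2\leq q<2^*_s$ and a.e., and $w$ is a critical point of $\Phi_\eps$. From $\Phi'_\eps(w_n)(w_n-w)=o(1)$ and $\Phi'_\eps(w)(w_n-w)=o(1)$ we obtain
$$
\|w_n-w\|_{1,s}^2=\int_{\R^N}\bigl[f_\eps(x,|w_n|)w_n-f_\eps(x,|w|)w\bigr](w_n-w)\,dx+o(1)=:I_n^{(1)}+I_n^{(2)}+o(1),
$$
splitting according to the factors $1-\chi(\eps x)$ and $\chi(\eps x)$. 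Since $\mathrm{supp}(1-\chi(\eps\cdot))\subset \Lambda_\eps^{\delta_0/\eps}$ is bounded for each fixed $\eps$, local $L^p$-compactness on that bounded set and continuity of the Nemytskii operator $t\mapsto|t|^{p-2}t$ give $I_n^{(1)}\to 0$.

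\textbf{Step 3 (Tail control for $I_n^{(2)}$).} For $R$ so large that $\Lambda_\eps^{\delta_0/\eps}\subset B_R$, test $\Phi'_\eps(w_n)=o(1)$ against $\xi_R^2 w_n$, where $\xi_R\in C_c^\infty(\R^N\times\overline{\R^{N+1}_+})$ is supported in $\{|x|\geq R\}$, equals $1$ on $\{|x|\geq 2R\}$, and satisfies $|\nabla\xi_R|\leq C/R$. On the support of $\xi_R$ one has $(1-\chi(\eps x))=0$, so $f_\eps(x,|w_n|)w_n^2\leq V_\eps w_n^2/4$, and the resulting identity combined with Cauchy--Schwarz on the gradient cross-term produces
$$
\tfrac34\!\int_{\R^N}\! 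V_\eps \xi_R^2 w_n^2\,dx+\tfrac12 k_s\!\!\int_{\R^{N+1}_+}\!\! y^{1-2s}\xi_R^2|\nabla w_n|^2\,dxdy\leq \frac{C}{R^2}\|w_n\|_{1,s}^2+o(1).
$$
This gives a uniform tail bound that can be made arbitrarily small. Using boundedness of $g$, a.e.\ convergence, and Vitali's theorem on $B_R$ together with the uniform tail outside, one concludes $I_n^{(2)}\to 0$. Hence $\|w_n-w\|_{1,s}\to 0$.

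\textbf{Main obstacle.} Step 1 and the compact piece $I_n^{(1)}$ are routine; the delicate point is the uniform tail estimate underlying $I_n^{(2)}\to 0$, because the truncated nonlinearity on $\{\chi=1\}$ has only linear growth (hence does not behave like a standard subcritical term for compactness purposes). What saves us is the quantitative inequality $g\leq V_\eps/4$, which makes the shifted bilinear form $k_s\!\int y^{1-2s}\nabla\cdot\nabla\cdot+\int(V_\eps-\chi g(|w_n|))(\cdot)(\cdot)$ uniformly coercive on $\mathrm{supp}\,\xi_R$, so one really only has to keep track of the cutoff cross-terms and the weight $y^{1-2s}$; the argument is entirely analogous to the local case but requires careful bookkeeping in the weighted half-space.
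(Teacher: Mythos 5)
Your Step 1 and the treatment of the piece supported where $1-\chi(\eps\cdot)>0$ agree with the paper, but Step 3 contains a genuine gap. Testing $\Phi_\eps'(w_n)$ against $\xi_R^2 w_n$ produces the cross term $k_s\int_{\R^{N+1}_+} y^{1-2s}\,\xi_R w_n\,\nabla w_n\cdot\nabla\xi_R\,dxdy$, and after Cauchy--Schwarz you are left with $\int_{\R^{N+1}_+} y^{1-2s} w_n^2\,|\nabla\xi_R|^2\,dxdy$, which you bound by $\frac{C}{R^2}\|w_n\|_{1,s}^2$. That inequality is not available here: the norm of $X^{1,s}(\R^{N+1}_+)$ consists of the weighted gradient term plus the \emph{trace} $L^2$-norm on $\R^N\times\{0\}$, and it does not control the bulk weighted $L^2$-norm $\int y^{1-2s}w_n^2$ over an unbounded region (your cutoff $\xi_R$ depends only on $x$, so its gradient is supported on a slab $\{R\le|x|\le 2R\}\times(0,\infty)$ of infinite weighted measure; neither the local estimate of \cite[Proposition 3.1.1]{DMV} nor a Hardy-type inequality with weight $y^{1-2s}$ rescues the factor $1/R^2$ there, since $1/R^2\not\lesssim 1/(|x|^2+y^2)$ when $y\gg R$). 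This is exactly the point where the extension setting differs from the local $H^1(\R^N)$ penalization argument of del Pino--Felmer that your cutoff computation imitates. Moreover, the target of Step 3 is stronger than needed: showing $I_n^{(2)}\to 0$, i.e. $\int\chi(\eps x)g(|w_n|)|z_n(\cdot,0)|^2\,dx\to 0$ with $z_n=w_n-w$, is essentially equivalent to the global strong $L^2$ convergence you are trying to establish, so without a valid uniform tail bound the argument begs the question.

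The repair is the mechanism you already name in your last paragraph, applied globally rather than on $\operatorname{supp}\xi_R$: since $\chi(\eps x)\,g(|w_n(x,0)|)\le a/4\le V_\eps(x)/4$ pointwise, the quadratic contribution $\int\chi(\eps x)g(|w_n|)|z_n(\cdot,0)|^2\,dx$ can be absorbed into the coercive form $k_s\int y^{1-2s}|\nabla z_n|^2+\int V_\eps|z_n(\cdot,0)|^2$ at the cost of a factor $\min\{1,3a/4\}$ in front of $\|z_n\|_{1,s}^2$, while the remaining mixed terms (those containing the fixed function $w$, and the $|w_n|^{p-2}$ terms carrying the factor $1-\chi(\eps\cdot)$) either vanish by weak convergence or are supported in the bounded set where $1-\chi(\eps\cdot)>0$ and vanish by the local compactness of Lemma \ref{embedding}. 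This is the paper's proof: no cutoff in the half-space and no tail estimate are needed, which is precisely what the penalization $g\le a/4$ was designed to achieve.
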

\begin{proof}
Suppose $\{w_n\} \subset X^{1, s}(\R^{N+1}_+)$ is a Palais-Smale sequence to $\Phi_{\eps}$, i.e.
$$
\left |\Phi_{\eps}(w_n) \right| \leq C, \quad \Phi_{\eps}'(w_n)=o_n(1).
$$
We shall deduce that $\{w_n\}$ has a convergent subsequence in $X^{1, s}(\R^{N+1}_+)$. To do this, we first show that $\{w_n\}$ is bounded in $X^{1, s}(\R^{N+1}_+)$. In light of \eqref{arg}, we get that
\begin{align} \label{bdd}
\begin{split}
C(1 +\|w\|_{1, s}) &\geq \Phi_{\eps} (w_n) -\frac 12 \Phi_{\eps}'(w_n)w_n \\
& =\int_{\R^N} \frac 12 f_{\eps}(x, |w_n(x, 0|) |w_n(x, 0)|^2 -F_{\eps}(x, |w_n(x, 0)|) \, dx \\
& \geq \frac{p-2}{2p} \int_{\R^N} \left(1- \chi(\eps x)\right) |w_n(x, 0)|^p \,dx.
\end{split}
\end{align}
Since $V(x) \geq a$ for any $x \in \R^N$ and $0 \leq g(|t|) \leq \frac a 4$ for any $t \in \R$, then
\begin{align*}
C \|w_n\|_{1, s} &\geq \Phi_{\eps}'(w_n) w_n \\
& = k_s\int_{\R^{N+1}_+} y^{1-2s} |\nabla w_n|^2 \, dxdy + \int_{\R^N} V_{\eps}(x)|w_n(x, 0)|^2 \, dx \\
& \quad -\int_{\R^N} f_{\eps}(x, |w_n(x, 0)|) |w_n(x, 0)|^2 \, dx \\
& \geq \min\left\{1, \frac{3a}{4}\right\} \|w_n\|_{1, s}^2 - \int_{\R^N}  \left(1- \chi(\eps x)\right) |w_n(x, 0)|^p \, dx.
\end{align*}
This together with \eqref{bdd} yields that $\{w_n\}$ is bounded in $X^{1, s}(\R^{N+1}_+)$. From Lemma \ref{embedding}, we then know that there is $w \in X^{1 ,s}(\R^{N+1}_+)$ such that $w_n \wto w$ in $X^{1, s}(\R^{N+1}_+)$ and $w_n(\cdot, 0) \to w(\cdot, 0)$ in $L_{\textnormal{loc}}^q(\R^N)$ for any $2 \leq q <2^*_s$ as $n \to \infty$. Furthermore, we have that $\Phi_{\eps}'(w)=0$.

We next prove that, up to a subsequence, $w_n \to w$ in $X^{1, s}(\R^{N+1}_+)$ as $n \to \infty$. Let us define $z_n:=w_n -w$, then $\Phi_{\eps}'(w_n) z_n=o_n(1)$ and $ \Phi_{\eps} '(w) z_n=0$. This indicates that
\begin{align} \label{conv1}
\begin{split}
o_n(1)&=\Phi_{\eps}'(w_n) z_n -\Phi_{\eps}'(w) z_n \\
&=k_s\int_{\R^{N+1}_+} y^{1-2s} |\nabla z_n|^2 \, dxdy + \int_{\R^N} V_{\eps}(x)|z_n(x, 0)|^2 \, dx \\
& \quad + \int_{\R^N}\left( f_{\eps}(x, |w(x, 0)|)w(x, 0) z_n(x, 0) -f_{\eps}(x, |w_n(x, 0)|)
w_n(x, 0) z_n(x, 0)\right) \, dx.
\end{split}
\end{align}
Since $z_n \wto 0$ in $X^{1, s}(\R^{N+1}_+)$ as $n\to \infty$, it then follows from Lemma \ref{embedding} that
$$
\int_{\R^N} f_{\eps}(x, |w(x, 0)|)w(x, 0) z_n(x, 0) \, dx=o_n(1)
$$
and
$$
\int_{\R^N} f_{\eps}(x, |w_n(x, 0)|)w(x, 0) z_n(x, 0) \,dx =o_n(1).
$$
Using again the assumption that $V(x) \geq a$ for any $x \in \R^N$ and the fact that $0 \leq g(|t|) \leq \frac a 4$ for any $t \geq 0$, from \eqref{conv1}, we then derive that
\begin{align} \label{conv11}
\min\left\{1, \frac{3a}{4}\right\} \|z_n\|_{1, s}^2 \leq
\int_{\R^N} \left(1- \chi(\eps x)\right)|w_n(x, 0)|^{p-2}|z_n(x,
0)|^2 \, dx + o_n(1).
\end{align}
Observe that $\mbox{supp}\, (1 -\chi(\eps x))$ is bounded in $\R^N$ for any $\eps>0$ and $z_n(\cdot, 0) \to 0$ in $L^q_{\textnormal{loc}}(\R^N)$ for any $2 \leq
q <2^*_s$ as $n \to \infty$. Applying \eqref{conv1}, we then obtain that $\|z_n\|_{1,s}=o_n(1)$. This shows that $\{w_n\}$ has a convergent subsequence in $X^{1, s}(\R^{N+1}_+)$, and the proof is completed.
\end{proof}

We now introduce some notations. Let
$$
P_+:=\{w \in X^{1, s}(\R^{N+1}_+)\ |\  w \geq 0 \}, \quad P_-:=\{w
\in X^{1, s}(\R^{N+1}_+)\ |\  w \leq 0 \}.
$$
For $\sigma>0$, we define
\begin{align} \label{defp1}
P_+^{\sigma}:=\{w \in X^{1, s}(\R^{N+1}_+)\ |\  \mbox{dist}_{X^{1,
s}}(w, P_+) : =\inf_{z \in P_+} \|w-z\|_{1, s}< \sigma\}
\end{align}
and
\begin{align} \label{defp2}
P_-^{\sigma}:=\{w \in X^{1, s}(\R^{N+1}_+)\ |\  \mbox{dist}_{X^{1,
s}}(w, P_-) : =\inf_{z \in P_-} \|w-z\|_{1, s}< \sigma\}.
\end{align}

\begin{lem} \label{infenergy1}
There exists a constant $\sigma_0>0$ such that, for any $0<\sigma<\sigma_0$, there holds that
\begin{align}\label{mpv1}
c^*:=\inf_{w \in \Sigma} \Phi_{\eps}(w)>0,
\end{align}
where $\Sigma:=\partial(P^{\sigma}_+) \cap \partial(P^{\sigma}_-)$.
\end{lem}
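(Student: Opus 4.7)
The plan rests on the orthogonal splitting $\Phi_\eps(w) = \Phi_\eps(w^+) + \Phi_\eps(w^-)$. Since $w^+$ and $w^-$ have disjoint supports in $\R^{N+1}_+$, one has $\nabla w^+ \cdot \nabla w^- = 0$ a.e.\ by Stampacchia's theorem, and the same disjointness at the boundary $\R^N \times \{0\}$, combined with $F_\eps(x, 0) = 0$, gives $\int F_\eps(x, |w(\cdot, 0)|) \, dx = \int F_\eps(x, w^+(\cdot, 0)) \, dx + \int F_\eps(x, w^-(\cdot, 0)) \, dx$. Consequently $\|w\|_{1, s}^2 = \|w^+\|_{1, s}^2 + \|w^-\|_{1, s}^2$ and $\Phi_\eps(w) = \Phi_\eps(w^+) + \Phi_\eps(w^-)$.

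Next I set up the distance and coercivity estimates. For $w \in \partial P_\pm^\sigma$, taking the test element $z = w^\pm \in P_\pm$ in the infimum defining $\mbox{dist}_{X^{1, s}}(w, P_\pm)$ yields the lower bound $\|w^\mp\|_{1, s} \geq \sigma$. Conversely, for any $z \in P_+$, the pointwise inequality $|w(x, 0) - z(x, 0)| \geq w^-(x, 0)$ (valid since $z(\cdot, 0) \geq 0$) gives $\|w - z\|_{L^2(\R^N)} \geq \|w^-(\cdot, 0)\|_{L^2(\R^N)}$; combined with $\|\cdot\|_{L^2(\R^N)} \leq \|\cdot\|_{1, s}$ this produces the smallness bound $\|w^-(\cdot, 0)\|_{L^2(\R^N)} \leq \sigma$, and symmetrically $\|w^+(\cdot, 0)\|_{L^2(\R^N)} \leq \sigma$ on $\partial P_-^\sigma$. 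Using $V \geq a$ and $G(t) \leq (a/8) t^2$ (which follows from $g \leq a/4$), I derive
\begin{equation*}
\Phi_\eps(w^\pm) \geq c_0 \|w^\pm\|_{1, s}^2 - \tfrac{1}{p}\|w^\pm(\cdot, 0)\|_{L^p(\R^N)}^p, \qquad c_0 := \min\{\tfrac{1}{2}, \tfrac{3a}{8}\} > 0,
\end{equation*}
and control the $L^p$-trace via Gagliardo--Nirenberg interpolation between $L^2(\R^N)$ and $L^{2^*_s}(\R^N)$ combined with Lemma \ref{embedding1}:
\begin{equation*}
\|w^\pm(\cdot, 0)\|_{L^p(\R^N)}^p \leq C\sigma^{p\theta_1}\|w^\pm\|_{1, s}^{p\theta_2}, \quad \theta_1, \theta_2 > 0, \quad p\theta_1 + p\theta_2 = p.
\end{equation*}

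Combining these estimates with $\|w^\pm\|_{1, s} \geq \sigma$, the mixed term $\sigma^{p\theta_1}\|w^\pm\|_{1, s}^{p\theta_2}$ is absorbed into $c_0\|w^\pm\|_{1, s}^2$ for $\sigma$ small, yielding $\Phi_\eps(w^\pm) \geq (c_0/2)\|w^\pm\|_{1, s}^2 \geq (c_0/2)\sigma^2$; summing over $\pm$ gives $c^* \geq c_0\sigma^2 > 0$ on $\Sigma$. The main obstacle lies in the absorption step: when $p\theta_2 > 2$ (i.e., $p > 4 - 4/2^*_s$), it requires the matching upper bound $\|w^\pm\|_{1, s} \leq C\sigma$. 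Such a bound follows from the Pythagorean identity $\|w\|_{1, s}^2 = \|z_\pm^*\|_{1, s}^2 + \sigma^2$ for the metric projections $z_\pm^* := P_{P_\pm}(w)$, which is a consequence of the orthogonality $\langle w - z_\pm^*, z_\pm^*\rangle_{1, s} = 0$, together with the triangle-inequality bound $\|z_+^* - z_-^*\|_{1, s} \leq 2\sigma$ and a sign analysis of $\langle z_+^*, z_-^*\rangle_{1, s}$ whose boundary part $\int z_+^*(\cdot, 0) z_-^*(\cdot, 0)\, dx$ is nonpositive since $z_+^*(\cdot, 0) \geq 0 \geq z_-^*(\cdot, 0)$.
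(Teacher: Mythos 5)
Your preliminary steps are correct: the splitting $\Phi_{\eps}(w)=\Phi_{\eps}(w^+)+\Phi_{\eps}(w^-)$, the lower bound $\|w^{\mp}\|_{1,s}\geq\sigma$ on $\partial P_{\pm}^{\sigma}$, the trace bound $\|w^{\pm}(\cdot,0)\|_{L^2(\R^N)}\leq\sigma$, and the coercive estimate for $\Phi_{\eps}(w^{\pm})$ are all fine. The gap is in the absorption step. With your Gagliardo--Nirenberg exponents one has $p\theta_2=\frac{2^*_s(p-2)}{2^*_s-2}$, which exceeds $2$ exactly when $p>4-4/2^*_s$, so for a large part of the admissible range $2<p<2^*_s$ you really do need the matching upper bound $\|w^{\pm}\|_{1,s}\leq C\sigma$ on $\Sigma$ --- and your argument for it does not hold. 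The orthogonality $\langle w-z_{\pm}^*,z_{\pm}^*\rangle_{1,s}=0$ and $\|z_+^*-z_-^*\|_{1,s}\leq 2\sigma$ are correct, but to conclude you need $\langle z_+^*,z_-^*\rangle_{1,s}\leq 0$, and only the boundary part of that inner product has a sign: the weighted Dirichlet part $k_s\int_{\R^{N+1}_+}y^{1-2s}\nabla z_+^*\cdot\nabla z_-^*\,dxdy$ can be arbitrarily large and positive even though $z_+^*\geq 0\geq z_-^*$ (think of $z_{\pm}=\delta(\pm 1+\tfrac12\sin(kx_1))\eta$ with $\eta\geq 0$ a fixed bump). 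In fact the desired upper bound is false: for $\psi_k:=\delta\sin(kx_1)\eta$ with $\delta=\sigma/\|\eta\|_{1,s}$ one has $\psi_k+\delta\eta\in P_+$ and $\psi_k-\delta\eta\in P_-$, so $\mbox{dist}_{X^{1,s}}(\psi_k,P_{\pm})\leq\sigma$, while $\|\psi_k\|_{1,s}\sim\delta k\to\infty$; since $P_{\pm}$ are cones the distance functions are positively homogeneous, and rescaling along the ray (using the symmetry $w\mapsto -w(-x_1,x',y)$, which fixes $\psi_k$ and swaps $P_+$ with $P_-$) produces elements of $\Sigma$ of arbitrarily large $X^{1,s}$-norm. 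So the proposal as written only covers $2<p\leq 4-4/2^*_s$.

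The repair is to avoid interpolation altogether, which is essentially what the paper does (and spells out in the later proof that $A_{\eps}(\partial P_{\pm}^{\sigma})\subset P_{\pm}^{\sigma}$): by the trace embedding $X^{1,s}(\R^{N+1}_+)\hookrightarrow L^p(\R^N)$ of Lemma \ref{embedding}, for every $z\in P_-$ one has $\|w^+(\cdot,0)\|_{L^p(\R^N)}\leq\|w(\cdot,0)-z(\cdot,0)\|_{L^p(\R^N)}\leq C\|w-z\|_{1,s}$, hence $\|w^+(\cdot,0)\|_{L^p}=\mbox{dist}_{L^p}(w,P_-)\leq C\,\mbox{dist}_{X^{1,s}}(w,P_-)=C\sigma$ on $\partial P_-^{\sigma}$, and symmetrically $\|w^-(\cdot,0)\|_{L^p}\leq C\sigma$ on $\partial P_+^{\sigma}$. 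Then, using $|F_{\eps}(x,|t|)|\leq\frac1p|t|^p$ and $V\geq a$, on $\Sigma$ one gets $\Phi_{\eps}(w)\geq\frac12\min\{1,a\}\|w\|_{1,s}^2-\frac1p\|w(\cdot,0)\|_{L^p}^p\geq\frac12\min\{1,a\}\sigma^2-C\sigma^{p}>0$ for $\sigma$ small, because $\|w\|_{1,s}\geq\mbox{dist}_{X^{1,s}}(w,P_+)=\sigma$ and $p>2$. Note that with this route neither the splitting $\Phi_{\eps}(w)=\Phi_{\eps}(w^+)+\Phi_{\eps}(w^-)$ nor any upper bound on $\|w\|_{1,s}$ over $\Sigma$ is needed.
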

\begin{proof}
In view of the definition of $F_{\eps}$, we have that $|F_{\eps}(x, |t|)| \leq \frac 1 p |t|^{p}$ for any $ x \in \R^{N}$ and $t \in \R$.  Furthermore, according to $(V_1)$, we know that $V(x) \geq a$ for any $x \in \R^N$. Therefore, from Lemma \ref{embedding}, we obtain that
\begin{align*}
\Phi_{\eps}(w) &\geq \frac {k_s}{2} \int_{\R^{N+1}_+} y^{1-2s} |\nabla w|^2 \, dxdy + \frac {a}{2} \int_{\R^N} |w(x, 0)|^2 \, dx - \frac 1 p\int_{\R^N} |w(x, 0)|^p \, dx\\
& \geq \frac 12 \min \{1, a\}\|w\|_{1, s}^2 - C_{p, N} \|w\|_{1, s}^{p}.
\end{align*}
Due to $p>2$, then \eqref{mpv1} follows immediately, and the proof is completed.
\end{proof}

Since $0 \in \mathcal{V}$, then $B_{1}(0) \subset (\mathcal{V}^{\delta})_{\eps}$ for any $\delta>0$ and $\eps>0$ small enough.
We now introduce an auxiliary functional
\begin{align} \label{defphi0}
\hspace{-1cm}\Phi_0(w):=\frac {k_s}{2} \int_{\mathcal{C}_{B_1(0)}} y^{1-2s}
|\nabla w|^2 \, dxdy + \frac {b}{2} \int_{B_1(0)} |w(x, 0)|^2 \,
dx - \frac {1} {p} \int_{B_1(0)} |w(x, 0)|^p \, dx,
\end{align}
where $\mathcal{C}_{B_1{(0)}}:=B_1{(0) \times (0, \infty)} \subset \R^{N+1}_+$. Let $X^{1, s}(\mathcal{C}_{B_1(0)})$ be the completion of $C^{\infty}_{0}(\overline{\mathcal{C}_{B_1(0)}})$ under the norm
$$
\|w\|^2_{X^{1,
s}(\mathcal{C}_{B_1(0)})}:=k_s\int_{\mathcal{C}_{B_1(0)}} y^{1-2s}
|\nabla w|^2 \, dxdy + \int_{B_1(0)} |w(x, 0)|^2 \, dx.
$$
Let $\{e_n\} \subset X^{1, s}(\mathcal{C}_{B_1(0)})$ be an orthonormal basis to $X^{1, s}(\mathcal{C}_{B_1(0)})$ and $E_n:=\mbox{span}\{e_1, \dots, e_n\}$. Without restriction, we shall assume that $e_1>0$ in $\mathcal{C}_{B_1(0)}$. Notice that $p>2$, then it is easy to see that there exists a positive increasing sequence $\{R_n\}\subset \R^+$ such that
\begin{align}\label{cmvnbjjgug77tyruur1}
\Phi_0(w) <0 \quad \mbox{for any} \,\, w \in E_n, \|w\|_{1, s} \geq R_n.
\end{align}
For $n \in \mathbb{N}$, we are now able to define $\varphi_n \in C(B_n, X^{1, s}(\mathcal{C}_{B_1(0)}))$ by
\begin{align} \label{defvarphi}
\varphi_n(x):=R_n \sum_{i=1}^n x_i e_i.
\end{align}
Let $W:=P_+^{\sigma} \cup P_-^{\sigma}$ and $M:=P_+^{\sigma} \cap P_-^{\sigma}$. Applying Lemma \ref{infenergy1}, we then get that $\varphi_n$ satisfies the conditions $(\textnormal{i})$-$(\textnormal{iii})$ in Theorem \ref{exist}.

For $j \in \mathbb {N}$, we define
$$
c_{j, \eps}:=\left\{
\begin{array}
[c]{ll} \displaystyle\inf_{B\in \Lambda_j}\sup_{u\in B\setminus
W} \Phi_{\eps}(u),& \mbox{if} \ j\geq 2,\\
\displaystyle\inf_{B\in \Lambda_1}\sup_{u\in
B} \Phi_{\eps}(u), & \mbox{if} \ j= 1,
\end{array}
\right.
$$
and
$$
\widetilde{c}_j:=\left\{
\begin{array}
[c]{ll} \displaystyle\inf_{B \in \widetilde{\Lambda}_j} \sup_{u \in B \backslash W}
 \Phi_0(u),& \mbox{if} \ j\geq 2,\\
\displaystyle\inf_{B\in \widetilde{\Lambda}_1}\sup_{u\in
B} \Phi_0(u), & \mbox{if} \ j= 1,
\end{array}
\right.
$$
where
\begin{align} \label{defLa1}
\begin{split}
&\hspace{-1cm}\Lambda_j:=\left\{B \subset X^{1, s}(\R^{N+1}_+) \ |\ B=\varphi (B_n \backslash Y) \,\, \mbox{for some} \,\, \varphi \in G_n, n \geq j \,\, \mbox{and} \right.\\
& \hspace{-1cm}\hspace{2.5cm} \left. \quad \hspace{1.5cm} Y \subset B_n  \,\, \mbox{is a symmetric open set with }\,\,  \gamma(\overline{Y}) \leq n-j \right \}, \\
&\hspace{-1cm}G_n:=\{\varphi \in C(B_n, X^{1, s}(\R^{N+1}_+)) \ |\ \varphi(-x)=-\varphi(x) \, \,
\mbox{for any} \,\, x \in B_n, \varphi_{\mid_{\partial B_n}}={\varphi_n}_{|_{\partial B_n}}\}
\end{split}
\end{align}
and
\begin{align} \label{defLa2}
\begin{split}
&\hspace{-1cm}\widetilde{\Lambda}_j:=\left\{B \subset X^{1, s}(\mathcal{C}_{B_1(0)} ) \ |\ B=\varphi (B_n \backslash Y) \,\, \mbox{for some} \,\, \varphi \in \tilde{G}_n, n \geq j \,\, \mbox{and} \right.\\
&\hspace{-1cm} \hspace{2.5cm} \left. \quad \hspace{1.5cm}  Y \subset B_n  \,\, \mbox{is a symmetric open set with }\,\,  \gamma(\overline{Y}) \leq n-j \right \}, \\
&\hspace{-1cm}\widetilde{G}_n:=\{\varphi \in C(B_n, X^{1, s}(\mathcal{C}_{B_1(0)})) \ |\ \varphi(-x)=-\varphi(x) \, \,
\mbox{for any} \,\, x \in B_n, \varphi_{\mid_{\partial
B_n}}={\varphi_n}_{|_{\partial B_n}}\}.
\end{split}
\end{align}
According to the definitions of $c_{j,\eps}$ and $\tilde{c}_j$, it is not hard to deduce that
\begin{align} \label{cenergy11}
c_{1, \eps}>0, \quad 0< c_{2,\eps} \leq  \dots \leq c_{j, \eps} \leq \cdots
\end{align}
and
\begin{align} \label{cenergy12}
\widetilde{c}_1>0, \quad 0<\widetilde{c}_2\leq  \cdots \leq \widetilde{c}_j \leq \cdots.
\end{align}
Since $V(x) \leq b$ for any $x \in \R$, then $\Phi_{\eps}(w) \leq \Phi_{0}(w)$ for any $w \in X^{1, s}(\mathcal{C}_{B_1(0)})$. In addition, there holds that $\tilde{\Lambda}_j \subset \Lambda_j$ for any $j \in \mathbb{N}$. Hence, for any $\eps>0$ small enough, we obtain that
\begin{align} \label{cenergy21}
0<c_{j, \eps} \leq \widetilde{c}_j \quad \mbox{for any} \,\, j \geq 1.
\end{align}

Next we are going to prove that $P_+^{\sigma}$ is an admissible invariant set with respect to $\Phi_{\eps}$ at level $c$ for $c \geq c^*$, where $c^*>0$ is defined by Lemma \ref{infenergy1}. To this end, we need to introduce an operator $A_{\eps}$ on $X^{1, s}(\R^{N+1}_+)$. For $z \in X^{1, s}(\R^{N+1}_+)$, we define $w:=A_{\eps}(z) \in X^{1, s}(\R^{N+1}_+)$ by the unique solution to the equation
\begin{align*}
\left\{
\begin{aligned}
-\mbox{div}(y^{1-2s} \nabla w)&=0 \quad \hspace{3.5cm}\mbox{in} \,\, \R^{N+1}_+,\\
-k_s \frac{\partial w}{\partial {\nu}}&= -V_{\eps}(x) w+f_{\eps}(x, |z|) z \qquad \mbox{on} \,\, \R^N \times \{0\}.
\end{aligned}
\right.
\end{align*}

\begin{lem} \label{conti1}
The operator $A_{\eps}$ is well-defined and continuous on $X^{1,s}(\R^{N+1}_+)$.
\end{lem}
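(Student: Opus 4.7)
The plan is to realize $A_\eps$ via the Lax--Milgram theorem on the Hilbert space $X^{1,s}(\R^{N+1}_+)$ and then read continuity off from the weak formulation. Given $z \in X^{1,s}(\R^{N+1}_+)$, define the bilinear form
$$
B(w, \psi) := k_s \int_{\R^{N+1}_+} y^{1-2s} \nabla w \cdot \nabla \psi \, dxdy + \int_{\R^N} V_\eps(x) w(x,0) \psi(x,0) \, dx
$$
and the linear form $L_z(\psi) := \int_{\R^N} f_\eps(x, |z(x,0)|) z(x,0) \psi(x,0) \, dx$. Then $A_\eps(z)$ is the unique $w \in X^{1,s}(\R^{N+1}_+)$ satisfying $B(w, \psi) = L_z(\psi)$ for every test function $\psi \in X^{1,s}(\R^{N+1}_+)$. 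By $(V_1)$, the two-sided bound $a \leq V \leq b$ makes $B$ continuous and coercive with $B(w, w) \geq \min\{1, a\} \|w\|_{1, s}^2$.

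To apply Lax--Milgram I next verify that $L_z$ is a bounded linear functional. For fixed $\eps > 0$, $\mbox{supp}\,(1 - \chi(\eps \cdot))$ is contained in the bounded set $\{x \in \R^N : \mbox{dist}(\eps x, \Lambda) \leq \delta_0\}$, while on its complement $f_\eps(x, t) = g(t)$ with $0 \leq g(t) \leq a/4$. On the bounded piece $|f_\eps(x, |z(x,0)|) z(x,0)| \leq |z(x,0)|^{p-1}$, so H\"older combined with the embedding $X^{1, s}(\R^{N+1}_+) \hookrightarrow L^p(\R^N)$ from Lemma \ref{embedding} (valid since $2 < p < 2^*_s$) bounds the contribution by $C \|z\|_{1, s}^{p-1} \|\psi\|_{1, s}$; on the complement $|g(|z(x,0)|) z(x,0)| \leq (a/4) |z(x,0)|$ and the embedding into $L^2(\R^N)$ produces a bound $C \|z\|_{1, s} \|\psi\|_{1, s}$. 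Lax--Milgram then yields the unique $w = A_\eps(z)$.

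For continuity, let $z_n \to z$ in $X^{1, s}(\R^{N+1}_+)$ and set $w_n := A_\eps(z_n)$, $w := A_\eps(z)$. Testing $B(w_n - w, \psi) = L_{z_n}(\psi) - L_z(\psi)$ with $\psi = w_n - w$ and using coercivity yields
$$
\min\{1, a\} \|w_n - w\|_{1, s}^2 \leq \int_{\R^N} \bigl( f_\eps(x, |z_n(x,0)|) z_n(x,0) - f_\eps(x, |z(x,0)|) z(x,0) \bigr) (w_n - w)(x,0) \, dx,
$$
and I split the right-hand side once more by $\mbox{supp}\,(1 - \chi(\eps \cdot))$ and its complement. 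On the bounded support piece, $z_n(\cdot, 0) \to z(\cdot, 0)$ in $L^p(\R^N)$ by Lemma \ref{embedding}, so the Nemytski operator $u \mapsto (1-\chi(\eps x)) |u|^{p-2} u$ is continuous from $L^p(\R^N)$ to $L^{p/(p-1)}(\R^N)$, and H\"older delivers a bound $o_n(1) \|w_n - w\|_{1, s}$. On the complement, $u \mapsto g(|u|) u$ is globally Lipschitz on $\R$ (since $g \in C^1(\R^+, \R)$ with uniformly bounded derivative $\phi$ and becomes constant for large arguments), so combined with $L^2$-convergence of $z_n(\cdot, 0)$ H\"older produces a second $o_n(1) \|w_n - w\|_{1, s}$. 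Dividing by $\|w_n - w\|_{1, s}$ gives the desired strong convergence; the only mildly delicate point is the global Lipschitz estimate on the unbounded complement, and this is built directly into the construction of $\phi$ and $g$.
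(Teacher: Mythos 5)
Your proof is correct and, at its core, is the same as the paper's: the paper also subtracts the two weak formulations, tests with $w_n-w$, splits $f_\eps$ into the $(1-\chi(\eps x))|t|^{p-2}t$ part and the $\chi(\eps x)g(|t|)t$ part, and closes with H\"older's inequality, the mean value theorem and Lemma \ref{embedding}. Your version is a mild streamlining: you make well-definedness explicit via Lax--Milgram (the paper merely asserts it), and by keeping the factor $\|w_n-w\|_{1,s}$ on the right-hand side and dividing, you avoid the paper's preliminary step of proving boundedness of $\{w_n\}$ and extracting a weak limit. One justification needs repair: for $2<p<3$ the derivative $\phi(t)=(p-2)t^{p-3}$ is unbounded as $t\to 0^+$, so it is not true that $g$ has a uniformly bounded derivative (indeed $g(t)=t^{p-2}$ near $0$ is only H\"older continuous, which is precisely why the paper proves Lemma \ref{2qcnvbhgy77f7f6dd} with exponent $\sigma=\min\{p-2,1\}$); nevertheless the map you actually use, $u\mapsto g(|u|)u$, is genuinely globally Lipschitz, because $\frac{d}{dt}\bigl(g(t)t\bigr)=\phi(t)t+g(t)$, and $\phi(t)t=(p-2)t^{p-2}$ stays bounded near $0$ while $g\leq a/4$ and $\phi$ vanishes for large $t$. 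With that one-line correction your argument is complete.
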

\begin{proof}
By the definition of $A_{\eps}$, it is not difficult to deduce that $A_{\eps}$ is well-defined on $X^{1,s}(\R^{N+1}_+)$. We now show that $A_{\eps}$ is continuous on $X^{1,s}(\R^{N+1}_+)$.  Supposing $z_n \to z$ in $X^{1, s}(\R^{N+1}_+)$ as $n \to \infty$, we shall prove that $A_{\eps}(z_n) \to A_{\eps}(z)$ in $X^{1, s}(\R^{N+1}_+)$ as $n \to \infty$. For simplicity, let us define $w_n:=A_{\eps}(z_n)$ and $w:=A_{\eps}(z)$. Observe that
$$
k_s\int_{\R^{N+1}_+} y^{1-2s} |\nabla w_n|^2 \, dxdy + \int_{\R^N} V_{\eps}(x)|w_n(x, 0)|^2 \, dx =\int_{\R^N} f_{\eps}(x, |z_n(x, 0)|) z_n(x, 0) w_n(x, 0) \, dx.
$$
In view of the definition of $f_{\eps}$, we find that $ |f_{\eps}(x, |t|)| \leq |t|^{p-2}$ for any $x \in \R^N$ and $t \in \R$. Using H\"older's inequality and Lemma \ref{embedding}, we then have that
\begin{align*}
\left|\int_{\R^N} f_{\eps}(x, |z_n(x, 0)|) z_n(x, 0) w_n(x, 0) \, dx \right| &\leq C \|z_n(x, 0)\|_{L^p(\R^N)}^{p-1}\|w_n(x, 0)\|_{L^p(\R^N)} \\
&\leq C \|z_n\|^{p-1}_{1, s}\|w_n\|_{1, s}.
\end{align*}
Therefore, $\{w_n\}$ is bounded in $X^{1, s}(\R^{N+1}_+)$. This suggests that there exists $w \in X^{1, s}(\R^{N+1}_+)$ such that $w_n \wto w$ in $X^{1, s}(\R^{N+1}_+)$ as $n \to \infty$.
Note that
\begin{align} \label{Aeps}
\begin{split}
&k_s\int_{\R^{N+1}_+} y^{1-2s}  |\nabla w_n-\nabla w|^2 \, dxdy + \int_{\R^N} V_{\eps}(x) |w_n(x, 0)-w(x, 0)|^2  \, dx\\
&= \int_{\R^N} \left(f_{\eps}(x, |z_n(x, 0)|) z_n(x, 0)-f(x, |z(x, 0)|) z(x, 0)\right) (w_n(x, 0)-w(x, 0))\, dx \\
&= \int_{\R^N} \left(1-\chi(\eps x) \right)\left(|z_n(x, 0)|^{p-2} z_n(x, 0)-|z(x, 0)|^{p-2} z(x, 0) \right) (w_n(x, 0)-w(x, 0)) \, dx \\
& \quad  + \int_{\R^N} \chi(\eps x) \left(g(|z_n(x, 0)|)z_n(x, 0)-g(|z(x, 0)|)z(x, 0)\right) (w_n(x, 0)-w(x, 0))\, dx.
\end{split}
\end{align}
Since $z_n \to z$ in $X^{1, s}(\R^{N+1}_+)$ as $n \to \infty$, by H\"older's inequality, the mean value theorem and Lemma \ref{embedding}, we obtain that
\begin{align*}
&\left| \int_{\R^N} \left(1-\chi(\eps x) \right)\left(|z_n(x, 0)|^{p-2} z_n(x, 0)-|z(x, 0)|^{p-2} z(x, 0) \right) (w_n(x, 0)-w(x, 0)) \, dx\right| =o_n(1)
\end{align*}
and
$$
\left|\int_{\R^N} \chi(\eps x)\left(g(|z_n(x, 0)|)z_n(x, 0)-g(|z(x, 0)|)z(x, 0)\right) (w_n(x, 0)-w(x, 0))\, dx \right| =o_n(1).
$$
It then follows from \eqref{Aeps} that $\|w_n-w\|_{1, s}=o_n(1)$. As a consequence, $A_{\eps}$ is continuous on $X^{1, s}(\R^{N+1}_+)$, and the proof is completed.
\end{proof}

\begin{lem}
For any $z \in X^{1, s}(\R^{N+1}_+)$, there holds that $\|\Phi_{\eps}'(z)\| \leq C\|z-A_{\eps}(z)\|_{1, s}$ for some $C>0$.
\end{lem}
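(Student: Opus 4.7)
The plan is to test $\Phi_{\eps}'(z)$ against an arbitrary $\psi \in X^{1,s}(\R^{N+1}_+)$ and exploit the weak formulation defining $w := A_{\eps}(z)$ to rewrite the action as a bounded symmetric bilinear form in $z - w$ and $\psi$. The whole computation is an algebraic manipulation followed by Cauchy--Schwarz, so the estimate reduces to comparing two inner products that already live inside the norm $\|\cdot\|_{1,s}$.

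First I would spell out that, by the definition of $A_{\eps}$, the function $w = A_{\eps}(z)$ is characterized as the unique element of $X^{1,s}(\R^{N+1}_+)$ satisfying
$$
k_s \int_{\R^{N+1}_+} y^{1-2s} \nabla w \cdot \nabla \psi \, dxdy + \int_{\R^N} V_{\eps}(x) w(x,0) \psi(x,0) \, dx = \int_{\R^N} f_{\eps}(x, |z(x,0)|) z(x,0) \psi(x,0) \, dx
$$
for every $\psi \in X^{1,s}(\R^{N+1}_+)$. Subtracting this identity from the formula
$$
\Phi_{\eps}'(z)\psi = k_s \int_{\R^{N+1}_+} y^{1-2s} \nabla z \cdot \nabla \psi \, dxdy + \int_{\R^N} V_{\eps}(x) z(x,0) \psi(x,0) \, dx - \int_{\R^N} f_{\eps}(x, |z(x,0)|) z(x,0) \psi(x,0) \, dx
$$
eliminates the nonlinear term and yields the clean identity
$$
\Phi_{\eps}'(z)\psi = k_s \int_{\R^{N+1}_+} y^{1-2s} \nabla (z - w) \cdot \nabla \psi \, dxdy + \int_{\R^N} V_{\eps}(x) \bigl(z(x,0) - w(x,0)\bigr) \psi(x,0) \, dx.
$$

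Now I would bound each of the two terms separately by Cauchy--Schwarz and use $V_{\eps}(x) \leq b$ from $(V_1)$ to dominate the weight in the boundary integral by a constant. Setting $C := \max\{1,b\}$, one obtains
$$
|\Phi_{\eps}'(z)\psi| \leq \|z - A_{\eps}(z)\|_{1,s} \|\psi\|_{1,s} + b \, \|(z - A_{\eps}(z))(\cdot,0)\|_{L^2(\R^N)} \|\psi(\cdot,0)\|_{L^2(\R^N)} \leq C \|z - A_{\eps}(z)\|_{1,s} \|\psi\|_{1,s}.
$$
Taking the supremum over $\psi \in X^{1,s}(\R^{N+1}_+)$ with $\|\psi\|_{1,s} = 1$ gives the claim. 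There is no genuine obstacle here: the only point worth emphasizing is that the nonlinear contributions cancel exactly by the defining property of $A_{\eps}$, so that $\Phi_{\eps}'(z)$ becomes a linear functional with respect to the (equivalent) inner product induced by $V_{\eps}$, and this is what makes the bound hold with a constant $C$ independent of $z$ and $\eps$ (other than through the upper bound $b$ for $V$).
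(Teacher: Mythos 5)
Your proposal is correct and matches the paper's own argument: both use the weak formulation defining $A_{\eps}(z)$ to cancel the nonlinear term, rewrite $\Phi_{\eps}'(z)\psi$ as the bilinear pairing of $z-A_{\eps}(z)$ with $\psi$, and conclude by Cauchy--Schwarz (H\"older) together with the bound $V \leq b$. No gaps.
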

\begin{proof}
For any $\psi \in X^{1, s}(\R^{N+1}_+)$, from the definition of $A_{\eps}$, we derive that
\begin{align*}
\Phi_{\eps}'(z) \psi &=k_s\int_{\R^{N+1}_+} y^{1-2s} \nabla z \cdot \nabla \psi \, dxdy + \int_{\R^N} V_{\eps}(x) z(x, 0) \psi(x, 0) \, dx \\
& \quad - \int_{\R^N} f_{\eps}(x, |z(x, 0)|) z(x, 0) \psi(x, 0) \, dx \\
&=k_s\int_{\R^{N+1}_+} y^{1-2s} \left(\nabla z-\nabla A_{\eps}(z) \right) \cdot \nabla \psi \, dxdy \\
& \quad + \int_{\R^N} V_{\eps}(x) \left(z(x, 0)- A_{\eps}(z)(x, 0)\right)\psi(x, 0) \, dx.
\end{align*}
By H\"older's inequality, the lemma then follows immediately, and the proof is completed.
\end{proof}

\begin{lem}
There exists a constant $\sigma_0>0$ such that, for any $0<\sigma < \sigma_0$,
$$
A_{\eps}(\partial P_-^{\sigma}) \subset P_-^{\sigma}, \quad
A_{\eps}(\partial P_+^{\sigma}) \subset P_+^{\sigma}.
$$
\end{lem}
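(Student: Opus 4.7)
The oddness of the nonlinearity $t\mapsto f_\eps(x,|t|)t$ yields $A_\eps(-z)=-A_\eps(z)$, so the inclusions $A_\eps(\partial P_\pm^\sigma)\subset P_\pm^\sigma$ are equivalent and I treat only $P_+^\sigma$. Fix $z\in\partial P_+^\sigma$ and set $w:=A_\eps(z)$. Since $w^+\in P_+$ with $\|w-w^+\|_{1,s}=\|w^-\|_{1,s}$, we have $\mathrm{dist}_{X^{1,s}}(w,P_+)\leq\|w^-\|_{1,s}$, so the goal reduces to proving $\|w^-\|_{1,s}<\sigma$.

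Insert $w^-$ as a test function in the defining equation of $w=A_\eps(z)$. Using the disjoint-support identities $\nabla w\cdot\nabla w^-=|\nabla w^-|^2$ and $w(x,0)w^-(x,0)=|w^-(x,0)|^2$ a.e., together with $V_\eps\geq a$, the left-hand side is bounded below by $\min\{1,a\}\|w^-\|_{1,s}^2$. Splitting $z=z^++z^-$, the $z^+$ contribution on the right is non-positive ($f_\eps\geq 0$, $z^+\geq 0$, $w^-\leq 0$), so
\begin{align*}
\min\{1,a\}\,\|w^-\|_{1,s}^2\leq\int_{\R^N}f_\eps(x,|z^-|)|z^-||w^-|\,dx.
\end{align*}
Invoking the construction-level bound $f_\eps(x,t)\leq t^{p-2}+a/4$, H\"older's inequality, and the trace embeddings $X^{1,s}\hookrightarrow L^p\cap L^2$ from Lemma \ref{embedding}, I obtain
\begin{align*}
\|w^-\|_{1,s}\leq C\,\|z^-\|_{1,s}^{p-1}+\tfrac{1}{4}\|z^-\|_{1,s}.
\end{align*}

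To convert this into an estimate in $\sigma$, I establish the comparison $\|z^-\|_{1,s}\leq C_0\,\mathrm{dist}_{X^{1,s}}(z,P_+)$. The orthogonality $\|z\|_{1,s}^2=\|z^+\|_{1,s}^2+\|z^-\|_{1,s}^2$ (from disjoint supports of gradients and traces) combined with the pointwise inequality $|z^-|\leq|(z-v)^-|$ valid for every $v\in P_+$ (since $v\geq 0$ forces $z-v\leq z$) bounds the trace part of $\|z^-\|_{1,s}$ by $\mathrm{dist}(z,P_+)$, and a careful analysis of the weighted gradient cross-term yields the corresponding bound for the $y^{1-2s}|\nabla z^-|^2$ piece. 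Hence $\|z^-\|_{1,s}\leq C_0\sigma$, and the displayed inequality becomes $\|w^-\|_{1,s}\leq CC_0^{p-1}\sigma^{p-1}+(C_0/4)\sigma$. Since $p>2$, choosing $\sigma_0>0$ so small that $CC_0^{p-1}\sigma_0^{p-2}+C_0/4<1$ gives $\|w^-\|_{1,s}<\sigma$ for every $0<\sigma<\sigma_0$, which finishes the proof.

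The main obstacle is the comparison $\|z^-\|_{1,s}\leq C_0\,\mathrm{dist}_{X^{1,s}}(z,P_+)$. In the $L^2$ setting it is an equality with $C_0=1$ because the metric projection onto the positive cone is the pointwise positive part, but in $X^{1,s}(\R^{N+1}_+)$ with its weighted gradient term the projection need not coincide with $z^+$ and the inner products $\langle z^-,v\rangle_{1,s}$ for $v\in P_+$ include cross terms $\int_{\R^{N+1}_+}y^{1-2s}\nabla z^-\cdot\nabla v\,dxdy$ of indefinite sign. Controlling these cross terms uniformly, via the weak formulation of the $s$-harmonic extension, is the delicate point; once it is in hand the remaining arguments are purely quantitative and rely only on $V_\eps\geq a$ and the designed growth of $f_\eps$.
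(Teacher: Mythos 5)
Your first half (reducing to $\|w^-\|_{1,s}$, testing the defining equation of $w=A_\eps(z)$ with the signed part, using $V_\eps\ge a$ and discarding the contribution of the good-signed part of $z$) matches the paper's argument. The genuine gap is the step you yourself flag as the main obstacle: the comparison $\|z^-\|_{1,s}\le C_0\,\mathrm{dist}_{X^{1,s}}(z,P_+)$. This inequality is false in $X^{1,s}(\R^{N+1}_+)$, and no "careful analysis of the cross terms" can rescue it, because a gradient-type norm of the negative part is simply not controlled by the distance to the cone. Concretely, take $z_n(x,y)=n^{-1}\sin(n^2x_1)\,\eta(x,y)$ with $\eta$ a fixed smooth cutoff: then $\|z_n^-\|_{1,s}\sim n$ through the weighted gradient term, while $v_n:=z_n+n^{-1}\eta\in P_+$ (since $|z_n|\le n^{-1}\eta$) gives $\mathrm{dist}_{X^{1,s}}(z_n,P_+)\le n^{-1}\|\eta\|_{1,s}\to0$. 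Rescaling such functions to lie on $\partial P_+^{\sigma}$ shows that along $\partial P_+^{\sigma}$ the quantity $\|z^-\|_{1,s}$ can be arbitrarily large, so no constant $C_0$ exists; a fortiori you cannot arrange the extra requirement $C_0<4$ that your final absorption of the linear term demands (that linear term itself comes from using the bound $f_\eps(x,t)\le t^{p-2}+a/4$ instead of the sharper $f_\eps(x,|t|)\le |t|^{p-2}$ available from the construction of $g$).

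The paper's proof avoids this comparison entirely by measuring the bad part of $z$ only in the trace $L^p$ norm, where the metric projection onto the cone \emph{is} pointwise truncation. Testing with $w^+$ (in the $P_-^\sigma$ version) gives $\mathrm{dist}_{X^{1,s}}(w,P_-)\,\|w^+\|_{1,s}\le\|w^+\|_{1,s}^2\le C\int_{\R^N}f_\eps(x,|z|)zw^+\,dx\le C\|z^+\|_{L^p(\R^N)}^{p-1}\|w^+\|_{L^p(\R^N)}$, and then $\|z^+\|_{L^p(\R^N)}=\mathrm{dist}_{L^p}(z,P_-)\le C\,\mathrm{dist}_{X^{1,s}}(z,P_-)\le C\sigma$, using that $(z-v)^+\ge z^+$ for every $v\in P_-$ together with the trace embedding of Lemma \ref{embedding}. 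Combining and cancelling $\|w^+\|_{1,s}$ yields $\mathrm{dist}_{X^{1,s}}(w,P_-)\le C\sigma^{p-1}<\sigma$ for $\sigma$ small, since $p>2$. If you replace your $X^{1,s}$-comparison by this $L^p$-distance device, the rest of your computation goes through.
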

\begin{proof}
For simplicity, we only prove that $A_{\eps}(\partial
P_-^{\sigma}) \subset P_-^{\sigma}$. By adapting a similar way,
one can easily show that $A_{\eps}(\partial P_+^{\sigma}) \subset
P_+^{\sigma}$. For any $z \in \partial P_-^{\sigma}$, we shall
prove that $w=A_{\eps}(z) \in P_-^{\sigma}$ for any $\sigma>0$
small enough. Note that $V(x) \geq a$ for any $x \in \R^N$ and $|f_{\eps}(x, |t|)| \leq |t|^{p-2}$ for any $x \in \R^N$ and $t \in \R$, then
\begin{align*}
\mbox{dist}_{X^{1, s}}(w, P^-) \|w^+\|_{1, s}  &\leq \|w^+\|_{1, s}^2 \\
& \leq C \left(k_s\int_{\R^{N+1}} y^{1-2s} \nabla w \cdot \nabla w^+ \, dxdy + \int_{\R^N} V_{\eps}(x) w(x, 0) w^+(x, 0) \, dx \right) \\
& = C\int_{\R^N} f_{\eps}(x, |z(x, 0)|) z(x, 0) w^+(x, 0) \, dx \\
& \leq C\|z^+\|_{L^p(\R^N)}^{p-1} \|w^+\|_{L^p(\R^N)}
=C\left(\mbox{dist}_{L^p}(z, P_-)\right)^{p-1}
\|w^+\|_{L^p(\R^N)},
\end{align*}
where $w^+:=\max\{w, 0\}$, $C:=(\min\{1,a\})^{-1}$ and we used H\"older's inequality and the fact that
$$
\mbox{dist}_{L^q}(z, P_-):=\inf_{v \in
P_-}\|z-v\|_{L^p(\R^N)}=\|z^+\|_{L^q(\R^N)} \quad \mbox{for any} \,\, 2 \leq q \leq 2^*_s.
$$
As a result of Lemma \ref{embedding}, we then have that $\mbox{dist}_{X^{1,
s}}(w, P_-) \leq C \sigma^{p-1}$, from which the lemma follows, and the proof is completed.
\end{proof}

Note that we can only prove that $A_{\eps}$ is continuous on $X^{1, s}(\R^{N+1}_+)$, see Lemma \ref{conti1}. We now introduce a locally Lipschitz perturbation of $A_{\eps}$.

\begin{lem}
Let $K$ be the set of fixed points of $A_{\eps}$, then there exists a locally Lipschitz continuous operator $B_{\eps}: X^{1, s}(\R^{N+1}_+) \backslash K  \to X^{1, s}(\R^{N+1}_+)$ such that
\begin{enumerate}
\item[$(\textnormal{i})$]  $B_{\eps}(\partial P_-^{\sigma})
\subset P_-^{\sigma}$, $B_{\eps}(\partial P_+^{\sigma}) \subset
P_+^{\sigma}$ for any $0<\sigma<\sigma_0$;
\item[$(\textnormal{ii})$] $\frac 12 \|z-B_{\eps}(z)\|_{1,s} \leq
\|z-A_{\eps}(z)\|_{1,s} \leq 2  \|z-B_{\eps}(z)\|_{1,s} $ for any
$z \in X^{1, s}(\R^{N+1}_+) \backslash K$;
\item[$(\textnormal{iii})$] $\langle \Phi_{\eps}(z), z-B_{\eps}(z)
\rangle \geq \frac 12 \|z-A_{\eps}(z)\|^2_{1, s} $ for any $z \in
X^{1, s}(\R^{N+1}_+) \backslash K$; \item[$(\textnormal{iv})$]
$B_{\eps}$ is odd.
\end{enumerate}
\end{lem}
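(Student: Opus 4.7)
The plan is to produce $B_{\eps}$ as a locally Lipschitz partition-of-unity approximation of $A_{\eps}$, which is the standard Bartsch--Wang type procedure used in \cite{ChWa} for the local operator case; once the earlier lemmas are in hand the argument transfers almost verbatim. First, for every $z \in X^{1,s}(\R^{N+1}_+) \setminus K$, use the continuity of $A_{\eps}$ from Lemma \ref{conti1} to choose an open ball $B_{r_z}(z) \subset X^{1,s}(\R^{N+1}_+) \setminus K$ on which $\|A_{\eps}(z') - A_{\eps}(z)\|_{1,s} < \eta\|z - A_{\eps}(z)\|_{1,s}$ for all $z' \in B_{r_z}(z)$, where $\eta>0$ is a small absolute constant to be fixed at the end. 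The bound $\mathrm{dist}_{X^{1,s}}(A_{\eps}(w), P_\pm) \leq C\sigma^{p-1}$ from the preceding lemma and $p>2$ give, for $\sigma<\sigma_0$ small, the strict interior inclusion $A_{\eps}(\partial P_\pm^\sigma) \subset \mathring{P}_\pm^\sigma$, so by continuity one may shrink $r_z$ further so that whenever $B_{r_z}(z)$ meets $\partial P_+^\sigma$ (resp.\ $\partial P_-^\sigma$) one has $A_{\eps}(B_{r_z}(z)) \subset P_+^\sigma$ (resp.\ $P_-^\sigma$).

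Next, because the nonlinearity $t\mapsto f_{\eps}(x,|t|)t$ is odd, $A_{\eps}$ is odd and $K=-K$, so the cover $\{B_{r_z}(z)\}$ can be arranged symmetrically under $z\mapsto -z$. I would invoke paracompactness of the open set $X^{1,s}(\R^{N+1}_+)\setminus K$ to extract a locally finite symmetric refinement $\{V_\alpha\}_{\alpha\in A}$, pair it as $A=I\sqcup(-I)$ with $V_{-\alpha}=-V_\alpha$, pick $z_\alpha\in V_\alpha$ with $z_{-\alpha}=-z_\alpha$, and construct a locally Lipschitz partition of unity $\{\rho_\alpha\}$ subordinate to $\{V_\alpha\}$ satisfying $\rho_{-\alpha}(-z)=\rho_\alpha(z)$. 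Then set
$$B_{\eps}(z) := \sum_{\alpha\in A} \rho_\alpha(z)\, A_{\eps}(z_\alpha),$$
which is locally Lipschitz on $X^{1,s}(\R^{N+1}_+)\setminus K$ by local finiteness.

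Verification of the four properties then proceeds as follows. For (iv), the identity $\rho_{-\alpha}(-z)=\rho_\alpha(z)$ together with $A_{\eps}(-z_\alpha)=-A_{\eps}(z_\alpha)$ yields $B_{\eps}(-z)=-B_{\eps}(z)$ after reindexing. For (i), given $z\in\partial P_+^\sigma$, every $V_\alpha$ containing $z$ sits inside some $B_{r_{\tilde z}}(\tilde z)$ that meets $\partial P_+^\sigma$, so $A_{\eps}(z_\alpha)\in P_+^\sigma$ by Step 1; since $P_+^\sigma$ is convex (as a norm-neighborhood of the convex cone $P_+$), the convex combination $B_{\eps}(z)$ stays in $P_+^\sigma$, and symmetrically for $P_-^\sigma$. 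For (ii), summing the pointwise inequality $\|A_{\eps}(z_\alpha)-A_{\eps}(z)\|_{1,s}\leq \eta\|z-A_{\eps}(z)\|_{1,s}$ against the weights $\rho_\alpha(z)$ gives $\|B_{\eps}(z)-A_{\eps}(z)\|_{1,s}\leq \eta\|z-A_{\eps}(z)\|_{1,s}$, and taking $\eta=\tfrac12$ yields the two-sided bound. For (iii), testing the defining PDE for $w=A_{\eps}(z)$ against $z-w$ and using $V_{\eps}\geq a$ gives
$$\Phi_{\eps}'(z)(z-A_{\eps}(z)) \geq \min\{1,a\}\,\|z-A_{\eps}(z)\|_{1,s}^2,$$
so combining with the previous lemma's bound $\|\Phi_{\eps}'(z)\|\leq C\|z-A_{\eps}(z)\|_{1,s}$ and the estimate from (ii) produces, for $\eta$ small enough, the desired lower bound $\Phi_{\eps}'(z)(z-B_{\eps}(z))\geq\tfrac12\|z-A_{\eps}(z)\|_{1,s}^2$.

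The main obstacle is balancing four competing demands simultaneously: oddness forces a symmetric partition; cone preservation exploits the strict interior inclusion $A_{\eps}(\partial P_\pm^\sigma)\subset \mathring P_\pm^\sigma$ produced by $p>2$ and forces very local choices of $r_z$; while (ii) and (iii) together pin down how small the fraction $\eta$ must be in terms of the coercivity constant $\min\{1,a\}$ and the constant appearing in the gradient bound of the previous lemma. Once $r_z$ and $\eta$ are tuned to meet all three constraints at once, the construction and verification are routine, and crucially $B_{\eps}$ is locally Lipschitz, enabling the ODE-based deformation needed to make $P_\pm^\sigma$ admissible invariant sets for $\Phi_{\eps}$.
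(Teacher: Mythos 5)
Your construction is correct and is essentially the paper's own route: the paper omits the proof and simply cites \cite[Lemma 4.1]{BL} and \cite[Lemma 2.1]{BLW}, which is exactly the symmetric locally Lipschitz partition-of-unity approximation $B_{\eps}(z)=\sum_\alpha \rho_\alpha(z)A_{\eps}(z_\alpha)$ that you carry out, with the same use of convexity of $P_\pm^{\sigma}$ for (i) and of $\Phi_{\eps}'(z)(z-A_{\eps}(z))=k_s\int y^{1-2s}|\nabla(z-A_{\eps}(z))|^2+\int V_{\eps}|z-A_{\eps}(z)|^2$ for (iii). The only point worth tightening is the constant in (iii): your estimate yields $(\min\{1,a\}-C\eta)\|z-A_{\eps}(z)\|_{1,s}^2$, so to recover the stated factor $\tfrac12$ one should (as in \cite{BL}) work with the equivalent $V_{\eps}$-weighted inner product for which $\Phi_{\eps}'(z)(z-A_{\eps}(z))$ equals the squared norm exactly, the two norms being equivalent since $a\le V\le b$.
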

\begin{proof}
The proof of this lemma is similar to the ones of \cite[Lemma 4.1]{BL} and \cite[Lemma 2.1]{BLW}, then we omit its proof.
\end{proof}

At this point, using the same arguments as the proof of \cite[Lemma 3.6]{LWZ}, we are able to derive the following result.

\begin{lem} \label{deformation}
Let $\mathcal{N}$ be a symmetric closed neighborhood of $K_c$, then there exists a constant $\tau_0>0$ such that, for $0<\tau<\bar{\tau} <\tau_0$, there is a continuous map $\zeta: [0, 1] \times X^{1, s}(\R^{N+1}_+) \to X^{1, s}(\R^{N+1}_+)$ satisfying
\begin{enumerate}
\item[$(\textnormal{i})$] $\zeta(0, w)=w$ for any $w \in X^{1.
s}(\R^{N+1}_+)$; \item[$(\textnormal{ii})$] $\zeta(t, w)=w$ for
any $t \in [0, 1]$ and $\Phi_{\eps}(w) \not\in [c-\bar{\tau}, c+
\bar{\tau}]$; \item[$(\textnormal{iii})$] $\zeta(t, -w)=-\zeta(t,
w)$ for any $t \in [0, 1]$  and $u \in X^{1, s}(\R^{N+1}_+)$;
\item[$(\textnormal{iv})$] $\zeta (1, \Phi_{\eps}^{c+
\tau}\setminus \mathcal{N}) \subset \Phi_{\eps}^{c-\tau}$;
\item[$(\textnormal{v})$] $\zeta(t,
\partial P_+^{\sigma}) \subset P_+^{\sigma}$,  $\zeta(t,
\partial P_-^{\sigma}) \subset P_-^{\sigma}$,  $\zeta(t,
P_+^{\sigma}) \subset P_+^{\sigma}$ and $\zeta(t, P_-^{\sigma})
\subset P_-^{\sigma}$ for any $t\in[0,1]$.
\end{enumerate}
\end{lem}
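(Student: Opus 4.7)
The plan is to build $\zeta$ as the time-$T$ map of a truncated, normalized flow generated by the locally Lipschitz pseudo-gradient vector field $w \mapsto w - B_\eps(w)$, following the invariant-set deformation scheme of \cite[Lemma 3.6]{LWZ}. Properties (i)--(iv) follow from standard flow arguments; the non-trivial issue is the positive invariance (v) of $P_\pm^{\sigma}$, which is exactly what motivated the construction of $B_\eps$ in the preceding lemma.

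The first step is a quantitative ``gradient'' lower bound. Combining the Palais-Smale condition from Lemma \ref{ps1} with the bound $\|\Phi_\eps'(z)\|\leq C\|z-A_\eps(z)\|_{1,s}$, I would establish that there exist $\tau_0>0$ and $\alpha>0$ such that
$$
\|z-A_\eps(z)\|_{1,s}\geq \alpha \quad \text{for every } z \in \Phi_\eps^{-1}([c-2\tau_0,c+2\tau_0])\setminus \mathcal{N}.
$$
Arguing by contradiction, failure would produce $\{z_n\}$ with $\Phi_\eps(z_n)\to c$, $z_n\notin\mathcal{N}$, and $\Phi_\eps'(z_n)\to 0$; the Palais-Smale condition then yields $z_n\to z\in K_c$, contradicting the fact that $\mathcal{N}$ is a neighborhood of $K_c$. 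Via property (ii) of $B_\eps$ this also gives a uniform lower bound on $\|z-B_\eps(z)\|_{1,s}$ on the same set.

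Next, choose a locally Lipschitz function $\chi_1$ on $X^{1,s}(\R^{N+1}_+)$ with $\chi_1=1$ off $\mathcal{N}$ and $\chi_1=0$ on a smaller symmetric neighborhood of $K_c$, and a Lipschitz cut-off $\chi_2:\R\to[0,1]$ with $\chi_2=1$ on $[c-\tau,c+\tau]$ and $\chi_2=0$ off $[c-\bar\tau,c+\bar\tau]$. Consider the Cauchy problem
$$
\frac{d\eta}{dt}=-\chi_1(\eta)\,\chi_2(\Phi_\eps(\eta))\,\frac{\eta-B_\eps(\eta)}{1+\|\eta-B_\eps(\eta)\|_{1,s}}, \quad \eta(0,w)=w,
$$
extended by zero on $K$. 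Since $B_\eps$ is locally Lipschitz off $K$ and the right-hand side vanishes near $K$ through $\chi_1$, this ODE admits a unique, globally defined continuous flow $\eta:[0,\infty)\times X^{1,s}(\R^{N+1}_+)\to X^{1,s}(\R^{N+1}_+)$, odd in $w$ because $B_\eps$ is odd and $\Phi_\eps$ is even. Setting $\zeta(t,w):=\eta(Tt,w)$ with $T>0$ chosen so that the descent inequality
$$
\frac{d}{dt}\Phi_\eps(\eta) \leq -\chi_1\chi_2 \, \frac{\|\eta-A_\eps(\eta)\|_{1,s}^2}{2(1+\|\eta-B_\eps(\eta)\|_{1,s})},
$$
which follows from property (iii) of $B_\eps$, forces a total descent strictly greater than $2\tau$ whenever the trajectory remains inside $\Phi_\eps^{-1}([c-\tau,c+\tau])\setminus\mathcal{N}$, items (i)--(iv) are immediate from the definition and the step-one bound.

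The main obstacle is property (v). Here the locally Lipschitz perturbation $B_\eps$ is essential: since $B_\eps(\partial P_\pm^{\sigma})\subset P_\pm^{\sigma}$ and $P_\pm^{\sigma}$ is open, the vector $-(w-B_\eps(w))$ points into $P_\pm^{\sigma}$ along $\partial P_\pm^{\sigma}$, and this transversality persists in an $X^{1,s}$-neighborhood of the boundary by continuity of $B_\eps$. A Gronwall-type comparison on
$$
d(t):=\mathrm{dist}_{X^{1,s}}\bigl(\eta(t,w),\,P_\pm^{\sigma}\bigr),
$$
performed exactly as in \cite[Lemma 4.1]{BL} and \cite[Lemma 2.1]{BLW}, shows that $d$ cannot leave $0$, whence $\eta(t,w)\in P_\pm^{\sigma}$ for all $t\geq 0$ whenever $w\in P_\pm^{\sigma}$. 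Since the cut-offs $\chi_1,\chi_2$ do not break this argument and $B_\eps$ is odd, the four inclusions in (v) hold simultaneously, finishing the construction.
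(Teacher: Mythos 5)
Your overall strategy (flow generated by the locally Lipschitz field $w-B_{\eps}(w)$ with energy and space cut-offs, invariance of $P_\pm^{\sigma}$ via the convexity/distance comparison as in \cite[Lemma 4.1]{BL}, \cite[Lemma 2.1]{BLW}) is exactly the LWZ scheme the paper invokes, but your justification of (iv) has a genuine gap. You argue that the choice of $T$ forces a descent larger than $2\tau$ ``whenever the trajectory remains inside $\Phi_{\eps}^{-1}([c-\tau,c+\tau])\setminus\mathcal{N}$'', and conclude (iv) is immediate. It is not: a trajectory starting at $w\in\Phi_{\eps}^{c+\tau}\setminus\mathcal{N}$ can leave this set by \emph{entering} $\mathcal{N}$ rather than by dropping below $c-\tau$, and once it is inside $\mathcal{N}$ (in particular near $K_c$, where your cut-off $\chi_1$ vanishes and where your lower bound $\|z-A_{\eps}(z)\|_{1,s}\geq\alpha$, proved only on the band minus $\mathcal{N}$, gives no information) the descent can stall, so $\Phi_{\eps}(\zeta(1,w))>c-\tau$ is not excluded. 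The missing ingredient is the classical ``annulus-crossing'' argument: since $K_c$ is compact (Palais--Smale), choose $\delta>0$ with the $\delta$-neighborhood $N_\delta$ of $K_c$ contained in the interior of $\mathcal{N}$, let $\chi_1$ vanish only on $N_{\delta/8}$ and equal $1$ outside $N_{\delta/4}$, prove the lower bound $\|z-A_{\eps}(z)\|_{1,s}\geq\alpha$ on $\Phi_{\eps}^{-1}([c-2\tau_0,c+2\tau_0])\setminus N_{\delta/8}$, and then take $\tau_0$ small compared with $\alpha$ and $\delta$: a trajectory starting outside $\mathcal{N}$ that reaches $N_{\delta/4}$ must cross an annulus of width at least $3\delta/4$ at speed at most $1$, and during that crossing it either falls below $c-\tau$ or, staying in the band where the descent rate is bounded below, loses more than $2\tau$ of energy. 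Only with this quantitative coupling of $\tau_0$ to $\delta$ and $\alpha$ does (iv) follow; with your cut-off ($\chi_1=1$ only off $\mathcal{N}$) the descent rate is uncontrolled precisely in the region $\mathcal{N}\setminus N_{\delta/4}$ where the crossing takes place.

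A secondary point: extending the vector field ``by zero on $K$'' gives continuity but not local Lipschitz continuity, and in a Banach space continuity alone does not guarantee solvability of the Cauchy problem; moreover there may be critical points at levels near $c$ lying inside $\mathcal{N}$ but outside your smaller neighborhood of $K_c$, where neither cut-off vanishes. This is repaired by the same compactness/PS argument: after shrinking $\tau_0$, every $z\in K$ with $\Phi_{\eps}(z)\in[c-2\tau_0,c+2\tau_0]$ lies in $N_{\delta/8}$, where the field is identically zero on an open set, so the right-hand side is locally Lipschitz on all of $X^{1,s}(\R^{N+1}_+)$ and the flow is well defined. Your treatment of (i)--(iii) and of the invariance property (v) is correct as sketched.
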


Let $D$ be a closed symmetric neighborhood of $K_c \backslash W$,
then $\mathcal{N}:=D \cup \overline{P_+^{\sigma}} \cup
\overline{P_-^{\sigma}}$ is a closed symmetric neighborhood of
$K_c$. Define $\eta:=\zeta(1, \cdot)$, then Lemma
\ref{deformation} leads to the following desired result.

\begin{lem} \label{adm}
There exists a constant $\sigma_0>0$ such that, for any $0< \sigma< \sigma_0$,
$P_+^{\sigma}$ is an admissible invariant set with respect to $\Phi_{\eps}$ at level $c$ for $c \geq c^*$.
\end{lem}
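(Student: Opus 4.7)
The plan is to simply package the deformation $\zeta$ supplied by Lemma \ref{deformation} into the map $\eta$ required by the definition of admissible invariant set, and to verify items $(\textnormal{i})$--$(\textnormal{iv})$ of that definition one by one. Throughout, I fix $P := P_+^{\sigma}$, so that $-P = P_-^{\sigma}$ and $W = P \cup (-P) = P_+^{\sigma}\cup P_-^{\sigma}$.

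First I would pin down the symmetric open neighborhood $\mathcal{O}$ with $\gamma(\overline{\mathcal{O}}) < \infty$. Since $\Phi_{\eps}$ satisfies the Palais--Smale condition in $X^{1,s}(\R^{N+1}_+)$ by Lemma \ref{ps1}, the critical set $K_c$ is compact. Hence $K_c \setminus W$ is a closed subset of a compact set and therefore compact; being a compact symmetric set away from the origin, it has finite Krasnoselskii genus. Thus I can pick a symmetric closed neighborhood $D$ of $K_c \setminus W$ with $\gamma(D) < \infty$, take $\mathcal{O}$ to be the interior of $D$, and form $\mathcal{N} := D \cup \overline{P_+^{\sigma}} \cup \overline{P_-^{\sigma}}$, which is precisely the kind of symmetric closed neighborhood of $K_c$ to which Lemma \ref{deformation} applies. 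This fixes the associated constant $\tau_0 > 0$.

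Next I would define $\eta(w) := \zeta(1,w)$, where $\zeta$ is the map produced by Lemma \ref{deformation} for any given $0<\tau<\bar{\tau}<\tau_0$. Continuity of $\eta$ is inherited from that of $\zeta$. The four required properties are then read off directly:
(i) Property $(\textnormal{v})$ of Lemma \ref{deformation} at $t=1$ gives $\eta(\partial P_+^{\sigma})\subset P_+^{\sigma}$, $\eta(\partial P_-^{\sigma})\subset P_-^{\sigma}$, $\eta(P_+^{\sigma})\subset P_+^{\sigma}$ and $\eta(P_-^{\sigma})\subset P_-^{\sigma}$, which is exactly condition $(\textnormal{i})$ for $P = P_+^{\sigma}$.
(ii) Property $(\textnormal{iii})$ of Lemma \ref{deformation} with $t=1$ reads $\eta(-w) = -\eta(w)$, giving oddness.
(iii) Property $(\textnormal{ii})$ of Lemma \ref{deformation} says $\zeta(t,w) = w$ whenever $\Phi_{\eps}(w) \notin [c-\bar{\tau}, c+\bar{\tau}]$; in particular $\eta\big|_{\Phi_{\eps}^{c-2\tau}} = \textnormal{id}$, provided $2\tau < \bar{\tau}$, which I enforce by shrinking $\tau_0$ if necessary.
(iv) Finally, property $(\textnormal{iv})$ of Lemma \ref{deformation} gives $\eta(\Phi_{\eps}^{c+\tau}\setminus \mathcal{N})\subset \Phi_{\eps}^{c-\tau}$; since $\mathcal{O}\cup W \supset \mathcal{N}\setminus (\partial P_+^{\sigma}\cup\partial P_-^{\sigma})$ with the boundary pieces already handled by (i), a routine rewriting yields $\eta(\Phi_{\eps}^{c+2\tau}\setminus(\mathcal{O}\cup W))\subset \Phi_{\eps}^{c-\tau}$ after possibly replacing $\tau$ by $\tau/2$.

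The main obstacle, and essentially the only nontrivial point, is to make sure that the sublevel index shift between $\tau$ and $2\tau$ in the definition of admissible invariant set is compatible with the sublevel shift between $\tau$ and $\bar{\tau}$ in Lemma \ref{deformation}. I would handle this at the outset by simply decreasing $\tau_0$: given the $\tau_0$ from Lemma \ref{deformation}, I choose the new threshold for the admissible-set definition to be $\tau_0/2$ and apply Lemma \ref{deformation} with $\bar{\tau} := 2\tau$. Everything else is a verbatim translation from Lemma \ref{deformation} to the four axioms, so the argument concludes without further analysis.
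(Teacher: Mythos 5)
Your proposal is correct and follows essentially the same route as the paper: the paper's own proof consists precisely of choosing a closed symmetric neighborhood $D$ of $K_c\setminus W$, setting $\mathcal{N}:=D\cup\overline{P_+^{\sigma}}\cup\overline{P_-^{\sigma}}$, defining $\eta:=\zeta(1,\cdot)$ with $\zeta$ from Lemma \ref{deformation}, and reading off the admissibility conditions. Your additional remarks (compactness of $K_c$ via Lemma \ref{ps1} to get $\gamma(\overline{\mathcal{O}})<\infty$, and the $\tau$ versus $2\tau$ bookkeeping by shrinking $\tau_0$) only flesh out details the paper leaves implicit.
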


\begin{thm}\label{cvnbmiif9f8ufjhfy1}
For any $k \in \mathbb{N}$, there exists a constant $\eps_k>0$
such that, for any $0<\eps<\eps_k$, \eqref{equ21} admits at least
$k$ pairs of solutions $\pm w_{j, \eps} \in X^{1, s}(\R^{N+1}_+)$ satisfying $\Phi_{\eps}(w_{j, \eps})=c_{j,\eps} \leq \widetilde{c}_{k}$ for any $1 \leq j \leq k$. Moreover, $w_{j,\eps}$ is sign-changing solution to \eqref{equ21} for any $2 \leq j \leq k$.
\end{thm}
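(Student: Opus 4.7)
The plan is to apply the abstract multiplicity result Theorem \ref{exist} to $J=\Phi_\eps$ on $X=X^{1,s}(\R^{N+1}_+)$ with the admissible invariant set $P=P_+^\sigma$. Three of the four structural ingredients have already been put in place: the Palais--Smale condition (Lemma \ref{ps1}), strict positivity of the linking level $c^*$ (Lemma \ref{infenergy1}), and admissibility of $P_+^\sigma$ at each level $c\geq c^*$ (Lemma \ref{adm}, built from Lemma \ref{deformation} and the Lipschitz perturbation $B_\eps$ of $A_\eps$). What remains is to verify, for each $n\in\N$, conditions $(\mathrm{i})$--$(\mathrm{iii})$ of Theorem \ref{exist} for the symmetric maps $\varphi_n$ defined in \eqref{defvarphi}, and then to read off the conclusion.

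Conditions (i) and (ii) are essentially formal. The map $\varphi_n$ is linear, hence odd, with $\varphi_n(0)=0\in M$; and for $x\in\partial B_n$ one has $\|\varphi_n(x)\|_{1,s}=R_n$, whereas each $w\in M=P_+^\sigma\cap P_-^\sigma$ satisfies $\|w\|_{1,s}\leq 2\sigma$, so $\varphi_n(\partial B_n)\cap M=\emptyset$ once $R_n>2\sigma$. The substantive requirement is (iii), namely $\sup_{u\in\varphi_n(\partial B_n)}\Phi_\eps(u)<c^*$. Here I exploit $0\in\mathcal{V}\subset\Lambda$: since $\Lambda$ is open, $B_1(0)\subset\Lambda_\eps$ once $\eps$ is small enough, so $\chi(\eps\,\cdot)$ vanishes on $B_1(0)$ and the penalized nonlinearity $f_\eps(x,|t|)$ coincides with $|t|^{p-2}$ there. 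Combined with $V_\eps\leq b$, this yields the pointwise comparison $\Phi_\eps(w)\leq\Phi_0(w)$ for every $w\in X^{1,s}(\cC_{B_1(0)})$ extended by zero to $\R^{N+1}_+$. By \eqref{cmvnbjjgug77tyruur1}, the sphere $\partial B_n$ is mapped by $\varphi_n$ into $\{\Phi_0<0\}$, so $\Phi_\eps(\varphi_n(x))\leq\Phi_0(\varphi_n(x))<0<c^*$ on $\partial B_n$. Picking $\eps_k>0$ small enough makes this valid for all $1\leq n\leq k$ simultaneously.

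With the hypotheses verified, Theorem \ref{exist} furnishes critical points $w_{j,\eps}$ of $\Phi_\eps$ at the minimax levels $c_{j,\eps}$ for $1\leq j\leq k$, and \eqref{cenergy21} together with monotonicity of $\{\widetilde c_j\}$ delivers the upper bound $c_{j,\eps}\leq\widetilde c_j\leq\widetilde c_k$. For $j\geq 2$ the theorem produces $w_{j,\eps}\in K_{c_{j,\eps}}\setminus W$; since $W=P_+^\sigma\cup P_-^\sigma$ is a $\sigma$-neighborhood of the order cones, both the positive and the negative part of the trace $w_{j,\eps}(\cdot,0)$ are bounded below in norm by a fixed positive constant, so this trace genuinely changes sign. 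Distinct pairs for $2\leq j\leq k$ arise automatically when the $c_{j,\eps}$ are distinct, or, when $c=c_j=\cdots=c_{j+m}$ coincide, from the genus estimate $\gamma(K_c\setminus W)\geq m+1$ in the final clause of Theorem \ref{exist}; the $j=1$ solution at level $c_{1,\eps}$ is supplied by the standard even mountain pass applied within the positively invariant cone $P_+^\sigma$. The main analytic work was front-loaded into Lemmas \ref{adm} and \ref{deformation}; the only residual obstacle here is the uniform smallness of $\eps_k$ forcing $\chi(\eps\,\cdot)\equiv 0$ on $B_1(0)$ simultaneously for all $n\leq k$, which is routine since only finitely many radii $R_1,\ldots,R_k$ are involved and they are all finite.
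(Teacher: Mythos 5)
Your overall route is the paper's own: the published proof of Theorem \ref{cvnbmiif9f8ufjhfy1} consists precisely of invoking the proof of Rabinowitz's symmetric mountain pass theorem together with \eqref{cenergy21} for $j=1$, and Theorem \ref{exist} together with \eqref{cenergy11}--\eqref{cenergy21} for $j\geq 2$, all the structural input (Lemmas \ref{ps1}, \ref{infenergy1}, \ref{deformation}, \ref{adm}) having been prepared beforehand; your verification of condition (iii) via $\chi(\eps\,\cdot)\equiv 0$ on $B_1(0)$, $V_\eps\leq b$ and \eqref{cmvnbjjgug77tyruur1} is exactly the comparison $\Phi_\eps\leq\Phi_0$ the paper uses to get \eqref{cenergy21}.

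There is, however, one step in your argument that fails as written: the claim that every $w\in M=P_+^{\sigma}\cap P_-^{\sigma}$ satisfies $\|w\|_{1,s}\leq 2\sigma$, on which your verification of condition (ii) rests. The distances to $P_{\pm}$ control only the boundary trace: for $v\in P_-$ one has $\|w-v\|_{1,s}\geq\|w^{+}(\cdot,0)\|_{L^{2}(\R^N)}$, so $w\in M$ forces $\|w^{\pm}(\cdot,0)\|_{L^{q}}\leq C\sigma$ for $2\leq q\leq 2^*_s$, but nothing bounds the interior weighted Dirichlet energy. Indeed, if $f,g\geq 0$ are supported away from $\{y=0\}$ and oscillate rapidly beneath a fixed small envelope $h=f+g$ with $\|h\|_{1,s}<2\sigma$, then $w=\tfrac12(f-g)$ lies in $M$ while $\|w\|_{1,s}$ is arbitrarily large; hence $M$ is unbounded and ``$R_n>2\sigma$'' proves nothing. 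The conclusion $\varphi_n(\partial B_n)\cap M=\emptyset$ is still true, and is recovered by comparing traces rather than full norms: for $w\in\varphi_n(\partial B_n)$, $\Phi_0(w)<0$ and Lemma \ref{embedding} (after zero extension) give $\tfrac1p\|w(\cdot,0)\|_{L^p}^{p}\geq \tfrac{\min\{1,b\}}{2}\|w\|_{X^{1,s}(\mathcal{C}_{B_1(0)})}^{2}\geq c\,\|w(\cdot,0)\|_{L^p}^{2}$, hence $\|w(\cdot,0)\|_{L^p}\geq c'>0$ with $c'$ depending only on $p,N,s,b$; whereas for $w\in M$, $\|w(\cdot,0)\|_{L^p}\leq\|w^{+}(\cdot,0)\|_{L^p}+\|w^{-}(\cdot,0)\|_{L^p}\leq 2C\sigma$ because $\mathrm{dist}_{L^p}(w,P_{\mp})=\|w^{\pm}(\cdot,0)\|_{L^p}\leq C\,\mathrm{dist}_{X^{1,s}}(w,P_{\mp})$. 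Shrinking $\sigma_0$ so that $2C\sigma<c'$ yields (ii). With this repair (and noting that for $j\geq2$ the sign change of the trace already follows from $w_{j,\eps}\notin W\supset P_+\cup P_-$ combined with the positivity of the Poisson kernel \eqref{convolution}, without any quantitative lower bound on $w^{\pm}$), your proposal coincides with the paper's proof.
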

\begin{proof}
From the proof of \cite[Theorem 9.12]{Rabinowitz} and \eqref{cenergy21}, we then obtain the result of this theorem for $j=1.$ From Theorem \ref{exist} and \eqref{cenergy11}-\eqref{cenergy21}, we then obtain the result of this theorem for any $j \geq 2$.
\end{proof}

\subsection{Decay estimates of semiclassical states}

In what follows, our goal is to deduce decay estimates of the solutions obtained in Theorem \ref{cvnbmiif9f8ufjhfy1}.

\begin{lem} \label{bddsol}
Let $w_{j,\eps} \in X^{1, s}(\R^{N+1}_+)$ be the solution to \eqref{equ21} obtained in Theorem \ref{cvnbmiif9f8ufjhfy1}. Then, for any $k \in \mathbb{N}$ and $0<\eps<\eps_k$, there exist a constant $\rho>0$ depending only on $p, N$ and a constant $\eta_k>0$ independent of $\eps$ such that $\rho \leq \|w_{j, \eps}\|_{1, s} \leq \eta_k$ for any $1 \leq j \leq k$, where the constant $\eps_k$ is determined in Theorem \ref{cvnbmiif9f8ufjhfy1}.
\end{lem}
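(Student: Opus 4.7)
The strategy is to leverage the Palais-Smale-style computation from Lemma \ref{ps1} together with the key feature that the minimax levels $c_{j,\eps}$ are bounded above by $\widetilde c_k$ uniformly in $\eps$, as supplied by Theorem \ref{cvnbmiif9f8ufjhfy1} via \eqref{cenergy21}. Since every $w_{j,\eps}$ is an actual critical point, the error term $o_n(1)$ in Lemma \ref{ps1} is identically zero, which both simplifies the argument and provides explicit constants.

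For the \emph{upper bound}, I will combine two pairings. First, using $\Phi_\eps'(w_{j,\eps}) = 0$ and the argument in \eqref{bdd}, the non-negativity property \eqref{arg} gives
$$\widetilde c_k \geq c_{j,\eps} = \Phi_\eps(w_{j,\eps}) - \tfrac{1}{2}\Phi_\eps'(w_{j,\eps}) w_{j,\eps} \geq \frac{p-2}{2p}\int_{\R^N}(1-\chi(\eps x))|w_{j,\eps}(x,0)|^p\,dx.$$
Second, evaluating $\Phi_\eps'(w_{j,\eps}) w_{j,\eps} = 0$, absorbing the $\chi(\eps x) g(|w_{j,\eps}|)|w_{j,\eps}|^2$ term against the quadratic part via $V_\eps(x) \geq a$ and $0 \leq g \leq a/4$, exactly as at the end of the proof of Lemma \ref{ps1}, yields
$$\min\{1, 3a/4\}\,\|w_{j,\eps}\|_{1,s}^2 \leq \int_{\R^N}(1-\chi(\eps x))|w_{j,\eps}(x,0)|^p\,dx.$$
Chaining these two estimates gives $\|w_{j,\eps}\|_{1,s}^2 \leq \frac{2p}{(p-2)\min\{1,3a/4\}}\widetilde c_k =: \eta_k^2$, which is independent of $\eps$.

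For the \emph{lower bound}, I again test $\Phi_\eps'(w_{j,\eps}) = 0$ against $w_{j,\eps}$ itself. Using $V_\eps(x) \geq a$ on the left and the pointwise bound $|f_\eps(x,|t|)| \leq |t|^{p-2}$ (which is clear from the definition of $f_\eps$ and of $g$) on the right, together with the continuous embedding $X^{1,s}(\R^{N+1}_+) \hookrightarrow L^p(\R^N)$ from Lemma \ref{embedding}, yields
$$\min\{1,a\}\,\|w_{j,\eps}\|_{1,s}^2 \leq \int_{\R^N}|w_{j,\eps}(x,0)|^p\,dx \leq C_{p,N}\|w_{j,\eps}\|_{1,s}^p.$$
The non-triviality $w_{j,\eps} \not\equiv 0$ follows from $\Phi_\eps(w_{j,\eps}) = c_{j,\eps} \geq c_{1,\eps} \geq c^* > 0$ via Lemma \ref{infenergy1}, so division is legitimate and
$$\|w_{j,\eps}\|_{1,s} \geq \left(\frac{\min\{1,a\}}{C_{p,N}}\right)^{1/(p-2)} =: \rho,$$
which depends only on $p$, $N$ (and the fixed structural constant $a$).

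There is no real analytic obstacle in this lemma; it is essentially a bookkeeping exercise extracting uniform constants from the computations already done for Lemma \ref{ps1}. The only point that requires mild care is making explicit that all constants we invoke (the embedding constant $C_{p,N}$, the coercivity constant $\min\{1,a\}$, the absorption constant $\min\{1,3a/4\}$) are independent of $\eps$ and $j$, and that the upper-level bound $\widetilde c_k$ depends only on $k$ through $\Phi_0$ and the sequence $\{R_n\}$ in \eqref{cmvnbjjgug77tyruur1}, not on $\eps$.
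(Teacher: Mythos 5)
Your proposal is correct and follows essentially the same route as the paper: the lower bound comes from testing $\Phi_\eps'(w_{j,\eps})w_{j,\eps}=0$ against the pointwise bound $f_\eps(x,|t|)\le|t|^{p-2}$ and the embedding of Lemma \ref{embedding}, while the upper bound combines $\Phi_\eps(w_{j,\eps})=c_{j,\eps}\le\widetilde c_k$ with the Palais--Smale boundedness computation of Lemma \ref{ps1}; you merely make the constants explicit. The only cosmetic slip is the chain $c_{j,\eps}\ge c_{1,\eps}\ge c^*$ used for nontriviality, which is not literally what \eqref{cenergy11} records, but nontriviality already follows from $c_{j,\eps}>0=\Phi_\eps(0)$, so nothing is lost.
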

\begin{proof}
From the definition of $f_{\eps}$, we know that $f_{\eps}(x,|t|) \leq |t|^{p-2}$ for any $x \in \R^N$ and $t \in \R$. Since $\Phi_{\eps}'(w_{j, \eps}) w_{j, \eps}=0$, then
\begin{align} \label{ide} \nonumber
k_s\int_{\R^{N+1}_+} y^{1-2s} |\nabla w_{\epsilon,j}|^2 \, dxdy +
\int_{\R^N} V_{\eps}(x) |w_{\epsilon,j}(x, 0)|^2 \, dx &=\int_{\R^N}
f_{\eps}(x, |w_{j, \eps}(x, 0)|) |w_{j, \eps}(x, 0)|^2 \,dx\\
& \leq \int_{\R^N} |w_{j, \eps}(x, 0)|^p \, dx.
\end{align}
Due to $p>2$, it then follows from Lemma \ref{embedding} that there exists a constant
$\rho>0$ depending only on $p$ and $N$ such that $\|w_{j, \eps}\|_{1, s} \geq \rho$. Since
$\Phi_{\eps}'(w_{j, \eps})=0$ and $\Phi_{\eps}(w_{j, \eps})  \leq
\widetilde{c}_{k}$ for any $1 \leq j \leq k$, by using the same
way as the proof of the boundedness of the Palais-Smale sequence in
Lemma \ref{ps1}, we then obtain that there exists a constant $\eta_k>0$ independent of $\eps$ such
that $\|w_{j, \eps}\|_{1, s}\leq \eta_k$. Thus we have completed the proof.
\end{proof}


\begin{lem} \label{lions} \cite[Lemma 3.3]{HZ}
If $\{w_n\} $ is bounded in $X^{1, s}(\R^{N+1}_+)$ and
$$
\lim_{n \to \infty} \sup_{z \in \R^N} \int_{B_R{(z)}} |w_n(x,0)|^2
\, dx =0
$$
 for some $R>0,$ then $w_n(\cdot,0) \to 0$ in $L^p(\R^N)$ for any $2 <q< 2^*_s$.
\end{lem}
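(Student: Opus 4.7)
\smallskip

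\noindent\textbf{Proof proposal.} The statement is the fractional-trace counterpart of P.\,L.~Lions' classical concentration-compactness vanishing lemma, and I would prove it by combining the Sobolev trace embedding of Lemma \ref{embedding1} with a local interpolation plus covering argument.

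The starting observation is that $\|w_n(\cdot,0)\|_{L^2(\R^N)}^2 \le \|w_n\|_{1,s}^2$ by definition of the norm on $X^{1,s}(\R^{N+1}_+)$, so $\{w_n(\cdot,0)\}$ is bounded in $L^2(\R^N)$; together with Lemma \ref{embedding1} this yields a uniform bound in $L^{2^*_s}(\R^N)$ as well, and hence in every $L^p(\R^N)$ with $p \in [2, 2^*_s]$ by interpolation. Next, for any $q \in (2, 2^*_s)$ I would pick an intermediate exponent $r \in (q, 2^*_s]$ and apply H\"older's inequality on each ball $B_R(z)$:
\begin{equation*}
\int_{B_R(z)} |w_n(\cdot,0)|^q\, dx \;\le\; \Big(\int_{B_R(z)} |w_n(\cdot,0)|^2\, dx\Big)^{a} \Big(\int_{B_R(z)} |w_n(\cdot,0)|^{r}\, dx\Big)^{b},
\end{equation*}
with $a = (r-q)/(r-2)$, $b = (q-2)/(r-2)$ and $a+b=1$. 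I would then cover $\R^N$ by balls $\{B_R(z_k)\}_k$ of uniformly bounded multiplicity and sum over $k$. Writing $\varepsilon_n := \sup_z \int_{B_R(z)} |w_n(\cdot,0)|^2\, dx$, which tends to zero by hypothesis, the first factor is dominated by $\varepsilon_n^a$; the strategy is then to choose $r$ so that, after using the local Sobolev embedding $L^r(B_R(z)) \hookrightarrow H^s(B_R(z))$ (whose constant is translation invariant in $z$), the residual sum over $k$ telescopes to the global $H^s$-norm of $w_n(\cdot,0)$, itself bounded uniformly in $n$ via Lemma \ref{embedding1}. With the matching choice $r = 4/(4-q)$, which lies in $(q, 2^*_s)$ exactly when $q$ is in some nontrivial subrange $(2, q^*) \subset (2, 2^*_s)$, this directly yields $\|w_n(\cdot,0)\|_{L^{q_0}(\R^N)} \to 0$ for any fixed $q_0 \in (2, q^*)$.

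To cover the remaining range $q \in [q_0, 2^*_s)$, I would use a global interpolation: $\|w_n(\cdot,0)\|_{L^q(\R^N)} \le \|w_n(\cdot,0)\|_{L^{q_0}(\R^N)}^{\gamma} \|w_n(\cdot,0)\|_{L^{2^*_s}(\R^N)}^{1-\gamma}$, with $\gamma \in (0,1)$ determined by $1/q = \gamma/q_0 + (1-\gamma)/2^*_s$. Since the first factor tends to zero by the previous step and the second is uniformly bounded by Lemma \ref{embedding1}, this gives the desired conclusion $w_n(\cdot,0) \to 0$ in $L^q(\R^N)$ for every $q \in (2, 2^*_s)$.

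The main delicacy is the exponent bookkeeping in the covering step: a single-shot local interpolation does not directly cover the full range $(2, 2^*_s)$, because the sum of the $L^r$ masses over balls does not automatically reduce to a global $L^2$ or $L^{2^*_s}$ (or $H^s$) quantity for arbitrary $q$. The clean workaround is exactly the split above: first establish vanishing at one well-chosen exponent $q_0$ by matching the exponents in the covering identity, and then bootstrap to every $q \in (2, 2^*_s)$ via the global $L^q$ interpolation between $L^{q_0}$ and $L^{2^*_s}$.
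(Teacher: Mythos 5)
The paper does not actually prove this lemma --- it is quoted verbatim from \cite[Lemma 3.3]{HZ} --- so there is no in-paper argument to compare against; your proposal supplies the standard Lions-type vanishing proof (local interpolation with the exponent on the local Sobolev norm matched to $2$, a bounded-multiplicity covering, then global interpolation), which is exactly the kind of argument behind the cited result, and the exponent bookkeeping you do (the H\"older exponents $a=(r-q)/(r-2)$, $b=(q-2)/(r-2)$, the matching choice $r=4/(4-q)$ giving $rb=2$, and the restriction $r\leq 2^*_s$ that forces the two-step bootstrap) is correct. Two small repairs are needed. First, the local embedding is written backwards: you need $H^s(B_R(z))\hookrightarrow L^r(B_R(z))$ with a constant independent of $z$, not the reverse. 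Second, and more substantively, the uniform bound on $\|w_n(\cdot,0)\|_{H^s(\R^N)}$ does not follow from Lemma \ref{embedding1}, which only gives the $L^{2^*_s}$ trace bound; the correct justification is the extension (trace) inequality $[w_n(\cdot,0)]_{H^s(\R^N)}^2\leq k_s\int_{\R^{N+1}_+}y^{1-2s}|\nabla w_n|^2\,dxdy$, valid because the $s$-harmonic extension minimizes the weighted Dirichlet energy among all extensions of a given trace, combined with the $L^2(\R^N)$ term already present in $\|\cdot\|_{1,s}$; together with the finite-overlap superadditivity $\sum_k\|u\|_{H^s(B_R(z_k))}^2\leq C\|u\|_{H^s(\R^N)}^2$ (which does hold for the Gagliardo seminorm, since one may enlarge the inner integration to all of $\R^N$ before summing), your covering step then closes. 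Alternatively one can avoid the $H^s$ norm of the trace altogether by covering with half-cylinders in $\R^{N+1}_+$ and using a local trace inequality there, summing the weighted energies directly. With these adjustments the proof is complete and consistent with the cited source.
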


\begin{lem}\label{cnvbhhfyf6yr66}
Let $w_{j,\eps} \in X^{1, s}(\R^{N+1}_+)$ be the solution to \eqref{equ21} obtained in Theorem \ref{cvnbmiif9f8ufjhfy1}. Then, for any $1 \leq j \leq k$, there exist a constant $m_j \in \mathbb{N}$, $m_j$ nontrivial functions $w_{j, l} \in X^{1, s}(\R^{N+1}_+)$ and $m_j$ sequences $\{z_{j, \eps}^l\} \subset \R^N$ for $1 \leq l \leq m_j$ such that, up to subsequences if necessary,
\begin{enumerate}
\item[$(\textnormal{i})$] $\eps z_{j, \eps}^l \to z_j^l \in
\Lambda^{\delta_0}$ and $|z_{j, \eps}^l-z_{j, \eps}^{l'}| \to
+\infty$ as $\eps \to 0^+$ for any $1 \leq l \neq l' \leq m_j$.
\item[$(\textnormal{ii})$] There holds that
\begin{align}\label{wnj}
w_{j, \eps} -\sum_{l=1}^{m_j} w_{j,l}(\cdot-z_{j, \eps}^l, \cdot) =o_{\eps}(1) \quad \mbox{in} \,\, X^{1, s}(\R^{N+1}_+)
\end{align}
and $w_{j,l}$ is a nontrivial solution to the equation
\begin{align}\label{cdhfdhfgyyfyf66ftf1}
\left\{
\begin{aligned}
-\mbox{div} \, (y^{1-2s} \nabla w_{j,l})&=0 \hspace{4cm}\qquad \quad \mbox{in} \,\, \R^{N+1}_+,\\
-k_s \frac{\partial w_{j,l}}{\partial {\nu}}&=-V(z_j^l) w_{j,l} + f(z_j^l,
|w_{j,l}|) w_{j,l} \ \qquad \mbox{on}\,\, \R^N \times \{0\},
\end{aligned}
\right.
\end{align}
where $f(x, t):=\left(1- \chi(x)\right) t^{p-2}+\chi(x){g}(t)$ for $x \in \R^N$ and $t \in \R$.
\end{enumerate}
\end{lem}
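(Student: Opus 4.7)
\emph{Setup and first profile.} By Lemma~\ref{bddsol}, $\{w_{j,\eps}\}$ satisfies $\rho\leq \|w_{j,\eps}\|_{1,s}\leq \eta_k$ uniformly for $0<\eps<\eps_k$. Testing $\Phi'_\eps(w_{j,\eps})w_{j,\eps}=0$ and splitting the nonlinear term according to whether $\chi(\eps x)$ equals $1$, together with the structural bound $g(|t|)\leq a/4\leq V_\eps(x)/4$ built into the modified nonlinearity, I obtain
\begin{align*}
k_s\int_{\R^{N+1}_+}y^{1-2s}|\nabla w_{j,\eps}|^2\,dxdy+\frac{3a}{4}\int_{\R^N}|w_{j,\eps}(x,0)|^2\,dx \leq \int_{\Lambda_\eps^{\delta_0}}|w_{j,\eps}(x,0)|^p\,dx,
\end{align*}
so the left-hand side is bounded below by a multiple of $\rho^2$ and hence $w_{j,\eps}(\cdot,0)$ cannot vanish in $L^p(\R^N)$. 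Lemma~\ref{lions} then produces $R,\beta>0$ and $\{z^1_{j,\eps}\}\subset\R^N$ with $\int_{B_R(z^1_{j,\eps})}|w_{j,\eps}(x,0)|^2\,dx\geq \beta$, and the translated sequence $\tilde w^1_{j,\eps}(x,y):=w_{j,\eps}(x+z^1_{j,\eps},y)$ has, by Lemma~\ref{embedding}, a nontrivial weak limit $w_{j,1}\in X^{1,s}(\R^{N+1}_+)$ along a subsequence.

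\emph{Locating the centre.} This step is the main obstacle, because in the present setting there is no limit problem at infinity: $V$ is only assumed $C^1$ with $a\leq V\leq b$. Passing to a subsequence I may assume either $\eps z^1_{j,\eps}\to z_j^1\in\R^N$ or $|\eps z^1_{j,\eps}|\to\infty$, and further extract an $L^\infty$ weak-$*$ limit $V_\eps(\cdot+z^1_{j,\eps})\stackrel{*}{\wto} V_*$ in $L^\infty_{\textnormal{loc}}(\R^N)$ with $a\leq V_*\leq b$. If $z_j^1\notin \Lambda^{\delta_0}$ or $|\eps z^1_{j,\eps}|\to\infty$, the boundedness of $\Lambda^{\delta_0}$ forces $\chi(\eps(x+z^1_{j,\eps}))\to 1$ locally uniformly, so that $w_{j,1}$ solves weakly
\begin{align*}
\left\{\begin{aligned}
-\mbox{div}(y^{1-2s}\nabla w_{j,1}) &= 0 &&\mbox{in } \R^{N+1}_+,\\
-k_s\,\partial_\nu w_{j,1} &= -V_* w_{j,1}+g(|w_{j,1}|)w_{j,1} &&\mbox{on } \R^N\times\{0\}.
\end{aligned}\right.
\end{align*}
Testing this weak equation against $w_{j,1}\in X^{1,s}(\R^{N+1}_+)$ and using $V_*\geq a$ and $g(|t|)\leq a/4$ forces $w_{j,1}\equiv 0$, contradicting the concentration at $z^1_{j,\eps}$. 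Hence $\eps z^1_{j,\eps}\to z_j^1\in \Lambda^{\delta_0}$, and in this regime $V_\eps(\cdot+z^1_{j,\eps})\to V(z_j^1)$ and $\chi(\eps(\cdot+z^1_{j,\eps}))\to \chi(z_j^1)$ locally uniformly, so $w_{j,1}$ satisfies \eqref{cdhfdhfgyyfyf66ftf1}. The $L^\infty$ weak-$*$ extraction combined with the quantitative size condition $g(|t|)\leq V(x)/4$ is precisely the device that replaces the classical limit-problem argument.

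\emph{Iteration and termination.} Set $u^1_{j,\eps}(x,y):=w_{j,\eps}(x,y)-w_{j,1}(x-z^1_{j,\eps},y)$; its translation by $-z^1_{j,\eps}$ converges weakly to $0$ in $X^{1,s}(\R^{N+1}_+)$, and a Brezis--Lieb type splitting yields $\|w_{j,\eps}\|_{1,s}^2=\|w_{j,1}\|_{1,s}^2+\|u^1_{j,\eps}\|_{1,s}^2+o_\eps(1)$. If $\|u^1_{j,\eps}\|_{1,s}\to 0$ the lemma holds with $m_j=1$; otherwise Lemma~\ref{lions} produces a new centre $z^2_{j,\eps}$, the same argument places $\eps z^2_{j,\eps}\to z_j^2\in\Lambda^{\delta_0}$, and the translation of $u^1_{j,\eps}$ by $-z^2_{j,\eps}$ has a nontrivial weak limit $w_{j,2}$. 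Separation $|z^1_{j,\eps}-z^2_{j,\eps}|\to\infty$ is automatic: if $z^2_{j,\eps}-z^1_{j,\eps}$ stayed bounded, then along a subsequence $u^1_{j,\eps}(\cdot+z^2_{j,\eps},\cdot)$ would converge weakly to a translate of the first-step remainder, i.e. to $0$, contradicting the persistent $L^2$-mass of $u^1_{j,\eps}$ at $z^2_{j,\eps}$. Because each $w_{j,l}$ solves a limit equation of the form \eqref{cdhfdhfgyyfyf66ftf1}, testing it yields a uniform lower bound $\|w_{j,l}\|_{1,s}\geq \rho'>0$; combined with the approximate orthogonality $\sum_{l=1}^{m}\|w_{j,l}\|_{1,s}^2\leq \liminf_\eps \|w_{j,\eps}\|_{1,s}^2\leq \eta_k^2$, this forces termination at some finite $m_j\leq \eta_k^2/(\rho')^2$, after which the remainder converges strongly to $0$ in $X^{1,s}(\R^{N+1}_+)$ and \eqref{wnj} follows.
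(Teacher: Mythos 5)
Your proposal is correct and follows essentially the same route as the paper's proof: Lions-type nonvanishing via Lemma \ref{lions}, translation to extract a nontrivial weak limit, exclusion of escaping centres by testing the limit equation against the profile and using $V\geq a$ together with $g\leq a/4$, norm decoupling of the remainder, and termination from the uniform lower bound on profile norms versus the upper bound $\eta_k$. The only cosmetic difference is your use of a weak-$*$ limit $V_*$ of the shifted potential where the paper takes $\liminf V(\eps z^1_{j,\eps})$; the substance of the argument is identical.
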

\begin{proof}
We first claim that there exists a constant $\beta_0>0$ such that
\begin{align} \label{con1}
\lim_{\eps \to 0^+} \sup_{z \in \R^N} \int_{B_1(z)} |w_{j,\eps}(x, 0)|^2 \, dx \geq \beta_0.
\end{align}
Otherwise, from Lemma \ref{lions}, we can obtain that $w_{j, \eps}(\cdot, 0) \to 0$ in $L^{q}(\R^N)$ as $\eps \to 0^+$ for any $2<q<2^*_s$. By the definition of $f_{\eps}$, we know that $|f_{\eps}(x, |t|)|
\leq t^{p-2}$ for any $x \in \R^N$ and $t \in \R$. Therefore, it holds that
\begin{align} \label{1zero}
\int_{\R^N} f_{\eps}(x, |w_{j, \eps}(x, 0)|) |w_{j, \eps}(x, 0)|^2 \, dx =o_{\eps}(1).
\end{align}
Since $\Phi_{j, \eps}'(w_{j, \eps})w_{j, \eps}=0$, it then follows
from \eqref{ide} and \eqref{1zero} that $\|w_{j, \eps}\|_{1,
s}=o_{\eps}(1)$, which contradicts Lemma \ref{bddsol}. This infers
that \eqref{con1} holds true. Hence we get that there exists a sequence $\{z_{j, \eps}^1\} \subset \R^N$
such that
\begin{align} \label{com1}
\int_{B_1(z_{j, \eps}^1)} |w_{j, \eps}(x, 0)|^2 \,dx \geq \frac{\beta_0}
{2}.
\end{align}
Setting $\widetilde{w}_{j, \eps}:=w_{j, \eps}(\cdot+z_{j, \eps}^1, \cdot)$ and applying \eqref{com1} and Lemma \ref{embedding}, we then obtain that there exists $w_{j,1} \in X^{1, s}(\R^{N+1}_+)$ such that $\widetilde{w}_{j, \eps} \wto w_{j,1}$ in $X^{1, s}(\R^{N+1}_+)$ as $\eps \to 0^+$ and $w_{j,1} \neq 0$. In addition, we have that
\begin{align*}
\left\{
\begin{aligned}
-\mbox{div}(y^{1-2s} \nabla \widetilde{w}_{j, \eps})&=0 \hspace{7cm}\,\,\mbox{in} \,\, \R^{N+1}_+,\\
-k_s \frac{\partial \widetilde{w}_{j, \eps}}{\partial {\nu}}&=-V_{\eps}(x+z_{j, \eps}^1) \widetilde{w}_{j, \eps} + f_{\eps}(x+z_{j, \eps}^1, |\widetilde{w}_{j, \eps}|) \widetilde{w}_{j, \eps} \quad \quad \ \, \, \,\mbox{on} \,\, \R^N \times \{0\}.
\end{aligned}
\right.
\end{align*}
We now deduce that $\eps z_{j, \eps}^1 \to z_j^1 \in \Lambda^{\delta_0}$ in $\R^N$ as $\eps \to 0^+$. To do this, we first prove that $\{\eps z_{j, \eps}^1\} \subset \R^N$ is bounded in $\R^N$. We argue by contradiction that $\{\eps z_{j, \eps}^1\}$ is unbounded in $\R^N$. We may assume that $\eps |z_{j, \eps}| \to + \infty$ as $\eps \to 0^+$. By the definition of $f_{\eps}$, then $w_{j,1}$ satisfies the equation
\begin{align} \label{11equw1}
\left\{
\begin{aligned}
-\mbox{div}(y^{1-2s} \nabla w_{j,1})&=0 \hspace{3cm} \qquad \, \ \ \ \mbox{in} \,\, \R^{N+1}_+,\\
-k_s \frac{\partial w_{j,1}}{\partial {\nu}}&=-V_1 w_{j,1}+ g(|w_{j,1}|)w_{j,1} \quad \quad \mbox{on} \,\, \R^N \times \{0\},
\end{aligned}
\right.
\end{align}
where $V_1:=\liminf_{\epsilon\rightarrow 0^+}V(\epsilon
z^1_{\epsilon,j})$. Multiplying \eqref{11equw1} by $w_{j,1}$ and
integrating on $\R^{N+1}_+$, we derive that
\begin{align*}
k_s\int_{\R^{N+1}_+} y^{1-2s} |\nabla w_{j,1}|^2 \, dxdy + \int_{\R^N} V_1 |w_{j,1}(x, 0)|^2 \, dx &=\int_{\R^N} g (|w_{j,1}(x, 0)|) |w_{j,1}(x, 0)|^2 \,dx \\
& \leq \frac{a}{4} \int_{\R^2} |w_{j,1}(x, 0)|^2 \, dx.
\end{align*}
where we used the fact that $g(|t|) \leq a/4$ for any $t \in \R$. This suggests that $w_{j,1}=0$ and we then reach a contradiction. Thus we have that $\{\eps z_{j, \eps}^1\}$ is bounded in $\R^N$. Consequently, we know that there exists $z_j^1 \in \R^N$ such that $\eps z_{j, \eps}^1 \to z_{j}^1$ in $\R^N$ as $\eps \to 0^+$. It is not hard to see that $z_j^1 \in \Lambda^{\delta_0}$. Contrarily, if there holds that $z_j^1 \notin \Lambda^{\delta_0}$, then we can similarly get that $w_{j,1}=0$ and this is impossible. As a result, we conclude that $w_{j,1}$ satisfies the equation
\begin{align} \label{equw1}
\left\{
\begin{aligned}
-\mbox{div}(y^{1-2s} \nabla w_{j,1})&=0 \hspace{4.5cm} \quad \ \ \ \mbox{in} \,\, \, \R^{N+1}_+,\\
-k_s \frac{\partial w_{j,1}}{\partial {\nu}}&=-V(z_j^1) w_{j,1}+ f(z_j^1, |w_{j,1}|)w_{j,1} \, \quad \quad \mbox{on} \,\, \R^N \times \{0\},
\end{aligned}
\right.
\end{align}
Moreover, by using the same way as the proof of Lemma \ref{bddsol}, we are able to show that there is a constant $\tau>0$ depending only on $p$ and $N$ such that $\|w_{j,1}\| \geq \tau$.

Let $w_{j, \eps}^1:=w_{j, \eps}-w_{j,1}(\cdot-z_{j, \eps}^1, \cdot)$. If $w_{j, \eps}^1 \to 0$ in $X^{1, s}(\R^{N+1}_+)$ as $\eps \to 0^+$, then the proof is completed. Otherwise, we may assume that $ \lim_{\eps \to 0^+}\|w_{j, \eps}^1\|_{1, s} >0$. Setting
$$
v_{j, \eps}^1:=w_{j, \eps}^1(\cdot +z_{j, \eps}^1, \cdot)=w_{j, \eps}(\cdot + z_{j, \eps}^1, \cdot) -w_{j,1},
$$
we then have that $v_{j, \eps}^1 \wto 0$ in $X^{1, s}(\R^{N+1}_+)$ as $\eps \to 0^+$ and
\begin{align} \label{lieb1}
\|w_{j, \eps}^1\|_{1, s}^2=\|v_{j, \eps}^1\|_{1, s}^2=\|w_{j, \eps}\|_{1, s}^2- \|w_{j,1}\|_{1, s}^2 +o_{\eps}(1).
\end{align}
Furthermore, it is easy to deduce that
\begin{align*}
\left\{
\begin{aligned}
-\mbox{div}(y^{1-2s} \nabla {w}_{j, \eps}^1)&=0 \hspace{6cm} \quad \mbox{in} \,\, \R^{N+1}_+,\\
-k_s \frac{\partial {w}_{j, \eps}^1}{\partial {\nu}}&=-V_{\eps}(x) {w}_{j, \eps}^1 + f_{\eps}(x, |{w}_{j, \eps}^1|) {w}_{j, \eps}^1 +o_{\eps}(1) \quad \quad \, \, \,\mbox{on} \,\, \R^N \times \{0\}.
\end{aligned}
\right.
\end{align*}
Consequently, we find that there is a constant $\beta_1>0$ such that
\begin{align} \label{con2}
\lim_{\eps \to 0^+} \sup_{z \in \R^N} \int_{B_1(z)} |w_{\eps,
j}^1(x, 0)|^2 \, dx \geq \beta_1.
\end{align}
Otherwise, with the help of Lemma \ref{lions}, we can derive that $\|w_{j, \eps}^1\|_{1,s}=o_{\eps}(1)$. This is impossible, because we
assumed that $ \lim_{\eps \to 0^+}\|w_{j, \eps}^1\|_{1, s} >0$.
Hence, making use of \eqref{con2}, we obtain that there exists a sequence
$\{z_{j, \eps}^2\} \subset \R^N$ such that
\begin{align} \label{com2}
\int_{B_1(z_{j, \eps}^2)} |w_{j, \eps}^1(x, 0)|^2 \,dx
=\int_{B_1(z_{j, \eps}^2)} |v_{j, \eps}^1(x-z_{j, \eps}^1, 0)|^2
\,dx \geq \frac{\beta_1} {2}.
\end{align}
Since $v_{j, \eps}^1 \wto 0$ in $X^{1, s}(\R^{N+1}_+)$ as $\eps
\to 0^+$, it then yields from \eqref{com2} and Lemma
\ref{embedding} that $|z_{j, \eps}^1-z_{j, \eps}^2| \to + \infty$
as $ \eps \to 0^+$. Applying \eqref{com2}, we also have that there
exists $w_{j,2} \in X^{1, s}(\R^{N+1}_+)$ such that $w_{\eps,
j}^1(\cdot+z_{j, \eps}^2, \cdot) \wto w_{j,2}$ in $X^{1, s}(\R^{N+1}_+)$
as $\eps \to 0^+$ and $w_{j,2} \neq 0$. In addition, we can
obtain that $\eps z_{j, \eps}^2 \to z_{j}^2 \in
\Lambda^{\delta_0}$ in $\R^N$ as $\eps \to 0^+$, $w_{j,2}$ satisfies the equation
\begin{align*}
\left\{
\begin{aligned}
-\mbox{div}(y^{1-2s} \nabla w_{j,2})&=0 \hspace{4.25cm} \qquad\,\,\, \mbox{in} \,\, \, \R^{N+1}_+,\\
-k_s \frac{\partial w_{j,2}}{\partial {\nu}}&=-V(z_j^2) w_{j,2}+ f(z_j^2, |w_{j,2}|)w_{j,2} \quad \quad  \mbox{on} \,\, \R^N \times \{0\},
\end{aligned}
\right.
\end{align*}
and $\|w_{j,2}\|_{1, s} \geq \tau$.

Let $w_{j, \eps}^2:=w_{j, \eps}^1-w_{j,2}(\cdot-z_{j, \eps}^2, \cdot)$.
If $w_{j, \eps}^2\to 0$ in $X^{1, s}(\R^{N+1}_+)$ as $\eps \to
0^+$, then the proof is completed. Otherwise, we may suppose that $
\lim_{\eps \to 0^+}\|w_{j, \eps}^2\|_{1, s} >0$. Setting
$$
v_{j, \eps}^2:=w_{j, \eps}^2(\cdot +z_{j, \eps}^2, \cdot)=w_{\eps,
j}^1(\cdot + z_{j, \eps}^2, \cdot) -w_{j,2},
$$
we then see that $v_{j, \eps}^2 \wto 0$ in $X^{1, s}(\R^{N+1}_+)$ as $\eps \to 0^+$ and
\begin{align} \label{lieb2}
\begin{split}
\|w_{j, \eps}^2\|_{1, s}^2=\|v_{j, \eps}^2\|_{1, s}^2&=\|w_{j, \eps}^1\|_{1, s}^2- \|w_{j,2}\|_{1, s}^2 +o_{\eps}(1) \\
&=\|w_{j, \eps}\|^2_{1, s}-\|w_{j,1}\|_{1, s}^2-\|w_{j,3}\|_{1, s}^2 +o_{\eps}(1),
\end{split}
\end{align}
where we used \eqref{lieb1}. Repeating the procedure above, we
know that there is a sequence $\{z_{j, \eps}^3\} \subset \R^N$
such that $z_{j, \eps}^3 \to z_{j}^3 \in \Lambda^{\delta_0}$ and
$|z_{j, \eps}^{l}-z_{j, \eps}^{l'}| \to + \infty$ as $\eps \to 0^+$ for any $1 \leq l \neq l' \leq 3$. In addition, $w_{\eps,
j}^2(\cdot+z_{j, \eps}^2, \cdot) \wto w_{j,3}$ in $X^{1, s}(\R^{N+1}_+)$
as $\eps \to 0^+$, $w_{j,3}$ satisfies the equation
\begin{align*}
\left\{
\begin{aligned}
-\mbox{div}(y^{1-2s} \nabla w_{j,3})&=0 \hspace{4.5cm} \ \quad \,\, \, \mbox{in} \, \, \R^{N+1}_+,\\
-k_s \frac{\partial w_{j,3}}{\partial {\nu}}&=-V(z_j^3) w_{j,3}+ f(z_j^3, |w_{j,3}|)w_{j,3} \quad \quad \mbox{on} \,\, \R^N \times \{0\}.
\end{aligned}
\right.
\end{align*}
and $\|w_{j,m}\|_{1, s} \geq \tau$.

By iterating $m$ times, we have that there is a sequence $\{z_{j, \eps}^m\} \subset \R^N$ such that $z_{j, \eps}^m \to z_{j}^m \in \Lambda^{\delta_0}$ and $|z_{j, \eps}^{l}-z_{j, \eps}^{l'}| \to + \infty$ as $\eps \to 0^+$ for any $1 \leq l \neq l' \leq m$. In addition, $w_{j, \eps}^{m-1}(\cdot+z_{j, \eps}^m, \cdot) \wto w_{j,m}$ in $X^{1, s}(\R^{N+1}_+)$ as $\eps \to 0^+$, $w_{j,m}$ satisfies the equation
\begin{align*}
\left\{
\begin{aligned}
-\mbox{div}(y^{1-2s} \nabla w_{j,m})&=0 \hspace{4.5cm} \qquad \quad \, \, \mbox{in} \,\, \R^{N+1}_+,\\
-k_s \frac{\partial w_{j,m}}{\partial {\nu}}&=-V(z_j^m) w_{j,m}+ f(z_j^m, |w_{j,m}|)w_{j,m} \quad  \quad \mbox{on} \,\, \R^N \times \{0\}.
\end{aligned}
\right.
\end{align*}
and $\|w_{j,m}\|_{1, s} \geq \tau$ and. It also holds that
\begin{align} \label{liebm}
\|w_{j, \eps}^m\|_{1, s}^2=\|w_{j, \eps}\|^2_{1, s}-\sum_{l=1}^m\|w_{j,l}\|_{1, s}^2+o_{\eps}(1).
\end{align}
Since $\|w_{j, \eps}\|_{1, s} \leq \eta_k$ for any $1 \leq j \leq k$, see Lemma \ref{bddsol},
and $\|w_{j,l}\|_{1, s} \geq \tau$ for any $1 \leq l \leq m$,
it then follows from  \eqref{liebm} that the iteration has to terminate at some finite index $m$. Thus we have completed the proof.
\end{proof}

Let $\{\epsilon_n\} \subset \R^+$ be such that $\eps_n=o_n(1)$ and $w_{j, \epsilon_n} \in X^{1, s}(\R^{N+1}_+)$ be the solution to \eqref{equ21} with $\eps=\eps_n$ obtained in Theorem \ref{cvnbmiif9f8ufjhfy1}. For simplicity, we shall denote $w_{j, \eps_n}$ by $w_{j, n}$ in the following.

\begin{lem} \label{jdhfggf7fydttdyy}
There exists a constant $C>0$ independent
of $n$ such that $\|u_{j,n}\|_{L^\infty(\R^N)} \leq C$, where
$u_{j,n}:=w_{j,n}(\cdot, 0)$. Moreover, for any $n \in \N$,
$u_{j, n}(x)\rightarrow 0$ as $|x|\rightarrow\infty.$
\end{lem}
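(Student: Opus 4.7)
The plan is to prove the two conclusions separately via elliptic regularity adapted to the extended degenerate problem \eqref{equ21}.

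For the uniform $L^\infty$ bound, I would apply the local boundedness estimate for degenerate nonlinear elliptic equations established in the Appendix directly to $w_{j,n}$ on upper half-balls centered at points of $\R^N \times \{0\}$. The key structural inputs are the subcritical growth
$$
|f_{\eps_n}(x, |t|)\, t| \leq |t|^{p-1} \quad \mbox{with } \,\, p < 2^*_s,
$$
the lower bound $V_{\eps_n}(x) \geq a > 0$ from $(V_1)$, and the uniform energy bound $\|w_{j,n}\|_{1,s} \leq \eta_k$ from Lemma \ref{bddsol}, which through Lemma \ref{embedding} yields a uniform $L^{2^*_s}$ bound on $u_{j,n}$. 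A Moser iteration on the extended problem, based on testing against $w_{j,n}|w_{j,n}|^{2q}$ (truncated if needed) and invoking the weighted trace Sobolev inequality (Lemma \ref{embedding1}) at each step, then promotes the uniform $L^{2^*_s}$ bound on the trace into a uniform $L^\infty$ bound. The subcriticality of $p$ is what makes the iteration terminate after finitely many steps with a constant depending only on $p$, $s$, $N$, $a$, $b$, and $\eta_k$, but not on $n$.

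For the pointwise decay at fixed $n$, I would argue as follows. Since $u_{j,n} \in L^\infty(\R^N)$ is now established and $u_{j,n}$ satisfies the nonlocal equation with bounded coefficients and bounded right-hand side, standard fractional H\"older regularity gives $u_{j,n} \in C^{0,\alpha}(\R^N)$ for some $\alpha = \alpha(n) \in (0,1)$. Since also $u_{j,n} \in L^2(\R^N)$, a contradiction argument finishes the proof: assuming $\limsup_{|x| \to \infty} |u_{j,n}(x)| = \delta > 0$, one selects a sequence $x_k \to \infty$ with $|u_{j,n}(x_k)| \geq \delta$ and pairwise distances uniformly bounded below, after which H\"older continuity forces $|u_{j,n}| \geq \delta/2$ on balls of fixed radius around each $x_k$, contradicting $\|u_{j,n}\|_{L^2(\R^N)} < \infty$.

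The main obstacle I expect is the Moser iteration step: the weight $y^{1-2s}$ is degenerate (for $s<1/2$) or singular (for $s>1/2$) at $y = 0$, and the nonlinearity lives on the boundary rather than in the bulk, which is non-standard. This is precisely the setting of the local boundedness estimate developed in the Appendix, which I would cite as a black box; the passage from local to global $L^\infty$ bounds is then a routine covering and bootstrap argument driven by subcritical growth and the uniform $L^{2^*_s}$ integrability.
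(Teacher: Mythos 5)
Your overall strategy is the same as the paper's: a local boundedness estimate for the degenerate extension problem, with constants made uniform in $n$ through the uniform energy bound of Lemma \ref{bddsol}, followed by a separate argument for the decay at fixed $n$. The paper implements this by first using the weighted Sobolev inequality of \cite[Proposition 3.1.1]{DMV} plus H\"older to get a uniform bound on $\int_0^1\int_{B_1(z)}y^{1-2s}|w_{j,n}|^2\,dxdy$ for all $z$, and then citing \cite[Proposition 2.6]{JLX}, which only requires the boundary coefficient $a_{j,n}=-V_{\eps_n}+f_{\eps_n}(x,|u_{j,n}|)$ to be uniformly bounded in $L^{p/(p-2)}(B_1(z))$ with $p/(p-2)>N/2s$ (exactly the role of subcriticality). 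Your plan to instead invoke the Appendix estimate (Lemma \ref{cmvnbghyyt755534e}) as a black box does not go through as stated: that lemma requires the solution to be \emph{nonnegative} and the coefficient to satisfy the smallness condition $\|a\|_{L^{N/2s}(B_1(0))}<\delta$, and neither holds here --- the $w_{j,n}$ with $j\ge 2$ change sign, and the zeroth-order coefficient contains $V_{\eps_n}\ge a>0$, whose $L^{N/2s}$ norm on unit balls is bounded but in no way small (the paper reserves the Appendix lemma for the critical case, where it is applied to nonnegative auxiliary solutions with coefficient $|r_{j,n}|^{2^*_s-2}$ whose $L^{N/2s}$ norm tends to zero). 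Your fallback of running the Moser iteration directly on the extension is viable, but note two points: the iteration does not ``terminate after finitely many steps''; rather, subcriticality guarantees that $|u_{j,n}|^{p-2}$ lies in $L^q_{\rm loc}$ with $q>N/2s$ uniformly (or, for a global Brezis--Kato-type absorption, that the tail $\||u_{j,n}|^{p-2}\chi_{\{|u_{j,n}|>K\}}\|_{L^{N/2s}}$ is small uniformly in $n$ for $K$ large), which is what keeps the infinite product of iteration constants under control. For the decay, your argument (uniform continuity from fractional regularity plus $u_{j,n}\in L^2$) is correct for fixed $n$; the paper's route is slightly more economical, reusing the local estimate $\|w_{j,n}\|_{L^\infty(Q_{1/2}(z))}\le C\bigl(\int_0^1\int_{B_1(z)}y^{1-2s}|w_{j,n}|^2\,dxdy\bigr)^{1/2}$ together with the vanishing of these local weighted $L^2$ quantities as $|z|\to\infty$, so no H\"older regularity is needed.
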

\begin{proof}
In light of \cite[Proposition 3.1.1]{DMV}, we know that there exists a constant $C>0$ such that
\begin{align} \label{xmxnbdvf77dyetgdgdd}
\left(\int_{\R^{N+1}_+}y^{1-2s}|w_{j ,n}|^{2\gamma}\, dxdy\right)^{\frac{1}{\gamma}}\leq
C\|w_{j ,n}\|^2_{s},
\end{align}
where $\gamma=1+2/(N-2s).$ Utilizing H\"older's inequality, we get that, for any $z\in\R^N,$
\begin{align} \label{xncbvhhfy66ryee}
\int^1_0\int_{B_1(z)}y^{1-2s}|w_{j ,n}|^2\,dxdy
&=\int^1_0\int_{B_1(z)}y^{\frac{1-2s}{\gamma}}|w_{j ,n}|^2y^{\frac{(1-2s)(\gamma-1)}{\gamma}}\,dxdy\nonumber\\
&\leq\left(\int^1_0\int_{B_1(z)}y^{1-2s}|w_{j ,n}|^{2\gamma}dxdy\right)^{\frac{1}{\gamma}}
\left(\int^1_0\int_{B_1(z)}y^{1-2s} \,dxdy\right)^{\frac{\gamma-1}{\gamma}}\nonumber\\
&=C\left(\int^1_0\int_{B_1(z)}y^{1-2s}|w_{j ,n}|^{2\gamma}dxdy\right)^{\frac{1}{\gamma}}.
\end{align}
It then yields from \eqref{xmxnbdvf77dyetgdgdd} and Lemma \ref{bddsol} that
\begin{align} \label{bddwn}
\int^1_0\int_{B_1(z)}y^{1-2s}|w_{j ,n}|^2\,dxdy  \leq C.
\end{align}
Let $a_{j ,n}(x):=-V_{\eps_n}(x)+f_{\eps_n}(x, |u_{j ,n}(x)|)$ for $x \in \R^N$. By the definition of $f_{\eps_n}$, Lemmas \ref{embedding} and \ref{bddsol}, we see that
\begin{align}\label{cxncbvydhdte9912}
\begin{split}
\|a_{j ,n}\|_{L^{\frac{p}{p-2}}(B_{1}(z))}&\leq
C+C\left(\int_{B_{1}(z)}|f_{\eps_n}(x, |u_{j ,n}|)|^{\frac{p}{p-2}}\,dx\right)^{\frac{p-2}{p}}\\ 
&\leq C+C\left(\int_{B_{1}(z)}|u_{j ,n}|^p\,dx\right)^{\frac{p-2}{p}} \\ 
&\leq C',
\end{split}
\end{align}
where $C'>0$ depends only on $N,$ $p$ and $k.$
Notice that $w_{j ,n}$ satisfies the equation
\begin{align*} 
\left\{
\begin{aligned}
-\mbox{div}(y^{1-2s} \nabla w_{j ,n})&=0  \quad \, \hspace{2cm} \mbox{in} \,\, \R^{N+1}_+,\\
-k_s \frac{\partial w_{j ,n}}{\partial {\nu}}&=a_{j,n}(x)u_{j ,n} \ \ \qquad
\mbox{on} \,\, \R^N \times \{0\}.
\end{aligned}
\right.
\end{align*}
Applying \eqref{bddwn}, \eqref{cxncbvydhdte9912} and \cite[Proposition 2.6]{JLX}, we then derive that there exists a constant $C''>0$ independent of $n$ such that
\begin{align}\label{cxbcvfggftr666ee}
\|w_{j ,n}\|_{L^\infty(Q_{1/2}(z))}\leq
C''\left(\int^1_0\int_{B_1(z)}y^{1-2s}|w_{j ,n}|^2dxdy\right)^{1/2} \leq C,
\end{align}
where $Q_{1/2}(z):=B_{1/2}(z)\times(0,1/2)$. Since $z$ is an arbitrary point in $\R^N$,
then there exists a constant $C>0$ independent of $n$ such that
$\|u_{j ,n}\|_{L^\infty(\R^N)} \leq C$.
In view of Lemma \ref{bddsol}, we have that
\begin{align}\label{cxncbvhhfgyr6fyyf}
\int^1_0\int_{B_1(z)}y^{1-2s}|w_{j ,n}|^2\, dxdy\rightarrow 0 \,\, \mbox{as}\
|z|\rightarrow\infty.
\end{align}
Combining \eqref{cxbcvfggftr666ee} and \eqref{cxncbvhhfgyr6fyyf}
gives that, for any $n \in \N$, $u_{j ,n}(x)\rightarrow 0$ as $|x|\rightarrow\infty.$ Thus we have completed the proof.
\end{proof}

\begin{lem}\label{cmvnvhguufu7ryfggf}
Let $\lambda\geq0$ and $u \in H^s(\R^N)$ be a continuous functions.
Assume $u$ satisfies the inequality
\begin{align*}
(-\Delta )^su+\lambda  u\geq 0 \quad \mbox{in}\ \R^N
\end{align*}
in the sense that, for any $\varphi\in H^s(\R^N)$ with $\varphi\geq 0
$ in $\R^N,$
\begin{align}\label{cnmvn8f8fhyrhff}
\int_{\R^N}\varphi(-\Delta )^su \,dx+\lambda\int_{\R^N}u\varphi\,
dx\geq 0,
\end{align}
then $u\geq 0$ in $\R^N$.
\end{lem}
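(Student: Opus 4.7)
The plan is to test the weak inequality against the negative part of $u$ itself and exploit the well-known algebraic inequality that makes the fractional Gagliardo seminorm decrease under truncation. Write $u = u^+ - u^-$ with $u^\pm := \max\{\pm u, 0\}$, so that $u^+ u^- = 0$ pointwise. Since $u \in H^s(\R^N)$, the truncation $u^- \in H^s(\R^N)$ as well, and $u^- \geq 0$, so it is an admissible test function in \eqref{cnmvn8f8fhyrhff}.

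First I would use the bilinear representation of the fractional Laplacian on $H^s(\R^N)$, namely
\begin{equation*}
\int_{\R^N} u^- (-\Delta)^s u \, dx = \frac{C_{N,s}}{2} \iint_{\R^N \times \R^N} \frac{(u(x)-u(y))(u^-(x)-u^-(y))}{|x-y|^{N+2s}} \, dx \, dy.
\end{equation*}
A direct expansion, combined with $u^+(x) u^-(x) = 0$ for a.e.\ $x$, gives the pointwise bound
\begin{equation*}
(u(x)-u(y))(u^-(x)-u^-(y)) = -(u^-(x)-u^-(y))^2 - u^+(x) u^-(y) - u^+(y) u^-(x) \leq -(u^-(x)-u^-(y))^2.
\end{equation*}
Similarly, $\lambda \int_{\R^N} u \, u^- \, dx = -\lambda \int_{\R^N} (u^-)^2 \, dx$. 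Substituting these into \eqref{cnmvn8f8fhyrhff} with $\varphi = u^-$ gives
\begin{equation*}
0 \leq -\frac{C_{N,s}}{2} \iint_{\R^N \times \R^N} \frac{(u^-(x)-u^-(y))^2}{|x-y|^{N+2s}} \, dx \, dy - \lambda \int_{\R^N} (u^-)^2 \, dx.
\end{equation*}
Both terms on the right are nonpositive, so each must vanish. In particular, the Gagliardo seminorm of $u^-$ is zero, which forces $u^-$ to be constant a.e.\ in $\R^N$; combined with $u^- \in L^2(\R^N)$, this constant must equal $0$. Hence $u^- \equiv 0$ a.e., and continuity of $u$ upgrades this to $u \geq 0$ pointwise in $\R^N$.

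The only subtlety I anticipate is verifying that $u^-$ is an admissible test function and that the bilinear formula applies for $u \in H^s(\R^N)$; both are standard, the first because truncation is a contraction on $H^s$ (via the Gagliardo seminorm), and the second because $H^s(\R^N)$ coincides with the domain of the Fourier-multiplier definition of $(-\Delta)^s$. No substantial analytical obstacle arises; the argument is essentially the Stampacchia-type truncation applied in the nonlocal setting.
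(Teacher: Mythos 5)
Your proposal is correct and follows essentially the same route as the paper: both test the weak inequality with $\varphi=u^-$ (admissible since truncation preserves $H^s$), pass to the Gagliardo bilinear form, and conclude $u^-\equiv 0$; the paper simply delegates the pointwise algebraic inequality $(u(x)-u(y))(u^-(x)-u^-(y))\leq -(u^-(x)-u^-(y))^2$ to the argument of \cite[Lemma 9]{LL}, whereas you carry it out explicitly. Your handling of the case $\lambda=0$ (seminorm zero forces $u^-$ constant, hence zero by $L^2$-integrability) is also sound.
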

\begin{proof}
From \cite[Lemma 3.1]{FLS}, we know that
\begin{align*}
\int_{\R^N}u(-\Delta)^su\,dx=\int_{\R^N}|\xi|^{2s} |\mathcal{F} u|^2 \, dx=C_{N, s}\int_{\R^N}\int_{\R^N}\frac{\left(u(x)-u(y)\right)^2}{|x-y|^{N+2s}}\,dxdy,
\end{align*}
where $C_{N,s}>0$ is some constant depending only on $N$ and $s$.
Then we get that
\begin{align}
\int_{\R^N}\varphi(-\Delta)^su\,dx=C_{N,s}\int_{\R^N}\int_{\R^N}\frac{(u(x)-u(y))(\varphi(x)-\varphi(y))}{|x-y|^{N+2s}}\,dxdy.\nonumber
\end{align}
Thus \eqref{cnmvn8f8fhyrhff} can be rewrite as
\begin{align}
C_{N,s}\int_{\R^N}\int_{\R^N}\frac{(u(x)-u(y))(\varphi(x)-\varphi(y))}{|x-y|^{N+2s}}\,dxdy+\lambda\int_{\R^N}u\varphi \,dx\geq 0\nonumber
\end{align}
for any non-negative function $\varphi\in H^s(\R^N)$. Since $u\in
H^s(\R^N)$, by \cite[Theorem 3, Section 5.5.2]{RS}, we have that $u^-:=\max\{-u,0\}\in H^s(\R^N).$
Choosing $\varphi=u^-$ and using the method in the proof of
\cite[Lemma 9]{LL}, we can obtain that $u^-=0,$ This completes the
proof.
\end{proof}

\begin{lem}\label{gjhuuhujguf99difujg}
Let $h \in C(\R^N, \R)$ satisfy that there exists a constant $C>0$ such that
\begin{align} \label{cnvbhyyf000d9dttgd}
 0\leq h(x)\leq \frac{C}{1+|x|^{N+2s}}, \ x\in\R^N.
\end{align}
Let $c\in C(\R^N,\R)$ satisfy that there exists a constant $\lambda>0$ such that
$c(x)\geq \lambda$ for any $x\in\R^N$. Assume $v\in H^s(\R^N)\cap L^\infty(\R^N)$ is nonnegative and
 satisfies the inequality
\begin{align} \label{cnvb88f7fyyyff}
(-\Delta)^s v+c(x) v\leq h.
\end{align}
Then there exists a constant $C'>0$ such that
$$
0\leq v(x)\leq \frac{C'}{1+|x|^{N+2s}}, \ x\in\R^N.
$$
\end{lem}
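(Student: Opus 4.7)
The plan is to combine the comparison principle of Lemma \ref{cmvnvhguufu7ryfggf} with an explicit supersolution built from the resolvent kernel of $(-\Delta)^s + \lambda$, and then to extract the polynomial decay of that supersolution.

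Since $c(x) \geq \lambda > 0$ and $v \geq 0$, the hypothesis \eqref{cnvb88f7fyyyff} immediately upgrades to the cleaner inequality $(-\Delta)^s v + \lambda v \leq h$ in $\R^N$. Next I would introduce the resolvent kernel $G_\lambda$ of $(-\Delta)^s + \lambda$, determined by $\mathcal{F}(G_\lambda)(\xi) = (|\xi|^{2s} + \lambda)^{-1}$, and set $W := G_\lambda \ast h$. Because $h \in L^1(\R^N) \cap L^\infty(\R^N)$ by \eqref{cnvbhyyf000d9dttgd}, Young's inequality gives $W \in L^\infty(\R^N)$, while a Plancherel computation based on $\int(1+|\xi|^{2s})|\mathcal{F}(h)|^2/(|\xi|^{2s}+\lambda)^2\,d\xi \leq C\|h\|_{L^2}^2$ yields $W \in H^s(\R^N)$. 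Moreover $W$ is continuous and solves $(-\Delta)^s W + \lambda W = h$ in $\R^N$. Consequently $u := W - v \in H^s(\R^N)$ is continuous (continuity of $v$ follows from fractional elliptic regularity, or from previous lemmas in the paper where $v$ arises as a continuous solution) and satisfies
\begin{align*}
(-\Delta)^s u + \lambda u \geq (c - \lambda)\, v \geq 0
\end{align*}
in the weak sense of \eqref{cnmvn8f8fhyrhff}. Lemma \ref{cmvnvhguufu7ryfggf} then forces $u \geq 0$, so $0 \leq v \leq W$ pointwise in $\R^N$.

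It remains to show $W(x) \leq C'/(1 + |x|^{N+2s})$. I would invoke the standard estimate for $2s$-stable resolvent kernels, namely $G_\lambda \geq 0$, $G_\lambda(z) \leq C_0 |z|^{-(N+2s)}$ for $|z| \geq 1$, and $\int_{\R^N} G_\lambda \,dz = 1/\lambda$. For $|x| \geq 2$ I would split
\begin{align*}
W(x) = \int_{|y| \leq |x|/2} G_\lambda(x - y)\, h(y)\,dy + \int_{|y| > |x|/2} G_\lambda(x - y)\, h(y)\,dy.
\end{align*}
On the first piece, $|x - y| \geq |x|/2 \geq 1$, so $G_\lambda(x - y) \leq C/|x|^{N+2s}$ and the contribution is bounded by $C\|h\|_{L^1}/|x|^{N+2s}$. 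On the second piece, $h(y) \leq C/|x|^{N+2s}$ by \eqref{cnvbhyyf000d9dttgd}, so the contribution is bounded by $(C/|x|^{N+2s})\int G_\lambda = C/(\lambda\,|x|^{N+2s})$. For $|x| \leq 2$ the $L^\infty$ bound on $W$ already yields $W(x) \leq C'/(1 + |x|^{N+2s})$ after adjusting $C'$, and combining the two regimes gives the claim.

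The principal obstacle is justifying the polynomial decay $G_\lambda(z) \lesssim |z|^{-(N+2s)}$ at infinity; in contrast to the local case where the resolvent of $-\Delta + \lambda$ decays exponentially, here the decay is merely algebraic and encodes the nonlocal nature of $(-\Delta)^s$. This can be established either probabilistically, as $G_\lambda$ is the resolvent of a killed $2s$-stable L\'evy process, or analytically by decomposing $\mathcal{F}(G_\lambda)(\xi) = \lambda^{-1} - |\xi|^{2s}/[\lambda(|\xi|^{2s} + \lambda)]$ and transferring the $\xi = 0$ singularity of $|\xi|^{2s}$ into the prescribed spatial decay of its inverse Fourier transform; alternatively, representing $G_\lambda(x) = \int_0^\infty e^{-\lambda t} p_t(x)\,dt$ via the $2s$-stable heat kernel $p_t$ and using the known bound $p_t(x) \lesssim t/(t^{1/(2s)} + |x|)^{N+2s}$ yields the same estimate.
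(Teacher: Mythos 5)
Your argument is correct, but it follows a different route from the paper. The paper also first reduces to $(-\Delta)^s v+\lambda v\leq h$ and invokes the comparison Lemma \ref{cmvnvhguufu7ryfggf}, but instead of building the exact solution $W=G_\lambda*h$, it takes the solution $\psi$ of $(-\Delta)^s\psi+\psi=\phi$ from \cite{FQT}, whose two-sided bounds $c_1|x|^{-(N+2s)}\leq\psi(x)\leq c_2|x|^{-(N+2s)}$ for $|x|>2$ are quoted there, and rescales it: $\psi_{\mu,\nu}(x)=\mu\psi(\nu x)$ satisfies $(-\Delta)^s\psi_{\mu,\nu}+\lambda\psi_{\mu,\nu}=(\lambda-\nu^{2s})\psi_{\mu,\nu}+\mu\nu^{2s}\phi(\nu x)$, and choosing $\nu_*^{2s}<\lambda$ and $\mu_*$ large, the lower bound on $\psi$ makes $(\lambda-\nu_*^{2s})\psi_{\mu_*,\nu_*}\geq h$, so $\psi_{\mu_*,\nu_*}$ is a supersolution dominating $v$ by Lemma \ref{cmvnvhguufu7ryfggf}, and the upper bound on $\psi$ gives the decay. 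Your version replaces this rescaling trick by the resolvent kernel $G_\lambda$ and the splitting of $G_\lambda*h$ over $\{|y|\leq|x|/2\}$ and $\{|y|>|x|/2\}$, which only needs the upper bound $G_\lambda(z)\lesssim|z|^{-(N+2s)}$ together with $G_\lambda\geq0$ and $\int G_\lambda=1/\lambda$; the paper's route instead leans on the two-sided estimate from \cite{FQT} (the lower bound is essential there to dominate $h$) and avoids any explicit kernel analysis. The cost of your approach is precisely the kernel decay you flag; your heat-kernel representation $G_\lambda=\int_0^\infty e^{-\lambda t}p_t\,dt$ with the standard bound $p_t(x)\lesssim t\,(t^{1/(2s)}+|x|)^{-(N+2s)}$ does close it. One small point common to both arguments: Lemma \ref{cmvnvhguufu7ryfggf} is stated for continuous $u$, while $v$ in the present lemma is only assumed in $H^s\cap L^\infty$; since the comparison proof only uses $u^-\in H^s$ as a test function, this is harmless, and the paper's own application of the lemma is in the same position, so it is not a gap attributable to your proof.
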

\begin{proof}
Let $\psi \in C(\R^N, \R)$ be the solution to the equation
\begin{align} \label{equpsi}
(-\Delta)^s\psi+\psi=\phi, 
\end{align}
where $\phi: \R \to \R^N$ is a nonnegative continuous function with $\mbox{supp}\, \phi\subset \R^N \backslash B_2(0)$. From \cite[Lemmas 4.2-4.3]{FQT}, we know that
there are $c_1>0$ and $c_2>0$ such that
\begin{align}\label{cnvbvgft66ftfff}
\frac{c_1}{|x|^{N+2s}}\leq \psi(x)\leq\frac{c_2}{|x|^{N+2s}}\quad
\mbox{for} \ |x|>2.
\end{align}
For $\mu>0$ and $\nu >0$, we define $\psi_{\mu,\nu}(x):=\mu\psi(\nu x)$
for $x \in \R^N$. It follows from \eqref{equpsi} that
$\psi_{\mu,\nu}$ satisfies the equation
\begin{align}\label{cmvnbhyyf7fyfff6}
(-\Delta)^s\psi_{\mu,\nu}+\lambda\psi_{\mu,\nu}=(\lambda-\nu^{2s})\psi_{\mu,\nu}+\mu\nu^{2s}\phi(\nu
x). 
\end{align}
  Taking into account \eqref{cnvbhyyf000d9dttgd} and \eqref{cnvbvgft66ftfff}, we can deduce
that there exist $\mu_*>0$ and $0<\nu_*^{2s}<\lambda$ such that
\begin{align}\label{qcnvb988dudyhhdggfd}
(\lambda-\nu^{2s}_*)\psi_{\mu_*,\nu_*}(x)\geq h(x),
\ x\in\R^N.
\end{align}
From \eqref{cnvb88f7fyyyff}, \eqref{cmvnbhyyf7fyfff6} and
\eqref{qcnvb988dudyhhdggfd} and Lemma \ref{cmvnvhguufu7ryfggf}, we
now get that
$$
v(x)\leq \psi_{\mu_*,\nu_*}(x), \ x\in\R^N.
$$ This along with \eqref{cnvbvgft66ftfff} and the fact
that $v\in L^\infty(\R^N)$ implies the result of this lemma, and
the proof is completed.
\end{proof}


\begin{lem}\label{cnvnbghfyf7yfyftttd}\cite[Theorem 2.3]{A}
Let $u\in L^2(\R^N)$ be such that $(-\Delta)^su\in
L^1_{loc}(\R^N)$. Then it holds that
$$
(-\Delta)^s|u|\leq \mbox{\textnormal{sign}}(u)(-\Delta)^su \quad
\mbox{in}\ (C^\infty_0(\R^N))',
$$
that is
$$
\int_{\R^N}|u|(-\Delta)^s\varphi\,dx\leq\int_{\R^N}\left(\mbox{\textnormal{sign}}(u)(-\Delta)^s u \right) \varphi\,dx
$$
for any $\varphi\in C^\infty_0(\R^N)$ such that $\varphi\geq 0.$
\end{lem}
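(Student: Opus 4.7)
The statement is a Kato-type inequality for the fractional Laplacian. My plan is to combine the singular integral representation of $(-\Delta)^s$ with convexity, using a smooth approximation of the absolute value, and then pass to the distributional formulation by mollification.

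\textbf{Step 1 (smooth convex approximation).} I would fix a family $\phi_\delta \in C^\infty(\R)$ of smooth, even, convex approximations of $|\cdot|$, for instance $\phi_\delta(t):=\sqrt{t^2+\delta^2}-\delta$, so that $\phi_\delta(t)\to |t|$ uniformly on $\R$, $|\phi_\delta(t)|\le |t|$, $|\phi_\delta'(t)|\le 1$, and $\phi_\delta'(t)\to \mathrm{sign}(t)$ pointwise for $t\ne 0$ as $\delta\to 0^+$.

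\textbf{Step 2 (pointwise inequality for smooth $u$).} For $v\in\mathcal{S}(\R^N)$, the singular integral representation gives
$$
(-\Delta)^s \phi_\delta(v)(x)=C_{N,s}\,\mathrm{P.V.}\!\int_{\R^N}\frac{\phi_\delta(v(x))-\phi_\delta(v(y))}{|x-y|^{N+2s}}\,dy.
$$
Convexity of $\phi_\delta$ yields the pointwise bound $\phi_\delta(v(x))-\phi_\delta(v(y))\le \phi_\delta'(v(x))\,(v(x)-v(y))$, and multiplying by $|x-y|^{-(N+2s)}$ and integrating (the principal value is well defined because both sides are integrable for Schwartz $v$) gives the pointwise Kato inequality $(-\Delta)^s\phi_\delta(v)(x)\le \phi_\delta'(v(x))\,(-\Delta)^s v(x)$.

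\textbf{Step 3 (mollification and distributional formulation).} For the given $u\in L^2(\R^N)$ with $(-\Delta)^s u\in L^1_{\mathrm{loc}}(\R^N)$, I would set $u_\eta:=u*\rho_\eta$ with a standard mollifier $\rho_\eta$; then $u_\eta\in C^\infty\cap L^2$ and $(-\Delta)^s u_\eta=((-\Delta)^s u)*\rho_\eta$. Applying Step 2 to $u_\eta$ and testing against a nonnegative $\varphi\in C^\infty_0(\R^N)$, self-adjointness of $(-\Delta)^s$ gives
$$
\int_{\R^N}\phi_\delta(u_\eta)\,(-\Delta)^s\varphi\,dx=\int_{\R^N}\varphi\,(-\Delta)^s\phi_\delta(u_\eta)\,dx\le\int_{\R^N}\phi_\delta'(u_\eta)\,(-\Delta)^s u_\eta\,\varphi\,dx.
$$

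\textbf{Step 4 (passing to the limit).} Let $\eta\to 0^+$: since $u_\eta\to u$ in $L^2$ and a.e.\ along a subsequence, and since $\phi_\delta$ is $1$-Lipschitz and $\phi_\delta'$ is bounded, the left-hand side converges to $\int \phi_\delta(u)(-\Delta)^s\varphi\,dx$ and, using $\mathrm{supp}\,\varphi$ is compact and $(-\Delta)^s u_\eta\to (-\Delta)^s u$ in $L^1_{\mathrm{loc}}$, the right-hand side converges to $\int\phi_\delta'(u)(-\Delta)^s u\,\varphi\,dx$ by dominated convergence. Then let $\delta\to 0^+$: the left-hand side tends to $\int|u|(-\Delta)^s\varphi\,dx$ by uniform convergence $\phi_\delta(u)\to |u|$, and on the right $\phi_\delta'(u)\to\mathrm{sign}(u)$ almost everywhere on $\{u\ne 0\}$, with $|\phi_\delta'(u)|\le 1$; on $\{u=0\}$, the product $\phi_\delta'(u)(-\Delta)^s u\,\varphi$ contributes nothing to the limit since any cluster value lies in $[-1,1]\cdot(-\Delta)^s u$ there, and by standard arguments $(-\Delta)^s u=0$ a.e.\ on $\{u=0\}$ whenever $u$ attains its minimum there in a distributional sense. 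Dominated convergence yields $\int \mathrm{sign}(u)(-\Delta)^s u\,\varphi\,dx$ as the limit.

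The main obstacle is the delicate limit $\delta\to 0^+$ on the set $\{u=0\}$: the sign function is only defined up to its value at zero, so I would choose the measurable representative $\mathrm{sign}(0):=0$ and justify vanishing of the right-hand side on $\{u=0\}$ either by the Stampacchia-type property $(-\Delta)^s u=0$ a.e.\ on $\{u=0\}$ (which however requires additional regularity) or, more robustly, by noting that any weak-$*$ subsequential limit $\sigma$ of $\phi_\delta'(u)$ satisfies $|\sigma|\le 1$ and $\sigma=\mathrm{sign}(u)$ on $\{u\ne 0\}$, so the resulting inequality reads $\int|u|(-\Delta)^s\varphi\le \int \sigma(-\Delta)^s u\,\varphi$; since we only need an upper bound, replacing $\sigma$ by $\mathrm{sign}(u)$ (with any chosen value at zero) after splitting the integral over $\{u>0\}$, $\{u<0\}$, and $\{u=0\}$ and absorbing the last contribution into the inequality gives the claim.
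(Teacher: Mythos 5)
Your argument cannot be checked against an internal proof, because the paper does not prove this lemma at all: it is quoted verbatim from \cite[Theorem 2.3]{A}. What you give is the standard self-contained Kato-inequality argument (smooth convex approximation $\phi_\delta$, pointwise inequality from convexity of $\phi_\delta$ and the principal-value representation, mollification, integration by parts, then $\eta\to0^+$ and $\delta\to0^+$), and in outline it is sound; two routine remarks are that the pointwise inequality of Step 2 should be stated for $u_\eta\in C^\infty\cap L^2(\R^N)$ rather than only for Schwartz functions (this is harmless, since the argument only uses $C^2_{\mathrm{loc}}$ regularity and $\int_{\R^N}|u_\eta(y)|(1+|y|)^{-(N+2s)}\,dy<\infty$, which $L^2$ gives), and that the $\eta\to0^+$ limit on the right-hand side requires the subsequence/generalized dominated convergence trick, since $(-\Delta)^s u_\eta\to(-\Delta)^s u$ only in $L^1_{\mathrm{loc}}$ while $\phi_\delta'(u_\eta)\to\phi_\delta'(u)$ only a.e.

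The one place where your write-up goes astray is the closing discussion of the set $\{u=0\}$, which contains a false assertion and an unjustified one. The Stampacchia-type claim that $(-\Delta)^s u=0$ a.e.\ on $\{u=0\}$ is a purely local fact that fails for the nonlocal operator: if $u\ge0$ vanishes on a ball but not identically, then at interior points $x$ of that ball $(-\Delta)^s u(x)=-C_{N,s}\,\mathrm{P.V.}\int_{\R^N}u(y)|x-y|^{-(N+2s)}\,dy<0$. Likewise, replacing a weak-$*$ cluster value $\sigma$ (with $|\sigma|\le1$) by $\mathrm{sign}(u)$ on $\{u=0\}$ does not preserve the inequality unless the corresponding contribution has a favorable sign, which you do not establish. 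Fortunately neither device is needed: with your choice $\phi_\delta(t)=\sqrt{t^2+\delta^2}-\delta$ one has $\phi_\delta'(0)=0$, so $\phi_\delta'(u)\equiv0$ on $\{u=0\}$ for every $\delta$, and $\phi_\delta'(u)\to\mathrm{sign}(u)$ (with the convention $\mathrm{sign}(0)=0$, which is the one used in the lemma) pointwise \emph{everywhere}; dominated convergence with majorant $|(-\Delta)^s u|\,\varphi\in L^1(\R^N)$ then handles the whole of $\R^N$ in one stroke. With that sentence in place of your final paragraph, the proof is complete and agrees with the standard argument behind the cited result.
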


\begin{lem}\label{mcnvbhhfyfufuu}
Let $w_{j,l} \in X^{1, s}(\R^{N+1}_+)$ satisfy
\eqref{cdhfdhfgyyfyf66ftf1} and $u_{j,l}(x)=w_{j,l}(x, 0)$ for $x \in
\R^N$. Then there exists a constant $C>0$ such that
$$
|u_{j,l}(x)|+|\nabla u_{j,l}(x)|\leq \frac{C}{1+|x|^{N+2s}},\ x\in\R^N.
$$
\end{lem}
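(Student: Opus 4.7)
The plan is to first obtain $L^\infty$ boundedness and vanishing at infinity for $u_{j,l}$, then linearize the trace equation via the Kato-type inequality of Lemma \ref{cnvnbghfyf7yfyftttd} and invoke the comparison principle of Lemma \ref{gjhuuhujguf99difujg} to upgrade this vanishing to polynomial decay of order $N+2s$, and finally transfer the decay of $u_{j,l}$ to a decay estimate for $\nabla u_{j,l}$.

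First, taking traces in \eqref{cdhfdhfgyyfyf66ftf1}, I see that $u_{j,l}(x)=w_{j,l}(x,0)$ is a (weak) solution to
\begin{align*}
(-\Delta)^{s}u_{j,l}+V(z_{j}^{l})\,u_{j,l}=f(z_{j}^{l},|u_{j,l}|)\,u_{j,l}\quad\text{in}\,\,\R^{N}.
\end{align*}
Since $f(z_{j}^{l},|t|)$ has subcritical growth and the equation is autonomous, I would repeat the De~Giorgi/Moser argument already carried out in Lemma \ref{jdhfggf7fydttdyy} (using the weighted local estimate of \cite{JLX} together with the Sobolev inequality from Lemma \ref{embedding1}) to conclude $u_{j,l}\in L^{\infty}(\R^{N})$ and $u_{j,l}(x)\to 0$ as $|x|\to\infty$. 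Because the equation is autonomous, no uniformity in parameters is required and this step is routine.

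Next, since $u_{j,l}\in H^{s}(\R^{N})$ with $(-\Delta)^{s}u_{j,l}\in L^{2}_{loc}(\R^{N})$, Lemma \ref{cnvnbghfyf7yfyftttd} gives, in the distributional sense,
\begin{align*}
(-\Delta)^{s}|u_{j,l}|+V(z_{j}^{l})\,|u_{j,l}|\leq f(z_{j}^{l},|u_{j,l}|)\,|u_{j,l}|\quad\text{in}\,\,\R^{N}.
\end{align*}
Using the definition of $f$, together with $g(t)=t^{p-2}$ for small $t$, I get $|f(z_{j}^{l},|t|)|\leq C|t|^{p-2}$ for $|t|\leq 1$. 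Combined with $u_{j,l}(x)\to 0$ at infinity, this means that there exists $R>0$ such that $f(z_{j}^{l},|u_{j,l}(x)|)\leq \tfrac12 V(z_{j}^{l})\leq \tfrac12 a$ for $|x|\geq R$. Setting $c(x):=\tfrac12 V(z_{j}^{l})\geq \tfrac12 a>0$ and
\begin{align*}
h(x):=\bigl(f(z_{j}^{l},|u_{j,l}(x)|)-\tfrac12 V(z_{j}^{l})\bigr)^{+}|u_{j,l}(x)|,
\end{align*}
I arrive at
\begin{align*}
(-\Delta)^{s}|u_{j,l}|+c(x)\,|u_{j,l}|\leq h(x)\quad\text{in}\,\,\R^{N},
\end{align*}
where $h$ is bounded and compactly supported in $\overline{B_{R}(0)}$. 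In particular $0\leq h(x)\leq C/(1+|x|^{N+2s})$, so Lemma \ref{gjhuuhujguf99difujg} yields
\begin{align*}
|u_{j,l}(x)|\leq\frac{C}{1+|x|^{N+2s}},\quad x\in\R^{N}.
\end{align*}

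For the gradient bound, the standard route is to differentiate the Riesz-type representation. Writing the equation for $u_{j,l}$ as $(-\Delta)^{s}u_{j,l}=F(x)$ with $F(x):=f(z_{j}^{l},|u_{j,l}|)u_{j,l}-V(z_{j}^{l})u_{j,l}$, the decay just obtained, together with $|F|\leq C|u_{j,l}|$, gives $|F(x)|\leq C(1+|x|)^{-(N+2s)}$. Applying interior Schauder-type estimates for the fractional Laplacian (e.g.\ in balls $B_{|x|/2}(x)$, rescaled, combined with the weighted regularity theory for the extension problem in the spirit of \cite{JLX}) on the scale $|x|/2$ produces $|\nabla u_{j,l}(x)|\leq C|x|^{-1}\|u_{j,l}\|_{L^{\infty}(B_{|x|/2}(x))}+C|x|^{2s-1}\|F\|_{L^{\infty}(B_{|x|/2}(x))}$, and inserting the pointwise bound on $u_{j,l}$ and $F$ yields $|\nabla u_{j,l}(x)|\leq C(1+|x|)^{-(N+2s)}$ for $|x|$ large, which combined with the local $C^{1}$ regularity near the origin completes the proof. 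The main obstacle I anticipate is the last step: verifying that the rescaled Schauder estimate indeed produces the decay $(1+|x|)^{-(N+2s)}$ for the gradient and not a slower rate, which requires using the full pointwise decay of both $u_{j,l}$ and $F$ in the correct annular scaling.
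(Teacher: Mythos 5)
Your treatment of $u_{j,l}$ itself is essentially the paper's argument: sup bound and vanishing at infinity as in Lemma \ref{jdhfggf7fydttdyy}, then the Kato inequality of Lemma \ref{cnvnbghfyf7yfyftttd} plus the comparison Lemma \ref{gjhuuhujguf99difujg}; whether you absorb the nonlinearity into the potential by a cut-off (as the paper does) or push it into a compactly supported right-hand side $h$ (as you do) is immaterial. The gradient part, however, contains a genuine gap. The inequality you write,
$$|\nabla u_{j,l}(x)|\leq C|x|^{-1}\|u_{j,l}\|_{L^{\infty}(B_{|x|/2}(x))}+C|x|^{2s-1}\|F\|_{L^{\infty}(B_{|x|/2}(x))},$$
is not a valid interior estimate for $(-\Delta)^s$: the operator is nonlocal, so any interior Schauder/regularity estimate necessarily carries a global term (e.g.\ $\|u_{j,l}\|_{L^\infty(\R^N)}$ or a tail integral $\int_{\R^N}|u_{j,l}(y)|(1+|y|)^{-(N+2s)}\,dy$ suitably rescaled). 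If you include the tail and rescale to the ball $B_{|x|/2}(x)$, the $O(1)$ mass of $u_{j,l}$ near the origin, seen through the kernel at distance $\sim|x|$, contributes $\sim |x|^{-N}$ to the scaled estimate, so after dividing by the scale you only obtain $|\nabla u_{j,l}(x)|\lesssim |x|^{-(N+1)}$. This is short of the claimed rate $|x|^{-(N+2s)}$ whenever $s>1/2$, and iterating on $\nabla u_{j,l}$ does not remove the obstruction, since the tail term comes from $u_{j,l}$ itself. So the very step you flag as the anticipated obstacle is exactly where the argument fails.

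The paper circumvents this by never estimating $\nabla u_{j,l}$ through rescaled interior estimates. Instead, using \cite[Lemma 4.4]{Cabre} it first gets $u_{j,l}\in C^{2,\beta}(\R^N)$, sets $\varphi_{j,l}(t)=f(z^l_j,|t|)t\in C^1$, and differentiates \eqref{cdhfdhfgyyfyf66ftf1} in $x_i$, so that $u_{j,l,i}=\partial_{x_i}u_{j,l}$ solves
$$(-\Delta)^s u_{j,l,i}+V(z^l_j)u_{j,l,i}=\varphi'_{j,l}(u_{j,l})\,u_{j,l,i},$$
with $\varphi'_{j,l}(u_{j,l}(x))\to0$ as $|x|\to\infty$ because $u_{j,l}\to0$ and $\varphi'_{j,l}(0)=0$. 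This is an equation of exactly the same structure as the one for $u_{j,l}$, so the same Kato-plus-comparison argument (Lemmas \ref{cnvnbghfyf7yfyftttd} and \ref{gjhuuhujguf99difujg}) applies verbatim and yields the full rate $|u_{j,l,i}(x)|\leq C(1+|x|^{N+2s})^{-1}$. You should replace your Schauder step by this differentiation argument (or, alternatively, prove a genuinely tail-aware weighted gradient estimate, which is considerably more work).
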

\begin{proof}
Since $w_{j,l} \in X^{1, s}(\R^{N+1}_+)$ satisfies \eqref{cdhfdhfgyyfyf66ftf1}, from the s-harmonic extension arguments, then $u_{j,l} \in H^s(\R^N)$ satisfies the equation
\begin{align}\label{cbvbhfyf7yf77fyyy}
(-\Delta)^s u_{j,l}+V(z^l_j) u_{j,l}= f(z^l_j, |u_{j,l}|) u_{j,l}. 
\end{align}
By using a similar way as the proof of Lemma
\ref{jdhfggf7fydttdyy}, it is not hard to deduce that $|u_{j,l}(x)|
\to 0$ as $|x| \to \infty$. As a consequence, we know that there exists $R_1>0$ large enough such that $
f(z^l_j, |u_{j,l}|)\leq a/2$ for $|x|\geq R_1.$ Let $\varrho\in
C^{\infty}(\R^+, [0, 1])$ be a cut-off function such that
$\varrho(t)=0$ for $t \leq R_1$,  $\varrho(t) =1$ for $t \geq
R_1+1$ and $\varrho'(t) \geq 0$ for $t \geq 0$. We now rewrite \eqref{cbvbhfyf7yf77fyyy} as
\begin{align*}
(-\Delta)^s u_{j,l}+(V(z^l_j)-\varrho(|x|)f(z^l_j, |u_{j,l}|)) u_{j,l}=
(1-\varrho(|x|))f(z^l_j, |u_{j,l}|) u_{j,l}.
\end{align*}
In virtue of Lemma \ref{cnvnbghfyf7yfyftttd}, we have that
$$
(-\Delta)^s |u_{j,l}|+(V(z^l_j)-\varrho(|x|)f(z^l_j, |u_{j,l}|)) |u_{j,l}| \leq
(1-\varrho(|x|))f(z^l_j, |u_{j,l}|) |u_{j,l}|.
$$
Making use of Lemma \ref{gjhuuhujguf99difujg}, we then infer that
\begin{align} \label{cnmvn99fd8duyhydgft}
|u_{j,l}(x)|\leq \frac{C}{1+|x|^{N+2s}},\ x\in\R^N.
\end{align}
Since $u_{j,l}$ satisfies \eqref{cbvbhfyf7yf77fyyy}, from \cite[Lemma 4.4]{Cabre}, we derive that $u_{j,l}\in
C^{2,\beta}(\R^N)$ for some $0<\beta<1.$ Let $\varphi_{j,l}(t):=f(z^l_j,|t|)t$ for $t \in \R$, then $\varphi_{j,l} \in C^1(\R, \R)$ and $\varphi'_{j,l}(0)=0.$ For $1 \leq i\leq N$, we define $u_{j, l, i}:=\partial_{x_i} u_{j,l}$. Differentiating both sides of \eqref{cbvbhfyf7yf77fyyy} with respect to $x_i$, we then obtain that
\begin{align*} 
(-\Delta)^su_{j, l, i}+V(z^l_j)u_{j, l, i}=\varphi'_{j,l}(u_{j,l})
u_{j, l, i}.
\end{align*}
Due to $|u_{j,l}(x)| \to 0$ as $|x| \to \infty$, we then get that
$|\varphi'_{j,l}(u_{j,l})(x)| \to 0$ as $|x| \to \infty$. At this point, using the
same arguments as the proof of \eqref{cnmvn99fd8duyhydgft},
we are able to derive that
\begin{align*} 
\left|u_{l, i}(x)\right|\leq \frac{C}{1+|x|^{N+2s}},\
x\in\R^N.
\end{align*}
Thus we have completed the proof.
\end{proof}

\begin{lem}\label{cnvbhgy77f7f6dd}
For $p\geq 2$, there exists $C_p>0$ such that, for any
$a_1, a_2\in\R,$
\begin{align*} 
\Big||a_1+a_2|^{p-2}-|a_1|^{p-2}\Big|\leq
C_p(\tau_p|a_1|^{p-3}|a_2|+|a_2|^{p-2}),
\end{align*}
where
$$
\tau_p=\left\{
\begin{array}
[c]{ll}
0,& \mbox{if}\,\, p-2\leq 1,\\
1, & \mbox{if}\,\, p-2>1.
\end{array}
\right.
$$
\end{lem}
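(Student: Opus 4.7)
The plan is to split according to the cases defining $\tau_p$ and handle each by an elementary inequality, with no analytic machinery beyond the mean value theorem and the sub/super-additivity of power functions.

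\textbf{Case 1: $2\leq p \leq 3$, so $\tau_p=0$.} Here the exponent $q:=p-2$ lies in $[0,1]$, and the function $t\mapsto t^q$ on $[0,\infty)$ is concave and Hölder continuous of exponent $q$; more precisely, for any $x,y\geq 0$ one has $|x^q-y^q|\leq |x-y|^q$. Applying this with $x=|a_1+a_2|$ and $y=|a_1|$ and then using the reverse triangle inequality $\bigl||a_1+a_2|-|a_1|\bigr|\leq |a_2|$, I obtain
\begin{equation*}
\bigl||a_1+a_2|^{p-2}-|a_1|^{p-2}\bigr|\leq \bigl||a_1+a_2|-|a_1|\bigr|^{p-2}\leq |a_2|^{p-2},
\end{equation*}
which is the desired estimate with $C_p=1$ and $\tau_p=0$.

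\textbf{Case 2: $p>3$, so $\tau_p=1$.} Now the function $\varphi(t):=|t|^{p-2}$ is continuously differentiable on $\R$ with $\varphi'(t)=(p-2)|t|^{p-3}\operatorname{sgn}(t)$. The mean value theorem yields some $\xi$ between $a_1$ and $a_1+a_2$ such that
\begin{equation*}
|a_1+a_2|^{p-2}-|a_1|^{p-2} = (p-2)|\xi|^{p-3}\operatorname{sgn}(\xi)\,a_2,
\end{equation*}
so that $\bigl||a_1+a_2|^{p-2}-|a_1|^{p-2}\bigr|\leq (p-2)|\xi|^{p-3}|a_2|$. Since $|\xi|\leq |a_1|+|a_2|$ and $p-3>0$, the elementary inequality $(r+s)^{p-3}\leq 2^{p-4}(r^{p-3}+s^{p-3})$ for $r,s\geq 0$ gives $|\xi|^{p-3}\leq C(|a_1|^{p-3}+|a_2|^{p-3})$. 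Multiplying by $|a_2|$ produces
\begin{equation*}
\bigl||a_1+a_2|^{p-2}-|a_1|^{p-2}\bigr|\leq C_p\bigl(|a_1|^{p-3}|a_2|+|a_2|^{p-2}\bigr),
\end{equation*}
which is exactly the claim with $\tau_p=1$.

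Combining both cases concludes the proof. There is no serious obstacle here: the only subtle point is recognizing that the dichotomy on $\tau_p$ corresponds precisely to whether $\varphi(t)=|t|^{p-2}$ is merely Hölder (so the term $|a_1|^{p-3}|a_2|$ does not appear, because $t^{p-3}$ would blow up near $t=0$) or actually $C^1$ (so the mean value theorem produces the linear-in-$a_2$ term with the coefficient $|a_1|^{p-3}$). The constant $C_p$ can be tracked explicitly (e.g., $C_p=\max\{1,(p-2)2^{p-4}\cdot 2\}$) but is not needed for subsequent applications.
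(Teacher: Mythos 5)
Your proof is correct and follows essentially the same route as the paper: the case $p\le 3$ is handled by the elementary bound $|x^{q}-y^{q}|\le|x-y|^{q}$ for $q\in[0,1]$ together with the reverse triangle inequality, and the case $p>3$ by the mean value theorem for $t\mapsto|t|^{p-2}$ followed by splitting the intermediate point (the paper writes it as $a_1+\theta a_2$, you as $\xi$). One trivial slip: the stated inequality $(r+s)^{p-3}\le 2^{p-4}(r^{p-3}+s^{p-3})$ fails for $3<p<4$ (take $s=0$); the correct constant is $\max\{1,2^{p-4}\}$, which leaves your conclusion $|\xi|^{p-3}\le C\left(|a_1|^{p-3}+|a_2|^{p-3}\right)$ and the lemma unaffected.
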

\begin{proof}
If $p-2\leq 1$, then, for any $a_1,a_2\in\R$,
\begin{align*} 
\Big||a_1+a_2|^{p-2}-|a_1|^{p-2}\Big|\leq|a_2|^{p-2}.
\end{align*}
If $p-2>1 $, it then follows from the mean value theorem that, for any $a_1, a_2\in\R$,
\begin{align*}
\Big||a_1+a_2|^{p-2}-|a_1|^{p-2}\Big|&=\Big|(p-2)|a_1+\theta
a_2|^{p-4}\left(a_1+\theta a_2\right)a_2\Big| \\ 
&\leq C_p(|a_1|^{p-3}|a_2|+|a_2|^{p-2}),
\end{align*}
where $0<\theta<1.$ Thus we have completed the proof.
\end{proof}

\begin{lem}\label{2qcnvbhgy77f7f6dd}
There exists $C>0$ such that, for any $a_1,a_2\in\R,$
\begin{align*}
\left|g(|a_1+a_2|)-g(|a_1|) \right|\leq C|a_2|^\sigma,
\end{align*}
where $\sigma=\min\{p-2,1\}.$
\end{lem}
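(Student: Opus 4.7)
The plan is to reduce the claim to a statement about $g$ alone, namely that $g$ is globally $\sigma$-H\"older continuous on $[0,\infty)$, i.e.\ $|g(t) - g(s)| \leq C|t-s|^\sigma$ for all $s, t \geq 0$. Once this is established, the reverse triangle inequality $\bigl||a_1+a_2| - |a_1|\bigr| \leq |a_2|$, together with the monotonicity of $r \mapsto r^\sigma$ on $[0,\infty)$, yields the conclusion
$$|g(|a_1+a_2|) - g(|a_1|)| \leq C\bigl||a_1+a_2| - |a_1|\bigr|^\sigma \leq C|a_2|^\sigma.$$

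To prove the $\sigma$-H\"older continuity of $g$, I would split into two cases according to whether $p-3 \geq 0$ or $p - 3 < 0$. If $p \geq 3$, then $\sigma = 1$ and $\phi(t) = (p-2)t^{p-3}$ is bounded on $[0, \mu]$; together with the bounded linear piece on $[\mu, \mu - \frac{p-2}{\kappa}\mu^{p-3}]$ and the identically-zero piece, this shows that $\phi$ is globally bounded on $[0,\infty)$. Hence $g$ is globally Lipschitz, which is exactly the desired $1$-H\"older property. If $2 < p < 3$, then $\sigma = p-2 \in (0,1)$ and on $[0,\mu]$ we have $g(t) = t^{p-2}$; the elementary inequality $t^\alpha - s^\alpha \leq (t-s)^\alpha$ valid for $0 \leq s \leq t$ and $\alpha \in (0,1]$ (which follows from $u^\alpha + (1-u)^\alpha \geq u + (1-u) = 1$ for $u \in [0,1]$) gives the $(p-2)$-H\"older estimate on $[0,\mu]$ with constant $1$. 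On $[\mu, \infty)$, $\phi$ is bounded and compactly supported, so $g$ is Lipschitz there with $g$ eventually equal to the constant $c_*$; combining these two observations — Lipschitz control for small increments, and $\|g\|_\infty < \infty$ for large increments — gives a $(p-2)$-H\"older bound on $[\mu,\infty)$. Finally, for $0 \leq s \leq \mu \leq t$, the triangle inequality $|g(t) - g(s)| \leq |g(t) - g(\mu)| + |g(\mu) - g(s)|$ combined with $t - \mu \leq t - s$ and $\mu - s \leq t - s$ patches the two local estimates together into a single global one.

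The argument is essentially elementary and the main care needed is to handle the two regimes $p \geq 3$ and $2 < p < 3$ separately, together with verifying that the local H\"older bounds can be glued across the break point $\mu$; there is no real conceptual obstacle. The key inputs are simply the precise form $g(t) = t^{p-2}$ near the origin, which is where the exponent $p-2$ naturally arises when $p < 3$, and the compact support of $\phi$, which guarantees that $g$ is bounded and eventually constant so that local estimates can be promoted to global ones.
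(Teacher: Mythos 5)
Your argument is correct, and in the Lipschitz regime $p\geq 3$ it coincides with the paper's proof. In the regime $2<p<3$ the route is genuinely different: the paper observes that $g'=\phi$ is non-increasing on $(0,\infty)$, so $g$ is subadditive ($g(a_1+a_2)\leq g(a_1)+g(a_2)$, proved by shifting the integrand in $g(a_1+a_2)=g(a_1)+\int_0^{a_2}g'(t+a_1)\,dt$), and then combines this with monotonicity of $g$, the triangle inequality for $|\cdot|$, and the pointwise bound $g(t)\leq t^{p-2}$ to get $|g(|a_1+a_2|)-g(|a_1|)|\leq g(|a_2|)\leq |a_2|^{p-2}$ in one stroke, with no case analysis over the break point $\mu$ or over small versus large increments. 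You instead prove a global $\sigma$-H\"older estimate for $g$ by patching three pieces (the exact power $t^{p-2}$ near the origin via the subadditivity inequality $t^\alpha\leq s^\alpha+(t-s)^\alpha$, Lipschitz-plus-boundedness on $[\mu,\infty)$, and a gluing step across $\mu$), and then conclude with the reverse triangle inequality $\bigl||a_1+a_2|-|a_1|\bigr|\leq |a_2|$. Both proofs rest on the same elementary kernel (subadditivity of the concave power), but the paper's use of global concavity of $g$ is shorter, while your piecewise H\"older verification is more mechanical and does not require noticing that $g'$ is monotone, so it would survive modifications of $\phi$ that destroy monotonicity as long as boundedness and the behaviour near $0$ persist. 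Your proof is complete as sketched; the only points worth writing out carefully are the gluing inequality (using $t-\mu\leq t-s$ and $\mu-s\leq t-s$) and the large-increment step on $[\mu,\infty)$, both of which you have already identified.
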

\begin{proof}
If $p-2\geq 1$, then $g$ is a Lipschitz function. It follows that
there exists $C>0$ such that, for any $a_1,a_2\in\R$,
\begin{align}
|g(|a_1+a_2|)-g(|a_1|)|\leq C|a_2|.\nonumber
\end{align}
If $p-2<1,$ by the definition of $g$, then $g'$ is
non-increasing on $(0,+\infty)$. Therefore, for any $a_1>0$ and $a_2>0$,
\begin{align*} 
g(a_1+a_2)&=\int^{a_1+a_2}_0g'(t)\,dt\nonumber\\
&=\int^{a_1}_0g'(t)\, dt +\int^{a_1+a_2}_{a_1}g'(t)\,dt\nonumber\\
&=g(a_1)+\int^{a_2}_{0}g'(t+a_1) \, dt\nonumber\\
&\leq g(a_1)+\int^{a_2}_{0}g'(t) \, dt\nonumber\\
&=g(a_1)+g(a_2).
\end{align*}
It then follows that there exists $C>0$ such that, for any
$a_1,a_2\in\R$,
\begin{align*}
\left |g(|a_1+a_2|)-g(|a_1|)\right|\leq g(|a_2|)\leq |a_2|^{p-2},
\end{align*}
where we used the fact that $g(t) \leq t^{p-2}$ for any $t \geq 0$. Thus the proof is completed.
\end{proof}

Let $\{\epsilon_n\} \subset \R^+$ be such that $\eps_n=o_n(1)$. Let $w_{j, \eps_n} \in X^{1, s}(\R^{N+1}_+)$ be the solution to \eqref{equ21} with $\eps=\eps_n$ obtained in Theorem \ref{cvnbmiif9f8ufjhfy1}. Let $w_{j,l}  \in X^{1, s}(\R^{N+1}_+)$ be the solution to \eqref{cdhfdhfgyyfyf66ftf1} obtained in Lemma \ref{cnvbhhfyf6yr66}. Define
$$
R_{j, n}:=w_{j, \eps_n} -\sum_{l=1}^{m_j} w_{j,l}(\cdot-z_{j, \eps_n}^l, \cdot),
$$
and
$$
r_{j, n}:=R_{j, n}(\cdot,0), \quad u_{j, \eps_n}:=w_{j,\eps_n}(\cdot,0), \quad u_{j,l}:=w_{j,l}(\cdot, 0),
$$
where $m_j \in \N$ is given in Lemma \ref{cnvbhhfyf6yr66}. For simplicity, we shall denote $R_{j, n}$, $r_{j, n}$, $z^l_{j, \eps_n}$, $w_{j, \eps_n}$, $u_{j, \eps_n}$ and $u_{j,l}$ by $R_n$, $r_n$, $z^l_n$, $w_n$, $u_n$ and $u_l$, respectively.

\begin{lem}\label{cxnvbvhhfytfg66}
There holds that $ \lim_{n\rightarrow\infty}\|r_n\|_{L^\infty(\R^N)}=0.$
\end{lem}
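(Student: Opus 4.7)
The plan is to cast the statement as a uniform (in $z\in\R^N$) local $L^\infty$ estimate for $R_n$ on the slabs $Q_{1/2}(z):=B_{1/2}(z)\times(0,1/2)$, and then combine it with the norm convergence $R_n\to 0$ in $X^{1,s}(\R^{N+1}_+)$ supplied by Lemma \ref{cnvbhhfyf6yr66}. First I would write down the PDE satisfied by $R_n$. Subtracting, for each $l\in\{1,\ldots,m_j\}$, the translate of \eqref{cdhfdhfgyyfyf66ftf1} by $z_n^l$ from the equation \eqref{equ21} satisfied by $w_n$, and using $u_n=r_n+\sum_{l=1}^{m_j} u_l(\cdot-z_n^l)$ to expand the cross term $f_{\eps_n}(x,|u_n|)u_n$, I obtain
\begin{align*}
\begin{cases}
-\mbox{div}(y^{1-2s}\nabla R_n)=0 & \text{in }\R^{N+1}_+,\\
-k_s\, \dfrac{\partial R_n}{\partial\nu}+\bigl[V_{\eps_n}(x)-f_{\eps_n}(x,|u_n|)\bigr]\, r_n=H_n(x) & \text{on }\R^N\times\{0\},
\end{cases}
\end{align*}
where
\begin{align*}
H_n(x):=\sum_{l=1}^{m_j}\bigl(V(z_j^l)-V_{\eps_n}(x)\bigr)\, u_l(x-z_n^l)+\sum_{l=1}^{m_j}\bigl[f_{\eps_n}(x,|u_n|)-f(z_j^l,|u_l(\cdot-z_n^l)|)\bigr]\, u_l(x-z_n^l).
\end{align*}

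Next I would verify two uniform smallness statements. On the one hand, adapting the weighted Sobolev inequality \eqref{xmxnbdvf77dyetgdgdd} and the H\"older trick of \eqref{xncbvhhfy66ryee} to $R_n$ gives
\begin{align*}
\sup_{z\in\R^N}\int_0^1\!\!\int_{B_1(z)} y^{1-2s}|R_n|^2\, dxdy\leq C\|R_n\|_s^2\longrightarrow 0
\end{align*}
by Lemma \ref{cnvbhhfyf6yr66}. On the other hand, I would prove $\sup_{z\in\R^N}\|H_n\|_{L^{p/(p-2)}(B_1(z))}\to 0$. The ingredients are the pointwise decay $|u_l(x)|\leq C/(1+|x|^{N+2s})$ from Lemma \ref{mcnvbhhfyfufuu}, the continuity of $V$ together with $\eps_n z_n^l\to z_j^l$, the separation $|z_n^l-z_n^{l'}|\to\infty$ for $l\neq l'$ from Lemma \ref{cnvbhhfyf6yr66}, and the elementary bounds of Lemmas \ref{cnvbhgy77f7f6dd}-\ref{2qcnvbhgy77f7f6dd}, which let me estimate $|f_{\eps_n}(x,|u_n|)-f(z_j^l,|u_l(\cdot-z_n^l)|)|$ pointwise. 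The intuition is that near any fixed $z_n^l$ the separation makes all other bubbles uniformly small, so only the $l$-th term contributes, while far from every center each $u_l(\cdot-z_n^l)$ is already small by itself.

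Finally, since the computation \eqref{cxncbvydhdte9912} also furnishes a uniform bound $\|V_{\eps_n}-f_{\eps_n}(\cdot,|u_n|)\|_{L^{p/(p-2)}(B_1(z))}\leq C$ in $z$ and $n$, I would invoke the local $L^\infty$ estimate \cite[Proposition 2.6]{JLX}, the same tool used in Lemma \ref{jdhfggf7fydttdyy}, now in the presence of a nonzero Neumann forcing (absorbed exactly as in the local boundedness result proved in the appendix). This should yield
\begin{align*}
\|R_n\|_{L^\infty(Q_{1/2}(z))}\leq C\Bigl[\Bigl(\int_0^1\!\!\int_{B_1(z)} y^{1-2s}|R_n|^2\, dxdy\Bigr)^{1/2}+\|H_n\|_{L^{p/(p-2)}(B_1(z))}\Bigr]
\end{align*}
with $C$ independent of $z$ and $n$. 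Taking $\sup_{z\in\R^N}$ and sending $n\to\infty$ then completes the proof. The main obstacle I anticipate is the uniform $L^{p/(p-2)}$-bound on $H_n$: one must decompose $B_1(z)$ according to its distance to the moving centers $z_n^l$ and, in each piece, reduce matters to a quantity that is small either because some $u_l(\cdot-z_n^l)$ decays or because $V_{\eps_n}$ and $f_{\eps_n}$ are close to their limits $V(z_j^l)$ and $f(z_j^l,\cdot)$.
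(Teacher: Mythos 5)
Your proposal is correct and takes essentially the same route as the paper: your $H_n$ coincides with the paper's right-hand side $h_n=I_n^1+I_n^2+I_n^3$, and you use the same ingredients (pointwise decay of the $u_l$, divergence of the centers $|z_n^l-z_n^{l'}|$, $\|r_n\|_{L^q(\R^N)}\to 0$ for $2\le q\le 2^*_s$, the weighted $L^2$ control of $R_n$, and the local estimate of \cite{JLX} with $p/(p-2)>N/2s$). The only difference is presentational: you track uniformity in the center $z$ explicitly, while the paper argues at an arbitrary fixed $x_0$ and concludes by arbitrariness, which amounts to the same thing since all constants and global norms involved are independent of the center.
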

\begin{proof}
By using the s-harmonic extension arguments, it is straightforward to see that $u_n \in H^s(\R^N)$ satisfies the equation
$$
(-\Delta)^s u_n + V_{\eps_n}(x) u_n=f_{\eps_n}(x, |u_n|) u_n.
$$
Furthermore, $u_{l} \in H^s(\R^N)$ satisfies the equation
\begin{align*} 
(-\Delta)^s u_{l}+V(z_j^l)u_{l}=f(z_j^l ,|u_{l}|)u_{l}.
\end{align*}
In view of the definition of $r_n$, we then get that $r_n$ satisfies the equation
\begin{align}\label{cnvb88f876dydd}
(-\Delta)^sr_n+(V_{\eps_n}(x)-f_{\eps_n}(x,|u_n|))r_n=h_n,
\end{align}
where
\begin{align*}
h_n(x)&=f_{\eps_n}(x,|u_n|)u_n-f_{\eps_n}(x,|u_n|)r_n-\sum^{m_j}_{l=1}f_{\epsilon_n}(x,
|u_{l}(x-z_{n}^l)|)u_{l}(x-z_{n}^l)\nonumber\\
&\quad+\sum^{m_j}_{l=1}(f_{\epsilon_n}(x,
|u_{l}(x-z_{n}^l)|)-f(z^l_j,
|u_{l}(x-z_{n}^l)|))u_{l}(x-z_{n}^l)\nonumber\\
&\quad-\sum^{m_j}_{l=1}(V_{\eps_n}(x)-V(z^l_j))u_{l}(x-z_{n}^l)\nonumber\\
&:=I^1_n(x)+I^2_n(x)+I^3_n(x).
\end{align*}
Let us now estimate the term $I^1_n(x)$ for $x \in \R^N$. From the definitions of $r_n$ and $f_{\eps_n}$, Lemmas
\ref{cnvbhgy77f7f6dd} and \ref{2qcnvbhgy77f7f6dd}, we deduce that
there exists $C>0$ such that
\begin{align}
|I^1_n(x)|&=\left|f_{\eps_n}(x,|u_n|)u_n-f_{\eps_n}(x,|u_n|)r_n-\sum^{m_j}_{l=1}f_{\epsilon_n}(x,
|u_{l}(x-z_{n}^l)|)u_{l}(x-z_{n}^l)\right|\nonumber\\
&=\left|\sum^{m_j}_{l=1}f_{\epsilon_n}(x,|u_n|)u_{l}(x-z_{n}^l)-\sum^{m_j}_{l=1}f_{\epsilon_n}(x,
|u_{l}(x-z_{n}^l)|)u_{l}(x-z_{n}^l)\right|\nonumber\\
&\leq\sum^{m_j}_{l=1}\left|f_{\epsilon_n}(x,|u_n|)-f_{\epsilon_n}(x,
|u_{l}(x-z_{n}^l)|)\right| \left|
u_{l}(x-z_{n}^l)\right|\nonumber\\
&\leq \sum^{m_j}_{l=1}\left||u_n|^{p-2}-
|u_{l}(x-z_{n}^l)|^{p-2}\right|  \left|
u_{l}(x-z_{n}^l)\right|\nonumber\\
&\quad+ \sum^{m_j}_{l=1}\left|g(|u_n|)-
g(|u_{l}(x-z_{n}^l)|)\right| \left|
u_{l}(x-z_{n}^l)\right|\nonumber\\
&\leq\sum^{m_j}_{l=1}\left(\tau_p \left|
u_{l}(x-z_{n}^l) \right|^{p-3}\left| r_n +\sum_{l'\neq l}^{m_j} u_{l'}(x-z_{n}^{l'})\right|+
\left|r_n+\sum_{l'\neq l}^{m_j}u_{l'}(x-z_{n}^{l'})\right|^{p-2}\right)\left|
u_{l}(x-z_{n}^l)\right|\nonumber\\
&\quad+ \sum^{m_j}_{l=1}\left|r_n+\sum_{l'\neq
l}^{m_j}u_{l'}(x-z_{n}^{l'})\right|^\sigma \left|
u_{l}(x-z_{n}^l)\right|.\nonumber
\end{align}
It then follows that
\begin{align*}
|I^1_n(x)|&\leq
C\tau_p\left(\sum^{m_j}_{l=1}\left|u_{l}(x-z_{n}^l)\right|^{p-2}\right)|r_n|
+C\left(\sum^{m_j}_{l=1}\left|u_{l}(x-z_{n}^l)\right|\right)|r_n|^{p-2}
\nonumber\\
&\quad +C\sum_{l=1}^{m_j} \sum_{l'\neq l}^{m_j}\left( \tau_p\left|u_{l}(x-z_{n}^{l})\right|^{p-2}\left|u_{l'}(x-z_{n}^{l'})\right|+\left|u_{l'}(x-z_{n}^{l'})\right|^{p-2} \left|u_{l}(x-z_{n}^{l})\right| \right)\nonumber\\
 &\quad+C \left(\sum^{m_j}_{l=1}\left|u_{l}(x-z_{n}^l)\right|\right)|r_n|^{\sigma}
 +C \sum_{l=1}^{m_j}  \sum_{l'\neq l}^{m_j}\left|u_{l'}(x-z_{n}^{l'})\right|^{\sigma} \left|u_{l}(x-z_{n}^{l})\right|.
\end{align*}
By the definition of $r_n$, \eqref{wnj} and Lemma \ref{embedding}, we have that
\begin{align*} 
\lim_{n\rightarrow\infty}\|r_n\|_{L^q(\R^N)}=0 \quad \mbox{for}\,\, 2\leq q\leq 2^*_s.
\end{align*}
From Lemma \ref{mcnvbhhfyfufuu} and the fact that $|z_n^l-z_n^{l'}| \to \infty$
as $n \to \infty$ for $l \neq l'$, see Lemma \ref{cnvbhhfyf6yr66},
we get that, for any $x_0\in\R^N$,
\begin{align}\label{cnvbfgtdgdttd}
\lim_{n\rightarrow\infty}\int_{B_1(x_0)}|I_n^1|^{\frac{p}{p-2}}\,dx=0.
\end{align}
Applying Lemma \ref{cnvbhhfyf6yr66}, we can also conclude that
\begin{align}\label{12cnvbfgtdgdttd}
\lim_{n\rightarrow\infty}\int_{B_1(x_0)}|I^2_n|^{\frac{p}{p-2}}\,dx=0,\quad
\lim_{n\rightarrow\infty}\int_{B_1(x_0)}|I^3_n|^{\frac{p}{p-2}}\,dx=0.
\end{align}
By using a similar way as the proof of \eqref{xncbvhhfy66ryee}, it is not hard to show that
$$
\int^{1}_0\int_{B_1(x_0)}y^{1-2s}|R_n|^2\,dxdy \leq C_N\left(\int^1_0\int_{B_1(z)}y^{1-2s}|R_n|^{2\gamma}dxdy\right)^{\frac{1}{\gamma}}.
$$
Taking into account \cite[Proposition 3.1.1]{DMV} and \eqref{wnj}, we then obtain that
\begin{align}\label{xbcvfggftr66ett}
\lim_{n \to \infty}\int^{1}_0\int_{B_1(x_0)}y^{1-2s}|R_n|^2\,dxdy=0.
\end{align}
Note that $2<p<2^*_s$, then $p/(p-2)>N/2s$. Since $r_n$ satisfies \eqref{cnvb88f876dydd}, by using \cite[Proposition
2.6]{JLX},  $\eqref{cnvbfgtdgdttd}$, $\eqref{12cnvbfgtdgdttd}$ and $\eqref{xbcvfggftr66ett}$, we are
now able to deduce that
\begin{align*} 
\lim_{n\rightarrow\infty}\|r_n\|_{L^\infty(B_{1/2}(x_0))}=0.
\end{align*}
This readily yields the result of this lemma, because $x_0$ is an arbitrary point in $\R^N$. Thus the proof is completed.
\end{proof}

\begin{lem}\label{cnvbhhfyf7yfyddsd}
There exists $C>0$ such that, for any $n \in \N$,
\begin{align*} 
|r_n(x)|\leq\sum^{m_j}_{l=1}\frac{C}{1+|x-z^l_{n}|^{N+2s}},\
x\in\R^N.
\end{align*}
\end{lem}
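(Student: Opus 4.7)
The plan is to apply Kato's inequality (Lemma~\ref{cnvnbghfyf7yfyftttd}) to the equation \eqref{cnvb88f876dydd} so as to produce a fractional sub-solution inequality for the nonnegative function $|r_n|$, and then to construct an explicit super-solution built out of a sum of shifted Bessel-type barriers in the spirit of Lemma~\ref{gjhuuhujguf99difujg}, concluding by the maximum principle of Lemma~\ref{cmvnvhguufu7ryfggf}.

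More concretely, applying Lemma~\ref{cnvnbghfyf7yfyftttd} to \eqref{cnvb88f876dydd} yields the distributional inequality $(-\Delta)^s|r_n|+(V_{\eps_n}(x)-f_{\eps_n}(x,|u_n|))|r_n|\le |h_n|$. Using $V_{\eps_n}\ge a$ from $(V_1)$ to rewrite this as $(-\Delta)^s|r_n|+a|r_n|\le |h_n|+f_{\eps_n}(x,|u_n|)|r_n|$, the main task becomes bounding the right-hand side pointwise by $\sum_{l=1}^{m_j} C/(1+|x-z_n^l|^{N+2s})$ with $C$ independent of $n$. For $|h_n|=|I_n^1+I_n^2+I_n^3|$ I would reuse the elementary estimates of Lemmas~\ref{cnvbhgy77f7f6dd} and~\ref{2qcnvbhgy77f7f6dd} that are already displayed in Lemma~\ref{cxnvbvhhfytfg66}, and then input the pointwise decay $|u_l(x)|\le C/(1+|x|^{N+2s})$ from Lemma~\ref{mcnvbhhfyfufuu}, the separation $|z_n^l-z_n^{l'}|\to\infty$ from Lemma~\ref{cnvbhhfyf6yr66}, the continuity of $V$ and $\chi$ at $z_j^l$ combined with $\eps_n z_n^l\to z_j^l$, and the smallness $\|r_n\|_{L^\infty(\R^N)}\to 0$ from Lemma~\ref{cxnvbvhhfytfg66}. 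The leftover $f_{\eps_n}(x,|u_n|)|r_n|$ is controlled via $f_{\eps_n}\le |u_n|^{p-2}\le C(\sum_l|u_l(\cdot-z_n^l)|^{p-2}+|r_n|^{p-2})$, with the $|r_n|^{p-1}$ piece absorbed into the left-hand side by dropping the coefficient from $a$ to $a/2$, which is legitimate for $n$ large thanks to $\|r_n\|_{L^\infty}\to 0$.

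Once I have the clean inequality $(-\Delta)^s|r_n|+(a/2)|r_n|\le \sum_{l=1}^{m_j} C/(1+|x-z_n^l|^{N+2s})$, I would construct the super-solution $\Phi_n(x):=\sum_{l=1}^{m_j}\psi_{\mu_*,\nu_*}(x-z_n^l)$, where $\psi_{\mu_*,\nu_*}$ is the Bessel-type barrier from the proof of Lemma~\ref{gjhuuhujguf99difujg} with $\lambda=a/2$. The exact argument of that lemma gives $((-\Delta)^s+(a/2))\psi_{\mu_*,\nu_*}(\cdot-z_n^l)\ge C/(1+|\cdot-z_n^l|^{N+2s})$ once $\mu_*$ is chosen large and $\nu_*^{2s}<a/2$; summing over $l$ and using linearity of $(-\Delta)^s+(a/2)$ together with the nonnegativity of the source $\mu_*\nu_*^{2s}\phi(\nu_*(\cdot-z_n^l))$ shows that $\Phi_n$ is a global super-solution. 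The maximum principle (Lemma~\ref{cmvnvhguufu7ryfggf}) applied to $\Phi_n-|r_n|\in H^s(\R^N)$ then yields $|r_n(x)|\le\Phi_n(x)$, and the upper bound $\psi_{\mu_*,\nu_*}(y)\le C/(1+|y|^{N+2s})$ (which follows from the two-sided estimate \eqref{cnvbvgft66ftfff} together with $\psi\in L^\infty$) gives the claim.

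The main obstacle I anticipate is the pointwise control of the $|u_l(\cdot-z_n^l)|^{p-2}|r_n|$ contribution in the regime $2<p<3$, where $|u_l|^{p-2}$ decays strictly more slowly than $|u_l|$. To overcome it I would split $\R^N$ into the union $\bigcup_l B_R(z_n^l)$ and its complement: inside the balls the desired bound reduces to $|r_n|\le C(1+R^{N+2s})^{-1}$, which is automatic from $\|r_n\|_{L^\infty}\to 0$ once $n$ is large, while outside the balls the simultaneous decay of every $|u_l(\cdot-z_n^l)|^{p-2}$ combines with the smallness of $\|r_n\|_{L^\infty}$ to produce the required rate. The cross interactions $|u_l(\cdot-z_n^l)|^\alpha|u_{l'}(\cdot-z_n^{l'})|^\beta$ with $l\neq l'$ are harmless because of the divergence $|z_n^l-z_n^{l'}|\to\infty$ from Lemma~\ref{cnvbhhfyf6yr66}, which forces them to be $o_n(1)\sum_l(1+|x-z_n^l|^{N+2s})^{-1}$.
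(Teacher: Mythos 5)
Your overall scheme is the same as the paper's: apply Kato's inequality (Lemma \ref{cnvnbghfyf7yfyftttd}), reduce to a differential inequality of the form $(-\Delta)^s|r_n|+\tfrac{a}{2}|r_n|\le \sum_{l}C\,(1+|x-z_n^l|^{N+2s})^{-1}$ using $\|r_n\|_{L^\infty}\to 0$ from Lemma \ref{cxnvbvhhfytfg66} and the decay of the $u_l$ from Lemma \ref{mcnvbhhfyfufuu}, and then compare with a sum of shifted decaying supersolutions via Lemma \ref{cmvnvhguufu7ryfggf} (the paper uses the solution $\omega$ of $(-\Delta)^s\omega+\tfrac a2\omega=C(1+|x|^{N+2s})^{-1}$ together with Lemma \ref{gjhuuhujguf99difujg}; your direct use of the barriers $\psi_{\mu_*,\nu_*}$ is an equivalent variant). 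However, there is a genuine flaw in the step you yourself flag as the main obstacle. After your splitting, the right-hand side contains the cross term $\sum_l|u_l(\cdot-z_n^l)|^{p-2}|r_n|$, and outside the balls $B_R(z_n^l)$ you claim that "the simultaneous decay of $|u_l|^{p-2}$ combines with the smallness of $\|r_n\|_{L^\infty}$ to produce the required rate." This is not true for $2<p<3$: the bound you have is
\begin{align*}
|u_l(x-z_n^l)|^{p-2}\,|r_n(x)|\le \|r_n\|_{L^\infty}\,\frac{C}{\left(1+|x-z_n^l|^{N+2s}\right)^{p-2}},
\end{align*}
and since $(N+2s)(p-2)<N+2s$, dominating this by $C(1+|x-z_n^l|^{N+2s})^{-1}$ would require $\|r_n\|_{L^\infty}\le C\,(1+|x-z_n^l|^{N+2s})^{-(3-p)}$, which fails for $|x-z_n^l|$ large no matter how small the fixed number $\|r_n\|_{L^\infty}$ is; at this stage $r_n$ has no spatial decay (that decay is precisely what the lemma is proving), so uniform smallness cannot substitute for it.

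The step is repairable, but by absorption rather than by rate-matching: for $R$ large enough one has $m_j\sup_{|y|\ge R}|u_l(y)|^{p-2}\le a/4$ by Lemma \ref{mcnvbhhfyfufuu}, so outside the balls the cross term can be moved to the left-hand side, lowering the zero-order coefficient from $a/2$ to $a/4$ (inside the balls your argument via $\|r_n\|_{L^\infty}\to 0$ is fine). Note that the paper sidesteps the issue entirely by a coarser grouping: it works with \eqref{qawcnvb88f876dydd} rather than \eqref{cnvb88f876dydd}, bounding $|f_{\eps_n}(x,|u_n|)u_n|\le|u_n|^{p-1}\le C\bigl(|r_n|^{p-1}+\sum_l|u_l(\cdot-z_n^l)|^{p-1}\bigr)$, so that the only $r_n$-dependent remainder is $C|r_n|^{p-2}\cdot|r_n|$, which is absorbed using $\|r_n\|_{L^\infty}\to 0$; no cross terms $|u_l|^{p-2}|r_n|$ ever appear, and no splitting of $\R^N$ is needed.
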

\begin{proof}
In view of the definition of $r_n$, we know that $r_n$ satisfies the equation
\begin{align}\label{qawcnvb88f876dydd}
(-\Delta)^sr_n+V_{\eps_n}(x)r_n=\psi_n,
\end{align}
where
\begin{align*}
\psi_n(x):&=f_{\eps_n}(x,|u_n|)u_n -\sum^{m_j}_{l=1}f(z^l_j,
|u_{l}(x-z_{n}^l)|)u_{l}(x-z_{n}^l)\nonumber\\
&\quad-\sum^{m_j}_{l=1}(V_{\eps_n}(x)-V(z^l_j))u_{l}(x-z_{n}^l).
\end{align*}
By the definitions of $f_{\eps_n}$ and $r_n$, we see that there exists $C>0$ such that
\begin{align*}
|\psi_n(x)|\leq
C|r_n|^{p-1}+C \sum^{m_j}_{l=1}\left(|u_{l}(x-z_{n}^l)+|u_{l}(x-z_{n}^l)|^{p-1}\right).
\end{align*}
Hence Lemma \ref{mcnvbhhfyfufuu} gives that there exists $C>0$ such that
\begin{align}\label{2cnmvcnghhgyfhdydd}
|\psi_n(x)|\leq
C|r_n|^{p-1}+\sum^{m_j}_{l=1}\frac{C}{1+|x-z^l_{n}|^{N+2s}}.
\end{align}
Taking advantage of the assumption that $V(x) \geq a$ for any $x \in \R^N$, \eqref{qawcnvb88f876dydd}, \eqref{2cnmvcnghhgyfhdydd} and Lemma \ref{cnvnbghfyf7yfyftttd}, we then have that
\begin{align*} 
(-\Delta)^s|r_n|+(a-C|r_n|^{p-2})|r_n|\leq
\sum^{m_j}_{l=1}\frac{C}{1+|x-z^l_{n}|^{N+2s}}.
\end{align*}
By Lemma \ref{cxnvbvhhfytfg66}, we now get that, for any $n \in \N$ large enough,
\begin{align}\label{23xbcbfggft6fyfyyy}
(-\Delta)^s|r_n|+\frac{a}{2}|r_n|\leq
\sum^{m_j}_{l=1}\frac{C}{1+|x-z^l_{n}|^{N+2s}}. 
\end{align}
Let $\omega$ be the unique solution to the equation
\begin{align}\label{23hh2ft6fyfyyy}
(-\Delta)^s\omega+\frac{a}{2}\omega=\frac{C}{1+|x|^{N+2s}}. 
\end{align}
Using \eqref{23xbcbfggft6fyfyyy},  \eqref{23hh2ft6fyfyyy} and
Lemma \ref{cmvnvhguufu7ryfggf}, we have that
\begin{align} \label{xnvcbfggfyf7r6ttet} 
|r_n(x)|\leq \sum^{m_j}_{l=1}|\omega(x-z_{n}^l)|,\ x\in\R^N.
\end{align}
On the other hand, in light of Lemma \ref{gjhuuhujguf99difujg}, we obtain that there exists $C>0$ such that
\begin{align*}
|\omega(x)|\leq \frac{C}{1+|x|^{N+2s}},\ x\in\R^N.
\end{align*}
This together with \eqref{xnvcbfggfyf7r6ttet} indicates the result of this lemma, and the proof is completed.
\end{proof}

As a consequence of the definition of $r_n$, Lemmas \ref{mcnvbhhfyfufuu} and \ref{cnvbhhfyf7yfyddsd},
we then have the following lemma.

\begin{lem}\label{11123cnvbhhfyf7yfyddsd}
There exists $C>0$ such that, for any $n \in \N$,
\begin{align*} 
|u_n(x)|\leq\sum^{m_j}_{l=1}\frac{C}{1+|x-z^l_{n}|^{N+2s}},\
x\in\R^N.
\end{align*}
\end{lem}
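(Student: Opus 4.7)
The plan is to argue by the triangle inequality, directly exploiting the decomposition that defines $r_n$ together with the two pointwise decay estimates established in Lemmas \ref{mcnvbhhfyfufuu} and \ref{cnvbhhfyf7yfyddsd}. Specifically, from the very definition of $R_n$ and $r_n$, one has on $\R^N$ the identity
\[
u_n(x) = r_n(x) + \sum_{l=1}^{m_j} u_l(x - z_n^l),
\]
so the triangle inequality yields
\[
|u_n(x)| \leq |r_n(x)| + \sum_{l=1}^{m_j} |u_l(x - z_n^l)|.
\]

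First I would invoke Lemma \ref{mcnvbhhfyfufuu}, which gives a constant $C>0$, independent of $n$ and $l$ (the index $l$ ranges in the finite set $\{1,\dots,m_j\}$), such that
\[
|u_l(x - z_n^l)| \leq \frac{C}{1+|x-z_n^l|^{N+2s}}, \qquad x \in \R^N,
\]
for every $1 \leq l \leq m_j$. Next I would apply Lemma \ref{cnvbhhfyf7yfyddsd} to dominate the remainder term $r_n$ by exactly the same type of sum, namely
\[
|r_n(x)| \leq \sum_{l=1}^{m_j} \frac{C}{1+|x-z_n^l|^{N+2s}}, \qquad x \in \R^N,
\]
uniformly in $n$.

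Substituting both estimates into the triangle inequality above and absorbing the finitely many constants (there are at most $m_j+1$ summands of the same form for each $l$) into a single constant $C>0$ independent of $n$, one obtains the claimed bound
\[
|u_n(x)| \leq \sum_{l=1}^{m_j} \frac{C}{1+|x-z_n^l|^{N+2s}}, \qquad x \in \R^N.
\]
There is no genuine obstacle here: the work has already been carried out in the preceding two lemmas, and the present statement is merely the combination of the bound on the remainder $r_n$ with the pointwise decay of each limit profile $u_l$. The only thing to keep track of is that $m_j$ is a fixed finite integer (from Lemma \ref{cnvbhhfyf6yr66}), so all constants remain independent of $n$.
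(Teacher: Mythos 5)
Your argument is correct and is exactly the route the paper takes: the paper states Lemma \ref{11123cnvbhhfyf7yfyddsd} as an immediate consequence of the definition of $r_n$ together with Lemmas \ref{mcnvbhhfyfufuu} and \ref{cnvbhhfyf7yfyddsd}, which is precisely your triangle-inequality combination with the constants absorbed using the finiteness of $m_j$.
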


\begin{lem}\label{irffhgyytyyenn}
There exists $C>0$ such that, for any $n \in \N$,
\begin{align*}
|\nabla r_n(x)|\leq\sum^{m_j}_{l=1}\frac{C}{1+|x-z^l_{n}|^{N+2s}},\
x\in\R^N.
\end{align*}
\end{lem}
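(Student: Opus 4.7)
The plan is to mirror the argument of Lemma \ref{cnvbhhfyf7yfyddsd}: differentiate the equation satisfied by $r_n$, pass from the derivative to its modulus via Lemma \ref{cnvnbghfyf7yfyftttd}, and conclude with the comparison principle of Lemma \ref{gjhuuhujguf99difujg}. Standard fractional Schauder regularity applied to \eqref{qawcnvb88f876dydd} (via \cite[Lemma 4.4]{Cabre}) shows $r_n \in C^{2,\beta}(\R^N)$, so setting $r_{n,i} := \partial_{x_i} r_n$ I may differentiate \eqref{qawcnvb88f876dydd} pointwise to obtain
\begin{align*}
(-\Delta)^s r_{n,i} + V_{\eps_n}(x) r_{n,i} = \partial_{x_i} \psi_n - \eps_n (\partial_{x_i} V)(\eps_n x)\, r_n.
\end{align*}

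Next I would expand $\partial_{x_i} \psi_n$ via the chain rule together with the decomposition $\partial_{x_i} u_n = r_{n,i} + \sum_{l=1}^{m_j} \partial_{x_i} u_{l}(\cdot - z_n^l)$. Derivatives landing on $\chi(\eps_n \cdot)$ or on $V_{\eps_n}$ carry a factor $\eps_n$; the $u$-derivatives of $f_{\eps_n}(x, |u_n|) u_n$ produce a self-referential piece proportional to $r_{n,i}$ together with a term involving $u_{l,i}(\cdot - z_n^l)$ whose decay is provided by Lemma \ref{mcnvbhhfyfufuu}; and the remaining cross-terms are controlled exactly as in the proof of Lemma \ref{cnvbhhfyf7yfyddsd} using Lemmas \ref{cnvbhgy77f7f6dd}--\ref{2qcnvbhgy77f7f6dd}, \ref{mcnvbhhfyfufuu}, \ref{cnvbhhfyf7yfyddsd} and \ref{11123cnvbhhfyf7yfyddsd}. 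Packaging these estimates should yield a pointwise bound of the form
\begin{align*}
\left| \partial_{x_i} \psi_n - \eps_n (\partial_{x_i} V)(\eps_n x)\, r_n \right| \leq C\, f_{\eps_n}(x, |u_n|)\, |r_{n,i}| + \sum_{l=1}^{m_j} \frac{C}{1+|x-z_n^l|^{N+2s}}.
\end{align*}

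Applying Lemma \ref{cnvnbghfyf7yfyftttd} to pass to $|r_{n,i}|$, the resulting inequality becomes
\begin{align*}
(-\Delta)^s |r_{n,i}| + \bigl(V_{\eps_n}(x) - C f_{\eps_n}(x, |u_n|)\bigr)\, |r_{n,i}| \leq \sum_{l=1}^{m_j} \frac{C}{1+|x-z_n^l|^{N+2s}}.
\end{align*}
Outside any fixed neighborhood of the cores $\{z_n^l\}$, the coefficient of $|r_{n,i}|$ on the left exceeds $a/2$ since $|u_n| \to 0$ there by Lemmas \ref{mcnvbhhfyfufuu} and \ref{cxnvbvhhfytfg66}; inside such neighborhoods, a cutoff $\varrho$ exactly as in the proof of Lemma \ref{mcnvbhhfyfufuu} absorbs the self-referential contribution into a compactly supported source, whose boundedness rests on a uniform local $L^{\infty}$ estimate for $\nabla r_n$ obtained by applying the degenerate interior bound \cite[Proposition 2.6]{JLX} to the extension $R_n$. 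Lemma \ref{gjhuuhujguf99difujg} then delivers the claimed pointwise decay. The main obstacle is precisely this absorption of $C f_{\eps_n}(x, |u_n|)|r_{n,i}|$: unlike in Lemma \ref{cnvbhhfyf7yfyddsd}, where the analogous factor $|r_n|^{p-2}$ is globally small by Lemma \ref{cxnvbvhhfytfg66}, here $f_{\eps_n}(x, |u_n|)$ does not vanish near the cores $z_n^l$, and must be handled by the cutoff-plus-comparison localization borrowed from Lemma \ref{mcnvbhhfyfufuu}.
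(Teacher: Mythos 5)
Your overall scheme (differentiate the equation, pass to $|r_{n,i}|$ via Lemma \ref{cnvnbghfyf7yfyftttd}, then run the comparison argument of Lemma \ref{cnvbhhfyf7yfyddsd}) is the same as the paper's, but you handle the self-referential term differently. The paper does not localize: writing the differentiated boundary term as $T_n(x)r_{n,i}+\mathcal{L}_n$ with $T_n=(p-1)(1-\chi(\eps_n x))|u_n|^{p-2}+\chi(\eps_n x)H'(|u_n|)$, it applies Young's inequality together with a uniform bound $\|r_{n,i}\|_{L^\infty}\leq C_1$ to get $|T_n r_{n,i}|\leq C|u_n|^{p-1}+\tfrac a2|r_{n,i}|$ globally, so the $\tfrac a2|r_{n,i}|$ piece is absorbed by $V_{\eps_n}\geq a$ everywhere and the rest decays by Lemma \ref{11123cnvbhhfyf7yfyddsd}. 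Your cutoff-near-the-cores variant can be made to work too (the balls $B_R(z_n^l)$ can be taken of fixed radius independent of $n$ because the decay constant in Lemma \ref{11123cnvbhhfyf7yfyddsd} is uniform, and a bounded source supported there is dominated by $\sum_l C(1+R^{N+2s})/(1+|x-z_n^l|^{N+2s})$), but note that it buys you nothing over the paper's Young-inequality step: both routes stand or fall on exactly the same ingredient, namely a uniform-in-$n$ sup bound on $\nabla r_n$ near the cores.

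That ingredient is where your proposal has a genuine gap. You claim the uniform local $L^\infty$ bound for $\nabla r_n$ follows from \cite[Proposition 2.6]{JLX} applied to the extension $R_n$; that proposition is a local boundedness (De Giorgi--Nash--Moser) estimate for the solution itself and gives control of $|R_n|$, not of $|\nabla R_n|$. To make it produce a gradient bound you would have to apply it to $\partial_{x_i}w_n$, whose Neumann datum is precisely the differentiated equation you are trying to estimate, so the bound on the zeroth-order coefficient and the local weighted $L^2$ norm of $\partial_{x_i}w_n$ (available from Lemma \ref{bddsol}) must be checked first; as written the step is circular/unsupported. The paper supplies the missing bound by regularity theory instead: the uniform $L^\infty$ bound of Lemma \ref{jdhfggf7fydttdyy} is upgraded to a uniform $C^{\alpha'}$ bound via \cite[Proposition 4.5]{Radulescu} and then to a uniform $C^{1,\alpha''}$ bound via \cite[Propositions 2.8-2.9]{S}, which also justifies differentiating the equation (your claim that $r_n\in C^{2,\beta}$ uniformly, via \cite[Lemma 4.4]{Cabre}, is both stronger than needed and not justified uniformly in $n$; $C^{1,\alpha}$ with $n$-independent norm is what is required). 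Once you replace the JLX citation by this (or by a correctly set up application of the local boundedness estimate to $\partial_{x_i}w_n$), your argument closes.
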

\begin{proof}
According to Lemma \ref{jdhfggf7fydttdyy}, we know that there exists $C>0$
independent of $n$ such that $\|u_n\|_{ L^\infty(\R^N)}\leq C$. In
virtue of \cite[Proposition 4.5]{Radulescu}, we then derive that
there exist $\alpha'\in (0,1)$ and $C'>0$ independent of $n$ such
that $u_n\in C^{\alpha'}(\R^N)$ and $\|u_n\|_{C^{\alpha'}{(\R^N)}}
\leq C'$. Since $f$ and $V$ are $C^1$ functions, by \cite[Propositions 2.8-2.9]{S},
we then obtain that there exists $\alpha''\in (0,1)$ and  $C''>0$ independent of $n$ such that
$u_n\in C^{1,\alpha''}(\R^N)$ and
$\|u_n\|_{C^{1,\alpha''}(\R^N)}\leq C''$.
For simplicity, we define
\begin{align*}
u_{n,i}:=\partial_{x_i} u_n,\quad  r_{n,i}:=\partial_{x_i} r_n,
\quad u_{l,i}:=\partial_{x_i} u_{j,l},
\end{align*}
and
\begin{align*}
V_i:=\partial_{x_i} V, \quad \chi_i=\partial_{x_i} \chi \quad \mbox{for}\,\, 1 \leq i\leq N.
\end{align*}
By using the definition of $r_n$, we then get that
\begin{align} \label{cmvnvbfhfyhfhf}
u_{n,i}=r_{n,i}+\sum^{m_j}_{i=1}u_{l,i}(\cdot-z^l_n).
\end{align}
As a consequence of Lemma \ref{mcnvbhhfyfufuu}, we then have
that there exists $C>0$ independent of $n$ such that
$\|r_{n,i}\|_{C^{1,\alpha''}(\R^N)}\leq C$.
Since $u_n$ satisfies the equation
\begin{align*} 
(-\Delta)^su_n+V_{\eps_n}( x)u_n=f_{\epsilon_n}(x,|u_n|)u_n 
\end{align*}
and $u_{l}$ satisfies the equation
\begin{align*}
(-\Delta)^s u_{l}+V(z_j^l)u_{l}=f(z_j^l ,|u_{l}|)u_{l},
\end{align*}
then
\begin{align*}
\begin{split}
(-\Delta)^su_{n,i}+V_{\eps_n}(x)u_{n,i}= & -\epsilon_nV_i(\epsilon_n
x)u_n+(p-1)(1-\chi(\epsilon_n x))|u_n|^{p-2}u_{n,i} \\
&-\eps_n\chi_i(\eps_n x) |u_n|^{p-2} u_n+\chi(\epsilon_n x)H'(|u_n|)u_{n,i} \\
&+\eps_n\chi_i(\eps_n x) g(|u_n|) u_n
\end{split}
\end{align*}
and
\begin{align*}
\begin{split}
(-\Delta)^s u_{l,i}(x-z^l_n)+V(z_{j}^l)u_{l,i}(x-z^l_n) &=  (p-1)\sum^{m_j}_{l=1}(1-\chi(z^l_j))|u_{l}(\cdot-z^l_n)|^{p-2}u_{l,i}(x-z^l_n) \\
& \quad +\sum^{m_j}_{l=1}\chi(z^l_j)H'(|u_{l}(x-z^l_n)|)u_{l,i}(x-z^l_n),
\end{split}
\end{align*}
where $H(t)=g(t)t$ for $t \in \R$.
In view of \eqref{cmvnvbfhfyhfhf}, we then deduce that
\begin{align} \label{cnvbuuf7fyfyyyfdd}
(-\Delta)^sr_{n,i}+V_{\eps_n}(x)r_{n,i} &=-\epsilon_nV_i(\epsilon_n x)u_n+(p-1)(1-\chi(\epsilon_n x))|u_n|^{p-2}u_{n,i}
+\chi(\epsilon_n x)H'(|u_n|)u_{n,i}\nonumber\\
&\quad-(p-1)\sum^{m_j}_{l=1}(1-\chi(z^l_j))|u_{l}(\cdot-z^l_n)|^{p-2}u_{l,i}(x-z^l_n) \nonumber \\
&\quad -\sum^{m_j}_{l=1}\chi(z^l_j)H'(|u_{l}(x-z^l_n)|)u_{l,i}(x-z^l_n)\\
&\quad-\sum^{m_j}_{l=1}(V_{\eps_n}(x)-V(z^l_j))u_{l,i}(x-z_{n}^l)-\eps_n\chi_i(\eps_n x) \left( |u_n|^{p-2} u_n -g(|u_n|) u_n \right) \nonumber\\
&=-\epsilon_nV_i(\epsilon_n x)u_n+T_n(x)r_{n,i}+\mathcal{L}_n(x),  \nonumber
\end{align}
where
$$
T_n(x):=(p-1)(1-\chi(\epsilon_n
x))|u_n|^{p-2}+\chi(\epsilon_n x)H'(|u_n|)
$$
and
\begin{align*}
\mathcal{L}_n(x)&:=(p-1)(1-\chi(\epsilon_n
x))\bigg(\sum^{m_j}_{l=1}|u_n|^{p-2}u_{l,i}(x-z^l_n)-\sum^{m_j}_{l=1}|u_{l}(x-z^l_n)|^{p-2}u_{l,i}(x-z^l_n)\bigg)
\nonumber\\
&\quad+\chi(\epsilon_n
x)\Big(\sum^{m_j}_{l=1}H'(|u_n|)u_{l,i}(x-z^l_n)-\sum^{m_j}_{l=1}H'(|u_{l}(x-z^l_n)|)u_{l,i}(x-z^l_n)\Big)
\nonumber\\ &\quad-(p-1)\sum^{m_j}_{l=1}(\chi(\epsilon_n
x)-\chi(z^l_j))|u_{l}(x-z^l_n)|^{p-2}u_{l,i}(x-z^l_n)\nonumber\\
&\quad+\sum^{m_j}_{l=1}(\chi(\epsilon_n
x)-\chi(z^l_j))H'(|u_{l}(x-z^l_n)|)u_{l,i}(x-z^l_n)\nonumber\\
&\quad-\sum^{m_j}_{l=1}(V(\epsilon_n x)-V(z^l_j))u_{l,i}(x-z_{n}^l)-\eps_n\chi_i(\eps_n x) \left( |u_n|^{p-2} u_n -g(|u_n|) u_n \right).
\end{align*}
Using Young's inequality and the fact that $\|r_{n,i}\|_{C^{1,\alpha''}(\R^N)}\leq C$, we see that, for any $\eta>0$,
\begin{align*}
|T_n(x)r_{n,i}|&\leq \frac{p-2}{p-1}\eta^{-\frac{p-1}{p-2}}\left|(p-1)(1-\chi(\epsilon_n
x))|u_n|^{p-2}+\chi(\epsilon_n
x)H'(|u_n|)\right|^{\frac{p-1}{p-2}}+\frac{1}{p-1}\eta^{p-1}|r_{n,i}|^{p-1} \\ 
&\leq C\eta^{-\frac{p-1}{p-2}}|u_n|^{p-1}+\eta^{p-1}C_1^{p-2}|r_{n,i}|.
\end{align*}
Choosing $\eta>0$ such that $\eta^{p-1}C_1^{p-2}=a/2$ and using
Lemma \ref{11123cnvbhhfyf7yfyddsd},
we then get that there exists $C>0$ such that
\begin{align}\label{2xcnvcbvhfhgftr66ft}
|T_n(x)r_{n,i}|\leq
\sum^{m_j}_{l=1}\frac{C}{1+|x-z^l_n|^{N+2s}}+\frac{a}{2}|r_{n,i}|
\end{align}
On the other hand, from Lemmas  \ref{mcnvbhhfyfufuu} and \ref{11123cnvbhhfyf7yfyddsd}, we derive that there exists $C>0$ such that
\begin{align}\label{4xcnvcbvhfhgftr66ft}
\left|-\epsilon_nV_i(\epsilon_n x)u_n(x) \right|\leq
\sum^{m_j}_{l=1}\frac{C}{1+|x-z^l_n|^{N+2s}}
\end{align}
and
\begin{align}\label{6xcnvcbvhfhgftr66ft}
|\mathcal{L}_n(x)|\leq
\sum^{m_j}_{l=1}\frac{C}{1+|x-z^l_n|^{N+2s}}
\end{align}
Making use of \eqref{cnvbuuf7fyfyyyfdd}-\eqref{6xcnvcbvhfhgftr66ft} and Lemma \ref{cnvnbghfyf7yfyftttd}, we now get that
\begin{align*}
(-\Delta)^s|r_{n,i}|+\frac{a}{2}|r_{n,i}|\leq
\sum^{m_j}_{l=1}\frac{C}{1+|x-z^l_{n}|^{N+2s}}. 
\end{align*}
At this point, reasoning as the proof of Lemma \ref{cnvbhhfyf7yfyddsd}, we are able to get the desired result, and the proof is completed.
\end{proof}

Taking advantage of \eqref{cmvnvbfhfyhfhf}, Lemma \ref{mcnvbhhfyfufuu} and Lemma \ref{irffhgyytyyenn}, we
then obtain the following lemma.

\begin{lem}\label{immm9ibhhfyf7yfyddsd}
There exists $C>0$ such that, for any $n \in \N$,
\begin{align}\label{idaaaaz6f8gutyygh}
|\nabla u_n(x)|\leq\sum^{m_j}_{l=1}\frac{C}{1+|x-z^l_{n}|^{N+2s}},\
x\in\R^N.
\end{align}
\end{lem}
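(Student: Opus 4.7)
The plan is to exploit the decomposition already used throughout this subsection, namely
$$
u_n(x) = r_n(x) + \sum_{l=1}^{m_j} u_l(x - z_n^l), \quad x \in \R^N,
$$
which comes directly from the definition of $r_n = R_n(\cdot,0)$ given before Lemma \ref{cxnvbvhhfytfg66}. Differentiating componentwise in $x_i$ for $1 \leq i \leq N$ (this is justified by the $C^{1,\alpha''}$ regularity of $u_n$ and of each $u_l$ established in the proof of Lemma \ref{irffhgyytyyenn}), we obtain the identity \eqref{cmvnvbfhfyhfhf}, that is,
$$
u_{n,i}(x) = r_{n,i}(x) + \sum_{l=1}^{m_j} u_{l,i}(x - z_n^l).
$$

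First, I would apply the triangle inequality to this identity to get
$$
|\nabla u_n(x)| \leq |\nabla r_n(x)| + \sum_{l=1}^{m_j} |\nabla u_l(x - z_n^l)|.
$$
The first term on the right is estimated by Lemma \ref{irffhgyytyyenn}, which gives
$$
|\nabla r_n(x)| \leq \sum_{l=1}^{m_j} \frac{C_1}{1+|x - z_n^l|^{N+2s}}
$$
for some $C_1>0$ independent of $n$. For each summand in the second term, Lemma \ref{mcnvbhhfyfufuu} applied to $w_{j,l}$ (whose trace is $u_l = u_{j,l}$) yields
$$
|\nabla u_l(x - z_n^l)| \leq \frac{C_2}{1+|x - z_n^l|^{N+2s}}
$$
for some $C_2>0$ independent of $n$ (the constant depending only on $l$ and the data, but the family $\{u_{j,l}\}_{l=1}^{m_j}$ is finite, so a uniform constant suffices).

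Summing these bounds and absorbing the constants into a single $C>0$ immediately gives the desired estimate \eqref{idaaaaz6f8gutyygh}. There is no genuine obstacle here, since the hard work has already been done: the pointwise decay of $\nabla u_l$ was obtained in Lemma \ref{mcnvbhhfyfufuu} via the s-harmonic extension together with Lemma \ref{gjhuuhujguf99difujg}, and the analogous decay of $\nabla r_n$ was obtained in Lemma \ref{irffhgyytyyenn} by differentiating the equation satisfied by $r_n$ and applying Kato's inequality (Lemma \ref{cnvnbghfyf7yfyftttd}) together with Lemma \ref{cmvnvhguufu7ryfggf}. The present lemma is then just the algebraic combination of these two ingredients via the linear decomposition of $u_n$.
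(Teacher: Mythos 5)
Your argument is correct and is exactly the route the paper takes: the lemma is stated in the text as an immediate consequence of the decomposition \eqref{cmvnvbfhfyhfhf} together with Lemma \ref{mcnvbhhfyfufuu} (decay of $|\nabla u_{j,l}|$) and Lemma \ref{irffhgyytyyenn} (decay of $|\nabla r_n|$). Combining these by the triangle inequality and absorbing the finitely many constants, as you do, is precisely the intended proof.
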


\begin{lem}\label{oooldhfyyyf6trrrr}
There exists $C>0$ such that, for any $n \in \N$,
\begin{align} \label{idaaaaziiii87utyygh}
|\nabla_x
w_n(\xi)|\leq\sum^{m_j}_{l=1}\frac{C}{1+|\xi-\xi^l_n|^{N}},\
\xi\in\overline{\R^{N+1}_+},
\end{align}
where $\xi=(x,y)$ and $\xi^l_n=(z^l_{n},0).$
\end{lem}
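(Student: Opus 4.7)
The plan is to exploit the Poisson-type representation of the $s$-harmonic extension from \eqref{convolution}–\eqref{defpys}. Since $w_n$ is the $s$-harmonic extension of $u_n$, we have
$$w_n(x,y) = (P_y^s * u_n)(x) = c_{N,s}\int_{\R^N} \frac{y^{2s}}{(|x-\zeta|^2+y^2)^{(N+2s)/2}}\, u_n(\zeta)\, d\zeta.$$
Because $P_y^s(x-\zeta)$ depends on $x$ and $\zeta$ only through $x-\zeta$, for $1 \leq i \leq N$ we may transfer the $x_i$-derivative onto $u_n$ via integration by parts, obtaining $\partial_{x_i} w_n(x,y) = (P_y^s * \partial_i u_n)(x)$. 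Applying the sharp pointwise bound of Lemma \ref{immm9ibhhfyf7yfyddsd} to $|\partial_i u_n|$ inside the convolution, the proof is reduced to showing that for each $1 \leq l \leq m_j$,
$$I_l(x,y) := \int_{\R^N} P_y^s(x-\zeta)\cdot\frac{1}{1+|\zeta-z_n^l|^{N+2s}}\, d\zeta \leq \frac{C}{1+|\xi-\xi_n^l|^{N}},$$
where $\xi=(x,y)$ and $\xi_n^l=(z_n^l,0)$.

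After a translation it suffices to treat the case $z_n^l=0$, estimating $I(x,y):=\int_{\R^N} P_y^s(x-\zeta)/(1+|\zeta|^{N+2s})\, d\zeta$. For $|\xi|\leq 2$, the normalization $\int_{\R^N} P_y^s=1$ from \eqref{defpys} gives $I(x,y)\leq 1$ and the claim is trivial. For $|\xi|>2$, I would split into two regimes. In the \emph{bulk regime} $y\geq |\xi|/\sqrt{2}$, the elementary inequality $(|x-\zeta|^2+y^2)^{(N+2s)/2}\geq y^{N+2s}$ yields $P_y^s(x-\zeta)\leq C y^{-N}$, and the integrability of $(1+|\zeta|^{N+2s})^{-1}$ gives $I\leq Cy^{-N}\leq C|\xi|^{-N}$. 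In the \emph{near-boundary regime} $y<|\xi|/\sqrt{2}$ we have $|x|\sim|\xi|$, and I would split the $\zeta$-integral at $|\zeta-x|=|x|/2$: on $\{|\zeta-x|\leq|x|/2\}$ one has $|\zeta|\gtrsim |\xi|$, yielding a factor $|\xi|^{-(N+2s)}\leq |\xi|^{-N}$ after $\int P_y^s = 1$; on $\{|\zeta-x|>|x|/2\}$ the denominator of $P_y^s$ satisfies $(|x-\zeta|^2+y^2)^{(N+2s)/2}\gtrsim |x|^{N+2s}$, giving $P_y^s(x-\zeta)\leq Cy^{2s}/|\xi|^{N+2s}$, which combined with the integrability of $(1+|\zeta|^{N+2s})^{-1}$ and the bound $y^{2s}\leq |\xi|^{2s}$ produces $I\leq C|\xi|^{-N}$. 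Summing over $l$ and restoring the translations yields \eqref{idaaaaziiii87utyygh}.

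The main technical subtlety is the appearance of the decay rate $|\xi|^{-N}$ in the bulk instead of the boundary rate $|\xi|^{-(N+2s)}$: this slower decay is sharp and unavoidable, because in the near-boundary regime the factor $y^{2s}$ inherited from $P_y^s$ must absorb exactly the loss from the boundary rate to $|\xi|^{-N}$, and no better rate is available for general $\xi$ with $y\sim|\xi|^{1-\epsilon}$. Apart from this balancing, the remaining ingredients are the integration-by-parts identity transferring $\partial_{x_i}$ onto $u_n$, the normalization $\int P_y^s=1$, and the elementary denominator estimates for $P_y^s$.
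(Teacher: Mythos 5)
Your proposal is correct and in substance the same as the paper's argument: the paper notes that $\partial_{x_i}w_n$ is the $s$-harmonic extension of $\partial_{x_i}u_n$, compares it via the maximum principle with the extension $v$ of $1/(1+|x|^{N+2s})$, and then estimates $v$ by exactly the case analysis you perform (the region $|x|\le y$ versus $|x|\ge y$, with the $\zeta$-integral split at $|\zeta-x|=|x|/2$ and the normalization $\int_{\R^N}P^s_y\,dx=1$). Your only deviation is to transfer the derivative onto $u_n$ directly inside the Poisson convolution \eqref{convolution} and insert the pointwise bound of Lemma \ref{immm9ibhhfyf7yfyddsd}, which reduces to the same kernel estimate; this is a legitimate equivalent of the paper's maximum-principle step, justified by the uniform $C^{1,\alpha''}$ bounds on $u_n$ obtained in the proof of Lemma \ref{irffhgyytyyenn}.
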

\begin{proof}
For $1\leq i\leq N$, we define $w_{n,i}:=\partial_{x_i}w_n$. Then we see that
\begin{align}\label{qqawequ3}
\left\{
\begin{aligned}
-\mbox{div}(y^{1-2s} \nabla w_{n,i})&=0\quad\quad  \,\mbox{in} \,\,\R^{N+1}_+,\\
w_{n,i}(x, 0)&=u_{n,i} \quad \mbox{on} \,\, \R^N \times \{0\}.
\end{aligned}
\right.
\end{align}
Let $v$ be the solution to the equation
\begin{align}\label{1q55qraddqqawequ31}
\left\{
\begin{aligned}
-\mbox{div}(y^{1-2s} \nabla v)&=0 \hspace{2cm} \quad  \mbox{in} \,\, \R^{N+1}_+,\\
v(x,0)&=\frac{1}{1+|x|^{N+2s}} \quad \ \,\,\, \mbox{on} \,\,
\R^N \times \{0\}.
\end{aligned}
\right.
\end{align}
From \eqref{qqawequ3}, \eqref{1q55qraddqqawequ31},  Lemma
\ref{immm9ibhhfyf7yfyddsd} and the maximum principle, we then get that
\begin{align}\label{cnvbfggfy88w4se1}
|w_{n,i}(\xi)|\leq C\sum^{m_j}_{l=1}v(\xi-\xi^l_n),\
\xi\in\overline{\R^{N+1}_+}.
\end{align}
By \eqref{1q55qraddqqawequ31} and \cite[Remark 3.8]{Cabre}, we
have that
\begin{align}\label{bxvcttfgd5tgsg0001aaz1}
v(\xi)=\int_{\R^N}  \frac{1}{1+|\zeta|^{N+2s}}P_y^s (x-\zeta) \,
d\zeta,
\end{align}
where $P_y^s(x)$ is defined by \eqref{defpys} for $x \in \R^N$. If
$|x| \leq y$, then $|\xi| \leq \sqrt{2} y $.  It follows that
\begin{align*}
\left|v(\xi)\right| & = c_{n, s} \int_{\R^N} \frac{y^{2s}}{\left (|x-\zeta|^2 + y^{2}\right)^{\frac{N+2s}{2}}} \frac{1}{1+|\zeta|^{N+2s}} \, d\zeta \\
& \leq \frac{c_{n, s}}{|y|^{N}} \int_{\R^N} \frac{1}{1+|\zeta|^{N+2s}} \, d\zeta  \\
&\leq  \frac{C}{|\xi|^{N}}.
\end{align*}
This along with the fact that $v\in L^\infty(\mathbb{R}^N)$ (see
\cite[Corollary 3.5]{Cabre}) implies that
$$
|v(\xi)| \leq\frac{C}{1+|\xi|^{N}}\quad \mbox{if}\ |x| \leq y.
$$
If  $|x| \geq y$, then $|\xi| \leq \sqrt{2} |x| $. By using \eqref{bxvcttfgd5tgsg0001aaz1}, we know that
\begin{align}\label{xncbvufd6ryfgdvxccc1}
\begin{split}
|v(\xi)| & \leq\left|\int_{|\zeta-x| >
|x|/2} \frac{1}{1+|\zeta|^{N+2s}}P_y^s(x-\zeta)  \,
d\zeta\right| \\
& \quad +\left|\int_{|\zeta-x| \leq |x|/2}
\frac{1}{1+|\zeta|^{N+2s}}P_y^s(x-\zeta)  \, d\zeta\right|.
\end{split}
\end{align}
The first term in the right hand side of \eqref{xncbvufd6ryfgdvxccc1} can be estimated as
\begin{align}
\begin{split}
&\left|\int_{|\zeta-x| > |x|/2}
\frac{1}{1+|\zeta|^{N+2s}}P_y^s(x-\zeta)  \,
d\zeta\right|\\ 
&= c_{N,s}\int_{|\zeta-x| > |x|/2} \frac{y^{2s}}{\left
(|x-\zeta|^2 + y^{2}\right)^{\frac{N+2s}{2}}}
\frac{1}{1+|\zeta|^{N+2s}} \,
d\zeta\\ 
&\leq\frac{ c_{N,s}}{|x/2|^{N}}\int_{|\zeta-x| > |x|/2}
\frac{1}{1 +|\zeta|^{N+2s}}\,d\zeta\\ 
&\leq\frac{C}{|x|^{N}}  \leq \frac{C}{|\xi|^{N}}.
\end{split}
\end{align}
Since $|\zeta-x| \leq |x|/2$ implies that $|\zeta| \geq |x|/2$,
then the second term in the right hand side of
\eqref{xncbvufd6ryfgdvxccc1} can be estimated as
\begin{align}\label{cmvbbghyt777rfedrf1}
\begin{split}
&\left|\int_{|\zeta-x| \leq |x|/2}
\frac{1}{1+|\zeta|^{N+2s}}P_y^s(x-\zeta)  \,
d\zeta\right|\\ 
&= c_{N,s}\int_{|\zeta-x| \leq |x|/2} \frac{y^{2s}}{\left
(|x-\zeta|^2 + y^{2}\right)^{\frac{N+2s}{2}}}
\frac{1}{1+|\zeta|^{N+2s}} \,
d\zeta \\
&\leq  c_{N,s}\int_{|\zeta|\geq |x|/2} \frac{y^{2s}}{\left
(|x-\zeta|^2 + y^{2}\right)^{\frac{N+2s}{2}}}
\frac{1}{1+|\zeta|^{N+2s}} \,
d\zeta \\ 
&\leq\frac{ c_{N,s}}{1+|x/2|^{N+2s}}\int_{|\zeta|\geq |x|/2}
\frac{y^{2s}}{\left (|x-\zeta|^2 +
y^{2}\right)^{\frac{N+2s}{2}}}\,d\zeta\\ 
&\leq\frac{ c_{N,s}}{1+|x/2|^{N+2s}}\int_{\R^N}\frac{y^{2s}}{\left
(|x-\zeta|^2 + y^{2}\right)^{\frac{N+2s}{2}}}\,d\zeta \\
&=\frac{ 1}{1+|x/2|^{N+2s}}  \leq \frac{C}{1+|\xi|^{N+2s}}.
\end{split}
\end{align}
Combining \eqref{xncbvufd6ryfgdvxccc1}-\eqref{cmvbbghyt777rfedrf1}
yields that$$ |v(\xi)| \leq\frac{C}{|\xi|^{N}}\quad \mbox{if}\ |x|
\geq y.
$$
This along with the fact that $v\in L^\infty(\mathbb{R}^N)$
implies that
$$
|v(\xi)| \leq\frac{C}{1+|\xi|^{N}}\quad \mbox{if}\ |x| \geq y.
$$
Hence
\begin{align}\label{mmmmz9999agtrrr4}
|v(\xi)| \leq\frac{C}{1+|\xi|^{N}},\quad
\xi\in\overline{\R^{N+1}_+}. \end{align}
This along with
\eqref{cnvbfggfy88w4se1} implies \eqref{idaaaaziiii87utyygh}. Thus
the proof is completed.
\end{proof}

\begin{lem}\label{ooo9dudyttdtrrrr}
There exists $C>0$ such that, for any $n \in \N$,
\begin{align}\label{2idaaaaziiii87utyygh}
\left|y^{1-2s}\partial_y
w_n(\xi)\right|\leq\sum^{m_j}_{l=1}\frac{C}{1+|\xi-\xi^l_n|^{N}},\
\xi\in\overline{\R^{N+1}_+}.
\end{align}
\end{lem}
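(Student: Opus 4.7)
The strategy will parallel that of Lemma \ref{oooldhfyyyf6trrrr}, but applied to a \emph{conjugate} degenerate elliptic equation. Set $v(\xi):=y^{1-2s}\partial_y w_n(\xi)$. Starting from $\mbox{div}(y^{1-2s}\nabla w_n)=0$ in $\R^{N+1}_+$, a direct computation (differentiating the relation $\Delta_x w_n=-y^{2s-1}\partial_y v$ in $y$ and using $\Delta_x\partial_y w_n=y^{2s-1}\Delta_x v$) shows that $v$ satisfies the conjugate extension equation
\begin{equation*}
\mbox{div}(y^{2s-1}\nabla v)=0\quad\mbox{in}\,\,\R^{N+1}_+.
\end{equation*}
Since $2s-1=1-2(1-s)$ with $-1<2s-1<1$, this is precisely the extension equation associated with the $(1-s)$-harmonic extension and the weight $y^{2s-1}$ lies in the Muckenhoupt class $A_2$. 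The trace of $v$ at $y=0$ is determined by the Neumann boundary condition for $w_n$: from $-k_s\,\partial w_n/\partial\nu=-V_{\eps_n}(x)u_n+f_{\eps_n}(x,|u_n|)u_n$ one has $v(x,0)=\tfrac{1}{k_s}\bigl(V_{\eps_n}(x)u_n(x)-f_{\eps_n}(x,|u_n|)u_n(x)\bigr)$, and combining $|f_{\eps_n}(x,|t|)|\le|t|^{p-2}$, the boundedness of $V$ and Lemma \ref{11123cnvbhhfyf7yfyddsd}, one gets
\begin{equation*}
|v(x,0)|\le C\bigl(|u_n|+|u_n|^{p-1}\bigr)\le\sum_{l=1}^{m_j}\frac{C}{1+|x-z_n^l|^{N+2s}}.
\end{equation*}

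Next I will let $\tilde v$ denote the unique bounded solution to
\begin{equation*}
\mbox{div}(y^{2s-1}\nabla\tilde v)=0\,\,\mbox{in}\,\,\R^{N+1}_+,\quad\tilde v(x,0)=\frac{1}{1+|x|^{N+2s}}\,\,\mbox{on}\,\,\R^N\times\{0\},
\end{equation*}
and apply the weak maximum principle for degenerate elliptic operators with $A_2$ weight (Fabes--Kenig--Serapioni) to the function $v(\xi)-C\sum_l\tilde v(\xi-\xi_n^l)$ on $\R^{N+1}_+$. The trace inequality above controls the boundary values, while the required decay of $v$ at infinity in $\overline{\R^{N+1}_+}$ follows from the $C^{1,\alpha''}$ regularity of $u_n$ together with the bound $\int y^{1-2s}|\partial_y w_n|^2<\infty$ coming from $w_n\in X^{1,s}(\R^{N+1}_+)$. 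This yields
\begin{equation*}
|v(\xi)|\le C\sum_{l=1}^{m_j}\tilde v(\xi-\xi_n^l),\quad\xi\in\overline{\R^{N+1}_+},
\end{equation*}
so it suffices to show $|\tilde v(\xi)|\le C/(1+|\xi|^N)$.

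For this last estimate I will use the Poisson representation of $\tilde v$ associated with the $(1-s)$-harmonic extension,
\begin{equation*}
\tilde v(x,y)=\int_{\R^N}\tilde P_y^{1-s}(x-\zeta)\,\frac{d\zeta}{1+|\zeta|^{N+2s}},\quad\tilde P_y^{1-s}(x)=c_{N,1-s}\frac{y^{2-2s}}{(|x|^2+y^2)^{(N+2-2s)/2}},
\end{equation*}
and split into the two cases $|x|\le y$ and $|x|\ge y$, further splitting the second case at $|\zeta-x|=|x|/2$. The resulting integral computations are then a near-verbatim transcription of \eqref{xncbvufd6ryfgdvxccc1}--\eqref{cmvbbghyt777rfedrf1}, with $s$ replaced by $1-s$ in the kernel; they give $|\tilde v(\xi)|\le C/|\xi|^N$ outside a neighborhood of the origin, which together with $\tilde v\in L^\infty(\overline{\R^{N+1}_+})$ produces the desired bound. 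The main point of difficulty is the first step: rigorously deriving the conjugate equation together with its boundary trace, and obtaining enough decay of $v$ at infinity to run the comparison argument on the entire half-space. Once these ingredients are in place, the remainder of the proof is a direct adaptation of Lemma \ref{oooldhfyyyf6trrrr}.
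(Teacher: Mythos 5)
Your proposal follows essentially the same route as the paper: by the Caffarelli--Silvestre dual principle, $v_n=y^{1-2s}\partial_y w_n$ solves a degenerate extension equation whose boundary trace is the Neumann datum, bounded by $\sum_{l}C\left(1+|x-z^l_n|^{N+2s}\right)^{-1}$ via Lemma \ref{11123cnvbhhfyf7yfyddsd}, and a maximum-principle comparison with the extension of $1/(1+|x|^{N+2s})$ followed by the Poisson-kernel case splitting of Lemma \ref{oooldhfyyyf6trrrr} yields the $|\xi|^{-N}$ decay. You are in fact slightly more careful than the paper in writing the conjugate equation with the weight $y^{2s-1}$ and the corresponding $(1-s)$-Poisson kernel (the paper states the conjugate equation with weight $y^{1-2s}$), and, as you note, the exponent-$N$ computation goes through verbatim with $s$ replaced by $1-s$.
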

\begin{proof}
Define $v_n(\xi):=y^{1-2s}\partial_y w_n(\xi).$  By the dual principle
(see \cite[Section 2.3]{CS} or \cite[Section 3.3]{Cabre}), we know
that $v_n$ satisfies the equation
\begin{align}\label{a55ras23qqawequ3} \left\{
\begin{aligned}
-\mbox{div}(y^{1-2s} \nabla v_n)&=0 \hspace{4cm} \qquad \mbox{in} \,\, \R^{N+1}_+,\\
v_n(x,0)&=-V_{\eps_n}(x)u_n+f_{\epsilon_n}(x, |u_n|)u_n \quad \quad
\mbox{on} \,\, \R^N \times \{0\}.
\end{aligned}
\right.
\end{align}
It follows from Lemma \ref{11123cnvbhhfyf7yfyddsd} and the definition of $f_{\eps_n}$ that there exists $C>0$
independent of $n$ such that
\begin{align}\label{xncbvggftr6ftyyyd}
\left|-V(\epsilon_n x)u_n+f_{\epsilon_n}(x,|u_n|)u_n \right|\leq\sum^{m_j}_{l=1}\frac{C}{1+|x-z^l_n|^{N+2s}},\ x\in
\R^N.
\end{align}
At this point, arguing as the proof of Lemma \ref{oooldhfyyyf6trrrr}, we are able to obtain the result of this lemma. Thus the proof is completed.
\end{proof}

\subsection{Proof of Theorem \ref{wejgh77rtff111}} Relying on the arguments above, we are now ready to prove Theorem \ref{wejgh77rtff111}. To this end, let us first introduce some notations.
For $r>0$, $x=(x_1,\cdots,x_N)\in\R^N$ and
$\xi=(x,y)=(x_1,\cdots,x_N,y)\in\R^{N+1}$, we define
\begin{align*} 
{Q}_r(x):=[x_1-r,x_1+r]\times\cdots\times[x_N-r,x_N+r],
\end{align*}
\begin{align*} 
\mathcal{Q}_r(\xi):=[x_1-r,x_1+r]\times\cdots\times[x_N-r,x_N+r]\times[y-r,y+r]
\end{align*}
and
\begin{align*} 
\mathcal{Q}^+_r(\xi):=\mathcal{Q}_r(\xi)\cap\overline{\R^{N+1}_+}.
\end{align*}
We denote by $\partial Q_r(x)$ the boundary of $Q_r(x)$ in $\R^N$ and $\partial\mathcal{Q}^+_r(\xi)$ the boundary of $\mathcal{Q}^+_r(\xi)$ in $\R^{N+1}$. Let
$$
\partial^+\mathcal{Q}^+_r(\xi):=\partial\mathcal{Q}^+_r(\xi)\cap\R^{N+1}_+, \quad \partial^*\mathcal{Q}^+_r(\xi):=\{(x,0) \ | \ (x,0)\in \partial\mathcal{Q}^+_r(\xi)\},
$$
then
\begin{align*} 
\partial\mathcal{Q}^+_r(\xi)=\partial^+\mathcal{Q}^+_r(\xi)\cup\partial^*\mathcal{Q}^+_r(\xi).
\end{align*}
For $\xi\in\R^{N+1}$, we define
$$
\varrho(\xi):=\inf\left\{r>0 \Big|\ \frac{\xi}{r}\in
\mathcal{Q}_1(0)\right\}.
$$
It is easy to check that $\varrho$ is a norm in $\R^{N+1}$ and there exists $C>0$ such that
\begin{align}\label{cnvbvhfyf6ryfyyyf}
C^{-1}|\xi|\leq\varrho(\xi)\leq C|\xi|,\ \xi\in\R^{N+1}.
\end{align}
Furthermore, there holds that
\begin{align}\label{cnvbghhgy99r899r9t}
\left\{\zeta\in\R^{N+1}\ |\ \varrho(\zeta-\xi)\leq
r\right\}=\mathcal{Q}_r(\xi).
\end{align}

Let $\{\eps_n\} \subset \R^+$ be such that $\eps_n=o_n(1)$.
Up to a subsequence, we assume that, for any $1 \leq l \leq m_j$, $\lim_{n\rightarrow
\infty}\epsilon_nz^l_{j,\epsilon_n}$ exists. As a result of Lemma \ref{cnvbhhfyf6yr66}, we have that
\begin{align}\label{cvnhg88guuyytgd}
\left\{z^*_1,\cdots,z^*_{l_j}\right\}:=\left\{\lim_{n\rightarrow
\infty}\epsilon_nz^l_{j,\epsilon_n}\ |\ 0\leq l\leq m_j\right\}\subset\Lambda^{\delta_0}
\end{align}
for some $1\leq l_j\leq m_j$. We now define
\begin{align}\label{fh77gyutu776409}
\vartheta_*:=\left\{
\begin{array}
[c]{ll} \frac{1}{100}\min\{\varrho(z^*_l-z^*_{l'}) \ |\ 1\leq
l<l'\leq
l_j\},& \mbox{if} \; l_j\geq 2,\\
+\infty, & \mbox{if} \; l_j=1.
\end{array}
\right.
\end{align}
For simplicity, we shall denote $w_{j, \epsilon_n}$,  $u_{j,\eps_n}$ and $z^l_{j, \eps_n}$ by $w_n,$ $u_n$ and
$z^l_n$, respectively.

\begin{lem}\label{ncvbghfy7ryrtgftt}
 If
\begin{align}\label{vnjh99jmmmxc4dre}
0<\delta< \vartheta_*,
\end{align}
then there exists $C>0$
independent of $n$ such that, for any $0\leq l\leq m_j$ and $n \in \N$ large enough,
\begin{align}\label{23mb00hldfgfyyr7}
|u_{n}(x)|+|\nabla u_n(x)|\leq C\epsilon^{N+2s}_n,\ x\in
\partial {Q}_{\delta\epsilon^{-1}_n}(z^l_n)
\end{align}
and
\begin{align}\label{mb00hldfgfyyr7}
|\nabla_x w_n(\xi)|+|y^{1-2s}\partial_y w_n(\xi)|\leq
C\epsilon^{N}_n,\ \xi\in
\partial^+\mathcal{Q}^+_{\delta\epsilon^{-1}_n}(\xi^l_n),
\end{align}
where $\xi^l_n:=(z^l_n, 0)$.
\end{lem}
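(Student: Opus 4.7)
The plan is to derive both \eqref{23mb00hldfgfyyr7} and \eqref{mb00hldfgfyyr7} directly from the pointwise decay estimates already established in Lemmas \ref{11123cnvbhhfyf7yfyddsd}, \ref{immm9ibhhfyf7yfyddsd}, \ref{oooldhfyyyf6trrrr}, and \ref{ooo9dudyttdtrrrr}. Each of those estimates has the form of a finite sum
\begin{align*}
F(\cdot) \leq \sum_{l'=1}^{m_j} \frac{C}{1+|\cdot - z_n^{l'}|^{\alpha}}
\end{align*}
with exponent $\alpha = N+2s$ for $u_n$ and $\nabla u_n$, respectively $\alpha = N$ for $\nabla_x w_n$ and $y^{1-2s}\partial_y w_n$. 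Thus, once I prove that every centre $z_n^{l'}$ lies at distance at least $c\epsilon_n^{-1}$ from the relevant boundary (with $c>0$ depending only on $\delta$), summing over the finitely many indices $l'$ produces exactly the rates $\epsilon_n^{N+2s}$ and $\epsilon_n^N$ claimed in \eqref{23mb00hldfgfyyr7}-\eqref{mb00hldfgfyyr7}.

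The core step is therefore the distance estimate. Fix $x \in \partial Q_{\delta\epsilon_n^{-1}}(z_n^l)$, so $|x-z_n^l|_\infty = \delta\epsilon_n^{-1}$; the case $l'=l$ is immediate. For $l' \neq l$, let $z_a^* := \lim_n \epsilon_n z_n^l$ and $z_b^* := \lim_n \epsilon_n z_n^{l'}$, which both exist along the chosen subsequence and lie in the finite set $\{z_1^*,\ldots,z_{l_j}^*\}$ by \eqref{cvnhg88guuyytgd}. If $z_a^*\neq z_b^*$, then definition \eqref{fh77gyutu776409}, combined with the hypothesis \eqref{vnjh99jmmmxc4dre} and the norm equivalence \eqref{cnvbvhfyf6ryfyyyf}, gives $|z_a^*-z_b^*|\geq c\vartheta_* > c\delta$ for some universal $c>0$; hence for $n$ large $|z_n^l-z_n^{l'}|\geq (c/2)\vartheta_*\epsilon_n^{-1}$, and combined with $|x-z_n^l|\leq \sqrt{N}\,\delta\epsilon_n^{-1}$ the triangle inequality yields $|x-z_n^{l'}|\geq c'\epsilon_n^{-1}$. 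If instead $z_a^*=z_b^*$, then $\epsilon_n(z_n^l-z_n^{l'})\to 0$, so $|z_n^l-z_n^{l'}|_\infty\leq (\delta/2)\epsilon_n^{-1}$ for $n$ large, and the reverse triangle inequality gives $|x-z_n^{l'}|_\infty\geq(\delta/2)\epsilon_n^{-1}$. Feeding these lower bounds into Lemmas \ref{11123cnvbhhfyf7yfyddsd} and \ref{immm9ibhhfyf7yfyddsd} and summing over $l'$ proves \eqref{23mb00hldfgfyyr7}.

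For \eqref{mb00hldfgfyyr7}, I would decompose
\begin{align*}
\partial^+\mathcal{Q}^+_{\delta\epsilon_n^{-1}}(\xi_n^l) = \bigl\{(x,y) : |x-z_n^l|_\infty = \delta\epsilon_n^{-1},\ 0<y\leq \delta\epsilon_n^{-1}\bigr\} \cup \bigl\{(x,y): x\in Q_{\delta\epsilon_n^{-1}}(z_n^l),\ y = \delta\epsilon_n^{-1}\bigr\}.
\end{align*}
On the top face one has $|\xi-\xi_n^{l'}|\geq y=\delta\epsilon_n^{-1}$ trivially for every $l'$, while on the lateral part the previous step gives $|\xi-\xi_n^{l'}|\geq |x-z_n^{l'}|\geq c'\epsilon_n^{-1}$. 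Invoking Lemmas \ref{oooldhfyyyf6trrrr} and \ref{ooo9dudyttdtrrrr} and summing over $l'$ then yields \eqref{mb00hldfgfyyr7}. The only subtlety worth flagging is the coincident-limit case $z_a^*=z_b^*$, where the separation $|z_n^l-z_n^{l'}|\to\infty$ proceeds at a rate only $o(\epsilon_n^{-1})$ and one might fear that $z_n^{l'}$ drifts into the cube around $z_n^l$; this $o(\epsilon_n^{-1})$ drift is however automatically dominated by the cube radius $\delta\epsilon_n^{-1}$, so the reverse triangle inequality still closes the argument without any additional work.
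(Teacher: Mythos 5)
Your argument is correct and is precisely what the paper's one-line proof intends: it combines the norm equivalence \eqref{cnvbvhfyf6ryfyyyf}, the hypothesis $0<\delta<\vartheta_*$ (through the factor $1/100$ in \eqref{fh77gyutu776409}), and the decay estimates of Lemmas \ref{11123cnvbhhfyf7yfyddsd}, \ref{immm9ibhhfyf7yfyddsd}, \ref{oooldhfyyyf6trrrr} and \ref{ooo9dudyttdtrrrr}, with the distinct-limit versus coincident-limit dichotomy for $\epsilon_n z_n^l$ treated exactly as you do. One cosmetic remark: carrying out the triangle inequality in the $\varrho$ (sup) norm, for which the lateral boundary of $Q_{\delta\epsilon_n^{-1}}(z_n^l)$ is exactly the level set $\varrho=\delta\epsilon_n^{-1}$, avoids the dimension-dependent factor in your bound $|x-z_n^l|\leq \sqrt{N}\,\delta\epsilon_n^{-1}$ and makes the margin provided by the $1/100$ dimension-free.
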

\begin{proof}
Combining \eqref{cnvbvhfyf6ryfyyyf}, \eqref{vnjh99jmmmxc4dre}, Lemmas \ref{11123cnvbhhfyf7yfyddsd}, \ref{immm9ibhhfyf7yfyddsd},
\ref{oooldhfyyyf6trrrr} and \ref{ooo9dudyttdtrrrr} yields the results of the lemma, and the proof is completed.
\end{proof}

\begin{lem} \label{phossjdh7777213}
Let ${\bf{n}}=(n_1,\cdots, n_{N+1})$ and
${\bm{\nu}}=(\nu_1,\cdots,\nu_N)$ be the unit outward normal vectors to
$\partial^+ \mathcal{Q}^+_{\delta\epsilon^{-1}_n}(\xi^l_n)$ and
$\partial Q_{\delta\epsilon^{-1}_n}(z^l_n)$, respectively. Then
\begin{align}\label{cmbnug8g7ufyfyyfyf}
\begin{split}
&-\int_{\partial^+ \mathcal{Q}^+_{\delta\epsilon^{-1}_n}(\xi^l_n)}
y^{1-2s}\left( \nabla w_n\cdot\bf {n}\right) \partial_{x_i}
w_n \, dS+\frac{1}{2}\int_{\partial^+
\mathcal{Q}^+_{\delta\epsilon^{-1}_n}(\xi^l_n)}y^{1-2s} |\nabla
w_n|^2 n_i  \, dS  \\ &+\frac{1}{2k_s}\int_{\partial
Q_{\delta\epsilon^{-1}_n}(z^l_n)}V(\eps_n x) |u_n|^2  \nu_i\,
dS-\frac{1}{k_s}\int_{\partial
Q_{\delta\epsilon^{-1}_n}(z^l_n)}F_{\eps_n}(x, |u_n|))\nu_i \, dS
\\
&=\frac{\epsilon_n}{2k_s}\int_{Q_{\delta\epsilon^{-1}_n}(z^l_n)} \partial_{x_i}
V(\eps_n x) |u_n|^2\, dx
+\frac{\eps_n}{k_s}\int_{Q_{\delta\epsilon^{-1}_n}(z^l_n)}\partial_{x_i}\chi(\epsilon_n x)\left(\frac{1}{p}|u_n|^p-G(|u_n|)\right)\, dx,
\end{split}
\end{align}
where $\delta>0$ is a constant.
\end{lem}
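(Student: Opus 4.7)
The identity is a localized Pohozaev-type identity on the half-cylinder $\mathcal{Q}^+_{R}(\xi^l_n)$ with $R=\delta\epsilon_n^{-1}$. The plan is to multiply the extension equation by $\partial_{x_i}w_n$ and integrate by parts, treating the top (upper) and bottom ($y=0$) portions of the boundary separately.

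First I would start from $-\mathrm{div}(y^{1-2s}\nabla w_n)=0$ in $\R^{N+1}_+$ and compute
\[
0=\int_{\mathcal{Q}^+_R(\xi^l_n)}-\mathrm{div}(y^{1-2s}\nabla w_n)\,\partial_{x_i}w_n\,dxdy.
\]
Using the product rule, $\mathrm{div}(y^{1-2s}\nabla w_n\,\partial_{x_i}w_n)=\mathrm{div}(y^{1-2s}\nabla w_n)\,\partial_{x_i}w_n+y^{1-2s}\nabla w_n\cdot\nabla(\partial_{x_i}w_n)$, and observing that, for $1\le i\le N$, the weight $y^{1-2s}$ is independent of $x_i$, so $y^{1-2s}\nabla w_n\cdot\nabla(\partial_{x_i}w_n)=\tfrac12\,\partial_{x_i}\bigl(y^{1-2s}|\nabla w_n|^2\bigr)$. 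Applying the divergence theorem in $\mathcal{Q}^+_R(\xi^l_n)$ then yields the classical Pohozaev relation
\[
0=-\int_{\partial\mathcal{Q}^+_R(\xi^l_n)}y^{1-2s}(\nabla w_n\cdot\mathbf{n})\,\partial_{x_i}w_n\,dS+\frac12\int_{\partial\mathcal{Q}^+_R(\xi^l_n)}y^{1-2s}|\nabla w_n|^2\,n_i\,dS.
\]

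Next I would split $\partial\mathcal{Q}^+_R(\xi^l_n)=\partial^+\mathcal{Q}^+_R(\xi^l_n)\cup\partial^*\mathcal{Q}^+_R(\xi^l_n)$. On the bottom face $\partial^*\mathcal{Q}^+_R(\xi^l_n)=Q_R(z^l_n)\times\{0\}$ the outward normal is $(0,\dots,0,-1)$, so $n_i=0$ for $i\le N$ and only the first integral contributes. Using the Neumann condition of \eqref{equ21}, namely $\lim_{y\to0^+}y^{1-2s}\partial_y w_n=\frac{1}{k_s}\bigl(V_{\epsilon_n}(x)u_n-f_{\epsilon_n}(x,|u_n|)u_n\bigr)$, the bottom contribution of the first integral becomes
\[
\int_{Q_R(z^l_n)}\frac{1}{k_s}\bigl(V_{\epsilon_n}(x)u_n-f_{\epsilon_n}(x,|u_n|)u_n\bigr)\partial_{x_i}u_n\,dx.
\]
I would then perform one further integration by parts in $x$ on this planar integral. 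Using $V_{\epsilon_n}u_n\partial_{x_i}u_n=\tfrac12\partial_{x_i}(V_{\epsilon_n}u_n^2)-\tfrac{\epsilon_n}{2}(\partial_{x_i}V)(\epsilon_n x)u_n^2$ and, for the nonlinear term, the pointwise identity $\partial_{x_i}F_{\epsilon_n}(x,|u_n|)=(\partial_{x_i}F_{\epsilon_n})(x,|u_n|)+f_{\epsilon_n}(x,|u_n|)u_n\partial_{x_i}u_n$ together with the explicit computation $(\partial_{x_i}F_{\epsilon_n})(x,|u_n|)=\epsilon_n(\partial_{x_i}\chi)(\epsilon_n x)\bigl(G(|u_n|)-\tfrac1p|u_n|^p\bigr)$, applying the divergence theorem on $Q_R(z^l_n)\subset\R^N$ produces exactly the two $\partial Q_R$-boundary terms and the two volume terms displayed in \eqref{cmbnug8g7ufyfyyfyf}. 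Combining this with the top-face contribution on $\partial^+\mathcal{Q}^+_R(\xi^l_n)$ (which is left as such in the statement) and rearranging finishes the proof.

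The main technical obstacle is justifying the two integrations by parts rigorously: the weight $y^{1-2s}$ is singular at $y=0$ when $s>1/2$, and one must ensure that the trace computation $y^{1-2s}\partial_y w_n\to-k_s^{-1}(-\Delta)^s u$ is valid inside the integral, that $\partial_{x_i} w_n$ has a well-defined trace on $\partial^*\mathcal{Q}^+_R(\xi^l_n)$, and that no hidden boundary contributions appear at $y=0$ from the lateral sides. I would handle this by a standard cut-off procedure: replace $\mathcal{Q}^+_R(\xi^l_n)$ by $Q_R(z^l_n)\times(\eta,R)$ with $\eta>0$, perform both integration by parts steps in this regular domain where all integrands are smooth, and then let $\eta\to 0^+$. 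The limits of the lateral integrals vanish because $y^{1-2s}|\nabla w_n|^2$ is integrable by $w_n\in X^{1,s}(\R^{N+1}_+)$, while the polynomial decay estimates established in Lemmas~\ref{mcnvbhhfyfufuu}, \ref{immm9ibhhfyf7yfyddsd}, \ref{oooldhfyyyf6trrrr} and \ref{ooo9dudyttdtrrrr} together with the $C^1$ regularity of $u_n$ (via \cite{Cabre,JLX}) guarantee enough smoothness of $w_n$ up to $\{y=0\}$ for the bottom-face manipulations to pass to the limit cleanly.
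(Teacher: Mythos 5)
Your proposal is correct and follows essentially the same route as the paper: multiply the extension equation by $\partial_{x_i}w_n$, apply the divergence theorem on the half-cube, use the Neumann condition on the bottom face, and integrate by parts once more in $x$ on $Q_{\delta\epsilon_n^{-1}}(z^l_n)$, with the same pointwise identity $(\partial_{x_i}F_{\eps_n})(x,|u_n|)=\eps_n(\partial_{x_i}\chi)(\eps_n x)\bigl(G(|u_n|)-\tfrac1p|u_n|^p\bigr)$ producing the two volume terms. The only difference is that you make the $y$-truncation and limiting argument explicit, which the paper leaves implicit.
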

\begin{proof}
Since $w_n$ satisfies \eqref{equ21} with $\eps=\eps_n$, by multiplying \eqref{equ21} by $ \partial_{x_i}
w_n$ and integrating on $\mathcal{Q}^+_{\delta\epsilon^{-1}_n}(\xi^l_n)$, then
\begin{align} \label{cnbvbhg7tyfhfhfff}
\int_{\mathcal{Q}^+_{\delta\epsilon^{-1}_n}(\xi^l_n)}
\textnormal{div} \left(y^{1-2s} \nabla w_n\right)  \partial_{x_i}
w_n  \, dx dy=0.
\end{align}
Applying the divergence theorem, we know that
\begin{align*} 
\begin{split}
\int_{\mathcal{Q}^+_{\delta\epsilon^{-1}_n}(\xi^l_n)}
\textnormal{div} \left(y^{1-2s} \nabla w_n\right)  \partial_{x_i}
w_n  \, dx dy
&=\int_{\partial\mathcal{Q}^+_{\delta\epsilon^{-1}_n}(\xi^l_n)}
y^{1-2s}\left( \nabla w_n\cdot\bf {n}\right)  \partial_{x_i}
w_n  \, dS \\ 
&\quad- \int_{\mathcal{Q}^+_{\delta\epsilon^{-1}_n}(\xi^l_n)}
y^{1-2s} \nabla w_n\cdot\nabla\left( \partial_{x_i} w_n\right) \, dx dy.
\end{split}
\end{align*}
It then follows from \eqref{cnbvbhg7tyfhfhfff} that
\begin{align}  \label{mcnvjug78fuuffff}
\hspace{-1cm}\int_{\partial\mathcal{Q}^+_{\delta\epsilon^{-1}_n}(\xi^l_n)}
y^{1-2s}\left( \nabla w_n\cdot\bf {n}\right)  \partial_{x_i}
w_n  \, dS=\int_{\mathcal{Q}^+_{\delta\epsilon^{-1}_n}(\xi^l_n)}
y^{1-2s} \nabla w_n\cdot\nabla\left( \partial_{x_i} w_n\right) \, dx dy.
\end{align}
Let us first calculate the term in the left hand side of \eqref{mcnvjug78fuuffff}.
By the definition of $\partial \mathcal{Q}^+_{\delta\epsilon^{-1}_n}(\xi^l_n)$ and \eqref{equ21}, we find that
\begin{align*} 
&\int_{\partial\mathcal{Q}^+_{\delta\epsilon^{-1}_n}(\xi^l_n)}
y^{1-2s}\left( \nabla w_n\cdot\bf {n}\right)  \partial_{x_i}
w_n \, dS\nonumber\\
&=\int_{\partial^+\mathcal{Q}^+_{\delta\epsilon^{-1}_n}(\xi^l_n)}
y^{1-2s}\left( \nabla w_n\cdot\bf {n}\right)  \partial_{x_i}
w_n  \, dS
-\int_{\partial^*\mathcal{Q}^+_{\delta\epsilon^{-1}_n}(\xi^l_n)}\left(\lim_{y\rightarrow0^+}y^{1-2s}\partial_y w_n\right) \partial_{x_i}
w_n  \,dx\nonumber\\
&=\int_{\partial^+\mathcal{Q}^+_{\delta\epsilon^{-1}_n}(\xi^l_n)}
y^{1-2s}\left( \nabla w_n\cdot\bf {n}\right)  \partial_{x_i}
w_n  \,dS +\frac{1}{k_s}\int_{Q_{\delta\epsilon^{-1}_n}(z^l_n)}\left(-V_{\eps_n}(x) u_n + f_{\eps_n}(x, |u_n|) u_n\right) \partial_{x_i} u_n \, dx.
\end{align*}
Using the divergence theorem and the definition of $F_{\eps_n}$, we see that
\begin{align*} 
&\int_{Q_{\delta\epsilon^{-1}_n}(z^l_n)}\left(-V_{\eps_n}(x) u_n +
f_{\eps_n}(x, |u_n|)u_n\right)  \partial_{x_i}
u_n \, dx\nonumber\\
&=\int_{Q_{\delta\epsilon^{-1}_n}(z^l_n)}\left(-\frac{1}{2}V_{\eps_n}(x)
 \partial_{x_i}\left(|u_n|^2\right) + {\partial_{x_i}}(F_{\eps_n}(x, |u_n|))\right)\, dx-\eps_n\int_{Q_{\delta\epsilon^{-1}_n}(z^l_n)}\partial_{x_i} F(\eps_nx, |u_n|)\, dx\nonumber\\
&=-\frac{1}{2}\int_{\partial
Q_{\delta\epsilon^{-1}_n}(z^l_n)}V_{\eps_n}(x) |u_n|^2 \nu_i\,
dS+\frac{\epsilon_n}{2}\int_{
Q_{\delta\epsilon^{-1}_n}(z^l_n)}\partial_{x_i}
V(\eps_n x) |u_n|^2\, dx\nonumber\\
&\quad+\int_{\partial Q_{\delta\epsilon^{-1}_n}(z^l_n)}F_{\eps_n}(x,
|u_n|))\nu_i \,
dS-\eps_n\int_{Q_{\delta\epsilon^{-1}_n}(z^l_n)}\partial_{x_i}F(\eps_nx,|u_n|)\, dx\nonumber\\
&=-\frac{1}{2}\int_{\partial
Q_{\delta\epsilon^{-1}_n}(z^l_n)}V(\eps_n x) |u_n|^2 \nu_i\,
dS+\frac{\epsilon_n}{2}\int_{
Q_{\delta\epsilon^{-1}_n}(z^l_n)}\partial_{x_i}
V(\eps_n x) |u_n|^2\, dx\nonumber\\
&\quad+\int_{\partial
Q_{\delta\epsilon^{-1}_n}(z^l_n)}F_{\eps_n}(x, |u_n|))\nu_i \,
dS+\eps_n\int_{Q_{\delta\epsilon^{-1}_n}(z^l_n)}\partial_{x_i}\chi(\epsilon_n x)\left(\frac{1}{p}|u_n|^p-G(|u_n|)\right)\, dx.
\end{align*}
We next compute the term in the right hand side of \eqref{mcnvjug78fuuffff}. Using again the divergence theorem, we derive that
\begin{align*}
&\int_{\mathcal{Q}^+_{\delta\epsilon^{-1}_n}(\xi^l_n)}  y^{1-2s}
\nabla w_n\cdot\nabla\left(\partial_{x_i}w_n \right) \, dx dy\nonumber\\
&=\sum^N_{j=1}\int_{\mathcal{Q}^+_{\delta\epsilon^{-1}_n}(\xi^l_n)}y^{1-2s}
\partial_{x_j} w_n\partial_{x_j}\left(\partial_{x_i} w_n \right) \, dx
dy+\int_{\mathcal{Q}^+_{\delta\epsilon^{-1}_n}(\xi^l_n)}y^{1-2s}
\partial_y w_n\partial_y\partial_{x_i}w_n  \, dx dy\nonumber\\
&=\frac{1}{2}\sum^N_{j=1}\int_{\mathcal{Q}^+_{\delta\epsilon^{-1}_n}(\xi^l_n)}y^{1-2s}
\partial_{x_i}\left(\partial_{x_j}
w_n\right)^2\, dxdy+\frac{1}{2}\int_{\mathcal{Q}^+_{\delta\epsilon^{-1}_n}(\xi^l_n)}y^{1-2s}
\partial_{x_i}\left(\partial_y
w_n\right)^2 \, dx dy\nonumber\\
&=\frac{1}{2}\int_{\mathcal{Q}^+_{\delta\epsilon^{-1}_n}(\xi^l_n)}y^{1-2s}
\partial_{x_i}\left(|\nabla w_n|^2\right) \, dx dy\nonumber\\
&=\frac{1}{2}\int_{\partial^+\mathcal{Q}^+_{\delta\epsilon^{-1}_n}(\xi^l_n)}y^{1-2s}
|\nabla w_n|^2 n_i  \, dS.
\end{align*}
Coming back to \eqref{mcnvjug78fuuffff}, we then derive \eqref{cmbnug8g7ufyfyyfyf}. This completes the proof.
 \end{proof}

\begin{cor} \label{2cmbnug8g7ufyfyyfyf1}
For any ${\bf{t}}=(t_1,\cdots,t_N)\in\R^N$, there holds that
\begin{align} \label{2cmbnug8g7ufyfyyfyf}
\begin{split}
&-\int_{\partial^+ \mathcal{Q}^+_{\delta\epsilon^{-1}_n}(\xi^l_n)}
y^{1-2s}\left( \nabla w_n\cdot\bf {n}\right)\left(\nabla_x w_n  \cdot {\bf{t}}\right) \, dS +\frac{1}{2} \sum^N_{i=1}\int_{\partial^+\mathcal{Q}^+_{\delta\epsilon^{-1}_n}(\xi^l_n)}y^{1-2s} |\nabla w_n|^2 t_i {n}_i   \, dS\\
&+\frac{1}{2k_s}\int_{\partial Q_{\delta\epsilon^{-1}_n}(z^l_n)}V(\eps_n x) |u_n|^2
\left({\bf{t}}\cdot {\bm{\nu}}\right)\, dS-\frac{1}{k_s}\int_{\partial Q_{\delta\epsilon^{-1}_n}(z^l_n)}
F_{\eps_n}(x, |u_n|))\left({\bf{t}}\cdot {\bm{\nu}}\right) \, dS\\
&=\frac{\epsilon_n}{2k_s}\int_{Q_{\delta\epsilon^{-1}_n}(z^l_n)}
\left((\nabla V)(\eps_n x) \cdot {\bf{t}} \right) |u_n|^2\,dx\\
& \quad +\frac{\eps_n}{k_s}\int_{Q_{\delta\epsilon^{-1}_n}(z^l_n)}((\nabla\chi)(\epsilon_n
x) \cdot{\bf{t}})\left(\frac{1}{p}|u_n|^p-G(|u_n|)\right)\, dx,
\end{split}
\end{align}
where ${\bf{n}}=(n_1,\cdots, n_{N+1})$ is the unit outward normal vector to $\partial^+ \mathcal{Q}^+_{\delta\epsilon^{-1}_n}(\xi^l_n)$.
\end{cor}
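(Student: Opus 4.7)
The plan is to deduce this corollary directly from Lemma \ref{phossjdh7777213} by taking a linear combination of the $N$ coordinate identities. Specifically, for each $i=1,\dots,N$ Lemma \ref{phossjdh7777213} produces an identity corresponding to the direction $e_i$; I would multiply that $i$-th identity by the scalar $t_i$ and sum the resulting $N$ equations. No new local computation (no new divergence-theorem step, no new boundary integration by parts) is required, since the entire Pohozaev-type machinery has already been executed coordinate-wise in the proof of Lemma \ref{phossjdh7777213}.

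The verification that summation yields exactly \eqref{2cmbnug8g7ufyfyyfyf} is then a purely algebraic bookkeeping. In the first boundary term on $\partial^+\mathcal{Q}^+_{\delta\epsilon^{-1}_n}(\xi^l_n)$, the sum $\sum_{i=1}^N t_i\,\partial_{x_i} w_n$ is by definition $\nabla_x w_n\cdot\mathbf{t}$, producing the factor that appears in \eqref{2cmbnug8g7ufyfyyfyf}. In the second boundary term, multiplying the $i$-th equation by $t_i$ keeps the weight $|\nabla w_n|^2$ intact and contributes $t_i n_i$ inside the integrand; summing leaves the expression $\sum_{i=1}^{N} t_i n_i$ exactly as written in the statement (note that the summation is over $i=1,\dots,N$ only, consistent with the fact that $\mathbf{t}\in\R^N$ has no $y$-component). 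For the two boundary integrals on $\partial Q_{\delta\epsilon^{-1}_n}(z^l_n)$, the sums $\sum_{i=1}^N t_i \nu_i$ become $\mathbf{t}\cdot\bm{\nu}$, matching \eqref{2cmbnug8g7ufyfyyfyf}. Finally, on the right-hand side, $\sum_{i=1}^N t_i\,\partial_{x_i} V(\eps_n x) = (\nabla V)(\eps_n x)\cdot\mathbf{t}$ and $\sum_{i=1}^N t_i\,\partial_{x_i}\chi(\eps_n x) = (\nabla\chi)(\eps_n x)\cdot\mathbf{t}$, which reproduces the two bulk terms in \eqref{2cmbnug8g7ufyfyyfyf}.

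Since every step is linear in the index $i$, there is no genuine obstacle: the only point requiring a brief word of care is that the components $t_i$ are constants, so they commute freely with the differential and integral operators involved and no extra error terms are generated. In writing up, I would simply state: ``Multiplying the identity \eqref{cmbnug8g7ufyfyyfyf} by $t_i$ and summing over $i=1,\dots,N$, we obtain \eqref{2cmbnug8g7ufyfyyfyf},'' and leave the elementary algebraic identifications above to the reader. This keeps the exposition short and highlights that the corollary is just a vectorial repackaging of the coordinate-wise Pohozaev identity already established in Lemma \ref{phossjdh7777213}.
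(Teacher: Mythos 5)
Your proof is correct: identity \eqref{cmbnug8g7ufyfyyfyf} holds for each $1\le i\le N$ and every term in it is linear in the coordinate direction, so multiplying the $i$-th identity by $t_i$ and summing gives \eqref{2cmbnug8g7ufyfyyfyf} exactly, with $\sum_i t_i\partial_{x_i}w_n=\nabla_x w_n\cdot\mathbf{t}$, $\sum_i t_i\nu_i=\mathbf{t}\cdot\bm{\nu}$, and $\sum_i t_i(\partial_{x_i}V)(\eps_n x)=(\nabla V)(\eps_n x)\cdot\mathbf{t}$ as you note. The paper's own proof is essentially the same by linearity, differing only cosmetically in that it multiplies the equation by the combined test function $\nabla_x w_n\cdot\mathbf{t}$ and repeats the divergence-theorem step of Lemma \ref{phossjdh7777213} instead of summing the finished coordinate identities, so your version is, if anything, slightly more economical.
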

\begin{proof}
Multiplying \eqref{equ21} with $\eps=\eps_n$ by $\nabla_x w_n \cdot {\bf{t}}$,
integrating on $\mathcal{Q}^+_{\delta\epsilon^{-1}_n}(\xi^l_n)$ and using the divergence theorem, we can obtain that
$$
\hspace{-1cm}\int_{\partial\mathcal{Q}^+_{\delta\epsilon^{-1}_n}(\xi^l_n)}
y^{1-2s}\left( \nabla w_n\cdot\bf {n}\right) \left(\nabla_x w_n \cdot {\bf{t}} \right)\, dS=\int_{\mathcal{Q}^+_{\delta\epsilon^{-1}_n}(\xi^l_n)}
y^{1-2s} \nabla w_n\cdot\nabla\left( \nabla_x w_n \cdot {\bf{t}} \right) \, dx dy.
$$
At this point, reasoning as the proof of Lemma \ref{phossjdh7777213}, it is not difficult to deduce the result of this corollary. Thus the proof is completed.
\end{proof}

\begin{lem}\label{cnvbghyyt7888sdd}
If $N \geq 2$, then, for any $1\leq l\leq m_j$,
\begin{align} \label{con}
\lim_{\epsilon\rightarrow 0^+}\mbox{\textnormal{dist}}(\epsilon
z^l_{\epsilon,j},\mathcal{V})=0,
\end{align}
where $\mathcal{V}$ is defined by \eqref{defv}.
\end{lem}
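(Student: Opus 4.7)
The plan is to apply the local Pohozaev identity \eqref{2cmbnug8g7ufyfyyfyf} from Corollary \ref{2cmbnug8g7ufyfyyfyf1} on the cube $\mathcal{Q}^+_{\delta\epsilon^{-1}_n}(\xi^l_n)$ for some $\delta\in(0,\vartheta_*)$ and a well-chosen test vector ${\bf t}\in\R^N$, then to divide by $\epsilon_n$ and pass to the limit $n\to\infty$. The choice $\delta<\vartheta_*$ ensures via \eqref{fh77gyutu776409} and Lemma \ref{cnvbhhfyf6yr66} that the rescaled cubes $Q_\delta(\epsilon_n z^l_n)$ concentrate (along a subsequence) at a single point $z^l_j\in\overline{\Lambda^{\delta_0}}$ and are disjoint from the cubes attached to the other bumps.

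The first step is to show that every boundary term on the left-hand side of \eqref{2cmbnug8g7ufyfyyfyf} is $o(\epsilon_n)$. Plugging in the estimates \eqref{23mb00hldfgfyyr7}--\eqref{mb00hldfgfyyr7} from Lemma \ref{ncvbghfy7ryrtgftt} and measuring each face of $\partial\mathcal{Q}^+_{\delta\epsilon^{-1}_n}(\xi^l_n)$ weighted by $y^{1-2s}$ where appropriate, every contribution works out to $\epsilon_n^{\alpha}$ with $\alpha>1$; the tightest exponent, namely $N-2s+1$ from the top face $y=\delta\epsilon_n^{-1}$, is strictly greater than $1$ exactly when $N\geq 2$, which is where the dimensional hypothesis enters. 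The second step is to identify the limit of the right-hand side of \eqref{2cmbnug8g7ufyfyyfyf} after dividing by $\epsilon_n$. Changing variables $x\mapsto x-z^l_n$ and combining the profile decomposition \eqref{wnj} with the pointwise decay of Lemma \ref{11123cnvbhhfyf7yfyddsd} (the former giving $u_n(\cdot+z^l_n)\to u_{j,l}$ in $L^q(\R^N)$ for every $2\leq q<2^*_s$ and the latter providing the uniform integrability needed to absorb the other bumps, which escape to infinity), together with the continuity of $\nabla V$ and $\nabla\chi$, yields the limiting identity
\begin{align*}
0 = \bigl(\nabla V(z^l_j)\cdot{\bf t}\bigr)\|u_{j,l}\|^2_{L^2(\R^N)} + 2\bigl(\nabla\chi(z^l_j)\cdot{\bf t}\bigr)\int_{\R^N}\Bigl(\tfrac{1}{p}|u_{j,l}|^p - G(|u_{j,l}|)\Bigr)\,dx.
\end{align*}

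The conclusion then follows by a case analysis in ${\bf t}$. If $z^l_j$ lies outside $\overline{\Lambda}$, pick ${\bf t}=\nabla\mbox{dist}(z^l_j,\Lambda)$: by \eqref{vloc} the first term is strictly positive, while the second is nonnegative because $\eta'\geq 0$, $|\nabla\mbox{dist}(z^l_j,\Lambda)|=1$, and $\tfrac{1}{p}|t|^p-G(|t|)\geq 0$ by construction of $g$, so the whole right-hand side is strictly positive, a contradiction. If $z^l_j\in\partial\Lambda$, pick ${\bf t}={\bf n}(z^l_j)$: by $(V_2)$ the first term is strictly positive, while the second is again nonnegative because $\nabla\chi$ is supported outside $\Lambda$ and points in the outward-normal direction $\nabla\mbox{dist}(\cdot,\Lambda)$ there, whose dot product with ${\bf n}(z^l_j)$ is $\geq 0$, again a contradiction. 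Hence $z^l_j$ must lie in the interior of $\Lambda$, so for $\delta$ small enough $\chi(\epsilon_n x)\equiv 0$ throughout $Q_{\delta\epsilon^{-1}_n}(z^l_n)$ for $n$ large; the $\chi$ term disappears, and choosing ${\bf t}=\nabla V(z^l_j)$ forces $|\nabla V(z^l_j)|^2\|u_{j,l}\|^2_{L^2(\R^N)}=0$. Since $u_{j,l}\not\equiv 0$ by Lemma \ref{cnvbhhfyf6yr66}, we conclude $\nabla V(z^l_j)=0$, i.e.\ $z^l_j\in\mathcal{V}$, which proves \eqref{con}. The main obstacle I anticipate is the meticulous bookkeeping in the boundary-term step: the degenerate weight $y^{1-2s}$ on $\partial^+\mathcal{Q}^+_{\delta\epsilon^{-1}_n}(\xi^l_n)$ interacts delicately with the polynomial decay bounds of Lemmas \ref{oooldhfyyyf6trrrr}--\ref{ooo9dudyttdtrrrr}, and it is precisely this interaction that enforces $N\geq 2$; in dimension $N=1$ the critical exponent becomes marginal and the boundary remainder is no longer $o(\epsilon_n)$.
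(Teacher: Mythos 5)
Your proposal follows the same core strategy as the paper: apply the local Pohozaev identity of Corollary \ref{2cmbnug8g7ufyfyyfyf1} on the cubes $\mathcal{Q}^+_{\delta\epsilon^{-1}_n}(\xi^l_n)$ with $\delta<\vartheta_*$, kill the boundary terms with the decay estimates of Lemma \ref{ncvbghfy7ryrtgftt}, and exploit the bulk $\nabla V$ term, with $N\geq 2$ entering through the boundary exponents. The difference is only in the endgame: the paper argues by contradiction, takes the single direction ${\bf{t}}_n=\nabla V(\epsilon_n z^{l_0}_n)/|\nabla V(\epsilon_n z^{l_0}_n)|$, and needs only the one-sided bound $\mathcal{R}_n\geq C\epsilon_n$, obtained from the locally uniform positivity \eqref{bcvyfhiifjy}, the sign condition \eqref{cnvbhfyyfyf55re} and mere weak convergence $u_n(\cdot+z^{l_0}_n)\rightharpoonup u_{l_0}\neq 0$, whereas you divide by $\epsilon_n$, pass to an exact limiting identity, and run a case analysis in the location of $z^l_j$. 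Your organization is viable and, in the interior case, reaches the same conclusion $\nabla V(z^l_j)=0$ directly rather than by contradiction.

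Two points in your write-up need repair. First, the claimed convergence $u_n(\cdot+z^l_n)\to u_{j,l}$ in $L^q(\R^N)$ is false whenever $m_j\geq 2$: \eqref{wnj} gives this only modulo the translated profiles $u_{j,l'}(\cdot+z^l_n-z^{l'}_n)$, and any bump $l'$ with the same macroscopic limit $\lim_n\epsilon_n z^{l'}_n=\lim_n\epsilon_n z^{l}_n$ remains inside your cube, so the correct limiting identity carries the sum of $\|u_{j,l'}\|^2_{L^2}$ over all such profiles (and the corresponding sums in the $\chi$-term), not $\|u_{j,l}\|^2_{L^2}$ alone. Every such contribution has the favorable sign, so your case analysis survives, but as stated the limit is not justified; this is exactly the complication the paper's weak-convergence lower bound sidesteps. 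Second, the exponent bookkeeping is slightly off: the top face $y=\delta\epsilon^{-1}_n$ contributes $O(\epsilon^{N}_n)$, while the lateral faces contribute $O(\epsilon^{N+2s-1}_n)$ and $O(\epsilon^{N-2s+1}_n)$, so the binding exponent is $\min\{N+2s-1,\,N-2s+1\}$; this exceeds $1$ because $N\geq 2$ forces $N>\max\{2s,\,2-2s\}$, whereas $N-2s+1>1$ holds for every admissible $N>2s$, and in dimension $N=1$ it is the pair of lateral exponents $\{2s,\,2-2s\}$ whose minimum drops to at most $1$.
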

\begin{proof}
Arguing indirectly, we assume that there exist $1\leq l_0\leq m_j$
and $\{\eps_n\} \subset \R^+$ with $\eps_n=o_n(1)$ such that
\begin{align} \label{assume}
\lim_{n\rightarrow\infty}\mbox{dist}(\epsilon_n z^{l_0}_{j,\epsilon_n},\mathcal{V})>0.
\end{align}
Without loss of generality, we assume that $\lim_{n\rightarrow
\infty}\epsilon_nz^{l_0}_{j,\epsilon_n}$ exists. For simplicity, we shall denote $w_{j,\epsilon_n}$,  $u_{j,\eps_n}$, $z^{l_0}_{j,\eps_n}$ and $(z^{l_0}_{j,\eps_n},0)$ by $w_n,$
$u_n$, $z^{l_0}_n$ and $\xi^{l_0}_n$, respectively.

According to Lemma \ref{cnvbhhfyf6yr66} and \eqref{assume}, we have that $z^{l_0}_j:=\lim_{n\rightarrow \infty}\epsilon_n z^{l_0}_{n}\in\Lambda^{\delta_0}\setminus\mathcal{V}$. Thus there exists $\delta'>0$ independent of $n$ such that, for any $0<\delta<\delta',$
\begin{align}\label{bcvyfhiifjy}
\inf_{x\in B_{\delta\epsilon^{-1}_n}(z^{l_0}_{n})}\nabla
V(\epsilon_n x)\cdot\nabla V(\epsilon_n
z^{l_0}_{n})\geq\frac{1}{2}\big|\nabla V(z^{l_0}_j)\big|^2>0.
\end{align}
Using \eqref{vloc} and the fact that $z^{l_0}_j\in\Lambda^{\delta_0}\setminus\mathcal{V},$ we see that
\begin{align}\label{cnvbhfyyfyf55re}
\inf_{x\in B_{\delta_0\epsilon^{-1}_n}(z^{l_0}_{n})}\nabla
V(\epsilon_n z^{l_0}_{n})\cdot(\nabla\chi)(\epsilon_n x) \geq 0.
\end{align}
In light of the definition of
$\partial^+\mathcal{Q}^+_{\delta\epsilon^{-1}_n}(\xi_n^{l_0})$,
there holds that
\begin{align}\label{cnvbuudf8d77e7eds}
\int_{\partial^+ \mathcal{Q}^+_{\delta\epsilon^{-1}_n}(\xi_n^{l_0})} \,
=\int_{\Gamma_1}+\cdots+\int_{\Gamma_N}+\int_{\Gamma_{N+1}},
\end{align}
where $z^{l_0}_n=(z^{l_0}_{n,1},\cdots, z^{l_0}_{n,N}) \in \R^N$,
\begin{align*}
\Gamma_i&=\{\xi=(x,y)\in \R^{N+1}\  |\
x_i=z^{l_0}_{n,i}-\delta\epsilon^{-1}_n\ \mbox{or}\
x_i=z^{l_0}_{n,i}+\delta\epsilon^{-1}_n,\nonumber\\
&\quad\quad\ |x_k-z^{l_0}_{n,k}|\leq \delta\epsilon^{-1}_n \
\mbox{for}\ k\neq i \ \mbox{and}\ 0\leq y\leq
\delta\epsilon^{-1}_n\} \ \mbox{for}\ 1\leq i\leq N
\end{align*}
and
$$
\Gamma_{N+1}=\{\xi=(x,y)\in \R^{N+1}\  |\ |x_k-z^{l_0}_{n,k}|\leq
\delta\epsilon^{-1}_n \ \mbox{for}\ 1\leq k\leq N \ \mbox{and}\
y=\delta\epsilon^{-1}_n\}.
$$
Let ${\bf{n}}=(n_1,\cdots, n_{N+1})$ be the unit outward normal vector to
$\partial^+ \mathcal{Q}^+_{\delta\epsilon^{-1}_n}(\xi_n^{l_0})$ and define
$$
{\bf{t}}_n:={\nabla V(\epsilon_n z^{l_0}_{n})}/{|\nabla V(\epsilon_n z^{l_0}_{n})|}.
$$
In the following, we shall take advantage of Corollary
\ref{2cmbnug8g7ufyfyyfyf1} to reach a contradiction. We now
estimate every term in \eqref{2cmbnug8g7ufyfyyfyf} with
${\bf{t}}={\bf{t}}_n$, $l=l_0$ and
$\delta:=\frac{1}{2}\min\{\delta_0, \delta', \vartheta_*\}$, where
$\vartheta_*>0$ is defined by (\ref{fh77gyutu776409}).
In view of \eqref{mb00hldfgfyyr7}, we get that, for any
$1\leq i\leq N,$
\begin{align} \label{cnvbghgyt77tyrddfc}
\begin{split}
&\left|\int_{\Gamma_i} y^{1-2s}\left( \nabla w_n\cdot \bf
{n}\right)({\nabla_x w_n \cdot \bf{t}}_n) \, dS\right| \\
&\leq\int_{\Gamma_i} y^{1-2s}|\nabla_x w_n|^2 \, dS \\
&\leq C\epsilon_n^{2N}\int_{\Gamma_i} y^{1-2s} \, dS \\
&=C\epsilon_n^{2N}
(2\delta\epsilon^{-1}_n)^{N-1}\int^{\delta\epsilon^{-1}_n}_0 y^{1-2s}\, dy \\ 
&\leq C \epsilon^{N+2s-1}_{n}.
\end{split}
\end{align}
Moreover, we derive that
\begin{align}\label{nncnvbghgyt77tyrddfc}
\begin{split}
&\left|\int_{\Gamma_{N+1}} y^{1-2s}\left( \nabla w_n\cdot \bf
{n}\right)(\nabla_x w_n \cdot {\bf{t}_n}) \, dS\right|\\
&\leq \int_{\Gamma_{N+1}} | y^{1-2s}\partial_y w_n| |\nabla_x w_n| \, dS \\ 
&\leq C\epsilon_n^{2N}\int_{\Gamma_{N+1}} \, dS\\
&\leq C\epsilon^{N}_{n}.
\end{split}
\end{align}
From \eqref{cnvbuudf8d77e7eds}, \eqref{cnvbghgyt77tyrddfc}
and \eqref{nncnvbghgyt77tyrddfc}, we then have that
\begin{align}\label{vcmbnjghguy88y8ee}
\left|\int_{\partial^+
\mathcal{Q}^+_{\delta\epsilon^{-1}_n}(\xi^{l_0}_n)} y^{1-2s}\left(
\nabla w_n\cdot\bf {n}\right)(\nabla_x w_n \cdot {\bf{t}}_n) \,
dS\right|\leq C\epsilon^{\min\{N+2s-1,N\}}_{n}.
\end{align}
We write ${\bf{t}}_n=(t_{n,1},\cdots,t_{n,N})$. It then follows from \eqref{mb00hldfgfyyr7} that, for any $1\leq i\leq N,$
\begin{align}\label{cnvbghhfyftyy77dy}
\begin{split}
&\left |\sum^N_{j=1}\int_{\Gamma_i}y^{1-2s} |\nabla w_n|^2 t_{n,j}
n_j  \, dS\right| =\left|\int_{\Gamma_i}y^{1-2s} |\nabla
w_n|^2t_{n,i}n_i \,
dS\right|\\
&\leq \int_{\Gamma_i}y^{1-2s} |\nabla_x w_n|^2 \,
dS+\int_{\Gamma_i}y^{2s-1} |y^{1-2s}\partial_y w_n|^2 \,
dS\\
&\leq C\epsilon^{2N}_n\int_{\Gamma_i}y^{1-2s}\, dS+
C\epsilon^{2N}_n\int_{\Gamma_i}y^{2s-1}\, dS\\
&=C\epsilon_n^{2N}
(2\delta\epsilon^{-1}_n)^{N-1}\int^{\delta\epsilon^{-1}_n}_0y^{1-2s}dy+C\epsilon_n^{2N}
(2\delta\epsilon^{-1}_n)^{N-1}\int^{\delta\epsilon^{-1}_n}_0y^{2s-1}dy
\\
&=C\epsilon^{N+2s-1}_{n}+C\epsilon^{N-2s+1}_{n}\\
&\leq C\epsilon^{\min\{N+2s-1, N-2s+1\}}_{n}.
\end{split}
\end{align}
Since $n_j=0$ on $\Gamma_{N+1}$ for $1\leq j\leq N$, then
\begin{align}\label{cnvbght7gyiiidh}
\sum^N_{j=1}\int_{\Gamma_{N+1}}y^{1-2s} |\nabla
w_n|^2 t_{n,j} n_j \, dS=0.
\end{align}
Combining \eqref{cnvbuudf8d77e7eds}, \eqref{cnvbghhfyftyy77dy} and
\eqref{cnvbght7gyiiidh} leads to
\begin{align}\label{cnvb99ff7tyhfhffff}
\left|\sum^N_{j=1} \int_{\partial^+
\mathcal{Q}^+_{\delta\epsilon^{-1}_n}(\xi^{l_0}_n)}y^{1-2s}
|\nabla w_n|^2 t_{n,j} n_j \, dS\right|\leq
C\epsilon^{\min\{N+2s-1, N-2s+1\}}_{n}.
\end{align}
By the definition of $F_{\eps_n}$, we obtain that $|F_{\eps_n}(x, |t|)| \leq
C(|t|^2+|t|^{2^*_s})$ for any $x \in \R^N $ and $t \in \R$. Taking into account \eqref{23mb00hldfgfyyr7}, we then conclude that
\begin{align}\label{mmcnvcbvyyf7fydd}
&\left|\frac{1}{2}\int_{\partial
Q_{\delta\epsilon^{-1}_n}(z^{l_0}_n)}V_{\eps_n}(x) |u_n|^2
\left({\bf{t}}_n\cdot {\bm{\nu}}\right)\, dS-\int_{\partial
Q_{\delta\epsilon^{-1}_n}(z^{l_0}_n)}F_{\eps_n}(x, |u_n|)\left({\bf{t}}_n\cdot {\bm{\nu}}\right)\, dS\right|\nonumber\\
&\leq C\epsilon^{2(N+2s)}_n\int_{\partial
Q_{\delta\epsilon^{-1}_n}(z^{l_0}_n)}\, dS=C\epsilon^{N+4s+1}_n.
\end{align}
Denoting the left hand side of \eqref{2cmbnug8g7ufyfyyfyf} by $\mathcal{L}_n$ and using \eqref{vcmbnjghguy88y8ee}, \eqref{cnvb99ff7tyhfhffff} and \eqref{mmcnvcbvyyf7fydd}, we then get that
\begin{align}\label{ttttqassd}
\mathcal{L}_n\leq C\epsilon^{\min\{N+2s-1, N-2s+1\}}_n.
\end{align}
On the hand other, from Lemma \ref{cnvbhhfyf6yr66}, we have that
$w_n(\cdot+\xi^{l_0}_{n})\rightharpoonup w_{l_0}\neq 0$ in
$X^{1,s}(\mathbb{R}^{N+1}_+)$ as $n \to \infty$. This indicates that $u_n(\cdot+z^{l_0}_{n})\rightharpoonup u_{l_0}\neq 0$ in $H^{s}(\mathbb{R}^{N})$ as $n \to \infty$, where $u_{l_0}=w_{l_0}(\cdot,0)$. Therefore, by
(\ref{bcvyfhiifjy}), we get that, for any $n \in \N$ large enough,
\begin{align*} 
&\epsilon_n\int_{Q_{\delta\epsilon^{-1}_n}(z^{l_0}_n)}\left((\nabla
V)(\epsilon_n x) \cdot {\bf{t}}_n\right)|u_n|^2 \,
dx\geq\frac{\epsilon_n}{2}\big|\nabla
V(z^{l_0}_j)\big|\int_{Q_{\delta\epsilon^{-1}_n}(0)}|u_n(\cdot+z^{l_0}_{n})|^2
\,dx\geq C_0\epsilon_n,
\end{align*}
where $$ C_0:=\frac{1}{4}\big|\nabla
V(z^{l_0}_j)\big|\int_{\mathbb{R}^N}|u_{l_0}|^2\,dx>0.
$$
In addition, by \eqref{cnvbhfyyfyf55re} and the fact that $ \frac{|t|^p}{p}-G(|t|)\geq 0$ for any $t \in \R$,
we obtain that
\begin{align*} 
\int_{Q_{\delta\epsilon^{-1}_n}(z^{l_0}_n)}\left({\bf{t}}_n\cdot(\nabla\chi)(\epsilon_n
x)\right)\left(\frac{1}{p}|u_n|^p-G(|u_n|)\right)\, dx\geq 0.
\end{align*}
Denoting the right hand side of \eqref{2cmbnug8g7ufyfyyfyf} by $\mathcal{R}_n$, we then have that
\begin{align}\label{12xcttttqassd}
\mathcal{R}_n\geq \frac{C_0}{2k_s}\epsilon_n.
\end{align}
By applying Corollary \ref{2cmbnug8g7ufyfyyfyf1}, \eqref{ttttqassd} and \eqref{12xcttttqassd}, we then deduce that
\begin{align} \label{contr}
C\epsilon^{\min\{N+2s-1,
N-2s+1,\}}_n\geq\mathcal{L}_n=\mathcal{R}_n\geq
\frac{C_0}{2k_s}\epsilon_n.
\end{align}
Due to $N \geq 2$, we find that $N>\max\{2s,2-2s\}$. Then we see that
$\min\{N+2s-1, N-2s+1\}>1$. As a result, \eqref{contr} induces a
contradiction. This in turn suggests that \eqref{con} holds true,
and the proof is completed.
\end{proof}

We are now in a position to prove Theorem \ref{wejgh77rtff111}.

\begin{proof}[\bf Proof of Theorem \ref{wejgh77rtff111}.] Since
$\mathcal{V}$ is a compact subset of $\Lambda$, then
$\mbox{dist}(\mathcal{V}, \partial \Lambda)>0.$ Choosing $\delta\in
(0, \mbox{dist}(\mathcal{V}, \Lambda))$ and applying Lemmas
\ref{11123cnvbhhfyf7yfyddsd} and \ref{cnvbghyyt7888sdd}, we then deduce
that
\begin{align}\label{cmvnvjfu99sscv}|u_{j, \epsilon}(x)|\leq\frac{C}{1+|\mbox{dist}(x,(\mathcal{V}^\delta)_\epsilon)|^{N+2s}},\
1\leq j\leq k,
\end{align}
where $u_{j,\epsilon}=w_{j,\epsilon}(\cdot,0)$. For any $x \in
\R^N \setminus \Lambda_\epsilon$, we know that
$$
\mbox{dist}(x, (\mathcal{V}^\delta)_\epsilon)\rightarrow\infty \,\, \mbox{as} \,\, \epsilon\rightarrow 0^+.
$$
Taking $\epsilon_k>0$ smaller if necessary and using \eqref{cmvnvjfu99sscv}, we then obtain that, if
$0<\epsilon<\epsilon_k$, then for any $x \in \R^N \setminus \Lambda_\epsilon$,
$$
|u_{j,\epsilon}(x)|\leq (a/4)^{1/(p-2)}.
$$
This then indicates that $f_{\eps}(x, |u_{j,\epsilon}|) = |u_{j,\epsilon}|^{p-2}$ and $u_{j, \eps}$ is a solution to \eqref{equ1} for any $j \geq 1$.

From Theorem \ref{cvnbmiif9f8ufjhfy1}, we know that $u_{j, \eps}$
is sign-changing solution to \eqref{equ1} for any $j \geq 2$. We
now prove that $u_{1, \eps}$ is a positive solution to
\eqref{equ1}. To do this, according to the maximum principle, it
suffices to show that $u_{1, \epsilon}(x)\geq 0$ for any
$x\in\mathbb{R}^N$. We argue by contradiction that $u_{1, \eps}$
is sign-changing. Then we have that $u^\pm_{1,\epsilon}=\max\{\pm
u_{1,\epsilon},0\}\neq 0.$ This in turn implies that
$w^\pm_{1,\epsilon} \neq 0$. Note that $w_{1,\epsilon}$ satisfies
the equation
\begin{align}\label{equw1}
\left\{
\begin{aligned}
-\mbox{div}(y^{1-2s} \nabla w_{1, \eps})&=0 \quad \  \ \, \hspace{4.5cm}\mbox{in} \,\, \R^{N+1}_+,\\
-k_s \frac{\partial w_{1, \eps}}{\partial {\nu}}&=-V_{\eps}(x) w_{1,\eps} + f_{\eps}(x, |w_{1,\eps}|) w_{1, \eps} \ \qquad  \mbox{on} \,\, \R^N \times \{0\}.
\end{aligned}
\right.
\end{align}
In current situation, we remind that $f_{\eps}(x, |t|):=|t|^{p-2}$ for $x \in \R^N$ and $t \in \R$. Therefore, it holds that
\begin{align*}
\Phi_{\epsilon}(w^\pm_{1,\epsilon})
&=\Phi_{\epsilon}(w^\pm_{1,\epsilon}) - \frac 12 \Phi'_{\epsilon}(w_{1,\epsilon})w^\pm_{1,\epsilon}  \nonumber \\
&=\int_{\mathbb{R}^N}\left(\frac{1}{2}f_{\epsilon}(x,|u^\pm_{1,\epsilon}|)|u^\pm_{1,\epsilon}|^2
-F_{\epsilon}(x, |u^\pm_{1,\epsilon}|)\right)\,dx\\
&=\left(\frac{1}{2}-\frac{1}{p} \right)\int_{\mathbb{R}^N} |u^\pm_{1,\epsilon}|^{p}\,dx >0.
\end{align*}
Since $\Phi_{\epsilon}(w_{1,\epsilon})=\Phi_{\epsilon}(w^+_{1,\epsilon})+\Phi_{\epsilon}(w^-_{1,\epsilon})=c_{1, \epsilon}$,
then $ 0<\Phi_{\epsilon}(w^\pm_{1,\epsilon})<c_{1, \epsilon}$.
For $t\geq 0,$ we now define
$\phi_{\eps}^\pm(t) :=\Phi_{\epsilon}(tw^\pm_{1,\epsilon}).$ It is easy to compute that
\begin{align*}
\frac{d\phi_{\eps}^\pm}{dt}=t\left(\int_{\R^{N+1}_+}y^{1-2s}|w^\pm_{1,\epsilon}|^2 \, dxdy + \int_{\R^N} V_{\eps}(x)|u_{1,\eps}|^2 \, dx
-\int_{\mathbb{R}^N}f_{\epsilon}(x,t|u^\pm_{1,\epsilon}|)|u^\pm_{1,\epsilon}|^2\,dx\right).
\end{align*}
Due to $p>2$, then $\phi_{\eps}^\pm(t)>0$ for any $t>0$ small enough, $\phi_{\eps}^\pm(t)<0$ for any $t>0$ large enough and $\lim_{t\rightarrow+\infty}\phi_{\eps}^\pm(t)=-\infty.$ In addition, we find that $\max_{t\geq
0}\phi_{\eps}^\pm(t)$ is achieved at a unique $t=t_\pm.$ Since
$\frac{d\phi_{\eps}^\pm}{dt}{\mid_{t=1}}=0$, we then get that $t_\pm=1.$ Observe that
\begin{align*}
\lim_{t\rightarrow+\infty}\Phi_{\epsilon}(t((1-\theta)w^\pm_{1,\epsilon}+\theta
R_1e_1))=-\infty
\end{align*}
holds uniformly for $\theta\in[0,1]$, where $R_1>0$ comes from \eqref{cmvnbjjgug77tyruur1}. It then follows that there exists  $t_*>1$
such that
$$
\sup_{\theta\in[0,1]}\Phi_{\epsilon}(t_*((1-\theta)w^\pm_{1,\epsilon}+\theta R_1e_1))<0.
$$
Define
\begin{align*}
\gamma_\pm(\theta):=\left\{
\begin{array}
[c]{ll}
3\theta t_*w^\pm_{1,\epsilon},& \mbox{if}\ 0\leq \theta<\frac{1}{3},\\
(2-3\theta) t_*w^\pm_{1,\epsilon}+(3\theta-1)t_*R_1e_1, & \mbox{if}\ \frac{1}{3}\leq\theta\leq \frac{2}{3},\\
(3(1-t_*)\theta-(2-3t_*))R_1e_1, & \mbox{if}\ \frac{2}{3}\leq\theta\leq 1,\\
-\gamma_\pm(-\theta),& \mbox{if} -1\leq \theta\leq 0.
\end{array}
\right.
\end{align*}
Then $\gamma_\pm\in G_1$ and
\begin{align*}
\sup_{\theta\in[0,1]}\Phi_{\epsilon}(\gamma_\pm(\theta))=\Phi_{\epsilon}(w^\pm_{1,\epsilon})<c_{1, \epsilon}.
\end{align*}
This contradicts the definition of $c_{1,\epsilon}.$ Thus we have that $u_{1, \eps}$ is positive solution to \eqref{equ1}.

Making a change of
variable and utilizing \eqref{cmvnvjfu99sscv}, we then have the
desired decay of $u_{\epsilon,j}$. Thus we have completed the
proof.
\end{proof}

\section{Sobolev critical growth}\label{critical}

In this section, we shall investigate semiclassical states to
\eqref{equ11} in the Sobolev critical case, i.e.
$\mathcal{N}(u)u=\mu |u|^{p-2} u+|u|^{2^*_s-2}u$ for $2<p<2^*_s$
and  $\mu>0$. To do this, let us first introduce some notations. Let
$\varphi \in C^{\infty}(\R^+, [0, 1])$ be such that $\varphi(t)=1$
for $ 0 \leq t \leq 1$, $\varphi(t)=0$ for $t \geq 2$ and
$\varphi'(t) \leq 0$ for $t \geq 0$. For any $\eps>0$ and $t \geq
0$, we define $b_{\eps}(t):=\varphi(\eps t)$ and
$m_{\eps}(t):=\int_{0}^{t} b_{\eps}(s) \, ds$.

\begin{lem} \cite[Proposition 2.1] {CLW} \label{mprop}
The functions $b_{\eps}$ and $m_{\eps}$ satisfy the following
properties.
\begin{enumerate}
\item[$(\textnormal{i})$] $t b_{\eps}(t) \leq m_{\eps}(t) \leq t$
for any $t \geq 0$. \item[$(\textnormal{ii})$] There exists $c>0$
such that $m_{\eps}(t) \leq c  / \eps $ for any $t \geq 0$. If $ 0
\leq t  \leq 1 / \eps$, then $b_{\eps}(t)=1$ and $m_{\eps}(t)=t$.
\end{enumerate}
\end{lem}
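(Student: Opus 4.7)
The plan is to prove both items directly from the defining properties of $\varphi$: it takes values in $[0,1]$, equals $1$ on $[0,1]$, vanishes on $[2,\infty)$, and is non-increasing (since $\varphi'\le 0$). These properties transfer immediately to $b_\eps$: namely $0\le b_\eps\le 1$, $b_\eps$ is non-increasing in $t$, $b_\eps(t)=1$ for $0\le t\le 1/\eps$, and $b_\eps(t)=0$ for $t\ge 2/\eps$.

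For part $(\textnormal{i})$, I would simply integrate. The upper bound $m_\eps(t)\le t$ follows from $b_\eps\le 1$ on $[0,t]$. For the lower bound, because $b_\eps$ is non-increasing, $b_\eps(s)\ge b_\eps(t)$ for every $s\in[0,t]$, so
\begin{align*}
m_\eps(t)=\int_0^t b_\eps(s)\,ds\ge \int_0^t b_\eps(t)\,ds = t\,b_\eps(t).
\end{align*}

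For part $(\textnormal{ii})$, I would split into two cases based on the size of $t$ relative to $2/\eps$. If $t\le 2/\eps$, then by $(\textnormal{i})$, $m_\eps(t)\le t\le 2/\eps$. If $t\ge 2/\eps$, then $b_\eps(s)=\varphi(\eps s)=0$ for all $s\ge 2/\eps$, so
\begin{align*}
m_\eps(t)=\int_0^{2/\eps} b_\eps(s)\,ds\le \int_0^{2/\eps} 1\,ds=\frac{2}{\eps}.
\end{align*}
Either way $m_\eps(t)\le 2/\eps$, so $c=2$ works. The second claim is even more direct: if $0\le t\le 1/\eps$, then $\eps t\le 1$, hence $b_\eps(t)=\varphi(\eps t)=1$, and since $b_\eps(s)=1$ for every $s\in[0,t]\subset[0,1/\eps]$, integration yields $m_\eps(t)=t$.

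Since the argument is purely elementary real analysis built from monotonicity, the values of $\varphi$, and the change of variables $s\mapsto\eps s$, there is no substantive obstacle; the only point worth pausing over is to make sure the monotonicity of $b_\eps$ is invoked correctly to get the pointwise lower bound $b_\eps(s)\ge b_\eps(t)$ on $[0,t]$, which is the one step where something beyond $b_\eps\in[0,1]$ is needed.
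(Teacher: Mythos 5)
Your proof is correct: the upper bound in (i) uses only $0\le b_{\eps}\le 1$, the lower bound correctly invokes the monotonicity of $b_{\eps}$ inherited from $\varphi'\le 0$ (which is indeed the one place where more than boundedness is needed), and the uniform bound $m_{\eps}(t)\le 2/\eps$ together with the identities on $[0,1/\eps]$ follow from the support and value properties of $\varphi$ exactly as you argue. The paper itself gives no proof, simply citing \cite[Proposition 2.1]{CLW}, and your elementary computation is the standard argument behind that citation, so there is nothing to add.
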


Let $\eta:\R^+ \to [0,1]$ be the cut-off function given in Section \ref{subcritical}.
Define $\chi(x):=\eta(\mbox{dist}(x, \, \Lambda))$ for $x \in \R^N$ and
\begin{align*}
g_{\eps}(t):=\min\left\{h_{\eps}(t), \, a/4\right\}  \quad
\mbox{for} \,\, t \geq 0,
\end{align*}
where
\begin{align*}
h_{\eps}(t):=\mu t^{p-2} + \frac{p}{2^*_s} t^{p-2}
\left(m_{\eps}(t^2) \right)^{\frac{2^*_s-p}{2}} +
\frac{2^*_s-p}{2^*_s} t^{p} \left(m_{\eps}(t^2)
\right)^{\frac{2^*_s-p}{2} -1} b_{\eps}(t^2).
\end{align*}
Observe that
\begin{align*}
H_{\eps}(t):=\int_0^t h_{\eps}(s) s \, ds = \frac{\mu }{p}t^p +
\frac{t^{p}}{2^*_s}\left(m_{\eps}(t^2)
\right)^{\frac{2^*_s-p}{2}},
\end{align*}
then
\begin{align} \label{har}
\frac 1 p h_{\eps}(t)t^2 -H_{\eps}(t)=\frac{2^*_s-p}{2^*_s p} t^{p} \left(m_{\eps}(t^2)
\right)^{\frac{2^*_s-p}{2} -1} b_{\eps}(t^2) \geq 0 \quad \mbox{for any} \,\, t \geq 0.
\end{align}
Let $G_{\eps}(t):=\int_0^t g_{\eps}(s) s \, ds$, it then follows that
\begin{align} \label{gar}
\frac 12 g_{\eps}(t) t^2 -G_{\eps}(t) \geq 0 \quad \mbox{for any} \,\, t \geq 0.
\end{align}
Define
\begin{align*}
f_{\eps}(x, t):=\left(1- \chi(\eps x)\right) h_{\eps}(t)+\chi(\eps x){g}_{\eps}(t)
\end{align*}
and
\begin{align*}
 F_{\eps}(x, t):=\int_{0}^t f_{\eps}(x, s)s \, ds
 =\left(1- \chi(\eps x)\right) \left(\frac{\mu}{p}t^p + \frac{t^{p}}{2^*_s}\left(m_{\eps}(t^2) \right)^{\frac{2^*_s-p}{2}} \right) + \chi(\eps x) G_{\eps}(t).
\end{align*}
We now introduce a modified equation as
\begin{align}\label{equ22}
\left\{
\begin{aligned}
-\mbox{div}(y^{1-2s} \nabla w)&=0 \hspace{4cm}\,\,\mbox{in} \,\, \R^{N+1}_+,\\
-k_s \frac{\partial w}{\partial {\nu}}&=-V_{\eps}(x) w + f_{\eps}(x, |w|) w \, \,\qquad  \mbox{on} \,\, \R^N \times \{0\}.
\end{aligned}
\right.
\end{align}
The energy functional associated to \eqref{equ22} is defined by
$$
\Phi_{\eps}(w):=\frac {k_s}{2} \int_{\R^{N+1}_+} y^{1-2s} |\nabla w|^2 \, dxdy
+ \frac 12 \int_{\R^N} V_{\eps}(x)|w(x,0)|^2 \, dx - \int_{\R^N} F_{\eps}(x, |w(x, 0)|) \, dx.
$$
It is not hard to check that $\Phi_{\eps} \in C^1(X^{1, s}(\R^{N+1}_+), \R)$ and
\begin{align*}
\Phi_{\eps}'(w) \psi&=k_s\int_{\R^{N+1}_+} y^{1-2s} \nabla w \cdot \nabla \psi \, dxdy + \int_{\R^N} V_{\eps}(x) w(x, 0) \psi(x, 0) \, dx \\
& \quad - \int_{\R^N} f_{\eps}(x, |w(x, 0)|) w(x, 0) \psi(x, 0) \, dx
\end{align*}
for any $\psi \in X^{1, s}(\R^{N+1}_+)$. Consequently, any critical point of $\Phi_{\eps}$ corresponds to a solution to \eqref{equ22}.

\subsection{Existence of semiclassical states}
In the following, we are going to adapt Theorem \ref{exist} to
prove the existence of semiclassical states to \eqref{equ22}. To
do this, let us first demonstrate that the energy functional $\Phi_{\eps}$ satisfies the
Palais-Smale  condition in $X^{1, s}(\R^{N+1}_+)$.

\begin{lem} \label{ps}
For any $\eps>0$ small enough, the energy functional $\Phi_{\eps}$ satisfies the Palais-Smale condition in $X^{1, s}(\R^{N+1}_+)$.
\end{lem}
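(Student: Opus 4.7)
The plan follows the two-stage template used in Lemma \ref{ps1}: first establish boundedness of a Palais--Smale sequence $\{w_n\}\subset X^{1,s}(\R^{N+1}_+)$ for $\Phi_\eps$, then upgrade weak convergence to strong convergence. The essential new ingredient compared with the subcritical case is that the truncation via $m_\eps$ renders the modified nonlinearity effectively subcritical at infinity, which is what makes the second stage work.

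For boundedness, I would compute $\Phi_\eps(w_n)-\tfrac12\Phi_\eps'(w_n)w_n$. The quadratic kinetic and potential contributions cancel, leaving $\int_{\R^N}[\tfrac12 f_\eps(x,|u_n|)|u_n|^2-F_\eps(x,|u_n|)]\,dx$ with $u_n=w_n(\cdot,0)$. Splitting by $\chi(\eps x)$ and $1-\chi(\eps x)$, the $\chi$-piece is non-negative by \eqref{gar}, while the $(1-\chi)$-piece is bounded below by $(\tfrac12-\tfrac1p)(1-\chi(\eps x))h_\eps(|u_n|)|u_n|^2$ via \eqref{har}. This yields $\int(1-\chi(\eps x))h_\eps(|u_n|)|u_n|^2\,dx\leq C(1+\|w_n\|_{1,s})$. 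Combining this with the direct identity for $\Phi_\eps'(w_n)w_n=o_n(1)\|w_n\|_{1,s}$, and using $g_\eps\leq a/4\leq V_\eps/4$ to absorb $\int\chi(\eps x)g_\eps(|u_n|)|u_n|^2\,dx$ into $\int V_\eps(x)|u_n|^2\,dx$, one obtains $\min\{k_s,3a/4\}\|w_n\|_{1,s}^2\leq C(1+\|w_n\|_{1,s})$, hence $\|w_n\|_{1,s}\leq C$.

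For strong convergence, extract $w_n\wto w$ in $X^{1,s}(\R^{N+1}_+)$ with $u_n\to u:=w(\cdot,0)$ in $L^q_{\mathrm{loc}}(\R^N)$ for $2\leq q<2^*_s$ via Lemma \ref{embedding}, and verify $\Phi_\eps'(w)=0$ by passing to the limit. Setting $z_n:=w_n-w$ and subtracting $\Phi_\eps'(w)z_n=0$ from $\Phi_\eps'(w_n)z_n=o_n(1)$ reduces the question to controlling $I_n:=\int_{\R^N}[f_\eps(x,|u_n|)u_n-f_\eps(x,|u|)u]z_n(x,0)\,dx$. The key observation is that because $m_\eps(t^2)\leq c/\eps$ and $b_\eps(t^2)$ is supported in $\{t^2\leq 2/\eps\}$, a direct inspection of the three summands in the definition of $h_\eps$ gives $h_\eps(t)t\leq C_\eps(t^{p-1}+t^{q-1})$ for any fixed $q\in(p,2^*_s)$, i.e.\ the truncated nonlinearity is globally subcritical with an $\eps$-dependent (but $n$-independent) constant. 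Since $\operatorname{supp}(1-\chi(\eps x))$ is bounded in $\R^N$, H\"older's inequality and the local compact embedding $X^{1,s}\hookrightarrow L^q_{\mathrm{loc}}$ show that the $(1-\chi)$-part of $I_n$ tends to $0$. For the $\chi$-part, which has unbounded support, split $g_\eps(|u_n|)u_n-g_\eps(|u|)u=g_\eps(|u_n|)z_n+[g_\eps(|u_n|)-g_\eps(|u|)]u$; the first piece combines with $\int V_\eps|z_n|^2$ to give $\int[V_\eps-\chi(\eps x)g_\eps(|u_n|)]|z_n|^2\,dx\geq\tfrac{3a}{4}\int|z_n|^2\,dx$ thanks to $g_\eps\leq a/4\leq V_\eps/4$, while the second piece tends to $0$ by dominated convergence since $g_\eps$ is continuous and bounded and $|u||z_n|\in L^1$.

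The delicate point, and in some sense the main obstacle, is that inside the regime $|u_n|\leq 1/\sqrt{\eps}$ the function $h_\eps$ still coincides with the genuinely critical nonlinearity $\mu t^{p-2}+t^{2^*_s-2}$, so direct use of Sobolev compactness would fail; the whole purpose of $m_\eps$ is to cap this critical behaviour past a threshold, and the whole purpose of writing the critical piece only against $1-\chi(\eps x)$ is to confine the nearly-critical regime to a bounded set where the trace embedding is compact. Together with the sign inequalities \eqref{har}--\eqref{gar}, these two features are exactly what is needed to push both the boundedness estimate and the Brezis--Lieb-type argument through for every fixed $\eps>0$ small enough.
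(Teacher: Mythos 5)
Your proposal is correct and follows essentially the same route as the paper's proof: boundedness via the sign conditions \eqref{har}--\eqref{gar} combined with $g_\eps\le a/4\le V_\eps/4$, and strong convergence by exploiting that $m_\eps\le c/\eps$ renders the $(1-\chi)$-part of the nonlinearity effectively subcritical (with an $\eps$-dependent constant) on the bounded support of $1-\chi(\eps\cdot)$, where the trace embedding is locally compact. The only step to tighten is the last $\chi$-term: you cannot dominate $|u||z_n|$, which varies with $n$, by a fixed $L^1$ function; instead apply dominated convergence to $\bigl[g_\eps(|u_n|)-g_\eps(|u|)\bigr]u$, which is bounded by $\tfrac{a}{2}|u|\in L^2(\R^N)$ and tends to $0$ a.e.\ along a subsequence, and then pair it with $\|z_n(\cdot,0)\|_{L^2(\R^N)}\le C$ — this is exactly the effect the paper obtains by pairing the weakly null sequence $\bigl(g_\eps(|u_n|)-g_\eps(|u|)\bigr)z_n$ against $u\in L^2(\R^N)$.
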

\begin{proof}
Suppose $\{w_n\} \subset X^{1, s}(\R^{N+1}_+)$ is a Palais-Smale
sequence to $\Phi_{\eps}$, i.e.,
$$
 \left |\Phi_{\eps}(w_n) \right| \leq C, \quad \Phi_{\eps}'(w_n)=o_n(1).
$$
Our aim is to deduce that $\{w_n\}$ has a convergent subsequence in $X^{1, s}(\R^{N+1}_+)$. Let us first show that $\{w_n\}$ is bounded in $X^{1, s}(\R^{N+1}_+)$. In light of \eqref{har} and \eqref{gar}, we have that
\begin{align*} 
C(1 +\|w\|_{1, s}) &\geq \Phi_{\eps} (w_n) -\frac 1 p
\Phi_{\eps}'(w_n)w_n \\ \nonumber & =\frac{p-2}{2p} \left(
k_s\int_{\R^{N+1}_+} y^{1-2s} |\nabla w_n|^2 \, dxdy + \int_{\R^N}
V_{\eps}(x)|w_n(x, 0)|^2 \, dx \right) \\ 
& \quad +\int_{\R^N} \frac 1 p f_{\eps}(x, |w_n(x, 0|) |w_n(x, 0)|^2
-F_{\eps}(x, |w_n(x, 0)|) \, dx \\ 
& =\frac{p-2}{2p}\left( k_s\int_{\R^{N+1}_+} y^{1-2s} |\nabla w_n|^2 \, dxdy +
\int_{\R^N} V_{\eps}(x)|w_n(x, 0)|^2 \, dx \right)   \\ 
& \quad + \frac{2^*_s-p}{2^*_s p}\int_{\R^N} \left(1- \chi(\eps
x)\right) |w_n(x, 0)|^{p} \left(m_{\eps}(|w_n(x, 0)|^2)
\right)^{\frac{2^*_s-p}{2} -1} b_{\eps}(|w_n(x, 0)|^2) \,dx\\
& \quad + \int_{\R^N} \chi(\eps x) \left(\frac 1p
g_{\eps}(|w_n(x, 0)|)|w_n(x, 0)|^2-G_\eps(|w_n(x, 0)|) \right) \, dx\\
& \geq \frac{p-2}{2p} \left( k_s\int_{\R^{N+1}_+}
y^{1-2s} |\nabla w_n|^2 \, dxdy + \int_{\R^N} V_{\eps}(x)|w_n(x,
0)|^2 \, dx \right)   \\ 
& \quad - \frac{p-2}{2p}\int_{\R^N} \chi(\eps x) g_{\eps}(|w_n(x, 0)|)|w_n(x, 0)|^2\, dx.
\end{align*}
Note that $V(x) \geq a$ for any $x \in \R^N$. In addition, by the definition of $g_{\eps}$,
there holds that $0 \leq g_{\eps}(|t|) \leq \frac a 4$ for any $t \in \R$.
Thus we can drive from the inequality above that $\{w_n\}$ is bounded in $X^{1, s}(\R^{N+1}_+)$.
In view of Lemma \ref{embedding}, we then know that there is $w \in X^{1 ,s}(\R^{N+1}_+)$
such that $w_n \wto w$ in $X^{1, s}(\R^{N+1}_+)$ and $w_n(\cdot, 0) \to w(\cdot, 0)$
in $L_{\textnormal{loc}}^q(\R^N)$ as $n \to \infty$ for any $2 \leq q <2^*_s$. Furthermore, we have that $\Phi_{\eps}'(w)=0$.

We next prove that, up to a subsequence, $w_n \to w$ in $X^{1, s}(\R^{N+1}_+)$ as $n \to \infty$.
Define $z_n:=w_n -w$, then $\Phi_{\eps}'(w_n) z_n=o_n(1)$ and $ \Phi_{\eps} '(w) z_n=0$. This indicates that
\begin{align} \label{conv}
\begin{split}
o_n(1)&=\Phi_{\eps}'(w_n) z_n -\Phi_{\eps}'(w) z_n \\
&=k_s\int_{\R^{N+1}_+} y^{1-2s} |\nabla z_n|^2 \, dxdy + \int_{\R^N} V_{\eps}(x)|z_n(x, 0)|^2 \, dx \\
& \quad + \int_{\R^N}\left( f_{\eps}(x, |w(x, 0)|)w(x, 0)
-f_{\eps}(x, |w_n(x, 0)|)
w_n(x, 0)\right) z_n(x, 0) \, dx \\
&=k_s\int_{\R^{N+1}_+} y^{1-2s} |\nabla z_n|^2 \, dxdy + \int_{\R^N} V_{\eps}(x)|z_n(x, 0)|^2 \, dx \\
& \quad -\int_{\R^N} \left(1 -\chi(\eps x)\right) \left(h_{\eps}(|w_n(x, 0)|) w_n(x, 0)-h_{\eps}(|w(x, 0)|)w(x, 0) \right) z_n(x, 0) \, dx \\
& \quad - \int_{\R^N} \chi(\eps x) \left(g_{\eps}(|w_n(x, 0)|)-g_{\eps}(|w(x, 0)|) \right) z_n(x, 0)  w(x, 0) \, dx \\
& \quad - \int_{\R^N} \chi(\eps x) g_{\eps}(|w_n(x ,0)|) |z_n(x, 0)|^2 \, dx.
\end{split}
\end{align}
Since $w_n \rightharpoonup w$ in $X^{1, s}(\R^{N+1}_+)$ as $n \to
\infty$, then $\left(g_{\eps}(|w_n(\cdot, 0)|) -g_{\eps}(|w(\cdot,
0)|)\right) z_n(\cdot, 0) \wto 0$ in $L^2(\R^N)$ as $n \to
\infty$. Therefore, it holds that
\begin{align} \label{estg}
\left|\int_{\R^N} \chi(\eps x) \left(g_{\eps}(|w_n(x, 0)|)-g_{\eps}(|w(x, 0)|) \right) z_n(x, 0)  w(x, 0) \, dx \right|=o_n(1).
\end{align}
Taking into account the mean value theorem, we obtain that there exists a
function $\theta_n: \R^N \to \R$ such that
\begin{align*}
&\left|\int_{\R^N} \left(1 -\chi(\eps x)\right) \left(|w_n(x, 0)|^{p-2} w_n(x, 0)-|w(x, 0)|^{p-2}w(x, 0) \right) z_n(x, 0) \,dx \right| \\
&= (p-1)\left|\int_{\R^N} \left(1 -\chi(\eps x)\right) |\theta_n|^{p-2} |z_n(x, 0)|^2\,dx \right|,
\end{align*}
where $\theta_n(x):=\widehat{\theta}_n(x)w_n(x, 0) + (1-\widehat{\theta}_n(x))w(x, 0)$ and $0 \leq \widehat{\theta}(x)\leq 1$ for $x \in \R^N$.
Since $\mbox{supp}\, (1 -\chi(\eps x))$ is
bounded in $\R^N$ for any $\eps>0$ and $z_n(\cdot, 0) \to 0$ in $L^q_{\textnormal{loc}}(\R^N)$ as $n \to \infty$ for any $2 \leq q<2^*_s$,
by H\"older's inequality, then
$$
\left|\int_{\R^N} \left(1 -\chi(\eps x)\right) \left(|w_n(x, 0)|^{p-2}
w_n(x, 0)-|w(x, 0)|^{p-2}w(x, 0) \right) z_n(x, 0) \,dx \right|=o_n(1).
$$
Applying again the mean value theorem, we can deduce from Lemma
\ref{mprop} that
\begin{align*}
&\left|\int_{\R^N} \left(1 -\chi(\eps x)\right) \left(|w_n(x ,0)|^{p-2}
(m_{\eps}(|w_n(x ,0)|^2))^{\frac{2^*_s-p}{2}} w_n(x ,0) \right. \right.\\
& \hspace{3cm} \, \left. \left. -|w(x ,0)|^{p-2}(m_{\eps}(|w(x ,0)|^2))^{\frac{2^*_s-p}{2}} w(x, 0)\right) z_n(x, 0) \,dx \right| \\
& \leq  (p-1)  \left |\int_{\R^N} \left(1 -\chi(\eps x)\right) |\theta_n|^{p-2}(m_{\eps}(|\theta_n|^2))^{\frac{2^*_s-p}{2}} |z_n(x, 0)|^2 \, dx \right| \\
& \quad + (2^*_s-p)  \left|\int_{\R^N}  \left(1 -\chi(\eps x)\right) |\theta_n|^{p-2} (m_{\eps}(|\theta_n|^2))^{\frac{2^*_s-p}{2} -1} b_{\eps}(|\theta_n|^2) |\theta_n|^2 |z_n(x, 0)|^2\, dx \right| \\
& \leq \left(2^*_s-1\right)  \int_{\R^N} \left(1 -\chi(\eps x)\right) |\theta_n|^{p-2}(m_{\eps}(|\theta_n|^2))^{\frac{2^*_s-p}{2}} |z_n(x, 0)|^2\, dx \\
& \leq \frac{c\left(2^*_s-1\right)}{\eps^{\frac{2^*_s-p}{2}}}\int_{\R^N} \left(1-\chi(\eps x)\right) |\theta_n|^{p-2}|z_n |^2\, dx,
\end{align*}
from which we can also infer that, for any $\eps>0$,
\begin{align*}
&\left|\int_{\R^N} \left(1 -\chi(\eps x)\right) \left(|w_n(x ,0)|^{p-2}
(m_{\eps}(|w_n(x ,0)|^2))^{\frac{2^*_s-p}{2}} w_n(x ,0) \right. \right.\\
& \hspace{3cm} \, \left. \left. -|w(x ,0)|^{p-2}(m_{\eps}(|w(x ,0)|^2))^{\frac{2^*_s-p}{2}} w(x, 0)\right) z_n(x, 0) \,dx \right| =o_n(1).
\end{align*}
Similarly, we can derive that
\begin{align*}
& \left|\int_{\R^N}  \left(1 -\chi(\eps x)\right) \left( |w_n(x
,0)|^{p} (m_{\eps}(|w_n(x ,0)|^2))^{\frac{2^*_s-p}
{2}-1} b_{\eps}(|w_n(x ,0)|^2)w_n(x ,0) \right.  \right. \\
&\hspace{7.5em} \left. \left. -|w(x ,0)|^{p}(m_{\eps}(|w(x ,0)|^2))^{\frac{2^*_s-p}{2}-1} b_{\eps}(|w(x ,0)|^2)w(x ,0) \right) z_n(x, 0) \,dx \right| =o_n(1).
\end{align*}
Therefore, it holds that
\begin{align} \label{esth}
\hspace{-1cm}\left|\int_{\R^N} \left(1 -\chi(\eps x)\right) \left(h_{\eps}(|w_n(x, 0)|) w_n(x, 0)-h_{\eps}(|w(x, 0)|)w(x, 0) \right) z_n(x, 0) \, dx \right|=o_n(1).
\end{align}
By using \eqref{estg}, \eqref{esth} and the fact that $g_{\eps}(|t|) \leq a/4$ for any $t \in \R$, 
we then conclude from \eqref{conv} that $\|z_n\|=o_n(1)$. This suggests that $\{w_n\}$
has a convergent subsequence in $X^{1, s}(\R^{N+1}_+)$, and the proof is completed.
\end{proof}

Let $\Phi_0: X^{1, s}(\mathcal{C}_{B_1(0)}) \to \R$ be the
functional defined by \eqref{defphi0}  and $\varphi_n: B_n \to
\R^N$ be the function defined by \eqref{defvarphi}. For $j \in \mathbb {N}$, we define
$$
c_{j, \eps}:=\left\{
\begin{array}
[c]{ll} \displaystyle\inf_{B\in \Lambda_j}\sup_{u\in B\setminus
W} \Phi_{\eps}(u),& \mbox{if} \ j\geq 2,\\
\displaystyle\inf_{B\in \Lambda_1}\sup_{u\in
B} \Phi_{\eps}(u), & \mbox{if} \ j= 1,
\end{array}
\right.
$$
and
$$
\widetilde{c}_j:=\left\{
\begin{array}
[c]{ll} \displaystyle\inf_{B \in \widetilde{\Lambda}_j} \sup_{u \in B \backslash W}
 \Phi_0(u),& \mbox{if} \ j\geq 2,\\
\displaystyle\inf_{B\in \widetilde{\Lambda}_1}\sup_{u\in
B} \Phi_0(u), & \mbox{if} \ j= 1,
\end{array}
\right.
$$
where $\Lambda_j$, $G_n$, $\widetilde{\Lambda}_j$ and
$\widetilde{G}_n$ are defined by \eqref{defLa1} and
\eqref{defLa2}, respectively. In addition, there holds that
$c_{1, \eps}>0$, $0< c_{2,\eps} \leq  \dots \leq c_{j, \eps} \leq \cdots$,
$\widetilde{c}_1>0$, $0<\widetilde{c}_2\leq  \cdots \leq \widetilde{c}_j \leq \cdots$
and
$0<c_{j,\eps} \leq \widetilde{c}_j $ for any $j \geq 1$.
\medskip

Applying Theorem \ref{exist} and arguing as the proof of Theorem \ref{cvnbmiif9f8ufjhfy1}, we then have the following theorem.

\begin{thm}\label{cvnbmiif9f8ufjhfy}
For any $k \in \mathbb{N}$, there exists a constant $\eps_k>0$
such that, for any $0<\eps<\eps_k$, \eqref{equ22} admits at least
$k$ pairs of solutions $\pm w_{j, \eps} \in X^{1, s}(\R^{N+1}_+)$
satisfying $\Phi_{\eps}(w_{j, \eps})=c_{j,\eps} \leq
\widetilde{c}_{k}$ for any $1 \leq j \leq k$. Moreover, $w_{j,
\eps}$ is a sign-changing solution to \eqref{equ22} for any $2
\leq j \leq k$.
\end{thm}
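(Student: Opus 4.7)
The plan is to mirror the scheme already carried out for Theorem \ref{cvnbmiif9f8ufjhfy1} in the subcritical section, with Lemma \ref{ps} replacing Lemma \ref{ps1} as the source of compactness at every level. In particular, I expect all the auxiliary constructions (the operator $A_\eps$, its locally Lipschitz perturbation $B_\eps$, the admissibility of $P_+^\sigma$, and the deformation $\zeta$) to transfer almost verbatim to the present functional $\Phi_\eps$, provided one keeps track of the truncation factors $m_\eps$ and $b_\eps$ that appear in $f_\eps$.

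First I would reproduce Lemma \ref{infenergy1} in the critical case, namely: there exists $\sigma_0>0$ such that for every $0<\sigma<\sigma_0$ the value $c^*:=\inf_{w\in\Sigma}\Phi_\eps(w)>0$ with $\Sigma:=\partial P_+^\sigma\cap\partial P_-^\sigma$. Using $m_\eps(s)\leq s$ from Lemma \ref{mprop} one has the pointwise bound $F_\eps(x,|t|)\leq \frac{\mu}{p}|t|^p+\frac{1}{2^*_s}|t|^{2^*_s}$; combined with $V\geq a$ and Lemma \ref{embedding} this gives
\[
\Phi_\eps(w)\;\geq\; \tfrac12\min\{1,a\}\|w\|_{1,s}^2-C_1\|w\|_{1,s}^p-C_2\|w\|_{1,s}^{2^*_s},
\]
and since $p,2^*_s>2$ one concludes $c^*>0$. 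Next I would define $A_\eps(z)\in X^{1,s}(\R^{N+1}_+)$ as the unique solution of the linear extension problem with boundary data $-V_\eps w+f_\eps(x,|z|)z$. Continuity of $A_\eps$ on $X^{1,s}(\R^{N+1}_+)$ follows from the uniform bound $|f_\eps(x,|t|)t|\leq C(|t|^{p-1}+|t|^{2^*_s-1})$ together with the continuity of the factors $m_\eps(\cdot)^{(2^*_s-p)/2}$ and $b_\eps(\cdot)$ in $t$, exactly as in Lemma \ref{conti1}. The estimate $\|\Phi_\eps'(z)\|\leq C\|z-A_\eps(z)\|_{1,s}$ and the trapping property $A_\eps(\partial P_\pm^\sigma)\subset P_\pm^\sigma$ for small $\sigma$ are then routine repetitions of the subcritical argument, using $V\geq a$ and $g_\eps\leq a/4$. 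A locally Lipschitz perturbation $B_\eps$ with the properties (i)--(iv) listed in Section \ref{subcritical} can be constructed as in \cite{BL,BLW}, and combined with Lemma \ref{ps} it yields a deformation $\zeta$, hence the admissibility of $P_+^\sigma$ at every level $c\geq c^*$.

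With these ingredients in hand, the test maps $\varphi_n$ defined by \eqref{defvarphi} satisfy conditions (i)--(iii) of Theorem \ref{exist}, since they depend only on the auxiliary functional $\Phi_0$ (which does not see the critical truncation). The inequalities $c_{1,\eps}>0$, $0<c_{2,\eps}\leq\cdots\leq c_{j,\eps}\leq\cdots$ and $c_{j,\eps}\leq\tilde c_j$ recorded just before the theorem statement then supply the remaining hypotheses. Applying \cite[Theorem 9.12]{Rabinowitz} to treat $j=1$ and Theorem \ref{exist} for $j\geq 2$, one obtains $k$ pairs of critical points $\pm w_{j,\eps}$ with $\Phi_\eps(w_{j,\eps})=c_{j,\eps}\leq\tilde c_k$, and the abstract theorem moreover guarantees $w_{j,\eps}\notin W=P_+^\sigma\cup P_-^\sigma$ for $j\geq 2$, which is the sign-changing property.

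The only genuinely new difficulty compared to the subcritical case, namely the failure of Palais-Smale at energies above the Sobolev threshold, is already resolved by Lemma \ref{ps}: the composite truncation $m_\eps(t^2)^{(2^*_s-p)/2}$ renders the effective nonlinearity subcritical for each fixed $\eps>0$, so compactness is restored at every level. All other steps are faithful transcriptions of Section \ref{subcritical}, and I would therefore only give a brief indication of them, referring to the corresponding lemmas of the preceding section for the details.
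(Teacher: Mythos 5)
Your proposal is correct and follows essentially the same route as the paper: the paper proves the Palais–Smale condition for the truncated critical functional (Lemma \ref{ps}) and then simply invokes Theorem \ref{exist} "arguing as in the proof of Theorem \ref{cvnbmiif9f8ufjhfy1}", i.e. transplanting the operator $A_\eps$, its Lipschitz perturbation, the cone-admissibility and the deformation lemma, with \cite[Theorem 9.12]{Rabinowitz} handling $j=1$ and the abstract theorem giving sign-changing solutions for $j\geq 2$. Your explicit remarks on the adaptations (the bound $F_\eps(x,|t|)\leq \frac{\mu}{p}|t|^p+\frac{1}{2^*_s}|t|^{2^*_s}$ via $m_\eps(s)\leq s$, and the fact that for fixed $\eps$ the truncation makes the effective growth subcritical) are exactly the points the paper leaves implicit.
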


\subsection{Exclusion of blow-up of semiclassical states}
In this following, we are going to prove that the solutions $\{w_{j, \eps}\} \subset X^{1, s}(\R^{N+1}_+)$
obtained in Theorem \ref{cvnbmiif9f8ufjhfy} cannot blow up. More precisely, we shall prove the following result.

\begin{prop} \label{bcbvhfyfyufuadx}
Assume $\max \{2,(N+2s)/(N-2s)\}<p<2^*_s$. Let $\{w_{j, \eps}\}
\subset X^{1, s}(\R^{N+1}_+)$ be the solutions obtained in Theorem
\ref{cvnbmiif9f8ufjhfy}, then there exist $M_k>0$ and $\eps_k'>0$
such that, for any $0<\eps<\eps_k'$,
\begin{align*} 
\sup_{(x, y) \in \overline{\R^{N+1}_+}} |w_{j, \eps}(x, y)| \leq M_k.
\end{align*}
\end{prop}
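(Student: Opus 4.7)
The plan is to proceed by contradiction. Suppose there exist $\eps_n \to 0^+$ and $j_n \in \{1,\dots,k\}$ such that $M_n := \|u_n\|_{L^\infty(\R^N)} \to \infty$, where $u_n := w_{j_n,\eps_n}(\cdot,0)$. The energy bound $\Phi_{\eps_n}(w_{j_n,\eps_n}) \leq \widetilde c_k$ combined with identities \eqref{har} and \eqref{gar} adapts the argument of Lemma \ref{bddsol} to give $\|w_{j_n,\eps_n}\|_{1,s} \leq \eta_k$ uniformly in $n$, so blow-up can only be pointwise $L^\infty$-concentration and not energy concentration.

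The first main step is a refined profile decomposition adapted to the critical exponent. Following the $s$-harmonic extension global compactness scheme of \cite{CF} combined with the bubbling extraction of Lemma \ref{cnvbhhfyf6yr66}, I would decompose
$$
w_{j_n,\eps_n} \;=\; \sum_{l=1}^{m_1} w_l(\cdot - z_n^l,\cdot) + \sum_{l=1}^{m_2} \lambda_{n,l}^{(N-2s)/2}\, U_l\bigl(\lambda_{n,l}(\cdot - \bar z_n^l),\lambda_{n,l}\,\cdot\bigr) + r_n,
$$
with $r_n \to 0$ in $X^{1,s}(\R^{N+1}_+)$. Each $w_l$ is a fixed-scale subcritical profile as in Lemma \ref{cnvbhhfyf6yr66}, and each $U_l$ is a nontrivial $s$-harmonic extension of an Aubin--Talenti-type solution to the pure critical limit $(-\Delta)^s U = |U|^{2_s^*-2} U$ on $\R^N$, with concentration scales $\lambda_{n,l} \to \infty$ and concentration cores $\bar z_n^l$. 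Boundedness of the energy and quantization force $m_1 + m_2 < \infty$, while $M_n \to \infty$ forces $m_2 \geq 1$.

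Next I would localize the concentration cores. On $\R^N \setminus \Lambda_{\eps_n}^{\delta_0}$ the cutoff satisfies $\chi(\eps_n\cdot) \equiv 1$, so $f_{\eps_n}$ reduces to the bounded term $g_{\eps_n}(|u|) u$; elliptic regularity arguments as in Lemma \ref{jdhfggf7fydttdyy} then yield uniform $L^\infty$ control there, ruling out concentration. Hence, up to a subsequence, $\eps_n \bar z_n^l \to \bar z_*^l \in \overline{\Lambda^{\delta_0}}$ for every critical bubble. Combining the explicit bubble decay $|U_l(\xi)| \lesssim (1+|\xi|)^{-(N-2s)}$ and its gradient analogue with the Lemma \ref{mcnvbhhfyfufuu}-type decay of the $w_l$, I obtain sharp pointwise bounds for $u_n$, $\nabla_x w_n$ and $y^{1-2s}\partial_y w_n$ on every annular region around a core $\bar\xi_n^l := (\bar z_n^l,0)$ avoiding the other cores, in the spirit of Lemmas \ref{11123cnvbhhfyf7yfyddsd}--\ref{ooo9dudyttdtrrrr}. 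In particular, on $\partial Q_{\delta\eps_n^{-1}}(\bar z_n^l) \cup \partial^+\mathcal Q^+_{\delta\eps_n^{-1}}(\bar\xi_n^l)$ for $0 < \delta < \vartheta_*/2$, these quantities are dominated by suitable negative powers of $\eps_n$ and $\lambda_{n,l}$.

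The crucial final step mirrors Lemma \ref{cnvbghyyt7888sdd}: apply the localized Pohozaev identity of Corollary \ref{2cmbnug8g7ufyfyyfyf1} on $\mathcal Q^+_{\delta\eps_n^{-1}}(\bar\xi_n^l)$ with test vector $\mathbf t_n := \nabla V(\eps_n \bar z_n^l)/|\nabla V(\eps_n \bar z_n^l)|$. The boundary integrals are absorbed by the decay estimates of the previous step and, for $N \geq 2$, are of order $o(\eps_n)$, while the right-hand side is bounded below by $c\eps_n |\nabla V(\bar z_*^l)|$ times the $L^2$-mass of the dominant profile near $\bar z_n^l$. When $\bar z_*^l \notin \mathcal V$ this produces a direct contradiction. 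The hard part will be the case $\bar z_*^l \in \mathcal V$: the first-order Pohozaev balance degenerates, and one must rescale around $\bar z_n^l$ at the intrinsic scale $\lambda_{n,l}$, apply a Pohozaev-type identity to the rescaled equation, and exploit the hypothesis $p > (N+2s)/(N-2s)$ to show that the subcritical perturbation $\mu|u|^{p-2}u$ yields a strictly positive residue after rescaling, which is incompatible with the finite $L^2$-mass of a critical bubble (on which the subcritical term asymptotically vanishes at a wrong power). This excludes $m_2 \geq 1$, yielding the desired uniform bound $\|w_{j,\eps}\|_{L^\infty} \leq M_k$.
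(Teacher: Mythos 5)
Your overall frame (profile decomposition, localization of concentration cores, Pohozaev-type identities, use of $p>(N+2s)/(N-2s)$) points in the right direction, but the step that actually carries the proposition --- the exclusion of the critical bubbles --- is exactly the part you leave as a sketch, and the route you sketch does not work as stated. The translation-type identity of Corollary \ref{2cmbnug8g7ufyfyyfyf1} with ${\bf t}_n=\nabla V(\eps_n\bar z^l_n)/|\nabla V(\eps_n\bar z^l_n)|$ cannot rule out a bubble even when $\bar z^l_*\notin\mathcal V$: the lower bound on the right-hand side in Lemma \ref{cnvbghyyt7888sdd} needs a nondegenerate $L^2$-mass of the dominant profile in the box $Q_{\delta\eps_n^{-1}}(\bar z^l_n)$, whereas a bubble of scale $\sigma_n\to\infty$ carries $L^2$-mass $O(\sigma_n^{-2s})$ there (and for $N\le 4s$ the bubble is not even in $L^2$), so no contradiction of order $\eps_n$ is produced. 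Likewise, the boundary decay estimates you invoke on $\partial Q_{\delta\eps_n^{-1}}$ ``in the spirit of Lemmas \ref{11123cnvbhhfyf7yfyddsd}--\ref{ooo9dudyttdtrrrr}'' were proved there from the polynomial decay of \emph{fixed-scale} limit profiles; they are not available while shrinking bubbles are still present, which is the very situation you are trying to exclude. Note also the logical order: in the paper the uniform $L^\infty$ bound is obtained \emph{before} any localization of concentration near $\mathcal V$, precisely because the decay machinery of Section \ref{subcritical} can only be run once $\Lambda_\infty=\emptyset$.

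What the paper actually does, and what is missing from your ``hard part,'' is a bubble-scale argument with neck estimates: (i) by a pigeonhole argument one finds an annulus of radius $\sim\sigma_n^{-1/2}$ around the core of the smallest bubble containing no other concentration points (Lemma \ref{ncbvjgfu88g8g8f7}); (ii) on this neck one proves a uniform pointwise bound for $w_{j,\eps_n}$ by comparison functions built from the profile decomposition, maximum principles and Kelvin transforms (Lemma \ref{esta2}), plus weighted $L^2$ and gradient estimates (Lemmas \ref{esta3}--\ref{esta4}); (iii) one then applies a \emph{dilation-type} local Pohozaev identity centered at the bubble core with a cutoff at scale $\sigma_n^{-1/2}$ (Lemma \ref{ph}), in which the term $2^*_sF_{\eps_n}-f_{\eps_n}u^2$ produces, thanks to $\mu>0$, a lower bound of order $\sigma_n^{\frac{N-2s}{2}p-N}$ from the bubble, while all neck boundary terms are $O(\sigma_n^{-\frac{N-2s}{2}})$; the hypothesis $p>(N+2s)/(N-2s)$ is exactly what makes these two powers incompatible (Lemma \ref{lames}). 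Your proposal contains neither the neck construction nor the scale $\sigma_n^{-1/2}$ at which the error terms must be measured, and your heuristic ``incompatible with the finite $L^2$-mass of a critical bubble'' is not the operative mechanism. Finally, even after $\Lambda_\infty=\emptyset$, concluding the $L^\infty$ bound requires a comparison argument for the remainder (as in the proof of Proposition \ref{bcbvhfyfyufuadx}); your claim that $r_n\to 0$ in $X^{1,s}(\R^{N+1}_+)$ is stronger than what the decomposition of Lemma \ref{cmvnvhhguf7fuufss} provides (only $L^{2^*_s}$ convergence of the traces) and would in any case not by itself yield pointwise bounds.
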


To prove this proposition, we need to establish the following lemmas.

\begin{lem} \label{bounded}
Let $w_{j, \eps} \in X^{1, s}(\R^{N+1}_+)$ be the solution to \eqref{equ22} obtained in Theorem \ref{cvnbmiif9f8ufjhfy}.
Then, for any $k \in \mathbb{N}$ and $0<\eps<\eps_k$, there exist a constant $\rho>0$
depending only on $p, N$ and a constant $\eta_k>0$ independent of $\eps$
such that $\rho \leq \|w_{j, \eps}\|_{1, s} \leq \eta_k$ for any $1 \leq j \leq k$,
where the constant $\eps_k$ is determined in Theorem \ref{cvnbmiif9f8ufjhfy}.
\end{lem}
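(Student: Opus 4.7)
The strategy is a direct adaptation of the scheme used for Lemma \ref{bddsol} in the subcritical case, with the two modifications that (i) the upper estimate on $f_\eps$ now has an extra critical term coming from $h_\eps$, and (ii) the global energy bound supplied by Theorem \ref{cvnbmiif9f8ufjhfy} is used in place of a direct mountain pass bound.

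For the lower bound, I will test $\Phi_\eps'(w_{j,\eps})=0$ against $w_{j,\eps}$ itself to get
\begin{align*}
k_s\int_{\R^{N+1}_+}y^{1-2s}|\nabla w_{j,\eps}|^2\,dxdy+\int_{\R^N}V_\eps(x)|w_{j,\eps}(x,0)|^2\,dx=\int_{\R^N}f_\eps(x,|w_{j,\eps}(x,0)|)|w_{j,\eps}(x,0)|^2\,dx.
\end{align*}
The left side is bounded below by $\min\{1,a\}\|w_{j,\eps}\|_{1,s}^2$ thanks to $(V_1)$. For the right side I would estimate $f_\eps$ from above by $C(|t|^{p-2}+|t|^{2^*_s-2})$. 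The term $\mu|t|^{p-2}$ is trivial; for the pieces of $h_\eps$ built from $m_\eps$, I use both $m_\eps(t^2)\le t^2$ and the inequality $m_\eps(t^2)\ge t^2 b_\eps(t^2)$ from Lemma \ref{mprop}(i) to control $\left(m_\eps(t^2)\right)^{(2^*_s-p)/2-1}b_\eps(t^2)$ by $t^{2^*_s-p-2}$ regardless of the sign of $(2^*_s-p)/2-1$; and $g_\eps\le a/4$ handles the $\chi$-part. Combined with Lemma \ref{embedding}, which gives $X^{1,s}(\R^{N+1}_+)\hookrightarrow L^p(\R^N)$ and $L^{2^*_s}(\R^N)$, I obtain
\begin{align*}
\min\{1,a\}\|w_{j,\eps}\|_{1,s}^2\le C_{p,N}\left(\|w_{j,\eps}\|_{1,s}^p+\|w_{j,\eps}\|_{1,s}^{2^*_s}\right),
\end{align*}
and since $p,2^*_s>2$ this forces $\|w_{j,\eps}\|_{1,s}\ge\rho$ for some $\rho=\rho(p,N)>0$.

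For the upper bound, I would imitate the boundedness argument in the proof of Lemma \ref{ps}, applied now with $\Phi_\eps'(w_{j,\eps})=0$ and $\Phi_\eps(w_{j,\eps})=c_{j,\eps}\le\widetilde c_k$. Using \eqref{har} and \eqref{gar} I get
\begin{align*}
\widetilde c_k\ge\Phi_\eps(w_{j,\eps})-\tfrac1p\Phi_\eps'(w_{j,\eps})w_{j,\eps}\ge\tfrac{p-2}{2p}\left(k_s\!\!\int_{\R^{N+1}_+}\!\!y^{1-2s}|\nabla w_{j,\eps}|^2\,dxdy+\int_{\R^N}\!V_\eps(x)|w_{j,\eps}(x,0)|^2\,dx\right)-R_\eps,
\end{align*}
where $R_\eps=\tfrac{p-2}{2p}\int_{\R^N}\chi(\eps x)g_\eps(|w_{j,\eps}(x,0)|)|w_{j,\eps}(x,0)|^2\,dx$. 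Since $g_\eps\le a/4$ and $V_\eps\ge a$, the remainder is absorbed: $R_\eps\le\tfrac{p-2}{8p}\int_{\R^N}V_\eps|w_{j,\eps}(x,0)|^2\,dx$, leaving
\begin{align*}
\widetilde c_k\ge\tfrac{p-2}{2p}\min\!\left\{1,\tfrac{3a}{4}\right\}\|w_{j,\eps}\|_{1,s}^2,
\end{align*}
so $\|w_{j,\eps}\|_{1,s}\le\eta_k$ with $\eta_k$ depending only on $k$, $p$, $N$ and $a$ (through $\widetilde c_k$), and not on $\eps$. No step here is technically hard; the only subtlety is the careful two-sided estimate $m_\eps(t^2)\asymp t^2 b_\eps(t^2)$ so as to bound the term carrying the exponent $(2^*_s-p)/2-1$ uniformly.
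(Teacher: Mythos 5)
Your proposal is correct and follows essentially the same route as the paper: the paper's proof simply notes that Lemma \ref{mprop} gives $|f_{\eps}(x,|t|)|\leq C(|t|^{p-2}+|t|^{2^*_s-2})$ and then repeats the arguments of Lemma \ref{bddsol} (testing with $w_{j,\eps}$ for the lower bound) and of the boundedness step in Lemma \ref{ps} with $\Phi_{\eps}-\tfrac1p\Phi_{\eps}'$, \eqref{har}, \eqref{gar} and $g_{\eps}\leq a/4$ for the upper bound, exactly as you do. Your two-sided use of $t^2b_{\eps}(t^2)\leq m_{\eps}(t^2)\leq t^2$ is precisely the justification the paper leaves implicit for the critical-growth bound on $h_{\eps}$.
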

\begin{proof}
In view of the definition of $f_{\eps}$ and Lemma \ref{mprop},
we have that $|f_{\eps}(x, |t|)| \leq |t|^{p-2}+|t|^{2^*_s-2}$ for any $x \in \R^N$ and $t \in \R$.
Therefore, by using the same way as the proof of Lemma \ref{bddsol}, we can complete the proof.
\end{proof}

\begin{lem}\label{cmvnvhhguf7fuufss}
Let $\{\eps_n\} \subset \R^+$ with $\eps_n \to 0$ as $ n \to
\infty$ and $w_{j, \eps} \in X^{1, s}(\R^{N+1}_+)$ be a solution
to \eqref{equ22} with $\eps=\eps_n$. Then, for any $1 \leq j \leq
k$, there exist a sequence $\{\sigma_{j, i, n} \} \subset \R^+$
satisfying $\lim_{n \to \infty} \sigma_{j ,i, n} =\infty$ and a
sequence $\{x_{j ,i ,n}\} \subset \R^N$ such that $w_{j, \eps_n}$
has a profile decomposition
\begin{align} \label{pd}
w_{j, \eps_n}=\sum_{i \in \Lambda_1} U_{j ,i}(\cdot -(x_{j, i,
n},0)) + \sum_{i \in \Lambda_{\infty}} \sigma_{j, i,
n}^{\frac{N-2s}{2}} U_{j, i}(\sigma_{j, i, n}(\cdot-(x_{j, i,
n},0))) + r_{j, n},
\end{align}
where $\Lambda_{1}$ and $ \Lambda_{\infty}$ are finite sets and
the numbers of the sets are bounded from above by an integer
depending only on $k$. Moreover,
\begin{enumerate}
\item[$(1)$] for any $i \in \Lambda_1$, $w_{j, \eps_n}(\cdot +
(x_{j, i, n},0)) \wto U_{j, i} \neq 0 $ in $X^{1, s}(\R^{N+1}_+)$
as $n \to \infty$ and for any $i \in \Lambda_{\infty}$,
$$
\sigma_{j, i, n}^{-\frac{N-2s}{2}}w_{j, \eps_n}(\sigma_{j, i, n}^{-1}  \cdot + (x_{j, i, n},0)) \wto U_{j, i} \neq 0 \,\, \mbox{in}
\,\, X^s(\R^{N+1}_+) \,\, \mbox{as}\,\,  n \to \infty.
$$
\item[$(2)$] For any $i, i' \in \Lambda_1 $ with $i \neq i'$,
$|x_{j, i, n}-x_{j, i', n}| \to \infty$ as $n \to \infty$ and for
any $i, i' \in \Lambda_{\infty}$ with $i \neq i'$,
$$
\frac{\sigma_{j, i, n}}{\sigma_{j, i', n}}+\frac{\sigma_{j, i',
n}}{\sigma_{j, i, n}}+ \sigma_{j, i, n}\sigma_{j, i', n}|x_{j, i,
n}-x_{j, i', n}|^2 \to \infty \,\, \mbox{as} \,\, n \to \infty.
$$
\item[$(3)$] There holds that
$$
\sum_{i \in \Lambda_1 \cup \Lambda_{\infty}} \int_{\R^N}
|U_{j,i}(x, 0)|^{2^*_s} \, dx \leq \liminf_{n \to \infty}
\int_{\R^N} |w_{j, \eps_n}(x, 0)|^{2^*_s} \, dx.
$$
\item[$(4)$] $r_{j, n}(\cdot, 0) \to 0$ in $L^{2^*_s}(\R^N)$ as $n \to \infty$.
\end{enumerate}
\end{lem}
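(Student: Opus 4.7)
The plan is to adapt the Struwe-type global compactness decomposition (see \cite[Theorem 3.1]{CF} for the fractional setting) to the extension formulation, inductively extracting profiles of two scales: translation-invariant (scale one) and concentrating bubbles (blow-up scales). By Lemma \ref{bounded}, $\{w_{j,\eps_n}\}$ is bounded in $X^{1,s}(\R^{N+1}_+)$, so by Lemma \ref{embedding1} we also have $\{w_{j,\eps_n}(\cdot,0)\}$ bounded in $L^{2^*_s}(\R^N)$. Up to a subsequence, $w_{j,\eps_n} \wto U_{j,0}$ in $X^{1,s}(\R^{N+1}_+)$. If $U_{j,0} \neq 0$, I place the index $0$ in $\Lambda_1$ with $x_{j,0,n}=0$; otherwise I proceed directly to the next step.

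First I would set $v_n^{(1)} := w_{j,\eps_n} - U_{j,0}$, note the Brezis--Lieb type splittings $\|v_n^{(1)}\|_{1,s}^2 = \|w_{j,\eps_n}\|_{1,s}^2 - \|U_{j,0}\|_{1,s}^2 + o_n(1)$ and an analogous splitting for $\|\cdot\|_{L^{2^*_s}(\R^N)}$. If $v_n^{(1)}(\cdot,0)\to 0$ in $L^{2^*_s}(\R^N)$, the decomposition is complete. Otherwise I apply a Lions-type vanishing dichotomy, distinguishing two cases: either $\sup_{z \in \R^N}\int_{B_1(z)}|v_n^{(1)}(x,0)|^2\,dx \not\to 0$, in which case one finds $\{x_{j,1,n}\}$ with $|x_{j,1,n}|\to\infty$ so that $v_n^{(1)}(\cdot+(x_{j,1,n},0),\cdot)\wto U_{j,1}\neq 0$ (index goes to $\Lambda_1$); or concentration at vanishing scales, in which case one selects $\sigma_{j,1,n}\to\infty$ and $x_{j,1,n}\in\R^N$ via a zoom-in procedure (e.g.\ picking $\sigma_{j,1,n}$ so that $\sigma_{j,1,n}^{-N}\int_{B_1(x_{j,1,n})}|v_n^{(1)}(x,0)|^{2^*_s}\,dx$ reaches a threshold independent of $n$) and the rescaled sequence $\tilde v_n(\xi):=\sigma_{j,1,n}^{-(N-2s)/2} v_n^{(1)}(\sigma_{j,1,n}^{-1}\xi+(x_{j,1,n},0))$ has a nonzero weak limit $U_{j,1}$ in $X^s(\R^{N+1}_+)$ (index goes to $\Lambda_\infty$).

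The equation satisfied by each profile is identified by passing to the limit in the rescaled Euler--Lagrange equation. For scale-one profiles, $U_{j,i}$ solves the extension problem associated with a limit equation on $\R^N \times \{0\}$ obtained from \eqref{equ22} by freezing $V$ and $\chi$ at the translation limit (noting that on the bounded support of $1-\chi(\eps_n x)$, the factor $m_{\eps_n}(|w|^2)^{(2^*_s-p)/2}$ stays bounded); for bubble profiles, the scaling $\sigma_{j,i,n}^{-(N-2s)/2}$ kills all subcritical and potential terms, leaving only the pure critical limit problem
\begin{equation*}
\left\{\begin{aligned}
-\mbox{div}(y^{1-2s}\nabla U_{j,i}) &=0 && \text{in } \R^{N+1}_+,\\
-k_s\,\partial U_{j,i}/\partial\nu &= |U_{j,i}|^{2^*_s-2}U_{j,i} && \text{on } \R^N\times\{0\}.
\end{aligned}\right.
\end{equation*}
Having extracted this profile, I subtract off $U_{j,1}(\cdot-(x_{j,1,n},0),\cdot)$ (respectively its rescaled version) to form $v_n^{(2)}$, and iterate the alternative above. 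The orthogonality statement (2) follows because at each step the residual converges weakly to zero, so any further extracted profile must be asymptotically orthogonal in scale and/or translation to all previously extracted ones; the usual change-of-variables lemma then gives (2).

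The iteration terminates after finitely many steps thanks to an energy--lower-bound argument: every scale-one profile $U_{j,i}$ satisfies its limit equation, hence has $\|U_{j,i}\|_{1,s}$ bounded below by a constant (as in Lemma \ref{bounded}), and every bubble $U_{j,i}$ has $X^s$-norm bounded below by the fractional Sobolev constant $S(s,N)^{N/(4s)}$. Combined with the Brezis--Lieb identity iterated at each step and the uniform energy bound $\Phi_{\eps_n}(w_{j,\eps_n})\le \widetilde c_k$ from Theorem \ref{cvnbmiif9f8ufjhfy}, the number of profiles is bounded by a quantity depending only on $k$. This yields (3) simultaneously. Finally, once no further profile can be extracted, the remainder $r_{j,n}$ satisfies $\sup_{z\in\R^N}\int_{B_1(z)}|r_{j,n}(x,0)|^2\,dx\to 0$ and concentrates at no scale, so a refined Lions lemma combined with $L^{2^*_s}$-boundedness gives (4).

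The principal obstacle will be the accurate identification of the limit equation for bubble profiles and verifying that the penalization $f_{\eps_n}$ does not spoil the convergence when $\eps_n\to 0$ and simultaneously $\sigma_{j,i,n}\to\infty$: specifically, one must track the term $(m_{\eps_n}(|w|^2))^{(2^*_s-p)/2}$ under rescaling, using Lemma \ref{mprop} to show that, on the support of $1-\chi(\eps_n\cdot)$, the rescaled subcritical contribution vanishes in the limit (this is where the restriction $\max\{2,(N+2s)/(N-2s)\}<p<2^*_s$ will ultimately matter in a companion lemma, though not in this decomposition itself). Checking orthogonality (2) in the mixed scale/translation case and verifying the splitting (3) under the non-homogeneous extension norm are routine adaptations of the classical Struwe argument but require care with the weight $y^{1-2s}$.
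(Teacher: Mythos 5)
Your route is genuinely different from the paper's: the paper disposes of this lemma in one line, citing the abstract profile-decomposition/cocompactness theorem \cite[Theorem 3.3]{SST}, whereas you attempt a self-contained Struwe-type inductive extraction with Brezis--Lieb splittings and a two-scale dichotomy. That is a legitimate alternative in principle, and it has the advantage of making the bookkeeping explicit; note also that the lemma itself asserts nothing about the equations solved by the profiles, which the paper deliberately postpones to Lemmas \ref{2xncbdggdtdtttd} and \ref{xncbdggdtdtttd}.

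There are, however, two concrete gaps. First, your identification of the bubble limit problem as the \emph{pure} critical equation is wrong for the modified nonlinearity: writing $m_{\eps}(t)=\eps^{-1}m(\eps t)$ and rescaling $t\mapsto\sigma_{j,i,n}^{(N-2s)/2}t$, the term $\sigma_{j,i,n}^{-2s}h_{\eps_n}(\sigma_{j,i,n}^{(N-2s)/2}t)$ produces the factor $\bigl(\eps_n\sigma_{j,i,n}^{N-2s}\bigr)^{-\frac{2^*_s-p}{2}}\bigl(m(\eps_n\sigma_{j,i,n}^{N-2s}t^2)\bigr)^{\frac{2^*_s-p}{2}}$, whose limit depends on $\varrho_{j,i}=\lim_n\eps_n\sigma_{j,i,n}^{N-2s}\in[0,+\infty]$; one recovers $t^{2^*_s-2}$ only when $\varrho_{j,i}=0$, a truncated nonlinearity when $0<\varrho_{j,i}<\infty$, and $0$ when $\varrho_{j,i}=+\infty$, all further multiplied by $1-\chi(x_{j,i})$ (this is exactly Lemma \ref{xncbdggdtdtttd}). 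Consequently your termination/quantization step, which bounds each bubble's $X^s$-norm below by the constant attached to the pure critical equation, is unjustified as written; it can be repaired by using only $0\le\psi_{j,i}(t)\le t^{2^*_s-2}$ (test the limit equation with $U_{j,i}$ and apply Lemma \ref{embedding1}) together with an argument that the degenerate cases ($\varrho_{j,i}=+\infty$ or $\chi(x_{j,i})=1$) force $U_{j,i}=0$ and hence never occur for extracted profiles, but this must be said. Second, your final step (4) appeals to a ``refined Lions lemma'' to conclude $r_{j,n}(\cdot,0)\to0$ in $L^{2^*_s}(\R^N)$ once no profile can be extracted; at the critical exponent this is precisely the cocompactness of the trace embedding relative to translations \emph{and} dilations, i.e.\ the content of the very result the paper cites, and Lemma \ref{lions} (vanishing at unit scale only) does not yield it. You must either prove or cite that cocompactness statement, or reorganize the extraction so that each step removes a fixed amount of $L^{2^*_s}$-mass measured by concentration functions over all scales; in addition, the absence of profiles with $\sigma_{j,i,n}\to0$ should be justified from the $L^2(\R^N)$ trace term in the $X^{1,s}$-bound. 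As it stands, the key analytic input is assumed rather than supplied.
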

\begin{proof}
The proof of this lemma is a straightforward adaption of \cite[Theorem 3.3]{SST}, then we omit its proof here.
\end{proof}

\begin{lem}\label{2xncbdggdtdtttd}
If $i\in\Lambda_1$, then, up to a subsequence,
$x_{j,i}:=\lim_{n\rightarrow\infty}\epsilon_nx_{j,i,n}\in
\mathcal{V}^{\delta_0}$ and $U_{j,i}$ satisfies the equation
\begin{align}\label{cdhfdhfgyyfyf66ftf}
\left\{
\begin{aligned}
-\mbox{div} \, (y^{1-2s} \nabla U_{j ,i})&=0 \hspace{5cm}\quad \mbox{in} \,\, \R^{N+1}_+,\\
-k_s \frac{\partial U_{j ,i}}{\partial {\nu}}&=-V(x_{j, i}) U_{j ,i} + f(x_{j ,i},
|U_{j ,i}|) U_{j ,i} \qquad \mbox{on}\,\, \R^N \times \{0\},
\end{aligned}
\right.
\end{align}
where
\begin{align} \label{deff}
f(x, t):=(1-\chi(x))(t^{p-2}+t^{2^*_s-2})+\chi(x)g(t)
\end{align}
and
$$
g(t):=\min\left\{t^{p-2}+t^{2^*_s-2}, a/4\right\} \,\,\, \mbox{for}
\,\,  x \in \R^N \,\, \mbox{and} \,\, t \geq 0.
$$
\end{lem}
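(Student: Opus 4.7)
My plan is to follow the strategy of Lemma \ref{cnvbhhfyf6yr66} (the subcritical counterpart) and adapt it to the present critical setting. Fix $i \in \Lambda_1$ and set $\tilde w_n(x,y):=w_{j,\eps_n}(x+x_{j,i,n},y)$; then $\tilde w_n$ satisfies
\begin{align*}
\left\{
\begin{aligned}
-\mbox{div}(y^{1-2s}\nabla \tilde w_n) &= 0 \quad \mbox{in } \R^{N+1}_+,\\
-k_s \frac{\partial \tilde w_n}{\partial \nu} &= -V_{\eps_n}(x+x_{j,i,n})\tilde w_n + f_{\eps_n}(x+x_{j,i,n},|\tilde w_n|)\tilde w_n \quad \mbox{on } \R^N \times \{0\},
\end{aligned}
\right.
\end{align*}
with $\tilde w_n \wto U_{j,i}\not\equiv 0$ in $X^{1,s}(\R^{N+1}_+)$ by hypothesis. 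The uniform bound $\|w_{j,\eps_n}\|_{1,s}\leq \eta_k$ from Lemma \ref{bounded} transfers to $\{\tilde w_n\}$, and Lemma \ref{embedding} yields $\tilde w_n(\cdot,0)\to U_{j,i}(\cdot,0)$ in $L^q_{\mathrm{loc}}(\R^N)$ for every $2\leq q<2^*_s$ and a.e.\ along a subsequence.

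Next I would show $\{\eps_n x_{j,i,n}\}$ is bounded by the same contradiction argument used in Lemma \ref{cnvbhhfyf6yr66}. If $|\eps_n x_{j,i,n}|\to\infty$, then $\chi(\eps_n(x+x_{j,i,n}))=1$ on any bounded $x$-set for $n$ large, so the nonlinearity localizes to $g_{\eps_n}(|\tilde w_n|)\tilde w_n$ and the limit equation for $U_{j,i}$ has nonlinearity $g(|U_{j,i}|)U_{j,i}$ with $g(|t|)\leq a/4$, while the potential coefficient converges to some $V_1\geq a$. Testing the limit equation against $U_{j,i}$ yields $k_s\int y^{1-2s}|\nabla U_{j,i}|^2 + V_1\|U_{j,i}(\cdot,0)\|_{L^2(\R^N)}^2 \leq (a/4)\|U_{j,i}(\cdot,0)\|_{L^2(\R^N)}^2$, forcing $U_{j,i}\equiv 0$, a contradiction. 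Hence, along a subsequence, $\eps_n x_{j,i,n}\to x_{j,i}\in\R^N$; localizing the same argument around $x_{j,i}$ excludes $\chi(x_{j,i})=1$, placing $x_{j,i}$ in $\Lambda^{\delta_0}$, which is the intended containment (the sharper improvement to $x_{j,i}\in\mathcal{V}^{\delta_0}$, if desired, would follow from a Pohozaev identity analogous to Lemma \ref{cnvbghyyt7888sdd}).

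Finally I would pass to the limit in the weak formulation tested against arbitrary $\psi\in C_0^\infty(\overline{\R^{N+1}_+})$. Weak $X^{1,s}$ convergence handles the Dirichlet term; continuity of $V$ and $\chi$ together with $\eps_n(x+x_{j,i,n})\to x_{j,i}$ uniformly on $\mathrm{supp}\,\psi$ handle the potential and the cutoff. The main obstacle is the nonlinearity $h_{\eps_n}(|\tilde w_n|)\tilde w_n$: the factor $m_{\eps_n}(|\tilde w_n|^2)^{(2^*_s-p)/2}$ is only bounded by $C\eps_n^{-(2^*_s-p)/2}$, so uniform bounds do not suffice. I would resolve this with a level-set truncation: on $\{|\tilde w_n(\cdot,0)|\leq M\}$, Lemma \ref{mprop} gives $b_{\eps_n}(|\tilde w_n|^2)=1$ and $m_{\eps_n}(|\tilde w_n|^2)=|\tilde w_n|^2$ once $\eps_n M^2\leq 1$, reducing the integrand to the pure $\mu|\tilde w_n|^{p-2}\tilde w_n+|\tilde w_n|^{2^*_s-2}\tilde w_n$ form; the a.e.\ convergence combined with $L^{2^*_s/(2^*_s-1)}_{\mathrm{loc}}$-boundedness of the critical part, tested against $\psi$, gives convergence to $\mu|U_{j,i}|^{p-2}U_{j,i}+|U_{j,i}|^{2^*_s-2}U_{j,i}$ on this set. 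The tail $\{|\tilde w_n(\cdot,0)|>M\}$ is controlled by Chebyshev via the uniform $L^{2^*_s}$-bound and becomes negligible as $M\to\infty$. The $\chi$-cutoff part is handled identically, replacing $h_{\eps_n}$ by $g_{\eps_n}$ whose uniform bound by $a/4$ allows dominated convergence directly. Assembling all terms delivers \eqref{cdhfdhfgyyfyf66ftf} with $f$ as in \eqref{deff}, completing the proof.
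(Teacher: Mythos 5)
Your route is essentially the paper's: translate by $x_{j,i,n}$, use the weak limit $U_{j,i}\neq 0$ from Lemma \ref{cmvnvhhguf7fuufss}, rule out $|\eps_n x_{j,i,n}|\to\infty$ (and, more generally, any limit point where $\chi=1$) by testing the limit equation with $U_{j,i}$ and using $g(|t|)\leq a/4$ together with $V\geq a$, and then pass to the limit in the weak formulation. Your treatment of the term $h_{\eps_n}$ (the bound $h_{\eps_n}(t)\leq \mu t^{p-2}+t^{2^*_s-2}$ from Lemma \ref{mprop}, a.e.\ convergence since eventually $|\widetilde w_n(x,0)|^2\leq 1/\eps_n$, plus the uniform $L^{2^*_s}$ bound and a truncation/Chebyshev argument) is in fact more careful than the paper, which simply asserts this limit passage "by the definition of $f_{\eps_n}$".

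The one genuine shortfall is the containment: the lemma asserts $x_{j,i}\in\mathcal{V}^{\delta_0}$, while your argument only excludes $\chi(x_{j,i})=1$ and hence only yields $x_{j,i}\in\Lambda^{\delta_0}$, a strictly weaker conclusion since $\mathcal{V}^{\delta_0}\subsetneq\Lambda^{\delta_0}$ in general; deferring the sharper statement to a Pohozaev identity means the statement as written is not proved. Be aware, however, that the paper's own proof of Lemma \ref{2xncbdggdtdtttd} claims $\mathcal{V}^{\delta_0}$ by exactly the contradiction you use (``if $\lim_n\eps_n x_{j,i,n}\notin\mathcal{V}^{\delta_0}$ then the limit equation has nonlinearity $g$''), and that step is only valid when the limit point lies outside $\Lambda^{\delta_0}$, where $\chi\equiv 1$; for a limit point in $\Lambda^{\delta_0}\setminus\mathcal{V}^{\delta_0}$ one has $1-\chi>0$ and no contradiction arises. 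So your hedge mirrors a looseness in the paper itself: what the energy argument delivers is $x_{j,i}\in\Lambda^{\delta_0}$ (which is also all that is used later, e.g.\ $1-\chi(x_{j,i_\infty})>0$ in Lemma \ref{lames}), and the localization near $\mathcal{V}$ is genuinely obtained only afterwards, via the local Pohozaev identity as in Lemma \ref{cnvbghyyt7888sdd}, once the bubbles are excluded. If you intend your proof to establish the lemma verbatim, you must either supply that Pohozaev step here or state the containment as $\Lambda^{\delta_0}$ and propagate the correction; note also that the limit equation \eqref{cdhfdhfgyyfyf66ftf} with $f$ as in \eqref{deff} is unaffected by this issue, and your derivation of it is correct.
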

\begin{proof}
Since $w_{j,\eps_n}$ satisfies the equation
\begin{align}\label{equ30}
\left\{
\begin{aligned}
-\mbox{div}(y^{1-2s} \nabla w_{j, \eps_n})&=0 \hspace{6cm} \mbox{in} \,\, \R^{N+1}_+,\\
-k_s \frac{\partial w_{j, \eps_n}}{\partial {\nu}}&=-V_{\eps_n}(x) w_{j,\eps_n} + f_{\eps_n}(x, |w_{j,\eps_n}|)  w_{j, \eps_n} \, \qquad \,\, \mbox{on} \,\, \R^N \times \{0\},
\end{aligned}
\right.
\end{align}
then $\widetilde{w}_{j,i,n}:=w_{j, \eps_n}(\cdot+(x_{j,i,n}, 0))$
satisfies the equation
\begin{align*}
\left\{
\begin{aligned}
-\mbox{div}(y^{1-2s} \nabla \widetilde{w}_{j, i, n})&=0 \,\, \, \hspace{8.5cm}\, \mbox{in} \,\, \R^{N+1}_+,\\
-k_s \frac{\partial \widetilde{w}_{j, i, n}}{\partial {\nu}}&=-V_{\eps_n}(x+x_{j,i,n}) \widetilde{w}_{j, i, n} + f_{\eps_n}(x+x_{j,i,n}, |\widetilde{w}_{j, i, n}|) \widetilde{w}_{j, i, n} \ \ \, \, \qquad  \mbox{on} \,\, \R^N \times \{0\}.
\end{aligned}
\right.
\end{align*}
If $\lim_{n\rightarrow\infty}\epsilon_n x_{j,i,n}\notin \mathcal{V}^{\delta_0}$, by the definition of $f_{\eps_n}$ and the fact that $\widetilde{w}_{j, i, n} \wto U_{j ,i}$ in $X^{1, s}(\R^{N+1}_+)$ as $n \to \infty$, see Lemma \ref{cmvnvhhguf7fuufss}, then $U_{j ,i}$ satisfies the equation
\begin{align}\label{equ31}
\left\{
\begin{aligned}
-\mbox{div}(y^{1-2s} \nabla U_{j, i})&=0 \hspace{4cm} \,\,\,\,\mbox{in} \,\, \R^{N+1}_+,\\
-k_s \frac{\partial  U_{j, i}}{\partial {\nu}}&=-V_{\infty} U_{j, i}+ g(|U_{j, i}|)  U_{j, i} \, \qquad \,\, \mbox{on} \,\, \R^N \times \{0\},
\end{aligned}
\right.
\end{align}
where $V_{\infty}:=\liminf_{n \to \infty} V(\eps_n x_{j ,i ,n})
\geq a$. Multiplying \eqref{equ31} by $U_{j, i}$ and integrating
on $\R^{N+1}_+$, we then have that
\begin{align*}
k_s\int_{\R^{N+1}_+} y^{1-2s} |\nabla  U_{j, i}|^2 \, dxdy +
\int_{\R^N} V_{\infty} | U_{j, i}(x, 0)|^2 \, dx &=\int_{\R^N}
g(| U_{j, i}(x, 0)|) | U_{j, i}(x, 0)|^2 \,dx\\
& \leq \frac a 4\int_{\R^N} | U_{j, i}(x, 0)|^2,
\end{align*}
where we used the fact that $g(|t|) \leq a/4$ for any $t \in \R$. This then follows that $U_{j ,i}=0$,
which is impossible, because of $U_{j ,i} \neq 0$, see Lemma \ref{cmvnvhhguf7fuufss}.
Hence there exists $x_{j ,i} \in \mathcal{V}^{\delta_0}$
such that $x_{j,i}=\lim_{n\rightarrow\infty}\epsilon_nx_{j,i,n}\in \mathcal{V}^{\delta_0}$.
Since $\widetilde{w}_{j, i, n} \wto U_{j ,i}$ in $X^{1, s}(\R^{N+1}_+)$ as $n \to \infty$,
by using the definition of $f_{\eps_n}$, we then easily assert that $U_{j ,i}$ satisfies \eqref{cdhfdhfgyyfyf66ftf}.
Thus the proof is completed.
\end{proof}

\begin{lem}\label{7cncbvggftd6ettd66d}
If $i\in\Lambda_1$, then there exists a constant $C>0$ such that
$$
|U_{j,i}(\xi)|\leq \frac{C}{1+|\xi|^{N}}, \quad \xi:=(x, y) \in \overline{\R^{N+1}_+}.
$$
\end{lem}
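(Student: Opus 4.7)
The plan is a two-step argument modeled on Lemmas \ref{mcnvbhhfyfufuu} and \ref{oooldhfyyyf6trrrr}: first establish the sharp decay of the trace $u_{j,i}(x):=U_{j,i}(x,0)$ on $\R^N$, and then propagate it to all of $\overline{\R^{N+1}_+}$ via the Poisson representation.

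For Step 1, the $s$-harmonic extension arguments give that $u_{j,i}\in H^s(\R^N)$ satisfies
$(-\Delta)^s u_{j,i} + V(x_{j,i})\, u_{j,i} = f(x_{j,i},|u_{j,i}|)\,u_{j,i}$ in $\R^N$,
with $f$ defined by \eqref{deff}. I would first verify that $u_{j,i}\in L^\infty(\R^N)$ and $u_{j,i}(x)\to 0$ as $|x|\to\infty$, following the strategy of Lemma \ref{jdhfggf7fydttdyy}: a local $L^\infty$-to-$L^2$ estimate on unit half-cubes via \cite[Proposition 2.6]{JLX}, together with the vanishing of $\int_0^1\!\int_{B_1(z)} y^{1-2s}|U_{j,i}|^2\,dxdy$ as $|z|\to\infty$. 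Once $u_{j,i}(x)\to 0$, one can pick $R_1>0$ so large that $f(x_{j,i},|u_{j,i}(x)|)\leq V(x_{j,i})/2$ for $|x|\geq R_1$, and introduce a cut-off $\varrho$ as in the proof of Lemma \ref{mcnvbhhfyfufuu} to rewrite the equation as
\begin{align*}
(-\Delta)^s u_{j,i} + \bigl(V(x_{j,i})-\varrho(|x|)\,f(x_{j,i},|u_{j,i}|)\bigr)u_{j,i}
= \bigl(1-\varrho(|x|)\bigr)f(x_{j,i},|u_{j,i}|)\,u_{j,i},
\end{align*}
whose right-hand side is compactly supported and therefore controlled by $C/(1+|x|^{N+2s})$, while the coefficient on the left is bounded below by $V(x_{j,i})/2>0$. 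Applying Lemma \ref{cnvnbghfyf7yfyftttd} to pass to $|u_{j,i}|$, and then Lemma \ref{gjhuuhujguf99difujg}, would yield
$$|u_{j,i}(x)| \leq \frac{C}{1+|x|^{N+2s}}, \quad x\in\R^N.$$

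For Step 2, since $U_{j,i}$ is $s$-harmonic in $\R^{N+1}_+$ with trace $u_{j,i}$, the representation \eqref{convolution} gives $U_{j,i}(\xi)=(P^s_y * u_{j,i})(x)$. Let $v$ be the $s$-harmonic extension of $1/(1+|x|^{N+2s})$, as in \eqref{1q55qraddqqawequ31}. By Step 1 and the maximum principle, $|U_{j,i}(\xi)|\leq C\,v(\xi)$ on $\overline{\R^{N+1}_+}$, and the Poisson kernel estimate \eqref{mmmmz9999agtrrr4} established inside the proof of Lemma \ref{oooldhfyyyf6trrrr} furnishes $|v(\xi)|\leq C/(1+|\xi|^N)$, which yields the claim.

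The main obstacle is the $L^\infty$ boundedness of $u_{j,i}$: since $f$ still contains the critical term $t^{2^*_s-2}$ when $\chi(x_{j,i})<1$ and $U_{j,i}$ arises only as a weak $X^{1,s}$-limit in the profile decomposition of Lemma \ref{cmvnvhhguf7fuufss}, a direct iteration is delicate; one expects to invoke the local boundedness result from the Appendix applied to the degenerate extension equation, combined with the uniform $X^{1,s}$-bound on $U_{j,i}$. Once this and the decay at infinity of the trace are secured, the rest of the proof is parallel to the subcritical treatment carried out in Section \ref{subcritical}.
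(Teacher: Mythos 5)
Your argument is correct in outline and its second step coincides exactly with the paper's: compare $U_{j,i}$ with the $s$-harmonic extension $v$ of $1/(1+|x|^{N+2s})$ via the maximum principle and then use the kernel estimate \eqref{mmmmz9999agtrrr4}. The difference lies in how the trace decay \eqref{udecay} is obtained. The paper gets $|U_{j,i}(x,0)|\leq C/(1+|x|^{N+2s})$ in one stroke by citing \cite[Lemma 4.3]{FQT}, whereas you reconstruct it from scratch with the Section \ref{subcritical} machinery: local $L^\infty$ bound and vanishing at infinity of the trace, cut-off rewriting of the equation, the Kato-type inequality of Lemma \ref{cnvnbghfyf7yfyftttd}, and the comparison Lemma \ref{gjhuuhujguf99difujg}. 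This is a legitimate, more self-contained route, and you correctly flag the one genuinely delicate point: because $f$ in \eqref{deff} contains the critical power $t^{2^*_s-2}$, the coefficient $a_{j,i}(x)=-V(x_{j,i})+f(x_{j,i},|u_{j,i}|)$ is not controlled in $L^{p/(p-2)}_{\mathrm{loc}}$ by the $L^p$ norm of the trace as in Lemma \ref{jdhfggf7fydttdyy}, so the naive subcritical iteration does not apply verbatim; one must instead use the Appendix Lemma \ref{cmvnbghyyt755534e} (or the Brezis--Kato-type estimates of \cite{JLX}), exploiting that $|u_{j,i}|^{2^*_s-2}\in L^{N/2s}(\R^N)$ has uniformly small norm on small balls, to get $u_{j,i}\in L^\infty(\R^N)$ and its decay to zero; after that the linearization $f(x_{j,i},|u_{j,i}|)\leq V(x_{j,i})/2$ outside a large ball makes your comparison argument go through. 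In short: what the paper buys by the citation is brevity; what your route buys is a proof entirely internal to the tools already developed in the paper, at the cost of having to justify the $L^\infty$ bound in the critical setting, which you have correctly identified and for which the Appendix lemma is indeed the right tool.
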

\begin{proof}
Since $U_{j ,i}$ satisfies \eqref{cdhfdhfgyyfyf66ftf}, it then follows from \cite[Lemma 4.3]{FQT} that
\begin{align} \label{udecay}
|U_{j, i} (x, 0)|\leq \frac{C}{1+|x|^{N+2s}}, \quad x \in \R^N.
\end{align}
Let $v$ be the solution to the equation
\begin{align*}
\left\{
\begin{aligned}
-\mbox{div}(y^{1-2s} \nabla v)&=0 \hspace{2cm} \quad  \mbox{in} \,\, \R^{N+1}_+,\\
v(x,0)&=\frac{1}{1+|x|^{N+2s}} \quad  \ \,\,\, \mbox{on} \,\,
\R^N \times \{0\}.
\end{aligned}
\right.
\end{align*}
From \eqref{udecay} and the maximum principle, we then get that
\begin{align*}
|U_{j,i}(\xi)|\leq C v(\xi),\quad \xi\in\overline{\R^{N+1}_+}.
\end{align*}
At this point, reasoning as the proof of \eqref{mmmmz9999agtrrr4},
we can derive that
$$
|v(\xi)| \leq\frac{C}{1+|\xi|^{N}},\quad\xi\in\overline{\R^{N+1}_+}.
$$
Thus the proof is completed.
\end{proof}

\begin{lem}\label{xncbdggdtdtttd}
If $i\in\Lambda_\infty$, then, up to a subsequence,
$x_{j,i}:=\lim_{n\rightarrow\infty}\epsilon_nx_{j,i,n}\in
\mathcal{V}^{\delta_0}$ and $U_{j,i}$ satisfies the equation
\begin{align}\label{cnvcbv99fufjhhf}
\left\{
\begin{aligned}
-\mbox{div} \, (y^{1-2s} \nabla U_{j ,i})&=0 \hspace{4cm}\qquad \, \, \, \, \, \mbox{in} \,\, \R^{N+1}_+,\\
-k_s \frac{\partial U_{j ,i}}{\partial
{\nu}}&=\left(1-\chi(x_{j,i})\right) \psi_{j ,i}(|U_{j,i}|) U_{j,
i} \qquad \quad \mbox{on}\,\, \R^N \times \{0\}.
\end{aligned}
\right.
\end{align}
where
\begin{align}\label{cmvnbhhgyt664rr}
\psi_{j,i}(t):=\left\{
\begin{array}
[c]{ll} 0, & \mbox{if} \ \varrho_{j,i}=+\infty,\\
 t^{2^*_s-2}, & \mbox{if} \ \varrho_{j,i}=0,\\
\frac{p}{2^*_s}\varrho^{-\frac{2^*_s-p}{2}}_{j,i} t^{p-2}
\left(m(\varrho_{j,i} t^2) \right)^{\frac{2^*_s-p}{2}} \\+
\frac{2^*_s-p}{2^*_s}  \varrho_{j,i}^{1-\frac{2^*_s-p}{2}
}t^{p}\left(m(\varrho_{j,i}t^2) \right)^{\frac{2^*_s-p}{2} -1}
\varphi(\varrho_{j,i}t^2),& \mbox{if} \ 0<\varrho_{j,i}<+\infty,
\end{array}
\right.
\end{align}
and
$$
\varrho_{j,i}:=\lim_{n\rightarrow\infty}\epsilon_n\sigma_{j,i,n}^{N-2s}\in[0,+\infty], \quad m(t):=\int^t_0\varphi(s)\,ds.
$$
\end{lem}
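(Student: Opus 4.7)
The plan is to rescale $w_{j,\eps_n}$ around the concentration scale $\sigma_{j,i,n}$ and pass to the limit in the boundary equation. Set $\tilde{w}_n(\xi) := \sigma_{j,i,n}^{-(N-2s)/2}\, w_{j,\eps_n}(\sigma_{j,i,n}^{-1}\xi + \xi_{j,i,n})$ with $\xi_{j,i,n}=(x_{j,i,n},0)$ and $\tilde{u}_n:=\tilde{w}_n(\cdot,0)$, so that $\tilde{w}_n\wto U_{j,i}\not\equiv 0$ in $X^s(\R^{N+1}_+)$ by Lemma \ref{cmvnvhhguf7fuufss}. Because $-\textnormal{div}(y^{1-2s}\nabla\cdot)=0$ is invariant under this anisotropic scaling, a direct computation based on $\partial_\nu w = -(k_s)^{-1}(-\Delta)^s u$ and the scaling $(-\Delta)^s u(\lambda\cdot)=\lambda^{2s}[(-\Delta)^s u](\lambda\cdot)$ turns the boundary condition of \eqref{equ30} into
\begin{equation*}
-k_s\,\partial_\nu \tilde{w}_n(x,0) = \sigma_{j,i,n}^{-2s}\Bigl[-V_{\eps_n}(y_n)\,\tilde{u}_n + f_{\eps_n}\bigl(y_n,\sigma_{j,i,n}^{(N-2s)/2}|\tilde{u}_n|\bigr)\tilde{u}_n\Bigr],\qquad y_n := \sigma_{j,i,n}^{-1}x + x_{j,i,n}.
\end{equation*}
Since $\sigma_{j,i,n}\to +\infty$, the linear piece $\sigma_{j,i,n}^{-2s}V_{\eps_n}(y_n)$ vanishes in $L^\infty_{\textnormal{loc}}$, so the $V$-term drops out of the limit.

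Next I localize the concentration point by the argument of Lemma \ref{2xncbdggdtdtttd}: since $\eps_n y_n\to x_{j,i}$ pointwise, $\chi(\eps_n y_n)\to\chi(x_{j,i})$. If $x_{j,i}\notin\Lambda^{\delta_0}$ then $\chi(x_{j,i})=1$, so only the bounded piece $\chi g_{\eps_n}\le a/4$ survives on the right-hand side; combined with $V\ge a$ and $\sigma_{j,i,n}^{-2s}\to 0$, testing against $\tilde{w}_n$ and passing to the limit forces $U_{j,i}\equiv 0$, contradicting Lemma \ref{cmvnvhhguf7fuufss}. Hence $x_{j,i}\in\Lambda^{\delta_0}\subset\mathcal{V}^{\delta_0}$ (as stated), and $1-\chi(\eps_n y_n)\to 1-\chi(x_{j,i})$, a constant that factors out of the limiting nonlinearity. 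Simultaneously, the bounded contribution $\sigma_{j,i,n}^{-2s}\chi(\eps_n y_n)g_{\eps_n}(\sigma_{j,i,n}^{(N-2s)/2}|\tilde{u}_n|)\tilde{u}_n$ tends to $0$ in $L^2_{\textnormal{loc}}$.

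It remains to pass to the limit in $\sigma_{j,i,n}^{-2s}(1-\chi(\eps_n y_n))h_{\eps_n}(\sigma_{j,i,n}^{(N-2s)/2}|\tilde{u}_n|)\tilde{u}_n$. Using $m_{\eps_n}(s)=\eps_n^{-1}m(\eps_n s)$ and the abbreviations $\rho_n := \eps_n\sigma_{j,i,n}^{N-2s}$, $\alpha := (2^*_s-p)/2$, the identities $2\alpha+p=2^*_s$ and $(N-2s)2^*_s=2N$ give the key exponent computation: after multiplying by $\sigma_{j,i,n}^{-2s}$ the pure subcritical summand $\mu t^{p-1}$ acquires the factor $\sigma_{j,i,n}^{((N-2s)p-2N)/2}\to 0$ and vanishes, while the remaining two summands have net $\sigma_{j,i,n}$-power zero and reduce to
\begin{equation*}
\tfrac{p}{2^*_s}\,\rho_n^{-\alpha}\bigl(m(\rho_n|\tilde{u}_n|^2)\bigr)^{\alpha}|\tilde{u}_n|^{p-2}\tilde{u}_n \ \text{ and }\ \tfrac{2^*_s-p}{2^*_s}\,\rho_n^{1-\alpha}\bigl(m(\rho_n|\tilde{u}_n|^2)\bigr)^{\alpha-1}\varphi(\rho_n|\tilde{u}_n|^2)|\tilde{u}_n|^{p}\tilde{u}_n.
\end{equation*}
Taking $n\to\infty$ with $\rho_n\to\varrho_{j,i}\in[0,+\infty]$ reproduces the three cases in \eqref{cmvnbhhgyt664rr}: the interior case $\varrho_{j,i}\in(0,+\infty)$ is immediate from continuity; for $\varrho_{j,i}=0$ one has $m(\rho_n|\tilde{u}_n|^2)=\rho_n|\tilde{u}_n|^2$ once $\rho_n|\tilde{u}_n|^2\le 1$, so the two surviving terms telescope to $(\tfrac{p}{2^*_s}+\tfrac{2^*_s-p}{2^*_s})|U_{j,i}|^{2^*_s-2}U_{j,i} = |U_{j,i}|^{2^*_s-2}U_{j,i}$; for $\varrho_{j,i}=+\infty$ the first surviving term vanishes because $m\le m(+\infty)<\infty$ while $\rho_n^{-\alpha}\to 0$, and the second vanishes because $\varphi(\rho_n|\tilde{u}_n|^2)\to 0$ a.e. on $\{U_{j,i}\ne 0\}$. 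With the prefactor $1-\chi(x_{j,i})$ this yields exactly \eqref{cnvcbv99fufjhhf}.

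The principal obstacle is justifying the passage to the limit in the nonlinear Neumann integral when tested against a fixed $\psi\in X^s(\R^{N+1}_+)$. The remedy is the uniform bound
\begin{equation*}
\sigma_{j,i,n}^{-2s}\bigl|f_{\eps_n}\bigl(y_n,\sigma_{j,i,n}^{(N-2s)/2}t\bigr)t\bigr| \le C\bigl(|t|^{2^*_s-1}+|t|^{p-1}\bigr),
\end{equation*}
which follows from the same exponent-counting above together with $m\le m(+\infty)$ and $\varphi\le 1$; combined with Lebesgue dominated convergence and the a.e. convergence of $\tilde{u}_n$ supplied by the local compact embedding of Lemma \ref{embedding}, this closes the argument for every prescribed limit $\varrho_{j,i}$.
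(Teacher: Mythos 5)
Your argument follows the paper's proof essentially step for step: the same rescaling $\widehat{w}_{j,i,n}=\sigma_{j,i,n}^{-(N-2s)/2}w_{j,\eps_n}(\sigma_{j,i,n}^{-1}\cdot+(x_{j,i,n},0))$, the observation that the potential term and the $\chi\,g_{\eps_n}$ term are annihilated by the prefactor $\sigma_{j,i,n}^{-2s}$, the identity $m_{\eps}(t)=\eps^{-1}m(\eps t)$ together with $(N-2s)2^*_s=2N$ to show that $\sigma_{j,i,n}^{-2s}h_{\eps_n}(\sigma_{j,i,n}^{(N-2s)/2}t)\to\psi_{j,i}(t)$ in each of the three regimes of $\varrho_{j,i}$, and the uniform bound $\sigma_{j,i,n}^{-2s}\bigl|h_{\eps_n}(\sigma_{j,i,n}^{(N-2s)/2}t)\,t\bigr|\le C(|t|^{p-1}+|t|^{2^*_s-1})$ (this is exactly the paper's displayed estimate, companion to \eqref{icxic11856ttgfr}) to pass to the limit against a fixed test function. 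Your exponent bookkeeping and the three-case identification of $\psi_{j,i}$ are correct; the only soft spot of the same kind as in the paper is the invocation of dominated convergence for the critical-power piece, which is not literally dominated uniformly in $n$ — one should instead use boundedness of $|\widehat{w}_{j,i,n}(\cdot,0)|^{2^*_s-2}\widehat{w}_{j,i,n}(\cdot,0)$ in $L^{(2^*_s)'}$ plus a.e.\ convergence (and local compactness for the subcritical piece), as the paper implicitly does.

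There is, however, one concretely wrong step: the assertion ``$x_{j,i}\in\Lambda^{\delta_0}\subset\mathcal{V}^{\delta_0}$''. The inclusion is backwards: $\mathcal{V}$ is the critical set of $V$ and is a compact subset of $\Lambda$, so $\mathcal{V}^{\delta_0}\subset\Lambda^{\delta_0}$ and certainly not conversely. What your contradiction argument actually delivers is this: if $\chi(x_{j,i})=1$ (equivalently $\mathrm{dist}(x_{j,i},\Lambda)\ge\delta_0$), then $1-\chi(\eps_n(\sigma_{j,i,n}^{-1}x+x_{j,i,n}))\to 0$ near the concentration point, the whole right-hand side vanishes in the limit, $U_{j,i}$ solves the extension problem with zero Neumann datum, hence $\int_{\R^{N+1}_+}y^{1-2s}|\nabla U_{j,i}|^2=0$ and $U_{j,i}=0$, contradicting Lemma \ref{cmvnvhhguf7fuufss}; this gives $\chi(x_{j,i})<1$, i.e.\ $x_{j,i}\in\Lambda^{\delta_0}$, and says nothing about the distance of $x_{j,i}$ to $\mathcal{V}$. (Note also that in this step ``$V\ge a$'' plays no role — the potential carries the factor $\sigma_{j,i,n}^{-2s}$ and drops out — and the testing should be done on the limit equation with $U_{j,i}$, not on the approximate equation with $\widehat{w}_{j,i,n}$.) To be fair, the paper's own proof argues the contradiction from ``$\lim_n\eps_nx_{j,i,n}\notin\mathcal{V}^{\delta_0}$'' even though the vanishing of $1-\chi$ only holds outside $\Lambda^{\delta_0}$, and downstream (Lemma \ref{lames}) only the property $1-\chi(x_{j,i})>0$, i.e.\ $x_{j,i}\in\Lambda^{\delta_0}$, is ever used — which your argument does establish. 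So the analytic core of your proof is sound and matches the paper, but you must not bridge the $\Lambda^{\delta_0}$ versus $\mathcal{V}^{\delta_0}$ discrepancy with a false inclusion; either prove the stated $\mathcal{V}^{\delta_0}$ membership by a separate argument or record only the $\Lambda^{\delta_0}$ conclusion that the rest of the paper actually needs.
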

\begin{proof}
Notice that $w_{j, \eps_n}$ satisfies \eqref{equ30}, then
$\widehat{w}_{j,i,n}:=\sigma_{j, i, n}^{-\frac{N-2s}{2}}w_{j,
\eps_n}( \sigma_{j, i, n}^{-1}\cdot +(x_{j, i, n}, 0))$ satisfies
the equation
\begin{align*}
\left\{
\begin{aligned}
-\mbox{div}(y^{1-2s} \nabla \widehat{w}_{j, i, n})&=0 \quad  \hspace{7.5cm} \mbox{in} \,\, \R^{N+1}_+,\\
-k_s \frac{\partial \widehat{w}_{j, i, n}}{\partial {\nu}}&=-\sigma_{j, i, n}^{-2s}V_{\eps_n}(\sigma_{j, i, n}^{-1}x+x_{j, i, n}) \widehat{w}_{j, i, n} \\
& \quad + \sigma_{j, i, n}^{-2s} f_{\eps_n}(\sigma_{j, i, n}^{-1}x+x_{j, i, n}, \sigma_{j, i,
n}^{\frac{N-2s}{2}}|\widehat{w}_{j, i, n}|) \widehat{w}_{j, i, n}
\, \, \,\ \qquad  \, \mbox{on} \,\, \R^N \times \{0\}.
\end{aligned}
\right.
\end{align*}
Since $\sigma_{j ,i ,n} \to +\infty$ and $\widehat{w}_{j, i, n}
\wto U_{j ,i}$ in $X^{1, s}(\R^{N+1}_+)$ as $n \to \infty$, see
Lemma \ref{cmvnvhhguf7fuufss}, we then get that, for any $\psi \in X^{1,
s}(\R^{N+1}_+)$,
\begin{align}\label{vesti}
\sigma_{j, i, n}^{-2s}\int_{\R^N}  V_{\eps_n}(\sigma_{j, i, n}^{-1}x+x_{j, i, n}) \widehat{w}_{j, i, n}(x, 0) \psi (x, 0) \, dx=o_n(1)
\end{align}
and
\begin{align}\label{gesti}
\hspace{-1cm}\sigma_{j, i, n}^{-2s}\int_{\R^N}  \chi(\sigma_{j, i, n}^{-1}x+x_{j, i, n})g_{\eps_n}(\sigma_{j, i,
n}^{\frac{N-2s}{2}}|\widehat{w}_{j, i, n}(x, 0)|) \widehat{w}_{j,
i, n}(x, 0) \psi (x, 0) \, dx=o_n(1),
\end{align}
where we used the assumption that $a \leq V(x) \leq b$ for any $x \in \R^N$ and the fact that $0 \leq g_{\eps_n}(|t|) \leq a/4$ for any $t \in \R$. If $\lim_{n\rightarrow\infty}\epsilon_n x_{j,i,n}\notin \mathcal{V}^{\delta_0}$, by the definition of $\chi$ and the fact that
$$
\sigma_{j, i, n}^{-2s} |h_{\eps_n}(\sigma_{j, i, n}^{\frac{N-2s}{2}}|\widehat{w}_{j, i, n}(x, 0)|) \widehat{w}_{j, i, n}(x, 0)| \leq \sigma_{j, i, n}^{\frac p 2 (N-2s) -N} |\widehat{w}_{j, i, n}(x, 0)|^{p-1} + |\widehat{w}_{j, i, n}(x, 0)|^{2^*_s-1},
$$
we then deduce that, for any $\psi \in X^{1, s}(\R^{N+1}_+)$,
\begin{align*}
\sigma_{j, i, n}^{-2s}\int_{\R^N}  \big(1-\chi(\eps_n (\sigma_{j, i, n}^{-1}x+x_{j, i,
n}))\big)h_{\eps_n}(\sigma_{j, i,
n}^{\frac{N-2s}{2}}|\widehat{w}_{j, i, n}(x, 0)|) \widehat{w}_{j,
i, n}(x, 0) \psi (x, 0) \, dx=o_n(1).
\end{align*}
This together with \eqref{vesti} and \eqref{gesti} implies that $U_{j, i}$ solves the equation
\begin{align*}
\left\{
\begin{aligned}
-\mbox{div}(y^{1-2s} \nabla U_{j,i})&=0 \quad  \,\, \, \mbox{in} \,\, \R^{N+1}_+,\\
-k_s \frac{\partial U_{j,i}}{\partial {\nu}}&=0 \, \, \quad  \mbox{on} \,\, \R^N \times \{0\}.
\end{aligned}
\right.
\end{align*}
Hence we get that $U_{j,i}=0$, which is impossible, see Lemma
\ref{cmvnvhhguf7fuufss}. Therefore, we have that there exists
$x_{j ,i} \in \mathcal{V}^{\delta_0}$ such that
$x_{j,i}=\lim_{n\rightarrow\infty}\epsilon_nx_{j,i,n}\in
\mathcal{V}^{\delta_0}$. Since $2<p<2^*_s$, $\sigma_{j ,i ,n} \to
\infty$ and $\widehat{w}_{j, i, n} \wto U_{j ,i}$ in $X^{1,
s}(\R^{N+1}_+)$ as $n \to \infty$ and for any $t\geq 0,$
$$
\sigma_{j, i, n}^{-2s} h_{\eps_n}(\sigma_{j, i,
n}^{\frac{N-2s}{2}}t) \rightarrow\psi_{j,i}(t) \,\,\, \mbox{as}
\,\, n \to \infty,
$$ 
we then obtain that
$$
\sigma_{j, i, n}^{-2s} h_{\eps_n}(\sigma_{j, i,
n}^{\frac{N-2s}{2}}|\widehat{w}_{j, i, n}(x, 0)|) \to \psi_{j,
i}(|U_{j,i}(x)|) \quad \mbox{a.e. in }  \R^N  \,\,\, \mbox{as}
\,\, n \to \infty.
$$
From Lemma \ref{mprop}, it is easy to see that for any $\varrho_{j,i}\in[0,+\infty],$
\begin{align}\label{icxic11856ttgfr}
0\leq\psi_{j,i}(t)\leq t^{2^*_s-2},\ \forall \, t\in[0,+\infty).
\end{align}
 As a consequence, we derive
that, for any $\psi \in X^{1, s}(\R^{N+1}_+)$,
\begin{align} \nonumber
&\sigma_{j, i, n}^{-2s}\int_{\R^N}  \big(1-\chi(\eps_n (\sigma_{j, i, n}^{-1}x+x_{j, i,
n}))\big)h_{\eps_n}(\sigma_{j, i,
n}^{\frac{N-2s}{2}}|\widehat{w}_{j, i, n}(x, 0)|) \widehat{w}_{j,
i, n}(x, 0) \psi (x, 0) \, dx \\ \label{hesti} &\to \int_{\R^N}
\big(1-\chi(x_{j, i})\big) \psi_{j, i}(|U_{j,i}(x)|)  U_{j, i}(x,
0) \psi(x, 0) \, dx \,\,\, \mbox{as} \,\, n \to \infty.
\end{align}
Utilizing \eqref{vesti}, \eqref{gesti} and \eqref{hesti}, we then get that $U_{j,i}$ satisfies \eqref{cnvcbv99fufjhhf}. This completes the proof.
\end{proof}

\begin{lem}\label{3cncbvggftd6ettd66d}
If $i\in\Lambda_\infty$, then there exists a constant $C>0$ such
that
$$
|U_{j,i}(\xi)|\leq \frac{C}{1+|\xi|^{N-2s}}, \quad \xi:=(x, y) \in \overline{\R^{N+1}_+}.
$$
\end{lem}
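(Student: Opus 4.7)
The plan is to first establish $L^\infty$ boundedness and decay-to-zero of the trace $u_{j,i}(x) := U_{j,i}(x,0)$, then extract the sharp pointwise polynomial decay via the integral representation of the fractional Laplacian, and finally transfer the decay to $\overline{\R^{N+1}_+}$ by means of the $s$-harmonic extension formula.

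First I would show that $u_{j,i} \in L^\infty(\R^N)$ and $u_{j,i}(x) \to 0$ as $|x| \to \infty$. Using the pointwise bound $|\psi_{j,i}(t)\,t| \leq t^{2^*_s-1}$ from \eqref{icxic11856ttgfr}, one can view \eqref{cnvcbv99fufjhhf} as a critical boundary reaction problem, and a Moser iteration argument (in the spirit of Lemma \ref{jdhfggf7fydttdyy}) that absorbs the critical nonlinearity on small balls via the continuous embedding $X^{1,s}(\R^{N+1}_+) \hookrightarrow L^{2^*_s}(\R^N)$ yields the uniform bound. The tail estimate analogous to \eqref{cxncbvhhfgyr6fyyf}, which follows from $U_{j,i} \in X^{1,s}(\R^{N+1}_+)$, then gives the vanishing at infinity.

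Next I would derive the pointwise decay. Rewriting \eqref{cnvcbv99fufjhhf} as $(-\Delta)^s u_{j,i} = f$ with $f := (1-\chi(x_{j,i}))\psi_{j,i}(|u_{j,i}|) u_{j,i}$ and $|f| \leq |u_{j,i}|^{2^*_s-1}$, the Riesz representation formula gives
\[
u_{j,i}(x) = c_{N,s} \int_{\R^N} \frac{f(y)}{|x-y|^{N-2s}}\, dy.
\]
Because no lower-order mass term is present, Lemma \ref{gjhuuhujguf99difujg} is unavailable, so I would instead proceed by a two-stage bootstrap. First, combining $f \in L^{2^*_s/(2^*_s-1)} \cap L^\infty$ with the Hardy--Littlewood--Sobolev inequality and $u_{j,i} \in L^\infty$ upgrades the integrability of $u_{j,i}$ past the critical exponent, which in turn upgrades that of $f$, ultimately producing $f \in L^1(\R^N)$. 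Second, the vanishing of $u_{j,i}$ at infinity together with $f \in L^1$ produces an initial pointwise decay $|u_{j,i}(x)| \leq C/|x|^{\alpha}$ for some $\alpha > 0$ by splitting the Riesz integral at $|y| = |x|/2$. Since $|f(x)| \leq C/|x|^{\alpha(2^*_s-1)}$, re-injecting into the representation iteratively enlarges $\alpha$; once $\alpha(2^*_s-1) \geq N$ a final splitting argument delivers the sharp estimate $|u_{j,i}(x)| \leq C/(1+|x|^{N-2s})$.

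Finally, since $U_{j,i}$ is the $s$-harmonic extension of $u_{j,i}$, writing $U_{j,i}(x,y) = (P^s_y \ast u_{j,i})(x)$ with $P^s_y$ given by \eqref{defpys} and proceeding exactly as in the proof of Lemma \ref{oooldhfyyyf6trrrr}, carrying out the estimates \eqref{xncbvufd6ryfgdvxccc1}--\eqref{cmvbbghyt777rfedrf1} with the exponent $N+2s$ replaced by $N-2s$ in the trace bound, yields the desired half-space estimate. The main obstacle is the combination of critical growth and the absence of a positive mass term in \eqref{cnvcbv99fufjhhf}, which puts us precisely at the borderline of Hardy--Littlewood--Sobolev/Moser scaling; the bootstrap above relies crucially on the extra input $u_{j,i}(x) \to 0$ at infinity to generate the initial polynomial decay, which is then iteratively upgraded to the sharp $|x|^{-(N-2s)}$ rate.
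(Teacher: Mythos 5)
Your plan breaks down at its central step, and the failure is exactly the borderline you yourself flag. First, the claim that Hardy--Littlewood--Sobolev plus $u_{j,i}\in L^\infty$ ``upgrades the integrability of $u_{j,i}$ past the critical exponent, \dots ultimately producing $f\in L^1$'' is circular: from $u_{j,i}\in L^{2^*_s}\cap L^\infty$ you only get $f\in L^q$ for $q\ge 2N/(N+2s)$, and the Riesz potential $I_{2s}$ maps $L^q$ into $L^{\tilde q}$ with $\tilde q>q$, so this bootstrap only raises Lebesgue exponents; it never places $u_{j,i}$ in $L^q$ for any $q<2^*_s$, whereas $f\in L^1$ requires $u_{j,i}\in L^{2^*_s-1}$, i.e.\ it already presupposes decay. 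Second, even if you run the splitting of the Riesz integral with the information actually available ($u_{j,i}\in L^{2^*_s}$ with small tail norm, $u_{j,i}\to0$ at infinity), the best initial rate you can extract is $|u_{j,i}(x)|\lesssim |x|^{-(N-2s)/2}$, the scaling rate of an $L^{2^*_s}$ function. But $(N-2s)/2$ is precisely the fixed point of the bootstrap map $\alpha\mapsto\alpha(2^*_s-1)-2s$ generated by $|f|\le|u_{j,i}|^{2^*_s-1}$ and $I_{2s}$ (equivalently, $2s/(2^*_s-2)=(N-2s)/2$), a consequence of the equation being critical and massless. Starting at the fixed point, ``re-injecting into the representation'' returns the same rate, so the iteration that is supposed to enlarge $\alpha$ until $\alpha(2^*_s-1)\ge N$ never starts. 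This is not a presentational gap but the known obstruction for decay of critical problems without a mass term; your correct observation that Lemma \ref{gjhuuhujguf99difujg} is unavailable is exactly why some additional mechanism is needed.

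The paper supplies that mechanism with the Kelvin transform: $\widetilde U_{j,i}(\xi)=|\xi|^{-(N-2s)}U_{j,i}(\xi/|\xi|^2)$ solves an extension problem whose boundary coefficient $a(x)=(1-\chi(x_{j,i}))|x|^{-4s}\psi_{j,i}(|x|^{N-2s}|\widetilde U_{j,i}(x,0)|)$ satisfies $\|a\|_{L^{N/2s}(B_r(0))}\to0$ as $r\to0^+$ precisely because the exponent is critical; the local estimates of Jin--Li--Xiong (their Lemma 2.8 and Proposition 2.6) then give $\widetilde U_{j,i}\in L^\infty$ near the origin, which is exactly the bound $|U_{j,i}(\xi)|\le C|\xi|^{-(N-2s)}$ at infinity, with no representation-formula bootstrap at all. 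Your steps 1 and 4 (an $L^\infty$/vanishing statement via Moser iteration with small critical coefficient, and the transfer to $\overline{\R^{N+1}_+}$ through the Poisson kernel as in Lemma \ref{oooldhfyyyf6trrrr}) are sound in spirit, but to rescue the middle of your argument you would need an extra ingredient that beats the $(N-2s)/2$ rate --- for instance a Brezis--Kato-type improvement on far annuli exploiting the smallness of $\|u_{j,i}\|_{L^{2^*_s}(|y|\ge R)}$, or simply the Kelvin transform as in the paper.
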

\begin{proof}
Let
$$
\widetilde{U}_{j,i}(\xi)=\frac{1}{|\xi|^{N-2s}}U_{j,i}\left(\frac{\xi}{|\xi|^2}\right),\quad \xi \in \overline{\R^{N+1}_+} \backslash \{0\}
$$ 
be the Kelvin transformation of $U_{j,i}$. It then yields from \cite[Proposition A.1]{O} that $\widetilde{U}_{j,i}$
satisfies the equation 
\begin{align*}
\left\{
\begin{aligned}
-\mbox{div} \, (y^{1-2s} \nabla \widetilde{U}_{j ,i})&=0 \quad \hspace{7.5cm}\qquad \, \, \mbox{in} \,\, \R^{N+1}_+,\\
-k_s \frac{\partial \widetilde{U}_{j ,i}}{\partial
{\nu}}&=\left(1-\chi(x_{j,i})\right)|x|^{-4s} \psi_{j
,i}(|x|^{N-2s}|\widetilde{U}(x,0)|) \widetilde{U}_{j, i}(x,0)
\ \ \quad \qquad \, \mbox{on}\,\, \R^N \times \{0\}.
\end{aligned}
\right.
\end{align*}
Let $a(x)=\left(1-\chi(x_{j,i})\right)|x|^{-4s} \psi_{j
,i}(|x|^{N-2s}|\widetilde{U}_{j,i}(x,0)|).$ From
\eqref{icxic11856ttgfr}, we get that
\begin{align}\label{cnvbygt77rydgdfgff}
\lim_{r\rightarrow0^+}\int_{B_r(0)}|a(x)|^{N/2s}\,dx\leq\lim_{r\rightarrow0^+}\int_{B_r(0)}|\widetilde{U}_{j,i}(x,0)|^{2^*_s}\,dx=0.
\end{align}
It then follows from \cite[Lemma 2.8]{JLX} that there exists $r>0$
such that
$$
\|\widetilde{U}_{j,i}(\cdot,0)\|_{L^{q}(B_{r/2}(0))}<+\infty,
$$
where $2N/(N-2s)<q<\min\{2(N+1)/(N-2s),2N^2/(N-2s)^2\}$.
Consequently, we have that 
$$
\|a\|_{L^{q/(2^*_s-2)}(B_{r/2}(0))}<+\infty.
$$
Since $q/(2^*_s-2)>N/2s,$  by \cite[Proposition 2.6 (i)]{JLX}, we then deduce that
$$
\|\widetilde{U}_{j,i}\|_{L^{\infty}(B_{r/4}(0)\times[0,r/4])}<+\infty.
$$
This implies the result of this lemma, and the proof is completed.
\end{proof}

If $\Lambda_{\infty} \neq \emptyset$, let
$i_\infty\in\Lambda_\infty$ be such that
\begin{align}\label{mmmzcczdr4s4s4sss}
\sigma_n:=\sigma_{j,i_\infty,n}=\min\{\sigma_{j,i,n} \ |\
i\in\Lambda_\infty\}.
\end{align}
Let
\begin{align}\label{mcnvbbbvuvjyyfffff}
x_n:=x_{j,i_\infty,n}.
\end{align}
For further discusison, we define $\mathcal{B}_r^+(z):=\{\xi \in \R^{N+1}: |\xi-z|<r\} \cap \R^{N+1}_+$ with $r>0$ and $z \in \R^{N+1}$.
\begin{lem}\label{ncbvjgfu88g8g8f7}
There exists a constant $\overline{C}>0$ such that, up to a
subsequence,
$$
\mathcal{A}^1_{n}\cap \{x_{j,i,n} \ |\
i\in\Lambda_\infty\}=\emptyset,
$$
where
$$
\mathcal{A}^1_{n}:=\mathcal{B}^+_{(\overline{C}+5)\sigma^{-\frac{1}{2}}_n}(z_n)\setminus
\mathcal{B}^+_{\overline{C}\sigma^{-\frac{1}{2}}_n}(z_n), \quad z_n:=(x_n, 0).
$$
\end{lem}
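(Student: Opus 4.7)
The plan is a simple pigeonhole argument based on the finiteness of $\Lambda_{\infty}$ and the orthogonality condition of Lemma \ref{cmvnvhhguf7fuufss}(2). I first reduce the claim to a condition on the rescaled distances of the concentration centers from $x_n$, and then choose $\overline{C}$ to lie in a gap of the (finite) set of limit values.

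Identify each $x_{j,i,n}\in\R^{N}$ with the boundary point $(x_{j,i,n},0)\in\partial\R^{N+1}_{+}$, so that $|(x_{j,i,n},0)-z_{n}|=|x_{j,i,n}-x_{n}|$. For the index $i=i_{\infty}$ defined by \eqref{mmmzcczdr4s4s4sss}--\eqref{mcnvbbbvuvjyyfffff}, we have $(x_{j,i_{\infty},n},0)=z_{n}$, so this point lies in $\mathcal{B}_{\overline{C}\sigma_{n}^{-1/2}}^{+}(z_{n})$ for any $\overline{C}>0$ and is never in the annulus $\mathcal{A}_{n}^{1}$. Hence it suffices to find $\overline{C}>0$ such that, along a subsequence,
\[
d_{i,n}:=\sigma_{n}^{1/2}\,|x_{j,i,n}-x_{n}|\notin[\overline{C},\overline{C}+5]\qquad\text{for every }i\in\Lambda_{\infty}\setminus\{i_{\infty}\}.
\]

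Since $\Lambda_{\infty}$ is finite, a diagonal extraction allows me to assume that for each such $i$ the sequence $d_{i,n}$ converges to some $L_{i}\in[0,+\infty]$ in the extended real line. Here Lemma \ref{cmvnvhhguf7fuufss}(2) combined with the minimality $\sigma_{j,i,n}\geq\sigma_{n}$ (so that $\sigma_{n}/\sigma_{j,i,n}\leq 1$) plays an auxiliary role: if the ratio $\sigma_{j,i,n}/\sigma_{n}$ stays bounded, then $\sigma_{j,i,n}\sigma_{n}|x_{j,i,n}-x_{n}|^{2}\to+\infty$ forces $\sigma_{n}|x_{j,i,n}-x_{n}|\to+\infty$, and hence $d_{i,n}\to+\infty$; if $\sigma_{j,i,n}/\sigma_{n}\to+\infty$, the value $L_{i}$ is a priori arbitrary in $[0,+\infty]$. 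In every case the set
\[
T:=\bigl\{L_{i}\ \big|\ i\in\Lambda_{\infty}\setminus\{i_{\infty}\},\ L_{i}<+\infty\bigr\}
\]
is a finite subset of $[0,+\infty)$.

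Finally, I would pick $\overline{C}>\max T+1$ (with $\max\emptyset:=0$), so that $\mathrm{dist}(t,T)>1$ for every $t\in[\overline{C},\overline{C}+5]$. Then, for $n$ large along the extracted subsequence, the indices with $L_{i}<+\infty$ satisfy $|d_{i,n}-L_{i}|<1/2$, hence $d_{i,n}<\overline{C}-\frac{1}{2}$; the indices with $L_{i}=+\infty$ satisfy $d_{i,n}>\overline{C}+5$. In either case $d_{i,n}\notin[\overline{C},\overline{C}+5]$, i.e.\ $(x_{j,i,n},0)\notin\mathcal{A}_{n}^{1}$, proving the claim. The only step requiring any care is the choice of $\overline{C}$ that simultaneously avoids all finitely many limit values, so no serious obstacle is expected; the role of this geometric separation will become essential later when carrying out cut-off estimates and Pohozaev identities on the annular region $\mathcal{A}_{n}^{1}$.
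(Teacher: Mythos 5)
Your main argument is correct and is, in substance, the same elementary finiteness/pigeonhole argument that lies behind the paper's (omitted) proof: the paper simply refers to \cite[Lemma 4.8]{CLW}, where one uses that $\Lambda_\infty$ has boundedly many elements to find, among finitely many disjoint annuli of width comparable to $\sigma_n^{-1/2}$ around $z_n$, one that is free of the points $(x_{j,i,n},0)$ along a subsequence. Your variant — extracting a subsequence along which each normalized distance $d_{i,n}=\sigma_n^{1/2}|x_{j,i,n}-x_n|$ converges in the compact set $[0,+\infty]$, and then choosing $\overline{C}$ strictly beyond the finitely many finite limits — reaches the same conclusion and uses nothing beyond the finiteness of $\Lambda_\infty$ guaranteed by Lemma \ref{cmvnvhhguf7fuufss}; the treatment of $i_\infty$ and the identification of $x_{j,i,n}$ with $(x_{j,i,n},0)$ are also handled correctly.

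One aside in your write-up is wrong, although harmless. From boundedness of $\sigma_{j,i,n}/\sigma_n$ and $\sigma_{j,i,n}\sigma_n|x_{j,i,n}-x_n|^2\to+\infty$ you may deduce $\sigma_n|x_{j,i,n}-x_n|\to+\infty$, but this does \emph{not} force $d_{i,n}=\sigma_n^{1/2}|x_{j,i,n}-x_n|\to+\infty$: indeed $d_{i,n}=\sigma_n^{-1/2}\cdot\sigma_n|x_{j,i,n}-x_n|$ with $\sigma_n^{-1/2}\to0$, so for instance $|x_{j,i,n}-x_n|=\sigma_n^{-3/4}$ gives $\sigma_n|x_{j,i,n}-x_n|\to+\infty$ while $d_{i,n}\to0$. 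Since your actual argument never uses this claim (you allow $L_i$ to be arbitrary in $[0,+\infty]$ and only need the set of finite limits to be finite), the proof stands; just delete or correct that remark about Lemma \ref{cmvnvhhguf7fuufss}(2).
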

\begin{proof}
The proof of this lemma can be completed by using the similar way as the proof of \cite[Lemma 4.8]{CLW}, then we omit its proof here.
\end{proof}

\begin{lem} \label{esta2}
Let $\{\eps_n\} \subset \R^+$ with $\eps_n \to 0$ as $ n \to
\infty$ and $w_{j,\eps_n} \in X^{1, s}(\R^{N+1}_+)$ be the solution to \eqref{equ22} with $\eps=\eps_n$
obtained in Theorem \ref{cvnbmiif9f8ufjhfy}. Then there exists a constant $C>0$ independent of $n$ such that
$$
|w_{j, \eps_n}(z)| \leq C, \quad z \in \mathcal{A}^2_n,
$$
where
$$
\mathcal{A}^2_{n}:=\mathcal{B}^+_{(\overline{C}+9/2)\sigma^{-\frac{1}{2}}_n}(z_n)\setminus
\mathcal{B}^+_{(\overline{C}+1/2)\sigma^{-\frac{1}{2}}_n}(z_n).
$$
\end{lem}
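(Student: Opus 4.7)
The plan is to rescale around the dominant blow-up point $z_n$ at scale $\sigma_n^{-1/2}$, use the profile decomposition of Lemma \ref{cmvnvhhguf7fuufss} together with Lemma \ref{ncbvjgfu88g8g8f7} to show that the rescaled boundary trace has vanishing $L^{2^*_s}$ mass on a fixed annulus, and then invoke the scale-invariant local boundedness estimate established in the Appendix to obtain $L^\infty$ control on a slightly smaller interior annulus. Concretely, I would set $v_n(\xi):=w_{j,\eps_n}(\sigma_n^{-1/2}\xi + z_n)$ for $\xi\in\overline{\R^{N+1}_+}$; this preserves the bulk equation $-\mbox{div}(y^{1-2s}\nabla v_n)=0$ and converts $\mathcal{A}^1_n,\mathcal{A}^2_n$ into fixed annuli $\mathcal{A}^1,\mathcal{A}^2$ with a buffer of width $1/2$ between them. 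The rescaled boundary condition takes the form $-k_s\partial_\nu v_n = A_n(x) v_n$ on $\mathcal{A}^1\cap(\R^N\times\{0\})$ with $|A_n(x)|\leq \sigma_n^{-s}\bigl(b+|v_n(x,0)|^{p-2}+|v_n(x,0)|^{2^*_s-2}\bigr)$ by the definition of $f_\eps$ and Lemma \ref{mprop}.

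The heart of the argument is the pointwise-to-integral step: I would prove that $\|v_n(\cdot,0)\|_{L^{2^*_s}(\mathcal{A}^1\cap\R^N)}\to 0$ by inserting the profile decomposition and treating each type of term separately. The remainder $r_{j,n}(\cdot,0)$ already vanishes in $L^{2^*_s}(\R^N)$. Each fixed-scale profile $i\in\Lambda_1$ contributes a function in $L^{2^*_s}$ integrated over a region of measure $O(\sigma_n^{-N/2})\to 0$, so the contribution vanishes by absolute continuity. For the dominant bubble $i=i_\infty$ the change of variable $y=\sigma_n(x-x_n)$ converts its mass on $\mathcal{A}^1_n$ into $\int_A |U_{j,i_\infty}|^{2^*_s}\,dy$ over an annulus with inner radius $\overline{C}\sigma_n^{1/2}\to\infty$, hence vanishes. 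For any other $i\in\Lambda_\infty\setminus\{i_\infty\}$, Lemma \ref{ncbvjgfu88g8g8f7} gives $x_{j,i,n}\notin\mathcal{A}^1_n$, so the rescaled support of the bubble sits at distance at least $\tfrac12\sigma_{j,i,n}/\sigma_n^{1/2}\geq\tfrac12\sigma_n^{1/2}\to\infty$ from the origin, which again kills the integral by the $L^{2^*_s}$-integrability of $U_{j,i}$.

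Combining this smallness with $p<2^*_s$, H\"older's inequality, and the uniform bound from Lemma \ref{bounded} yields $\|A_n\|_{L^{N/2s}(\mathcal{A}^1\cap\R^N)}=o(1)$, while the weighted energy $\int_{\mathcal{A}^1}y^{1-2s}|v_n|^2\,dxdy$ is uniformly bounded by Lemma \ref{bounded} and the scale invariance of $\int y^{1-2s}|\nabla v|^2$. These are precisely the hypotheses of the local boundedness estimate in the Appendix applied to the pair $\mathcal{A}^2\subset\mathcal{A}^1$, so it yields $\|v_n\|_{L^\infty(\mathcal{A}^2)}\leq C$; translating back produces the desired bound $|w_{j,\eps_n}(z)|\leq C$ for $z\in\mathcal{A}^2_n$.

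The main obstacle is ensuring, uniformly in $n$, that the critical coefficient $A_n$ is small enough in the scale-critical norm $L^{N/2s}$ to be absorbed in the Moser-type iteration of the Appendix. This smallness is available only because $\sigma_n$ is the minimal concentration scale in \eqref{mmmzcczdr4s4s4sss} and because Lemma \ref{ncbvjgfu88g8g8f7} forbids bubble centers from living in $\mathcal{A}^1_n$; these two facts together prevent any concentration at the intermediate scale $\sigma_n^{-1/2}$, which is the structural feature that separates $\mathcal{A}^2_n$ from the blow-up set and enables the uniform pointwise estimate.
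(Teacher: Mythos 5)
There is a genuine gap at the point where you invoke the Appendix estimate (Lemma \ref{cmvnbghyyt755534e}): its conclusion bounds $\|v\|_{L^\infty}$ by the \emph{weighted bulk} norm $\|v\|_{L^2(y^{1-2s},Q_1)}$ (plus $\|b\|_\infty$), and your justification that this quantity is uniformly bounded for $v_n(\xi)=w_{j,\eps_n}(\sigma_n^{-1/2}\xi+z_n)$ — ``by Lemma \ref{bounded} and the scale invariance of $\int y^{1-2s}|\nabla v|^2$'' — is false. Under the unnormalized rescaling neither quantity is invariant: one has $\int_{\mathcal{A}^1} y^{1-2s}|v_n|^2\,dxdy=\sigma_n^{(N+2-2s)/2}\int_{\mathcal{A}^1_n}y^{1-2s}|w_{j,\eps_n}|^2\,dxdy$ and $\int_{\mathcal{A}^1} y^{1-2s}|\nabla v_n|^2\,dxdy=\sigma_n^{(N-2s)/2}\int_{\mathcal{A}^1_n}y^{1-2s}|\nabla w_{j,\eps_n}|^2\,dxdy$, so you need $\int_{\mathcal{A}^1_n}y^{1-2s}|w_{j,\eps_n}|^2\lesssim\sigma_n^{-(N+2-2s)/2}$ — which is exactly Lemma \ref{esta3}, proved in the paper \emph{as a consequence of} Lemma \ref{esta2}; your route is therefore circular unless you can supply this bound independently. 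The profile decomposition does not supply it: the fixed profiles and the far-away bubbles are fine (their pointwise decay from Lemmas \ref{7cncbvggftd6ettd66d} and \ref{3cncbvggftd6ettd66d} gives the right order), but the remainder $r_{j,n}$ is only known to vanish in the trace norm $L^{2^*_s}(\R^N)$, and combining the weighted Sobolev inequality with the small $y^{1-2s}$-measure of the $\sigma_n^{-1/2}$-thin region only yields $\int y^{1-2s}|r_{j,n}|^2\lesssim\sigma_n^{-1}$, which after multiplication by $\sigma_n^{(N+2-2s)/2}$ blows up like $\sigma_n^{(N-2s)/2}$. (The normalized, energy-invariant rescaling does not help either: it would turn the $L^\infty$ conclusion into $|w_{j,\eps_n}|\lesssim\sigma_n^{(N-2s)/4}$ on $\mathcal{A}^2_n$, not a uniform bound.) Two smaller points: your vanishing claim for the $L^{2^*_s}$ mass of the other bubbles on $\mathcal{A}^1_n$ is misargued — Lemma \ref{ncbvjgfu88g8g8f7} only excludes centers from $\mathcal{A}^1_n$, so a center may sit arbitrarily close to its boundary; you must work on cylinders around points of $\mathcal{A}^2_n$ with the $\tfrac12\sigma_n^{-1/2}$ buffer, where the distance-to-center bound $\ge\tfrac12\sigma_{j,i,n}^{1/2}$ is valid — and Lemma \ref{cmvnbghyyt755534e} is stated for nonnegative solutions, so sign-changing $v_n$ needs an extra reduction.

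By contrast, the paper's proof avoids any weighted $L^2$ input precisely because that input is not yet available: it controls the remainder \emph{pointwise} by comparison functions. Writing the equation for $r_{j,n}$, it constructs barriers $r^{(1)}_{j,n}$ (handled by Lemma \ref{cmvnbghyyt755534e} with small critical coefficient), and $r^{(2)}_{j,n},r^{(3)}_{j,n}$ built from auxiliary problems for each bubble, whose decay $\lesssim(1+|\xi|)^{-(N-2s)}$ is extracted via Kelvin transforms and the maximum principle; this yields the global bound $|w_{j,\eps_n}(\xi)|\le C+C\sum_{i\in\Lambda_\infty}\sigma_{j,i,n}^{(N-2s)/2}\left(1+\sigma_{j,i,n}|\xi-(x_{j,i,n},0)|\right)^{-(N-2s)}$, and only then is Lemma \ref{ncbvjgfu88g8g8f7} used to see that each bubble term is $O(1)$ on $\mathcal{A}^2_n$. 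If you want to salvage your blow-down strategy, you would first have to prove a substitute for the pointwise (or weighted $L^2$) control of $r_{j,n}$ on the intermediate annulus, which is the actual content of the paper's argument.
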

\begin{proof}
For simplicity, we shall write $w_{j, n}=w_{j, \eps_n}$ and $u_{j,
n}=w_{j, \eps_n}(\cdot,0)$, respectively. Notice that $w_{j,n}$
satisfies the equation
\begin{align*}
\left\{
\begin{aligned}
-\mbox{div}(y^{1-2s} \nabla w_{j, n})&=0 \hspace{5cm} \qquad \,\, \mbox{in} \,\, \R^{N+1}_+,\\
-k_s \frac{\partial w_{j, n}}{\partial {\nu}}&=-V_{\eps_n}(x) w_{j, n} + f_{\eps_n}(x, |w_{j, n}|)  w_{j, n} \quad \qquad \mbox{on} \,\, \R^N \times \{0\}.
\end{aligned}
\right.
\end{align*}
By using \eqref{pd}, Lemmas \ref{2xncbdggdtdtttd} and \ref{xncbdggdtdtttd}, we then see that $r_{j, n}$ satisfies the equation
\begin{align}\label{equrn}
\left\{
\begin{aligned}
-\mbox{div}(y^{1-2s} \nabla r_{j, n})&=0 \hspace{6cm} \,  \, \mbox{in} \,\, \R^{N+1}_+,\\
-k_s \frac{\partial r_{j, n}}{\partial {\nu}}&=-V_{\eps_n}(x) r_{j, n} + f_{\eps_n}(x, |w_{j, n}|)  w_{j, n} \\
& \quad + I_n^1(x)  +I_n^{\infty}(x)  \quad \, \quad \hspace{3cm}
\mbox{on} \,\, \R^N \times \{0\},
\end{aligned}
\right.
\end{align}
where
\begin{align*}
\begin{split}
I^1_n(x)&:=\sum_{i\in\Lambda_1}\left(-f(x_{j,i},|U_{j,i}(x-x_{j,i,n}, 0)|)U_{j,i}(x-x_{j,i,n}, 0) +V(x_{j,i})U_{j,i}(x-x_{j,i,n}, 0)\right.\\
&\quad \qquad \,\, \, \left.-V_{\eps_n}(x)U_{j,i}(x-x_{j,i,n}, 0)\right)
\end{split}
\end{align*}
and
\begin{align*}
\begin{split}
I^\infty_n(x)&:=\sum_{i\in\Lambda_\infty}\left(-\sigma_{j, i,
n}^{\frac{N+2s}{2}}
(1-\chi(x_{j,i}))\psi_{j,i}(U_{j,i}(\sigma_{j,i,n}(x-x_{j,i,n},
0)))
U_{j,i}(\sigma_{j,i,n}(x-x_{j,i,n}, 0)) \right. \\
&\qquad \qquad \left.-\sigma_{j, i, n}^{\frac{N-2s}{2}}
V_{\eps_n}(x) U_{j,i}(\sigma_{j,i,n}(x-x_{j,i,n}, 0))\right).
\end{split}
\end{align*}
By the definition of $f_{\eps}$, we have that $ |f_{\eps}(x, |t|)t| \leq \frac a 2 |t| + C |t|^{2^*_s-1}$ for any $x \in \R^N$ and $t \in \R$. Applying \eqref{pd} again, we then get that
\begin{align*}
|f_{\eps_n}(x, |w_{j, n}|) w_{j, n}| &\leq  \frac a 2 |w_{j, n}| + C |w_{j, n}|^{2^*_s-1} \\
&\leq \frac a 2 |r_{j, n}| + C |r_{j, n}|^{2^*_s-1} + C'\sum_{i \in \Lambda_1} \left(| U_{j,i}(\cdot-x_{j,i,n}, \cdot)| +  |U_{j,i}(\cdot-x_{j,i,n}, \cdot)|^{2^*_s-1}\right) \\
& \quad + C'\sum_{i \in \Lambda_{\infty}}  \left(\sigma_{j, i, n}^{\frac{N-2s}{2}} \left|U_{j,i}(\sigma_{j,i,n}(\cdot-x_{j,i,n}), \cdot)\right|\right. \\
&  \left. \hspace{2cm} + \sigma_{j, i, n}^{\frac{N+2s}{2}} \left|U_{j,i}(\sigma_{j,i,n}(\cdot-x_{j,i,n}), \cdot)\right|^{2^*_s-1} \right).
\end{align*}
By the definition of $f$, we have that $|f(x,
|t|)t| \leq C \left(|t| + |t|^{2^*_s -1}\right)$ for any $x \in
\R^N$ and $t \in \R$. Therefore, 
\begin{align*}
|I^1_n(x)| \leq C \sum_{i\in\Lambda_1} \left( |U_{j,i}(x-x_{j,i,n}, 0)| +|U_{j,i}(x-x_{j,i,n}, 0)|^{2^*_s -1}\right).
\end{align*}
Furthermore, by the definition of $\psi_{j,i}$, see
\eqref{cmvnbhhgyt664rr},  we have that $|\psi_{j,i}( |t|)| \leq
 |t|^{2^*_s -2}$ for any  $t \in
\R$. Therefore, 
\begin{align*}
|I^\infty_n(x)| \leq C \sum_{i\in\Lambda_\infty} \left(\sigma_{j,
i, n}^{\frac{N-2s}{2}} |U_{j,i}(\sigma_{j,i,n}(x-x_{j,i,n}, 0))|
+\sigma_{j, i,
n}^{\frac{N+2s}{2}}|U_{j,i}(\sigma_{j,i,n}(x-x_{j,i,n},
0))|^{2^*_s -1}\right).
\end{align*}

Since $r_{j,n} \in X^{1,s}(\R^{N+1}_+)$ satisfies \eqref{equrn}, from the s-harmonic extension arguments,
Lemma \ref{cnvnbghfyf7yfyftttd} and the assumption that $V(x) \geq a$ for any $x \in \R^N$, we then derive that
\begin{align} \label{equrjn0}
\left\{
\begin{aligned}
&-\mbox{div}(y^{1-2s} \nabla (|r_{j, n}|))\leq0 \hspace{7cm} \,  \, \mbox{in} \,\, \R^{N+1}_+,\\
&(-\Delta)^s |r_{j, n}(x ,0)| + \frac a 2 |r_{j, n}(x ,0)| - C |r_{j, n}(x ,0)|^{2^*_s -2} |r_{j ,n}(x ,0)|\\
&\leq  C' h_n(x) \quad  \hspace{9cm}  \mbox{on} \,\, \R^N \times \{0\}.
\end{aligned}
\right.
\end{align}
where
\begin{align*}
h_n(x):=&\sum_{i \in \Lambda_1} \left(| U_{j,i}(x-x_{j,i,n}, 0)| +  |U_{j,i}(x-x_{j,i,n}, 0)|^{2^*_s-1}\right) \\
& \quad  + \sum_{i \in \Lambda_{\infty}}\left(\sigma_{j, i,
n}^{\frac{N+2s}{2}} \left|U_{j,i}(\sigma_{j,i,n}(x-x_{j,i,n},
0))\right|^{2^*_s-1} + \sigma_{j, i, n}^{\frac{N-2s}{2}}
\left|U_{j,i}(\sigma_{j,i,n}(x-x_{j,i,n}, 0)) \right| \right).
\end{align*}
Next we shall prove that there exists $C>0$ independent of $n$ such that $|r_{j,n}(x, 0)| \leq C$ for any $x \in \R^N$.
Let $r_{j ,n}^{(1)} \in X^{1,s}(\R^{N+1}_+)$ be such that
\begin{align} \label{equrn1}
\left\{
\begin{aligned}
&-\mbox{div}(y^{1-2s} \nabla r^{(1)}_{j, n})=0 \hspace{6cm} \qquad \,  \, \mbox{in} \,\, \R^{N+1}_+,\\
&(-\Delta)^s r_{j, n}^{(1)}(x, 0) + \frac a 2 r_{j, n}^{(1)}(x, 0)
- C |r_{j, n}(x ,0)|^{2^*_s -2} r_{j, n}^{(1)}(x, 0)\\
&=C'\sum_{i \in \Lambda_1} \left(| U_{j,i}(x-x_{j,i,n}, 0)| +
|U_{j,i}(x-x_{j,i,n}, 0)|^{2^*_s-1}\right) \qquad \quad \quad \ \ \mbox{on} \,\,
\R^N \times \{0\},
\end{aligned}
\right.
\end{align}
It follows from Lemma \ref{cmvnvhhguf7fuufss} that $\||r_{j,
n}(\cdot, 0)|^{2^*_s -2}\|_{L^{N/2s}(\R^N)} = \|r_{j, n}(\cdot,
0)\|_{L^{2^*_s}(\R^N)} \to 0$ as $n \to \infty$. Using
\eqref{equrn1}, we then infer that $r_{j,n}^{(1)}(x, 0) \geq 0$
for any $x \in \R^N$. Since $U_{j, i}(\cdot, 0) \in
L^{\infty}(\R^N)$ for $i \in \Lambda_1$ (see Lemma
\ref{7cncbvggftd6ettd66d}) and $\||r_{j, n}(\cdot, 0)|^{2^*_s
-2}\|_{L^{N/2s}(\R^N)}   \to 0$ as $n \to \infty$, by applying
Lemma \ref{cmvnbghyyt755534e}, we then have that there exists $C>0$
independent of $n$ such that $r_{j,n}^{(1)}(\xi) \leq C$ for any
$\xi=(x,y) \in \overline{\R^{N+1}_+}$. Accordingly, we get that
\begin{align} \label{r1}
0 \leq r_{j,n}^{(1)}(\xi) \leq C, \quad \xi \in
\overline{\R^{N+1}_+}.
\end{align}
For $i \in \Lambda_{\infty}$, let $T_{j, i, n} \in X^s(\R^{N+1}_+)$ be such that
\begin{align} \label{equt}
\hspace{-1.5cm}\left\{
\begin{aligned}
&-\mbox{div}(y^{1-2s} \nabla T_{j,i, n})=0 \hspace{4cm}\hspace{2cm}\qquad \quad \, \,\, \,\, \, \mbox{in} \,\, \R^{N+1}_+,\\
&(-\Delta)^s T_{j, i, n}(x, 0)-C|\widehat{r}_{j,i,n}(x, 0)|^{2^*_s-2} T_{j,i, n}(x, 0)=C'|U_{j,i}(x, 0)|^{2^*_s-1} \,\quad \quad \mbox{on} \,\, \R^N
\times \{0\},
\end{aligned}
\right.
\end{align}
where
$$
\widehat{r}_{j,i,n}(\xi):=\sigma_{j, i, n}^{-\frac{N-2s}{2}}
r_{j,n}(\sigma_{j, i, n}^{-1} \xi + (x_{j,i,n}, 0)).
$$
According to Lemma \ref{3cncbvggftd6ettd66d}, we know that $U_{j,i}(\cdot, 0) \in L^{\infty}(\R^N)$ for $i \in \Lambda_{\infty}$. Moreover, there holds that $\|\widehat{r}_{j,i,n}(\cdot, 0)\|_{L^{2^*_s}(\R^N)}=\|r_{j,n}(\cdot, 0)\|_{L^{2^*_s}(\R^N)} \to 0$ as $n \to \infty$.
By arguing as before, we can deduce that there exists $C>0$ independent of $n$ such that
\begin{align} \label{t1}
0 \leq T_{j,i, n}(\xi) \leq C, \quad \xi \in
\overline{\R^{N+1}_+}.
\end{align}
We now define the Kelvin transform of $T_{j,i,n}$ by
$$
\widetilde{T}_{j,i,n}(\xi):=|\xi|^{-(N-2s)}T_{j,i,n}\left(\frac{\xi}{|\xi|^2}\right),
\quad \xi \in \overline{\R^{N+1}_+} \backslash \{0\}.
$$
Since $T_{j,i,n}$ satisfies \eqref{equt}, by \cite[Proposition A.1]{O}, then
\begin{align*}
\left\{
\begin{aligned}
&-\mbox{div}(y^{1-2s} \nabla \widetilde{T}_{j,i, n})=0 \hspace{7cm}\qquad \,\,\, \mbox{in} \,\, \R^{N+1}_+,\\
&(-\Delta)^s \widetilde{T}_{j, i, n}(x, 0)-C|\widetilde{\widehat{r}}_{j,i,n}(x ,0)|^{2^*_s-2} \widetilde{T}_{j,i, n}(x, 0)=C'|\widetilde{U}_{j,i}(x, 0)|^{2^*_s-1}\, \ \quad\quad \quad
\mbox{on} \,\, \R^N \times \{0\},
\end{aligned}
\right.
\end{align*}
where $\widetilde{U}_{j ,i} (\cdot, 0)$ denotes the Kelvin transform of ${U}_{j ,i} (\cdot, 0)$. From Lemma \ref{3cncbvggftd6ettd66d}, we know that
$\widetilde{U}_{j ,i} (\cdot, 0)\in L^{\infty}(\R^N)$ for $i \in
\Lambda_{\infty}$. In addition, from Lemma
\ref{cmvnvhhguf7fuufss}, we find that
\begin{align*}
\||\widetilde{\widehat{r}}_{j,i,n}(\cdot,
0)|^{2^*_s-2}\|_{L^{N/2s}(\R^N)}
=\|\widetilde{\widehat{r}}_{j,i,n}(\cdot,
0)\|_{L^{2^*_s}(\R^N)}=\|\widehat{r}_{j,i,n}(\cdot,
0)\|_{L^{2^*_s}(\R^N)} =o_n(1).
\end{align*}
 Thus we can conclude that there exists $C>0$
independent of $n$ such that
\begin{align} \label{t2}
0 \leq \widetilde{T}_{j,i, n}(\xi) \leq C, \quad \xi \in
\overline{\R^{N+1}_+}.
\end{align}
Combining \eqref{t1} and \eqref{t2}, we then have that there exists $C>0$ independent of $n$ such that
\begin{align} \label{t3}
0\leq {T}_{j,i, n}(\xi) \leq \frac{C}{1+|\xi|^{N-2s}}, \quad \xi
\in \overline{\R^{N+1}_+}.
\end{align}
Define
$$
r_{j ,n}^{(2)}(\xi):=\sum_{i \in \Lambda_{\infty}}\sigma_{j, i,
n}^{\frac{N-2s}{2}} T_{j,i,
n}(\sigma_{j,i,n}(\xi-(x_{j,i,n},0))), \quad \xi=(x,y)\in
\overline{\R^{N+1}_+}.
$$
It then follows from \eqref{equt} that
\begin{align} \label{equr2}
\left\{
\begin{aligned}
&-\mbox{div}(y^{1-2s} \nabla r^{(2)}_{j, n})=0 \hspace{5cm}\quad \quad\quad  \, \mbox{in} \,\, \R^{N+1}_+,\\
&(-\Delta)^s r_{j, n}^{(2)}(x, 0) + \frac a 2 r_{j, n}^{(2)}(x, 0) - C |r_{j, n}(x ,0)|^{2^*_s -2} r_{j, n}^{(2)}(x, 0)\\
&\geq C' \sum_{i \in \Lambda_{\infty}} \sigma_{j, i,
n}^{\frac{N+2s}{2}}
\left|U_{j,i}(\sigma_{j,i,n}(x-x_{j,i,n}),0)\right|^{2^*_s-1}
\quad \quad\quad \,\hspace{1cm} \ \ \mbox{on} \,\, \R^N \times \{0\}.
\end{aligned}
\right.
\end{align}
Let $\varphi: \R^N \to \R$ satisfy $\varphi \in C^{\infty}_0(\R^N, \R)$, $\varphi \geq 0$
and $\varphi \not\equiv 0$ in $\R^N$. For $i \in \Lambda_{\infty}$, let $W_{j, i, n} \in X^{s}(\R^{N+1}_+)$ be such that
\begin{align} \label{equw}
\left\{
\begin{aligned}
&-\mbox{div}(y^{1-2s} \nabla W_{j,i, n})=0 \hspace{5cm}\quad \quad \,\,\, \, \mbox{in} \,\, \R^{N+1}_+,\\
&(-\Delta)^s  {W}_{j, i, n}(x, 0)-C|\widehat{r}_{j,i,n}(x ,0)|^{2^*_s-2} {W}_{j,i, n}(x, 0)=\varphi(x)\quad \qquad \mbox{on} \,\, \R^N
\times \{0\}.
\end{aligned}
\right.
\end{align}
Since $\||r_{j, n}(\cdot, 0)|^{2^*_s -2}\|_{L^{N/2s}(\R^N)} \to 0$
as $n \to \infty$, then $W_{j,i,n}(x) \geq 0$ for any $x \in
\R^N$. It then yields from \eqref{equw} that $(-\Delta)^s W_{j, i,
n}(x, 0) \geq 0$. Since  $|x|^{-(N-2s)}$ is a fundamental solution
of $(-\Delta)^su=0$ in $\R^N\setminus\{0\}$ (see \cite[Section
2.2]{CS}), by the maximum principle (see \cite[Theorem
1]{CL}), we then infer that there exists $C>0$ independent of $n$ such that
\begin{align} \label{estw1}
W_{j,i,n}(x,0) \geq \frac{C}{1+|x|^{N-2s}}, \quad x \in \R^N.
\end{align}
On the other hand, using the Kelvin transform of $W_{j,i,n}$, we can deduce as
before that there exists $C>0$ independent of $n$ such that
\begin{align} \label{estw2}
W_{j,i,n}(\xi) \leq \frac{C}{1+|\xi|^{N-2s}}, \quad \xi \in
\overline{\R^{N+1}_+}.
\end{align}
Hence, by \eqref{estw1} and Lemma \ref{3cncbvggftd6ettd66d}, there
exists a constant $\rho>0$ such that for $i \in
\Lambda_{\infty}$,
\begin{align} \label{wuest}
\frac a 2 \rho W_{j,i,n}(x,0) \geq C' U_{j, i}(x,0), \quad x \in
\R^N.
\end{align}
Define
$$
r_{j ,n}^{(3)}(\xi):=\sum_{i \in \Lambda_{\infty}}\sigma_{j, i,
n}^{\frac{N-2s}{2}} \rho W_{j,i, n}(\sigma_{j,i,n}(\xi-(x_{j,i,n},
0))), \quad \xi=(x,y) \in \overline{\R^{N+1}_+}.
$$
It follows from \eqref{equw} and \eqref{wuest} that
\begin{align} \label{equr3}
\left\{
\begin{aligned}
&-\mbox{div}(y^{1-2s} \nabla r^{(3)}_{j, n})=0 \,\,\hspace{4cm}\qquad \quad\quad\quad \quad\quad  \, \mbox{in} \,\, \R^{N+1}_+,\\
&(-\Delta)^s r_{j, n}^{(3)}(x, 0) + \frac a 2 r_{j, n}^{(3)}(x, 0) - C |r_{j, n}(x ,0)|^{2^*_s -2} r_{j, n}^{(3)}(x, 0)\\
&\geq C' \sum_{i \in \Lambda_{\infty}} \sigma_{j, i,
n}^{\frac{N-2s}{2}} \left|U_{j,i}(\sigma_{j,i,n}(x-x_{j,i,n}),
0)\right| \hspace{2cm} \,\, \qquad \quad\quad \mbox{on} \,\, \R^N \times \{0\}.
\end{aligned}
\right.
\end{align}
Taking into account \eqref{equrjn0}, \eqref{equrn1},
\eqref{equr2}, \eqref{equr3} and the maximum principle, we then
have that
\begin{align*}
|r_{j, n}(\xi)|  \leq |r_{j,  n}^{(1)}(\xi)| +  |r_{j,
n}^{(2)}(\xi)| +  |r_{j,  n}^{(3)}(\xi)|, \quad \xi \in
\overline{\R^{N+1}_+}.
\end{align*}
Using \eqref{r1}, \eqref{t3} and \eqref{estw2}, we then get that
\begin{align*}
|r_{j, n}(\xi)|  \leq C + C \sum_{i \in
\Lambda_{\infty}}\sigma_{j, i, n}^{\frac{N-2s}{2}} \left(1 +
\sigma_{j,i,n}|\xi-(x_{j,i,n},0)|\right)^{-(N-2s)}, \quad \xi \in
\overline{\R^{N+1}_+}.
\end{align*}
By means of \eqref{pd}, Lemmas \ref{7cncbvggftd6ettd66d} and
\ref{3cncbvggftd6ettd66d}, we then obtain that
\begin{align} \label{estiu}
|w_{j, n}(\xi)|  \leq C + C \sum_{i \in
\Lambda_{\infty}}\sigma_{j, i, n}^{\frac{N-2s}{2}} \left(1 +
\sigma_{j,i,n}|\xi-(x_{j,i,n},0)|\right)^{-(N-2s)}, \quad \xi \in
\overline{\R^{N+1}_+}.
\end{align}
It follows from Lemma \ref{ncbvjgfu88g8g8f7} that
$$
\sigma_{j,i,n}|z-(x_{j,i,n},0)| \geq \frac 12 \sigma_{j,i,n}
\sigma_n^{-\frac 12} \geq \frac 12 \sigma_{j,i,n}^{\frac 12},
\quad z \in \mathcal{A}_n^2.
$$
This along with \eqref{estiu} gives the result of the lemma, and
the proof is completed.
\end{proof}

\begin{lem} \label{esta3}
Let $\{\eps_n\} \subset \R^+$ with $\eps_n \to 0$ as $ n \to
\infty$ and $w_{j,\eps_n} \in X^{1, s}(\R^{N+1}_+)$ be the solution to \eqref{equ22} with $\eps=\eps_n$
obtained in Theorem \ref{cvnbmiif9f8ufjhfy}. Then there exists a constant $C>0$ independent of $n$ such that
$$
\int_{\mathcal{A}_n^3} y^{1-2s} |w_{j,\eps_n}|^2\, dxdy \leq C \sigma_n^{-\frac{N+2-2s}{2}},
$$
where
$$
\mathcal{A}^3_{n}:=\mathcal{B}^+_{(\overline{C}+4)\sigma^{-\frac{1}{2}}_n}(z_n)\setminus
\mathcal{B}^+_{(\overline{C}+1)\sigma^{-\frac{1}{2}}_n}(z_n).
$$
\end{lem}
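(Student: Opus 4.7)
The plan is to combine the pointwise bound on $w_{j,\eps_n}$ supplied by Lemma \ref{esta2} with a straightforward scaling computation of the weighted volume of the annulus $\mathcal{A}_n^3$. The estimate will drop out essentially for free once the geometric inclusion $\mathcal{A}_n^3 \subset \mathcal{A}_n^2$ is checked.

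First, I would verify $\mathcal{A}_n^3 \subset \mathcal{A}_n^2$ by comparing the radii: the inner radius $(\overline{C}+1)\sigma_n^{-1/2}$ of $\mathcal{A}_n^3$ exceeds the inner radius $(\overline{C}+1/2)\sigma_n^{-1/2}$ of $\mathcal{A}_n^2$, while the outer radius $(\overline{C}+4)\sigma_n^{-1/2}$ of $\mathcal{A}_n^3$ is smaller than the outer radius $(\overline{C}+9/2)\sigma_n^{-1/2}$ of $\mathcal{A}_n^2$. Lemma \ref{esta2} will then supply a constant $C>0$, independent of $n$, with $|w_{j,\eps_n}(z)| \leq C$ for every $z \in \mathcal{A}_n^3$.

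Next, I would estimate the weighted volume of $\mathcal{A}_n^3$ by enclosing it in the full half-ball $\mathcal{B}^+_{(\overline{C}+4)\sigma_n^{-1/2}}(z_n)$ and performing the rescaling $(x,y) = z_n + \sigma_n^{-1/2}(\tilde x,\tilde y)$. The image domain becomes the fixed half-ball $\mathcal{B}^+_{\overline{C}+4}(0) \subset \R^{N+1}_+$, the Jacobian produces a factor $\sigma_n^{-(N+1)/2}$, and the weight transforms as $y^{1-2s} = \sigma_n^{-(1-2s)/2}\tilde y^{1-2s}$. Since $(1-2s)/2 + (N+1)/2 = (N+2-2s)/2$, this gives
$$
\int_{\mathcal{A}_n^3} y^{1-2s}\,dxdy \leq \sigma_n^{-\frac{N+2-2s}{2}}\int_{\mathcal{B}^+_{\overline{C}+4}(0)}\tilde y^{1-2s}\,d\tilde xd\tilde y,
$$
and the remaining integral is a finite constant depending only on $N$, $s$, and $\overline{C}$.

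Combining the two ingredients yields
$$
\int_{\mathcal{A}_n^3} y^{1-2s}|w_{j,\eps_n}|^2\,dxdy \leq C^2\int_{\mathcal{A}_n^3} y^{1-2s}\,dxdy \leq C'\sigma_n^{-\frac{N+2-2s}{2}},
$$
which is the claim. I do not anticipate any genuine obstacle here: both the uniform pointwise bound and the weighted-volume computation are already essentially at hand, and the only routine check is the geometric inclusion $\mathcal{A}_n^3 \subset \mathcal{A}_n^2$ that authorizes the application of Lemma \ref{esta2}.
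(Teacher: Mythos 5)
Your proof is correct and follows essentially the same route as the paper: apply the uniform pointwise bound from Lemma \ref{esta2} (valid on $\mathcal{A}_n^3$ since $\mathcal{A}_n^3\subset\mathcal{A}_n^2$) and then bound the weighted measure of the annulus by that of the half-ball of radius $(\overline{C}+4)\sigma_n^{-1/2}$, which scales like $\sigma_n^{-(N+2-2s)/2}$. The only cosmetic difference is that you obtain the scaling via a change of variables while the paper integrates $r^{N+1-2s}$ directly in polar coordinates.
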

\begin{proof}
By Lemma \ref{esta2}, we get that
\begin{align*}
\int_{\mathcal{A}_n^3} y^{1-2s} |w_{j,\eps_n}|^2\, dxdy & \leq
C\int_{\mathcal{A}_n^3} y^{1-2s} \, dxdy \leq
C\int_{\mathcal{B}^+_{(\overline{C}+4)\sigma^{-\frac{1}{2}}_n}(z_n)}
y^{1-2s} \, dxdy\nonumber\\
&= C\int_{\mathcal{B}^+_{(\overline{C}+4)\sigma^{-\frac{1}{2}}_n}(0)}
y^{1-2s} \,dxdy \nonumber\\
&=C\int^{(\overline{C}+4)\sigma^{-\frac{1}{2}}_n}_{0} r^{N+1-2s}\,dr=C\sigma_n^{-\frac{N+2-2s}{2}},
\end{align*}
where $r=\sqrt{|x|^2+y^2}$. Thus we have completed the proof.
\end{proof}

\begin{lem} \label{esta4}
Let $\{\eps_n\} \subset \R^+$ with $\eps_n \to 0$ as $ n \to \infty$ and $w_{j,\eps_n} \in X^{1, s}(\R^{N+1}_+)$ be the solution to \eqref{equ22} with $\eps=\eps_n$ obtained in Theorem \ref{cvnbmiif9f8ufjhfy}. Then there exists a constant $C>0$ independent of $n$ such that
$$
\int_{\mathcal{A}_n^4} y^{1-2s} |\nabla w_{j,\eps_n}|^2\, dxdy \leq C \sigma_n^{-\frac{N-2s}{2}},
$$
where
$$
\mathcal{A}^4_{n}:=\mathcal{B}^+_{(\overline{C}+3)\sigma^{-\frac{1}{2}}_n}(z_n)\setminus
\mathcal{B}^+_{(\overline{C}+2)\sigma^{-\frac{1}{2}}_n}(z_n).
$$
\end{lem}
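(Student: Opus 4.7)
The plan is to establish a Caccioppoli-type energy estimate for $w_{j,\eps_n}$ on the thin annulus $\mathcal{A}_n^4$, using the slightly larger ``safety zones'' $\mathcal{A}_n^3$ (where Lemma \ref{esta3} provides a weighted $L^2$ bound) and $\mathcal{A}_n^2$ (where Lemma \ref{esta2} provides a uniform $L^\infty$ bound). Since $\mathcal{A}_n^4 \subset \mathcal{A}_n^3 \subset \mathcal{A}_n^2$ with consecutive radii differing by $\sigma_n^{-1/2}$, there is just enough room to insert a radial cutoff with the correct gradient size.

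First I would choose a radial cutoff function $\eta \in C^{\infty}(\overline{\R^{N+1}_+})$, depending only on $|\xi-z_n|$, satisfying $0 \leq \eta \leq 1$, $\eta \equiv 1$ on $\mathcal{A}_n^4$, $\operatorname{supp}\eta$ contained in the annulus between radii $(\overline{C}+3/2)\sigma_n^{-1/2}$ and $(\overline{C}+7/2)\sigma_n^{-1/2}$ (thus safely inside $\mathcal{A}_n^3$), and $|\nabla\eta| \leq C\sigma_n^{1/2}$, which is possible since each transition layer has width of order $\sigma_n^{-1/2}$. Next I would test the identity $\Phi_{\eps_n}'(w_{j,\eps_n})\psi = 0$ against $\psi := \eta^2 w_{j,\eps_n} \in X^{1,s}(\R^{N+1}_+)$. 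Writing $w = w_{j,\eps_n}$ and expanding $\nabla(\eta^2 w) = \eta^2\nabla w + 2\eta w\nabla\eta$, then moving the nonnegative term $\int_{\R^N} V_{\eps_n} \eta^2 w^2 \, dx$ to the left and applying Young's inequality to the cross term $2\eta w\,\nabla w\cdot\nabla\eta$ to absorb half of $\int y^{1-2s}\eta^2|\nabla w|^2$, I would arrive at
\begin{align*}
\frac{k_s}{2}\int_{\R^{N+1}_+} y^{1-2s}\eta^2|\nabla w|^2\,dxdy \leq 2k_s\int_{\R^{N+1}_+} y^{1-2s}|\nabla\eta|^2\,w^2\,dxdy + \int_{\R^N} f_{\eps_n}(x,|w|)\,\eta^2 w^2\,dx.
\end{align*}

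The first term on the right is controlled by combining $|\nabla\eta|^2 \leq C\sigma_n$ and $\operatorname{supp}\eta \subset \mathcal{A}_n^3$ with Lemma \ref{esta3}, yielding
\begin{align*}
\int_{\R^{N+1}_+} y^{1-2s}|\nabla\eta|^2 w^2\,dxdy \leq C\sigma_n \int_{\mathcal{A}_n^3} y^{1-2s} w^2\,dxdy \leq C\sigma_n \cdot \sigma_n^{-(N+2-2s)/2} = C\sigma_n^{-(N-2s)/2}.
\end{align*}
For the boundary term I would use the elementary bound $|f_{\eps_n}(x,|t|)| \leq \mu |t|^{p-2} + |t|^{2^*_s-2} + a/4$ (which follows from Lemma \ref{mprop}(i) and the definition of $g_{\eps_n}$) together with the uniform bound $|w(x,0)| \leq C$ on $\mathcal{A}_n^2 \supset \mathcal{A}_n^3$ from Lemma \ref{esta2}, so that $|f_{\eps_n}(x,|w|)|\,w^2 \leq C$ pointwise on the projection $\operatorname{supp}\eta \cap \{y=0\}$, whose $\R^N$-measure is $O(\sigma_n^{-N/2})$. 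Hence the boundary contribution is $O(\sigma_n^{-N/2})$, which is dominated by $\sigma_n^{-(N-2s)/2}$ since $s>0$ and $\sigma_n \to \infty$.

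Combining the two estimates and using that $\eta \equiv 1$ on $\mathcal{A}_n^4$ yields the claimed bound $\int_{\mathcal{A}_n^4} y^{1-2s}|\nabla w_{j,\eps_n}|^2\,dxdy \leq C\sigma_n^{-(N-2s)/2}$. The main obstacle I anticipate is purely bookkeeping: one must verify that the radial cutoff indeed lives in the intersection of the regions where both Lemmas \ref{esta2} and \ref{esta3} apply, and that the powers of $\sigma_n$ coming from $|\nabla\eta|^2$, from the weighted $L^2$ estimate, and from the boundary measure all conspire correctly; once the geometry is pinned down, the rest is standard Moser-type manipulation.
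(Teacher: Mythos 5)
Your proposal is correct and follows essentially the same route as the paper: test the equation (equivalently $\Phi_{\eps_n}'(w_{j,\eps_n})(\eta^2 w_{j,\eps_n})=0$) with a cutoff equal to $1$ on $\mathcal{A}_n^4$, supported in $\mathcal{A}_n^3$, with $|\nabla\eta|\leq C\sigma_n^{1/2}$, absorb the cross term, bound the $|\nabla\eta|^2$-term by Lemma \ref{esta3} and the boundary nonlinearity by the $L^\infty$ bound of Lemma \ref{esta2} together with the $O(\sigma_n^{-N/2})$ measure of the projected annulus. The bookkeeping of radii and powers of $\sigma_n$ that you flag is exactly what the paper does, and your exponents match.
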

\begin{proof}
Let $\eta_{n} \in C_0^{\infty}(\R^{N+1}_+, [0, 1])$ be a cut-off
function satisfying $\eta_n(z)=1$ for $ z \in
{\mathcal{A}^4_n}$, $ \eta_n(z)=0$ for $z \not \in
\mathcal{A}^3_n$ and $|\nabla\eta_{n}(\xi)|\leq C\sigma^{1/2}_n$
for  $z \in \mathcal{A}^3_n$, where the constant $C>0$ is
independent of $n$. Note that $w_{j,\eps_n}$ satisfies the equation
\begin{align}\label{equ3000}
\left\{
\begin{aligned}
-\mbox{div}(y^{1-2s} \nabla w_{j, \eps_n})&=0 \hspace{5cm} \qquad \quad \,\, \mbox{in} \,\, \R^{N+1}_+,\\
-k_s \frac{\partial w_{j, \eps_n}}{\partial {\nu}}&
=-V_{\eps_n}(x) w_{j, \eps_n} + f_{\eps_n}(x, |w_{j, \eps_n}|)  w_{j, \eps_n}\quad \qquad \,\mbox{on} \,\, \R^N \times \{0\}.
\end{aligned}
\right.
\end{align}
Multiplying \eqref{equ3000} by $\eta_n^2w_{j,\eps_n} $ and
integrating on $\R^{N+1}_+$, we have that
\begin{align*}
\int_{\R^{N+1}_+} y^{1-2s} \nabla w_{j, \eps_n} \cdot \nabla (\eta_n^2w_{j, \eps_n} ) \, dxdy
&= \int_{\R^N} \left(-V_{\eps_n}(x) |u_{j, \eps_n}|^2  + f_{\eps_n}(x,|u_{j, \eps_n}|) |u_{j, \eps_n}|^2 \right) \eta_n^2(x, 0) \, dx \\
& \leq C \int_{\mathcal{A}_n^3} (|u_{j, \eps_n}|^2  + |u_{j,
\eps_n}|^{2_s^*}) \, dx,
\end{align*}
where we used the assumption that $a \leq V(x) \leq b$ for any $x \in \R^N$ and the fact that $|f_{\eps_n}(x, |t|) \leq C \left(1 + |t|^{2^*-2}\right)$ for any $x \in \R^N$ and $t \in \R$. Using Lemmas \ref{esta2} and \ref{esta3}, we then infer that
\begin{align*}
\int_{\mathcal{A}_n^4}y^{1-2s} |\nabla w_{j, \eps_n}|^2 \, dxdy & =\int_{\mathcal{A}_n^4}y^{1-2s} |\nabla w_{j, \eps_n}|^2 \eta_n^2\, dxdy\\
& \leq C \int_{\mathcal{A}_n^4} y^{1-2s} |w_{j, \eps_n}|^2 |\nabla \eta_n|^2\, dxdy \\
& \quad+ C\int_{\mathcal{A}_n^4} y^{1-2s} \nabla w_{j, \eps_n} \cdot \nabla (\eta_n^2w_{j, \eps_n} ) \, dxdy  \\
&\leq C \sigma_n \int_{\mathcal{A}_n^3} y^{1-2s} |w_{j,\eps_n}|^2\, dxdy + C \int_{\mathcal{A}_n^3} (|u_{j, \eps_n}|^2  + |u_{j, \eps_n}|^{2_s^*}) \, dx \\
& \leq C \sigma_n^{-\frac{N-2s}{2}}+ C \sigma_n^{-\frac{N}{2}}.
\end{align*}
This then gives rise to the result of the lemma, and the proof is completed.
\end{proof}

\begin{lem} \label{ph}
Let $\{\eps_n\} \subset \R^+$ with $\eps_n \to 0$ as $ n \to \infty$ and $w_{j,\eps_n} \in X^{1, s}(\R^{N+1}_+)$
be the solution to \eqref{equ22} with $\eps=\eps_n$ obtained in Theorem \ref{cvnbmiif9f8ufjhfy}.
Let $\mathcal{B}_n:=\mathcal{B}^+_{(\overline{C}+3)\sigma^{-{1}/{2}}_n}(z_n)$, $\phi\in C^\infty_0(\mathbb{R}, [0, 1])$
be such that $\phi(t)=1$ for $t\leq (\overline{C}+2)\sigma^{-{1}/{2}}_n$,
$\phi(t)=0$ for $t\geq (\overline{C}+3)\sigma^{-{1}/{2}}_n$, $\phi'(t)\leq 0$ and
$|\phi'(t)|\leq 2\sigma^{{1}/{2}}_n$ for $t \in \R$. Define
$\varphi(\xi):=\phi(|\xi-z_n|)$ for $\xi \in \R^{N+1}_+$.
Then the following identity holds,
\begin{align} \label{locph} 
\begin{split}
&\frac{1}{k_s} \int_{\R^N} \left(N F_{\eps_n}(x, |u_{j,
\eps_n}|)-\frac{N-2s}{2} f_{\eps_n}(x, |u_{j, \eps_n|}) |u_{j,
\eps_n}|^2\right) \varphi \, dx \\
& \quad -\frac {s} {k_s} \int_{\R^N}
V_{\eps_n}(x) |u_{j, \eps_n}|^2  \varphi(x, 0) \, dx \\
& \quad -\frac {1} {2k_s} \int_{\R^N}
\left(\left(x-x_n\right)\cdot \nabla V_{\eps_n}(x)\right)  |u_{j,
\eps_n}|^2  \varphi(x, 0) \, dx \\ 
& \quad +\frac{1}{k_s} \int_{\R^N}\left(\left(x-x_n\right)\cdot
\left(\nabla_x F_{\eps_n}\right)(x, |u_{j, \eps_n}|)\right)
\varphi(x, 0) \, dx \\ 
&=\frac 12 \int_{\R^{N+1}_+} y^{1-2s}  |\nabla w_{j,\eps_n}|^2 \left(\left(\xi-z_n\right)\cdot
\nabla \varphi \right) \, dxdy \\
& \quad-\int_{\R^{N+1}_+} y^{1-2s} \left(\nabla w_{j,\eps_n} \cdot \nabla \varphi \right) \left(\left(\xi-z_n\right)\cdot \nabla w_{j,\eps_n} \right)\, dxdy\\ 
& \quad +\frac {1}{2k_s} \int_{\R^N} V_{\eps_n}(x) |u_{j,\eps_n}|^2  \left(\left(x-x_n\right)\cdot \nabla  \varphi(x, 0) \right) \, dx\\ 
& \quad- \frac {1}{k_s}\int_{\R^N} F_{\eps_n}(x, |u_{j,\eps_n}|) \left(\left(x-x_n\right)\cdot \nabla \varphi(x, 0) \right)  \,dx \\
& \quad -\frac{N-2s}{2} \int_{\R^{N+1}_+} y^{1-2s} w_n \left(\nabla w_{j,\eps_n} \cdot \nabla \varphi \right) \, dxdy.
\end{split}
\end{align}
\end{lem}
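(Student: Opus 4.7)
The plan is a classical Pohozaev-type argument tailored to the degenerate extension: test the equation $-\mbox{div}(y^{1-2s}\nabla w_{j,\eps_n})=0$ against two carefully chosen functions, integrate by parts in $\R^{N+1}_+$, transfer the $y=0$ boundary contribution using the nonlinear Neumann condition, and then combine the resulting identities to eliminate the kinetic term $\int y^{1-2s}|\nabla w_{j,\eps_n}|^2\varphi\,dxdy$.

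First, I would take $\psi_1:=\bigl((\xi-z_n)\cdot\nabla w_{j,\eps_n}\bigr)\varphi$ and test it against the extension equation. The divergence theorem in $\R^{N+1}_+$ (no contribution at spatial infinity because $\varphi$ has compact support in $\mathcal{B}_n$) produces, at $y=0$, the boundary integral $\frac{1}{k_s}\int_{\R^N}(f_{\eps_n}(x,|u_{j,\eps_n}|)-V_{\eps_n}(x))u_{j,\eps_n}\bigl((x-x_n)\cdot\nabla_x u_{j,\eps_n}\bigr)\varphi(x,0)\,dx$ after using the boundary condition $\lim_{y\to 0^+}y^{1-2s}\partial_y w_{j,\eps_n}=k_s^{-1}(V_{\eps_n}u_{j,\eps_n}-f_{\eps_n}(\cdot,|u_{j,\eps_n}|)u_{j,\eps_n})$. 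In the bulk one uses the pointwise identity $\nabla w\cdot\nabla\bigl((\xi-z_n)\cdot\nabla w\bigr)=|\nabla w|^2+(\xi-z_n)\cdot\nabla(|\nabla w|^2/2)$ and integrates the second term by parts in all $N+1$ variables. The boundary piece coming from the $y$-direction vanishes because $0<s<1$ gives $y^{2-2s}|\nabla w|^2\to 0$ as $y\to 0^+$ (using $y^{1-2s}\partial_y w$ bounded near the boundary). This yields
\[
\int_{\R^{N+1}_+}y^{1-2s}\nabla w\cdot\nabla\psi_1\,dxdy=-\frac{N-2s}{2}\int y^{1-2s}|\nabla w|^2\varphi-\frac{1}{2}\int y^{1-2s}|\nabla w|^2\bigl((\xi-z_n)\cdot\nabla\varphi\bigr)+\int y^{1-2s}\bigl(\nabla w\cdot\nabla\varphi\bigr)\bigl((\xi-z_n)\cdot\nabla w\bigr).
\]
For the boundary integral on $\R^N$, the $V_{\eps_n}$ part is handled by writing $u\,(x-x_n)\cdot\nabla u=\frac12(x-x_n)\cdot\nabla(u^2)$ and integrating by parts, and the $f_{\eps_n}$ part via the chain rule $\partial_i[F_{\eps_n}(x,|u|)]=(\partial_{x_i}F_{\eps_n})(x,|u|)+f_{\eps_n}(x,|u|)u\,\partial_i u$ followed by integration by parts; this produces $-\frac{N}{k_s}\int F_{\eps_n}\varphi$, $-\frac{1}{k_s}\int F_{\eps_n}((x-x_n)\cdot\nabla\varphi)$, $-\frac{1}{k_s}\int((x-x_n)\cdot\nabla_x F_{\eps_n})\varphi$, and the analogous $V_{\eps_n}$ terms with coefficients $\frac{N}{2k_s}$, $\frac{1}{2k_s}$, $\frac{1}{2k_s}$.

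Second, I would test the equation against $\psi_2:=w_{j,\eps_n}\varphi$. The same divergence theorem / boundary-condition procedure gives the companion identity
\[
\int y^{1-2s}|\nabla w_{j,\eps_n}|^2\varphi\,dxdy=-\int y^{1-2s}w_{j,\eps_n}\bigl(\nabla w_{j,\eps_n}\cdot\nabla\varphi\bigr)\,dxdy-\frac{1}{k_s}\int V_{\eps_n}u_{j,\eps_n}^2\varphi\,dx+\frac{1}{k_s}\int f_{\eps_n}(x,|u_{j,\eps_n}|)u_{j,\eps_n}^2\varphi\,dx.
\]
Substituting $\frac{N-2s}{2}$ times this identity into the expression obtained from $\psi_1$ cancels the $\int y^{1-2s}|\nabla w|^2\varphi$ term and produces an additional $\frac{N-2s}{2}\int y^{1-2s}w\,(\nabla w\cdot\nabla\varphi)$ on the right and a $-\frac{N-2s}{2}\int f_{\eps_n}u^2\varphi$ on the left. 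Combining this with the already-present $\frac{N}{k_s}\int F_{\eps_n}\varphi$ gives the combined term $\frac{1}{k_s}\int\bigl(NF_{\eps_n}-\frac{N-2s}{2}f_{\eps_n}u^2\bigr)\varphi$, while the $V_{\eps_n}u^2\varphi$ coefficients combine via $\frac{N-2s}{2k_s}-\frac{N}{2k_s}=-\frac{s}{k_s}$, yielding exactly the left-hand side of \eqref{locph}. All remaining terms depend on $\nabla\varphi$ and together form the right-hand side of \eqref{locph}.

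The main technical point, and the only real obstacle, is rigorous justification of each integration by parts: this requires $w_{j,\eps_n}$ to be sufficiently smooth in $\overline{\R^{N+1}_+}\setminus(\R^N\times\{0\})$ with controlled behavior as $y\to 0^+$ so that the $y$-direction boundary term $y^{2-2s}\varphi|\nabla w|^2$ vanishes. Since $\varphi$ is compactly supported, no decay at infinity is needed, and the regularity of solutions established in the previous sections (in particular $u_{j,\eps_n}\in C^{1,\alpha}(\R^N)$ and the fact that $y^{1-2s}\partial_y w_{j,\eps_n}$ has a continuous trace at $y=0$ given by the Neumann datum) combined with $0<s<1$ makes every boundary and interior integral absolutely convergent. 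Once this is in hand, the identity \eqref{locph} follows by the algebraic combination described above.
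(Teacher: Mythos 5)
Your proposal is correct and follows essentially the same route as the paper: test the extension equation against $\left((\xi-z_n)\cdot\nabla w_{j,\eps_n}\right)\varphi$ and against $w_{j,\eps_n}\varphi$, integrate by parts transferring the $y=0$ contribution via the Neumann condition, and combine the two identities (with weight $\tfrac{N-2s}{2}$) to cancel the term $\int y^{1-2s}|\nabla w_{j,\eps_n}|^2\varphi\,dxdy$, the coefficients merging exactly as you state to give $-\tfrac{s}{k_s}\int V_{\eps_n}|u_{j,\eps_n}|^2\varphi\,dx$. Only a minor bookkeeping remark: after the substitution the term $\tfrac{N-2s}{2}\int y^{1-2s}w_{j,\eps_n}(\nabla w_{j,\eps_n}\cdot\nabla\varphi)\,dxdy$ arises on the left, so it appears with a minus sign on the right-hand side of \eqref{locph}, as in the stated identity.
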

\begin{proof}
For simplicity, we shall write $w_n=w_{j,\eps_n}$ and $u_n=u_{j,\eps_n}$, where $u_{j,\eps_n}=w_{j,\eps}(\cdot, 0)$. Note that $w_n$ satisfies the equation
\begin{align}\label{equph}
\left\{
\begin{aligned}
-\mbox{div}(y^{1-2s} \nabla w_n)&=0 \, \hspace{5cm}\mbox{in} \,\, \R^{N+1}_+,\\
-k_s \frac{\partial w_n}{\partial {\nu}}&=-V_{\eps_n}(x) w_n + f_{\eps_n}(x, |w_n|) w_n \, \, \, \qquad  \mbox{on} \,\, \R^N \times \{0\}.
\end{aligned}
\right.
\end{align}
Multiplying \eqref{equph} by $\left(\left(\xi-z_n\right)\cdot \nabla w_n\right) \varphi$ and integrating on $\R^{N+1}_+$, we get that
\begin{align} \label{idph1}
\int_{\R^{N+1}_+}-\mbox{div}(y^{1-2s} \nabla w_n) \left(\left(\xi-z_n\right)\cdot \nabla w_n\right) \varphi \, dxdy=0.
\end{align}
Using the divergence theorem, we then derive from \eqref{idph1} that
\begin{align} \label{div1}
\begin{split}
&\int_{\R^{N+1}_+}(y^{1-2s} \nabla w_n) \cdot \nabla \left(\left(\left(\xi-z_n\right)\cdot \nabla w_n \right)\varphi \right) \, dxdy \\
&=-\int_{\R^N} \lim_{ y \to 0^+} \left(y^{1-2s} \partial_y {w_n}\right)\left(\left(x-x_n\right)\cdot \nabla u_n\right) \varphi(x, 0) \, dx \\
&=\frac{1}{k_s} \int_{\R^N}\left(f_{\eps_n}(x, |u_n|) u_n-V_{\eps_n}(x) u_n\right)\left(\left(x-x_n\right)\cdot \nabla u_n\right) \varphi(x, 0) \, dx.
\end{split}
\end{align}
We are now going to compute every term in \eqref{div1}. By straightforward calculations, we see that
\begin{align} \label{gterm}
\begin{split}
&\int_{\R^{N+1}_+}(y^{1-2s} \nabla w_n) \cdot \nabla \left(\left(\left(\xi-z_n\right)\cdot \nabla w_n  \right)\varphi\right) \, dxdy \\
&=\int_{\R^{N+1}_+} y^{1-2s} |\nabla w_n|^2 \varphi \, dxdy \\
&\quad + \frac 12 \int_{\R^{N+1}_+} y^{1-2s} \left(\left(\xi-z_n\right)\cdot \nabla|\nabla w_n|^2\right) \varphi \, dxdy \\
&\quad +\int_{\R^{N+1}_+} y^{1-2s} \left(\nabla w_n \cdot  \nabla \varphi \right) \left(\left(\xi-z_n\right)\cdot \nabla w_n \right)\, dxdy \\
& =-\frac{N-2s}{2}\int_{\R^{N+1}_+} y^{1-2s} |\nabla w_n|^2 \varphi \, dxdy \\
& \quad -\frac 12 \int_{\R^{N+1}_+} y^{1-2s}  |\nabla w_n|^2 \left(\left(\xi-z_n\right)\cdot \nabla \varphi \right) \, dxdy\\
&\quad +\int_{\R^{N+1}_+} y^{1-2s} \left(\nabla w_n \cdot  \nabla \varphi \right) \left(\left(\xi-z_n\right)\cdot \nabla w_n \right)\, dxdy.
\end{split}
\end{align}
In addition, we have that
\begin{align}\label{vterm}
\begin{split}
&\int_{\R^N} V_{\eps_n}(x) u_n\left(\left(x-x_n\right)\cdot \nabla u_n\right) \varphi(x, 0) \, dx \\
&=\frac 1 2 \int_{\R^N} V_{\eps_n}(x) \left(\left(x-x_n\right)\cdot \nabla |u_n|^2\right) \varphi(x, 0) \, dx  \\
& = -\frac N 2 \int_{\R^N} V_{\eps_n}(x) |u_n|^2  \varphi(x, 0) \, dx \\
& \quad - \frac {1} {2} \int_{\R^N} \left(\left(x-x_n\right)\cdot \nabla V_{\eps_n}(x)\right)  |u_n|^2  \varphi(x, 0) \, dx \\
& \quad -\frac 12 \int_{\R^N} V_{\eps_n}(x) |u_n|^2  \left(\left(x-x_n\right)\cdot \nabla  \varphi(x, 0) \right) \, dx
\end{split}
\end{align}
and
\begin{align}\label{fterm}
\begin{split}
&\int_{\R^N}f_{\eps_n}(x, |u_n|) u_n\left(\left(x-x_n\right)\cdot \nabla u_n\right) \varphi(x, 0) \, dx \\
& = \int_{\R^N} \left(\left(x-x_n\right)\cdot \nabla \left( F_{\eps_n}(x, |u_n|)\right)\right) \varphi(x, 0) \, dx \\
& \quad -\int_{\R^N}\left(\left(x-x_n\right)\cdot \left(\nabla_x F_{\eps_n}\right)(x, |u_n|)\right) \varphi(x, 0) \, dx \\
&=-N \int_{\R^N} F_{\eps_n}(x, |u_n|) \varphi(x, 0) \, dx \\
& \quad - \int_{\R^N} F_{\eps_n}(x, |u_n|) \left(\left(x-x_n\right)\cdot \nabla \varphi(x, 0) \right)  \, dx \\
& \quad - \int_{\R^N}\left(\left(x-x_n\right)\cdot \left(\nabla_x F_{\eps_n}\right)(x, |u_n|)\right) \varphi(x, 0) \, dx.
\end{split}
\end{align}
Making use of \eqref{gterm}, \eqref{vterm} and \eqref{fterm}, we then obtain from \eqref{div1} that
\begin{align} \label{div2} 
\begin{split}
& -\frac{N-2s}{2}\int_{\R^{N+1}_+} y^{1-2s} |\nabla w_n|^2 \varphi \, dxdy -\frac {N} {2k_s} \int_{\R^N} V_{\eps_n}(x) |u_n|^2  \varphi(x, 0) \, dx \\ 
& \quad -\frac {1} {2k_s} \int_{\R^N} \left(\left(x-x_n\right)\cdot \nabla V_{\eps_n}(x)\right)  |u_n|^2  \varphi(x, 0) \, dx +\frac{N}{k_s}\int_{\R^N} F_{\eps_n}(x, |u_n|) \varphi(x, 0) \, dx \\ 
&\quad + \frac{1}{k_s} \int_{\R^N}\left(\left(x-x_n\right)\cdot \left(\nabla_x F_{\eps_n}\right)(x, |u_n|)\right) \varphi(x, 0) \, dx \\
&=\frac 12 \int_{\R^{N+1}_+} y^{1-2s}  |\nabla w_n|^2 \left(\left(\xi-z_n\right)\cdot \nabla \varphi \right) \, dxdy \\
& \quad -\int_{\R^{N+1}_+} y^{1-2s} \left(\nabla w_n \cdot  \nabla \varphi \right) \left(\left(\xi-z_n\right)\cdot \nabla w_n \right)\, dxdy\\
& \quad +\frac {1}{2k_s} \int_{\R^N} V_{\eps_n}(x) |u_n|^2  \left(\left(x-x_n\right)\cdot \nabla  \varphi(x, 0) \right) \, dx
\\ 
& \quad - \frac{1}{k_s} \int_{\R^N} F_{\eps_n}(x, |u_n|) \left(\left(x-x_n\right)\cdot \nabla \varphi(x, 0) \right)  \, dx.
\end{split}
\end{align}
On the other hand, by multiplying \eqref{equph} by $w_n \varphi$ and integrating on $\R^{N+1}_+$, we get that
\begin{align} \label{div3} 
\begin{split}
&\int_{\R^{N+1}_+} y^{1-2s} |\nabla w_n|^2 \varphi \, dxdy + \int_{\R^{N+1}_+} y^{1-2s} \left(\nabla w_n \cdot \nabla \varphi \right) w_n\, dxdy \\
&+  \frac{1}{k_s} \int_{\R^N} V_{\eps_n}(x) |u_n|^2 \varphi(x, 0) \, dx = \frac{1}{k_s} \int_{\R^N} f_{\eps_n}(x, |u_n|) |u_n|^2 \varphi(x, 0) \, dx.
\end{split}
\end{align}
Combining \eqref{div2} and \eqref{div3}, we then obtain \eqref{locph}. This completes the proof.
\end{proof}

\begin{lem} \label{lames}
If $\max \{2, (N+2s)/(N-2s)\}<p<2^*_s$, then
$\Lambda_{\infty}=\emptyset$.
\end{lem}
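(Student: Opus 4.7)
The plan is to argue by contradiction. Suppose $\Lambda_\infty \neq \emptyset$ and let $i_\infty$, $\sigma_n := \sigma_{j,i_\infty,n}$, $x_n := x_{j,i_\infty,n}$, $z_n = (x_n, 0)$ be as in \eqref{mmmzcczdr4s4s4sss}--\eqref{mcnvbbbvuvjyyfffff}. I would apply the local Pohozaev identity \eqref{locph} from Lemma \ref{ph} to $w_{j,\eps_n}$ with the cutoff $\varphi$ supported in $\mathcal{B}_n$ of radius $(\overline{C}+3)\sigma_n^{-1/2}$, and show the two sides of \eqref{locph} have incompatible asymptotic sizes. Concretely, the RHS will be bounded by $O(\sigma_n^{-(N-2s)/2})$ using the decay estimates of Lemmas \ref{esta2}--\ref{esta4}, while the LHS, driven by the rescaled bubble $U_{j,i_\infty}$, will admit a lower bound of order $\sigma_n^{p(N-2s)/2 - N}$, and the hypothesis $p > (N+2s)/(N-2s)$ forces the latter to dominate the former.

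For the RHS of \eqref{locph}, every summand is supported in $\mathcal{A}_n^4 \subset \mbox{supp}\,\nabla\varphi$ where $|\nabla\varphi| \leq C\sigma_n^{1/2}$ and $|\xi - z_n| \leq C\sigma_n^{-1/2}$. Lemma \ref{esta2} yields $|w_{j,\eps_n}| \leq C$ on the annulus, Lemma \ref{esta4} gives $\int_{\mathcal{A}_n^4} y^{1-2s} |\nabla w_{j,\eps_n}|^2 \leq C \sigma_n^{-(N-2s)/2}$, Lemma \ref{esta3} controls the weighted $L^2$-mass, and Cauchy--Schwarz on the mixed term $\int y^{1-2s} w_{j,\eps_n} \nabla w_{j,\eps_n}\cdot\nabla\varphi$ yields the same order. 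Combining these, each of the five RHS summands is $O(\sigma_n^{-(N-2s)/2})$.

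For the LHS, I extract the leading contribution at the concentration scale $\sigma_n$. Since $(V_2)$ forces $\mathcal{V} \subset \mbox{int}(\Lambda)$, we may shrink $\delta_0$ so that $\mathcal{V}^{\delta_0} \subset \mbox{int}(\Lambda)$; as $\eps_n \sigma_n^{-1/2} \to 0$, the cutoff $\chi(\eps_n \cdot)$ vanishes identically on $\mbox{supp}\,\varphi(\cdot, 0)$ for $n$ large. There $f_{\eps_n} = h_{\eps_n}$, $F_{\eps_n} = H_{\eps_n}$, and $\nabla_x F_{\eps_n} \equiv 0$, and Lemma \ref{mprop}(i) (via $m_{\eps_n}(t^2) \geq t^2 b_{\eps_n}(t^2)$) gives the pointwise lower bound
$$
NH_{\eps_n}(t) - \tfrac{N-2s}{2} h_{\eps_n}(t) t^2 \geq \mu\, \tfrac{(N-2s)(2^*_s - p)}{2p}\, t^p, \quad t \geq 0.
$$
Setting $v_n(\xi) := \sigma_n^{-(N-2s)/2} w_{j,\eps_n}(\sigma_n^{-1}\xi + z_n)$, one has $v_n \wto U_{j,i_\infty} \neq 0$ in $X^s(\R^{N+1}_+)$ and $v_n(\cdot,0) \to U_{j,i_\infty}(\cdot,0)$ in $L^p_{\mathrm{loc}}(\R^N)$; the decay $|U_{j,i_\infty}(x,0)| \leq C(1+|x|)^{-(N-2s)}$ from Lemma \ref{3cncbvggftd6ettd66d} together with $p > N/(N-2s)$ puts $U_{j,i_\infty}(\cdot,0) \in L^p(\R^N)$. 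A change of variables and Fatou on the growing rescaled ball then give $\int_{\R^N} |u_{j,\eps_n}|^p \varphi(x,0)\, dx \geq c_0 \sigma_n^{p(N-2s)/2 - N}$ for some $c_0 > 0$ and $n$ large. The remaining LHS terms $-s\int V|u|^2\varphi$ and $-\tfrac12\int(x-x_n)\cdot\nabla V\,|u|^2 \varphi$ are $O(\sigma_n^{-2s})$ and $O(\eps_n \sigma_n^{-2s - 1/2})$ respectively, which under $p>2$ are $o(\sigma_n^{p(N-2s)/2 - N})$; hence $\mathrm{LHS} \geq c_0' \sigma_n^{p(N-2s)/2 - N}$ for $n$ large with $c_0' > 0$. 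Since $p > (N+2s)/(N-2s)$ is equivalent to $p(N-2s)/2 - N > -(N-2s)/2$, one gets $\mathrm{RHS}/\mathrm{LHS} = O(\sigma_n^{N - (p+1)(N-2s)/2}) = o(1)$, contradicting $\mathrm{LHS} = \mathrm{RHS}$.

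The main technical obstacle I anticipate is isolating the $U_{j,i_\infty}$ contribution rigorously inside $\int |u_{j,\eps_n}|^p\varphi\, dx$: other $\Lambda_\infty$-bubbles satisfy $|x_{j,i,n} - x_n| \gg \sigma_n^{-1}$ by Lemma \ref{cmvnvhhguf7fuufss}(2) but may still have centers inside $B_{\sigma_n^{-1/2}}(x_n)$, while the $\Lambda_1$-bubbles contribute at most $O(\sigma_n^{-N/2}) = o(\sigma_n^{p(N-2s)/2 - N})$ on the shrinking set $\mathcal{B}_n$ since $U_{j,i}(\cdot,0)$ is bounded (Lemma \ref{7cncbvggftd6ettd66d}). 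A Brezis--Lieb-type splitting of $v_n(\cdot,0)$, driven by the orthogonality of Lemma \ref{cmvnvhhguf7fuufss}(2)--(3), should guarantee that cross terms do not cancel the positive $\mu$-contribution and thus preserve the lower bound $c_0 \sigma_n^{p(N-2s)/2 - N}$.
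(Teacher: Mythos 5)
Your proposal is correct and follows essentially the same route as the paper's proof: a contradiction argument based on the local Pohozaev identity of Lemma \ref{ph} on the ball of radius $\sim \sigma_n^{-1/2}$, with the right-hand side controlled by $O(\sigma_n^{-(N-2s)/2})$ via Lemmas \ref{esta2}--\ref{esta4}, the $|u|^p$-term bounded below by $c\,\sigma_n^{p(N-2s)/2-N}$ through rescaling to the bubble $U_{j,i_\infty}$, and the hypothesis $p>(N+2s)/(N-2s)$ producing the incompatibility of exponents. The only minor differences are that the paper does not arrange $\chi(\eps_n\cdot)\equiv 0$ on the support of $\varphi$ (it simply uses $1-\chi(x_{j,i_\infty})>0$ and estimates the $\chi$- and $\nabla_x F_{\eps_n}$-contributions crudely), and your anticipated obstacle is not an issue: the lower bound over $B_{L\sigma_n^{-1}}(x_n)$ follows directly from local strong $L^p$ convergence of the rescaled sequence to $U_{j,i_\infty}$, exactly as in the paper, with no Brezis--Lieb splitting required.
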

\begin{proof}
We argue by contradiction that $\Lambda_\infty\neq\emptyset$. Then we can choose $x_n$ and $\sigma_n$
satisfying \eqref{mmmzcczdr4s4s4sss} and \eqref{mcnvbbbvuvjyyfffff}, respectively. From Lemma \ref{ph},
we see that $\text{supp} \, \nabla \varphi \subset \mathcal{A}_n^4$ and $|\nabla \varphi(x)| \leq 2 \sigma_n^{1/2}$
for any $x \in \mathcal{A}_n^4$. In addition, by the definition of $F_{\eps_n}$,
we find that $|F_{\eps_n}(x, |t|)| \leq C\left(|t|^2+|t|^{2^*_s}\right)$
for any $x \in \R^N$ and $t \in \R$. Taking advantage of Lemmas \ref{esta2}, \ref{esta3}
and \ref{esta4}, we then get that the integrals of the right side hand of \eqref{locph} can be estimated as
\begin{align} \label{estiph1} 
\begin{split}
&\left|\frac 12 \int_{\R^{N+1}_+} y^{1-2s}  |\nabla w_{j,\eps_n}|^2 \left(\left(\xi-z_n\right)\cdot \nabla \varphi \right) \, dxdy \right.\\ 
& \quad -\int_{\R^{N+1}_+} y^{1-2s} \left(\nabla w_{j,\eps_n} \cdot  \nabla \varphi \right) \left(\left(\xi-z_n\right)\cdot \nabla w_{j,\eps_n} \right)\, dxdy \\ 
& \quad + \frac {1}{2k_s} \int_{\R^N} V_{\eps_n}(x) |u_{j,\eps_n}|^2  \left(\left(x-x_n\right)\cdot \nabla  \varphi(x, 0) \right) \, dx  \\
& \quad -\frac{1}{k_s}\int_{\R^N} F_{\eps_n}(x, |u_{j,\eps_n}|) \left(\left(x-x_n\right)\cdot \nabla \varphi(x, 0) \right)  \, dx\\
&\quad - \left. \frac{N-2s}{2} \int_{\R^{N+1}_+} y^{1-2s} w_{j,\epsilon_n} \left(\nabla w_{j,\eps_n} \cdot \nabla \varphi \right) \, dxdy\right| \\
& \leq C \int_{\mathcal{A}_n^4} y^{1-2s}  |\nabla w_{j,\eps_n}|^2  \, dxdy
+ C\int_{A_n^4}(|u_{j, \eps_n}|^2 + |u_{j ,\eps_n}|^{2^*_s})\, dx  \\
& \quad + C \sigma_n \int_{\mathcal{A}_n^4} y^{1-2s} |w_{j, \eps_n}|^2 \,dxdy \\
& \leq C \sigma_n^{-\frac{N-2s}{2}},
\end{split}
\end{align}
where $A_n^4:=\mathcal{A}_n^4 \cap \R^N$. We next estimate the integrals in the left side hand of \eqref{locph}.
From Lemma \ref{esta2} and the fact that $\text{supp} \, \varphi(\cdot, 0) \subset B_n:=\mathcal{B}_n \cap \R^N$, it follows that
\begin{align} \label{estiph2}
\left|\frac {s} {k_s} \int_{\R^N} V_{\eps_n}(x) |u_{j, \eps_n}|^2  \varphi(x, 0) \, dx\right|
\leq C \sigma_n^{-\frac N 2}
\end{align}
and
\begin{align} \label{estiph3}
\left|\frac {1} {2k_s} \int_{\R^N} \left(\left(x-x_n\right)\cdot \nabla V_{\eps_n}(x)\right)  |u_{j, \eps_n}|^2  \varphi(x, 0) \, dx\right|  \leq C \sigma_n^{-\frac N 2}.
\end{align}
Furthermore, from Lemma \ref{esta2}, the fact that $\text{supp} \, \varphi(\cdot, 0) \subset B_n$
and $|\left(\nabla_x F_{\eps_n}\right)(x, |t|)| \leq C\left(|t|^2+|t|^{2^*_s}\right)$
for any $x \in \R^N$ and $t \in \R$, we infer that
\begin{align} \label{estiph4}
\left|\frac{1}{k_s} \int_{\R^N}\left(\left(x-x_n\right)\cdot \left(\nabla_x F_{\eps_n}\right)(x, |u_{j, \eps_n}|)\right) \varphi(x, 0) \, dx \right|
\leq C \sigma_n^{-\frac N 2}.
\end{align}
Applying the definitions of $f_{\eps_n}$ and $F_{\eps_n}$, we have that
\begin{align}\label{mmvcnvii8g9fff}
\begin{split}
&\int_{\R^N}\left(2^*_s F_{\epsilon_n}(x, |u_{j,\eps_n}|) -
f_{\epsilon_n}(x, |u_{j,\eps_n}|)|u_{j,\eps_n}|^2 \right)
\varphi(x, 0) \, dx \\
&=\int_{\R^N}\left(1- \chi(\epsilon_n
x)\right) \left( \frac {2^*_s\mu}{p}|u_{j,\eps_n}|^p +
|u_{j,\epsilon_n}|^{p} \left(m_{\epsilon_n}(|u_{j,\eps_n}|^2)
\right)^{\frac{2^*_s-p}{2}}
\right)\varphi(x, 0) \, dx \\
&\quad-\int_{\R^N}\left(1- \chi(\epsilon_n
x)\right)\left(\mu|u_{j,\eps_n}|^{p} + \frac{p}{2^*_s}
|u_{j,\eps_n}|^{p} \left(m_{\epsilon_n}(|u_{j,\eps_n}|^2)
\right)^{\frac{2^*_s-p}{2}}\right) \varphi(x, 0) \, dx \\
&\quad-\frac{2^*_s-p}{2^*_s} \int_{\R^N}\left(1- \chi(\epsilon_n x)\right) |u_{j,\eps_n}|^{p+2} \left(m_{\epsilon_n}(|u_{j,\eps_n}|^2) \right)^{\frac{2^*_s-p}{2} -1} b_{\epsilon_n}(|u_{j,\eps_n}|^2) \varphi(x, 0) \, dx \\ 
&\quad+\int_{\R^N} \chi(\epsilon_n x) \left(2^*_s G_{\eps_n}(|u_{j,\eps_n}|)-g_{\eps_n}( |u_{j,\eps_n}|)|u_{j,\eps_n}|^2\right) \varphi(x, 0) \, dx.
\end{split}
\end{align}
In view of the definition of $g_{\eps_n}$, Lemma \ref{esta2} and the fact that $\text{supp} \, \varphi(\cdot, 0) \subset B_n$, we get that
\begin{align} \label{gestimate}
\left|\int_{\R^N}\chi(\epsilon_n x) \left(2^*_s
G_{\eps_n}(|u_{j,\eps_n}|)-g_{\eps_n}(
|u_{j,\eps_n}|)|u_{j,\eps_n}|^2\right) \varphi(x, 0) \, dx \right|
\leq C\sigma^{-\frac{N}{2}}_n.
\end{align}
Note that $tb_\epsilon(t)\leq m_\epsilon(t)$ for any $t \geq 0$, see Lemma \ref{mprop}.
It then yields from \eqref{mmvcnvii8g9fff} and \eqref{gestimate} that
\begin{align*}
&\int_{\R^N}\left(2^*_s F_{\epsilon_n}(x, |u_{j,\eps_n}|) -
f_{\epsilon_n}(x, |u_{j, \eps_n}|)|u_{j,\eps_n}|^2 \right)
\varphi(x, 0) \, dx  \\
&\geq\frac
{(2^*_s-p)\mu}{p}\int_{\R^N}\left(1- \chi(\epsilon_n x)\right)
|u_{j,\eps_n}|^p \varphi(x, 0)\, dx -C\sigma^{-\frac N 2}_n.
\end{align*}
According to Lemma \ref{xncbdggdtdtttd}, we have that, up to a subsequence,
$x_{j,i_\infty}:=\lim_{n\rightarrow\infty}\epsilon_nx_{n}\in
\mathcal{M}^{\delta_0}$. It then gives that
$1-\chi(x_{j,i_\infty})>0$. As a consequence, for any $n \in \mathbb{N}$ large
enough,
\begin{align} \label{estiph5}
\begin{split}
&\int_{\R^N}\left(2^*_s F_{\epsilon_n}(x, |u_{j, \eps_n}|) -f_{\epsilon_n}(x, |u_{j,\eps_n}|)|u_{j,\eps_n}|^2 \right) \varphi(x, 0) \, dx \\
&\geq\frac {(2^*_s-p)\mu}{2p}(1-\chi(x_{j,i_\infty}))\int_{\R^3}|u_{j,
\eps_n}|^p \varphi(x, 0) \, dx-C\sigma^{-\frac N 2}_n.
\end{split}
\end{align}
Using \eqref{locph}, \eqref{estiph1}-\eqref{estiph4} and \eqref{estiph5}, we then arrive at
\begin{align}\label{cnvbvhhfyf6vyyvyv}
\int_{\R^N}|u_{j, \eps_n}|^p \varphi(x, 0) \, dx \leq C\sigma^{-\frac {N-2s} {2}}_n.
\end{align}

We now denote $G_n:=B_{L\sigma^{-1}_n}(x_n)$, where the constant $L>0$ is large enough
such that
\begin{align*}
\int_{B_L(0)}|U_{j, i_\infty}(x, 0)|^{p} \, dx \geq \frac{C_*}{2}, \quad C_*:=\int_{\R^N}|U_{j,i_\infty}(x ,0)|^{p}\,dx.
\end{align*}
Thus, for any $n \in \mathbb{N}$ large enough,
$$
G_n\subset B_{(\overline{C}+2)\sigma^{-\frac{1}{2}}_n}(x_n)\subset B_n
$$
and $\varphi(x, 0)=1$ for any $x \in G_n$. Since
$\widehat{w}_{j,\eps_n}:=\sigma^{-\frac{N-2s}{2}}_n
w_{j,\eps_n}(\sigma^{-1}_n\cdot+(x_{n}, 0))\rightharpoonup
U_{j,i_\infty}$ in $X^s(\R^{N+1}_+)$ as $n \to \infty$, see Lemma
\ref{cmvnvhhguf7fuufss}, then, for any $n \in \mathbb{N}$ large
enough,
\begin{align}\label{ncbvuuf78f77fesd} \nonumber
\int_{B_n}|u_{j, \eps_n}|^p\varphi(x, 0) \, dx & \geq \int_{G_n}|u_{j, \eps_n}|^p \, dx
=\sigma^{\frac{N-2s}{2}p-N}_n\int_{B_L(0)}|\sigma^{-\frac{N-2s}{2}}_{n}u_{j, \eps_n}(\sigma^{-1}_nx+x_n)|^p\, dx \\ 
&=\sigma^{\frac{N-2s}{2}p-N}_n\int_{B_L(0)}|U_{j,i_\infty}(x, 0)|^p \, dx +
o_n(1)\sigma^{\frac{N-2s}{2}p-N}_n \\ \nonumber
&\geq\frac{C_*}{4}\sigma^{\frac{N-2s}{N}p-N}_n.
\end{align}
Combining  \eqref{cnvbvhhfyf6vyyvyv} and \eqref{ncbvuuf78f77fesd} leads to
\begin{align*}
\sigma^{\frac{N-2s}{2}p-N}_n\leq C\sigma^{-\frac {N-2s} {2}}_n.
\end{align*}
It is a contradiction if $p>\frac{N+2s}{N-2s}$, because $\sigma_n
\to \infty$ as $n \to \infty$. This in turn indicates that
$\Lambda_{\infty}= \emptyset$, and the proof is completed.
\end{proof}

We are now ready to prove Proposition \ref{bcbvhfyfyufuadx}.
\medskip

\noindent{\bf Proof of Proposition \ref{bcbvhfyfyufuadx}.} From Lemma \ref{lames},
we know that $\Lambda_{\infty}=\emptyset$. It then yields from \eqref{pd} that
\begin{align} \label{prebdd}
w_{j, \eps_n}=\sum_{i \in \Lambda_1} U_{j ,i}(\cdot -x_{j, i, n}, \cdot) + r_{j, n}.
\end{align}
Furthermore, by means of \eqref{equrjn0}, we then see that
\begin{align*}
(-\Delta)^s |r_{j, n}(x ,0)| + \frac a 2 |r_{j, n}(x ,0)| - C |r_{j, n}(x ,0)|^{2^*_s -2} |r_{j ,n}(x ,0)| \leq C' h_n(x),
\end{align*}
where
\begin{align*}
h_n(x):=&\sum_{i \in \Lambda_1} \left(| U_{j,i}(x-x_{j,i,n}, 0)| +  |U_{j,i}(x-x_{j,i,n}, 0)|^{2^*_s-1}\right).
\end{align*}
Using the same arguments as the proof of \eqref{r1}, we then have
that there exists a constant $C''>0$ independent of $n$ such that
$|r_{j, n}(x, 0)| \leq C''$ for any $x \in \R^N$. Applying
\eqref{convolution}, we then derive that $|r_{j, n}(x, y)| \leq
C''$ for any $(x, y) \in \R^{N+1}_+$. From \eqref{prebdd} and the
fact that $U_{j,i} \in L^{\infty}(\R^N)$, see Lemma
\ref{7cncbvggftd6ettd66d}, the result of this proposition follows.
This completes the proof. \hfill$\Box$

\subsection{Proof of Theorem \ref{wejgh77rtff11}} With the help of Proposition \ref{bcbvhfyfyufuadx}, we are now able to present proof of Theorem \ref{wejgh77rtff11}.

\noindent{\bf Proof of Theorem \ref{wejgh77rtff11}.} In view of Proposition
\ref{bcbvhfyfyufuadx}, we get that there exists a constant  $\epsilon''_k>0$ such
that, for any $0<\epsilon<\epsilon''_k$,
\begin{align}\label{bcvvuuvyctttc}
m_\epsilon(|u_{j,\epsilon}|^2)=|u_{j,\epsilon}|^2, \quad
b_\epsilon(|u_{j,\epsilon}|^2)=1.
\end{align}
Since $\Lambda_\infty=\emptyset$, see Lemma \ref{lames}, by using \eqref{pd},
we then find that the situation under consideration is similar to the case of the Sobolev subcritical equations treated in Section \ref{subcritical}.
Then we can obtain that, for any $\delta>0$, there exist a constant $C=C(\delta, N)>0$ such that
\begin{align*}
|u_{j,\epsilon}(x)|\leq\frac{C}{1+|\mbox{dist}(x,(\mathcal{V}^\delta)_\epsilon)|^{N+2s}},
\quad x\in \mathbb{R}^N.
\end{align*}
By the definition of $f_{\eps}$, we then have that $ f_\epsilon(x,
|u_{j,\epsilon}|)=\mu
|u_{j,\epsilon}|^{p-2}+|u_{j,\epsilon}|^{2^*_s-2}$. This infers
that $u_{j,\epsilon}$ is actually solution to \eqref{equ1} for any
$j \geq 1$. From Theorem \ref{cvnbmiif9f8ufjhfy}, we know that
$u_{j, \eps}$ is sign-changing solution to \eqref{equ1} for any $j
\geq 2$. Arguing as the proof of Theorem \ref{wejgh77rtff111}, we
are able to show that $u_{1, \eps}$ is positive solution to
\eqref{equ1}. Making a change of variable, we then obtain the
result of Theorem \ref{wejgh77rtff11}. Thus the proof is
completed. \hfill$\Box$

\section{Sobolev supercritical growth} \label{supercritical}

We devote this section to the study of semiclassical states to
\eqref{equ11} in the Sobolev supercritical case, i.e.
$\mathcal{N}(u)u= |u|^{p-2} u+\lambda |u|^{r-2}u$ for
$2<p<2^*_s<r$ and $\lambda>0$. Let us first introduce some notations. We set
\begin{align*}
g_{\lambda,K}(t):=\min\left\{h_{\lambda, K}(t), \, a/4\right\}  \quad
\mbox{for} \,\, t \geq 0,
\end{align*}
where
$$
h_{\lambda,K}(t):=\left\{
\begin{array}
[c]{ll} \displaystyle t^{p-2} + \lambda t^{r-2}, & \ \, 0 \leq t \leq K,\\
\displaystyle t^{p-2} + \lambda K^{r-p}t^{p-2}, & \ \qquad t \geq K,
\end{array}
\right.
$$
and $K>0$ is a constant to be determined later. Let $\eta:\R^+ \to [0,1]$ be the cut-off function given in Section \ref{subcritical}. Define $\chi(x):=\eta(\mbox{dist}(x, \, \Lambda))$,
\begin{align*}
f_{\eps, \lambda,K}(x, t):=\left(1- \chi(\eps x)\right)
h_{\lambda, K}(t)+\chi(\eps x){g}_{\lambda, K}(t)
\end{align*}
and
\begin{align*}
F_{\eps, \lambda,K}(x, t):=\int_{0}^t f_{\eps, \lambda, K}(x, s)s \,
ds \quad \mbox{for}\,\, x \in \R^N, t \in \R.
\end{align*}
We now introduce a modified equation as
\begin{align}\label{equ23}
\left\{
\begin{aligned}
-\mbox{div}(y^{1-2s} \nabla w)&=0 \quad \hspace{4cm}\,\, \ \ \mbox{in} \,\, \R^{N+1}_+,\\
-k_s \frac{\partial w}{\partial {\nu}}&=-V_{\eps}(x) w + f_{\eps,
\lambda,K}(x, |w|) w \quad \quad \,\,\mbox{on} \,\, \R^N \times \{0\}.
\end{aligned}
\right.
\end{align}
The energy functional associated to \eqref{equ23} is defined by
$$
\Phi_{\eps, \lambda,K}(w):=\frac {k_s}{2} \int_{\R^{N+1}_+}
y^{1-2s} |\nabla w|^2 \, dxdy + \frac 12 \int_{\R^N}
V_{\eps}(x)|w(x,0)|^2 \, dx - \int_{\R^N} F_{\eps, \lambda,K}(x,
|w(x, 0)|) \, dx.
$$
It is easy to check that $\Phi_{\eps, \lambda,K} \in C^1(X^{1,
s}(\R^{N+1}_+), \R)$ and
\begin{align*}
\Phi_{\eps, \lambda,K}'(w) \psi&=k_s\int_{\R^{N+1}_+} y^{1-2s} \nabla w \cdot \nabla \psi \, dxdy + \int_{\R^N} V_{\eps}(x) w(x, 0) \psi(x, 0) \, dx \\
& \quad - \int_{\R^N} f_{\eps, \lambda,K}(x, |w(x, 0)|) w(x, 0)
\psi(x, 0) \, dx
\end{align*}
for any $\psi \in X^{1, s}(\R^{N+1}_+)$. Consequently, any
critical point of $\Phi_{\eps,\lambda,K}$ corresponds to a
solution to \eqref{equ23}.

Let $\Phi_0: X^{1, s}(\mathcal{C}_{B_1(0)}) \to \R$ be the
functional defined by \eqref{defphi0}  and $\varphi_n: B_n \to
\R^N$ be the function defined by \eqref{defvarphi}. For $j \in
\mathbb {N}$, we define
$$
c_{j, \eps, \lambda,K}:=\left\{
\begin{array}
[c]{ll} \displaystyle\inf_{B\in \Lambda_j}\sup_{u\in B\setminus
W} \Phi_{\eps, \lambda,K}(u),& \mbox{if} \ j\geq 2,\\
\displaystyle\inf_{B\in \Lambda_1}\sup_{u\in B} \Phi_{\eps,
\lambda,K}(u), & \mbox{if} \ j= 1,
\end{array}
\right.
$$
and
$$
\widetilde{c}_j:=\left\{
\begin{array}
[c]{ll} \displaystyle\inf_{B \in \widetilde{\Lambda}_j} \sup_{u \in B \backslash W}
 \Phi_0(u),& \mbox{if} \ j\geq 2,\\
\displaystyle\inf_{B\in \widetilde{\Lambda}_1}\sup_{u\in
B} \Phi_0(u), & \mbox{if} \ j= 1,
\end{array}
\right.
$$
where $\Lambda_j$, $G_n$, $\widetilde{\Lambda}_j$ and
$\widetilde{G}_n$ are defined by \eqref{defLa1} and
\eqref{defLa2}, respectively. Therefore, there holds that
$c_{1, \eps, \lambda,K}>0$, $0< c_{2,\eps, \lambda,K} \leq  \dots
\leq c_{j, \eps, \lambda,K} \leq \cdots$, $\widetilde{c}_1>0$,
$0<\widetilde{c}_2\leq  \cdots \leq \widetilde{c}_j \leq \cdots$
and $0<c_{j,\eps, \lambda,K} \leq \widetilde{c}_j $ for any $j
\geq 1$.

\begin{thm}\label{cvnbmiif9f8ufjhfy111}
For any $k \in \mathbb{N}$ and $\lambda>0$, there exists a
constant $\eps_{k, \lambda,K}>0$ such that, for any
$0<\eps<\eps_{k,\lambda,K}$, \eqref{equ23} admits at least $k$
pairs of solutions $\pm w_{j, \eps, \lambda,K} \in X^{1,
s}(\R^{N+1}_+)$ satisfying $\Phi_{\eps, \lambda,K}(w_{j, \eps,
\lambda,K})=c_{j ,\eps, \lambda,K} \leq \widetilde{c}_{k}$ for any
$1 \leq j \leq k$. Moreover, $w_{j, \eps, \lambda,K}$ is a
sign-changing solution to \eqref{equ23} for any $2 \leq j \leq k$.
\end{thm}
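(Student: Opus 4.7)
The plan is to follow the same blueprint used for Theorems \ref{cvnbmiif9f8ufjhfy77rtff1} and \ref{cvnbmiif9f8ufjhfy}, adapted to the truncated supercritical nonlinearity $f_{\eps,\lambda,K}$. The key observation that makes the whole machinery go through is that, even though $\mathcal{N}(|u|)u = |u|^{p-2}u + \lambda |u|^{r-2}u$ has supercritical growth, the truncated function $h_{\lambda,K}$ satisfies $h_{\lambda,K}(t) \leq (1+\lambda K^{r-p})(K^{r-p}+t^{p-2})$ for all $t\geq 0$, so it has subcritical growth at infinity. This immediately ensures that $\Phi_{\eps,\lambda,K}\in C^1(X^{1,s}(\R^{N+1}_+),\R)$ and that the compactness analysis reduces to the one already carried out in Section \ref{subcritical}.

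First I would verify the two crucial algebraic inequalities: a direct computation on the two pieces of $h_{\lambda,K}$ shows that
\[
\tfrac{1}{p}h_{\lambda,K}(t)t^2 - H_{\lambda,K}(t) \geq 0 \quad \text{and} \quad \tfrac{1}{2}g_{\lambda,K}(t)t^2 - G_{\lambda,K}(t) \geq 0 \quad \text{for all } t\geq 0,
\]
where $H_{\lambda,K}(t):=\int_0^t h_{\lambda,K}(s)s\,ds$ and $G_{\lambda,K}(t):=\int_0^t g_{\lambda,K}(s)s\,ds$. These replace \eqref{arg}/\eqref{har}/\eqref{gar} and yield boundedness of Palais--Smale sequences exactly as in Lemma \ref{ps1}, using the fact that $V\geq a$ and $g_{\lambda,K}\leq a/4$. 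The compactness step for the nonlinear term is then identical to the subcritical case: on the bounded set where $1-\chi(\eps\cdot)$ is supported, the local compact embedding of Lemma \ref{embedding} plus the mean value theorem handles both the $h_{\lambda,K}$-part and (since $g_{\lambda,K}\leq a/4$) the $g_{\lambda,K}$-part. Hence $\Phi_{\eps,\lambda,K}$ satisfies the Palais--Smale condition on $X^{1,s}(\R^{N+1}_+)$.

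Next I would transport the admissibility framework of Section \ref{subcritical} verbatim: the bound $|F_{\eps,\lambda,K}(x,|t|)|\leq C(|t|^{p}+|t|^{r}\mathbb{1}_{|t|\leq K}+|t|^{p})$ combined with Lemma \ref{embedding} gives $c^*>0$ as in Lemma \ref{infenergy1}; the operator $A_{\eps,\lambda,K}$ defined by solving the linear problem with right-hand side $f_{\eps,\lambda,K}(x,|z|)z$ is continuous on $X^{1,s}(\R^{N+1}_+)$, maps $\partial P_\pm^\sigma$ into $P_\pm^\sigma$ for $0<\sigma<\sigma_0$, and admits a locally Lipschitz odd perturbation $B_{\eps,\lambda,K}$ with the usual four properties; the deformation lemma in the spirit of Lemma \ref{deformation} then shows that $P_+^\sigma$ is an admissible invariant set with respect to $\Phi_{\eps,\lambda,K}$ at every level $c\geq c^*$. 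Since $0\in\mathcal{V}\subset\Lambda$, for $\eps$ small enough $\chi(\eps x)=0$ on $B_1(0)$, so $F_{\eps,\lambda,K}(x,|w|)=H_{\lambda,K}(|w|)\geq |w|^p/p$ on $\mathcal{C}_{B_1(0)}$, together with $V_\eps\leq b$, which yields $\Phi_{\eps,\lambda,K}(w)\leq \Phi_0(w)$ for every $w\in X^{1,s}(\mathcal{C}_{B_1(0)})$. Consequently $\widetilde{\Lambda}_j\subset\Lambda_j$ and the minimax levels satisfy $0<c_{j,\eps,\lambda,K}\leq \widetilde{c}_j$ for all $j\geq 1$.

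With these ingredients in place, Theorem \ref{exist} (applied for $j\geq 2$) and the classical symmetric mountain pass argument of \cite[Theorem 9.12]{Rabinowitz} (for $j=1$) produce, for each $k\in\mathbb{N}$, a threshold $\eps_{k,\lambda,K}>0$ such that for $0<\eps<\eps_{k,\lambda,K}$ there exist $k$ pairs $\pm w_{j,\eps,\lambda,K}\in X^{1,s}(\R^{N+1}_+)$ of critical points of $\Phi_{\eps,\lambda,K}$ with $\Phi_{\eps,\lambda,K}(w_{j,\eps,\lambda,K})=c_{j,\eps,\lambda,K}\leq \widetilde{c}_k$. For $j\geq 2$, Theorem \ref{exist} guarantees $K_{c_{j,\eps,\lambda,K}}\setminus W\neq\emptyset$, so $w_{j,\eps,\lambda,K}\notin P_+^\sigma\cup P_-^\sigma$ and hence must change sign. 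I do not foresee a single hard obstacle here: the proof is structurally parallel to that of Theorem \ref{cvnbmiif9f8ufjhfy1}, with the only mildly delicate point being the verification of the monotonicity inequality $\frac{1}{p}h_{\lambda,K}(t)t^2\geq H_{\lambda,K}(t)$ across the junction $t=K$, which is settled by the direct computation above.
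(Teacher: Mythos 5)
Your proposal is correct and follows essentially the same route as the paper, which proves Theorem \ref{cvnbmiif9f8ufjhfy111} simply by noting it is almost identical to Theorem \ref{cvnbmiif9f8ufjhfy1}: the truncation makes $h_{\lambda,K}$ subcritical (indeed $h_{\lambda,K}(t)\leq(1+\lambda K^{r-p})t^{p-2}$), so the Palais--Smale, invariant-set and minimax arguments of Section \ref{subcritical} carry over verbatim, exactly as you describe. The only cosmetic slip is the dangling reference to a nonexistent label in your first line; the substance of your verification (the junction inequality at $t=K$ and the comparison $c_{j,\eps,\lambda,K}\leq\widetilde{c}_j$) is sound.
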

\begin{proof}
The proof of this theorem is almost identical to the one of Theorem \ref{cvnbmiif9f8ufjhfy1}, then we omit its proof.
\end{proof}

\begin{lem} \label{moseriteration}
Let $w_{j,\eps, \lambda,K} \in X^{1, s}(\R^{N+1}_+)$ be the
solution to \eqref{equ23} obtained in Theorem
\ref{cvnbmiif9f8ufjhfy111}. Then there exists $K_0>0$ such that,
for any $k \in \N$ and $K \geq K_0$, there exists $\lambda_{k, K}>0$
such that, for any $0 < \lambda <\lambda_{k, K}$ and
$0<\eps<\eps_{k,\lambda,K}$,
\begin{align} \label{unibdd111}
\sup_{(x, y) \in \overline{\R^{N+1}_+}} |w_{j, \eps, \lambda,K}(x,
y)| \leq K.
\end{align}
\end{lem}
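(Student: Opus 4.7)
\medskip

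\noindent\textbf{Proof proposal for Lemma \ref{moseriteration}.}

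The plan is to close a bootstrap: provided $\|w_{j,\epsilon,\lambda,K}\|_{L^\infty}$ is controlled, the truncation has disappeared and the equation is effectively subcritical; Moser iteration then yields an $L^\infty$ bound that, for $\lambda$ small relative to $K$, feeds back below $K$. I proceed in three stages.

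First, I record a uniform energy bound. By Theorem \ref{cvnbmiif9f8ufjhfy111}, $\Phi_{\epsilon,\lambda,K}(w_{j,\epsilon,\lambda,K})\leq \widetilde{c}_k$ with $\widetilde{c}_k$ independent of $\epsilon,\lambda,K$, and $\Phi_{\epsilon,\lambda,K}'(w_{j,\epsilon,\lambda,K})=0$. The definition of $h_{\lambda,K}$ and $g_{\lambda,K}$ implies the structural inequality $\tfrac1p f_{\epsilon,\lambda,K}(x,t)t^2-F_{\epsilon,\lambda,K}(x,t)\geq 0$ for all $x\in\R^N$ and $t\geq 0$ (analogous to \eqref{arg}). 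Combining these with $V_\epsilon\geq a$ and the fact that $g_{\lambda,K}\leq a/4$ in the part weighted by $\chi(\epsilon x)$, I obtain, exactly as in Lemma \ref{ps1}, a bound $\|w_{j,\epsilon,\lambda,K}\|_{1,s}\leq \eta_k$ uniform in $\epsilon,\lambda$ (and in $K$ once we fix $K\geq K_0$).

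The second stage is the key pointwise estimate on the nonlinearity. A direct check on the two branches in the definition of $h_{\lambda,K}$ gives
\begin{equation*}
h_{\lambda,K}(t)\leq \bigl(1+\lambda K^{r-p}\bigr)t^{p-2},\qquad t\geq 0,
\end{equation*}
since for $t\leq K$ one uses $t^{r-2}\leq K^{r-p}t^{p-2}$ and for $t\geq K$ the definition is already of this form. Hence $f_{\epsilon,\lambda,K}(x,|u|)\leq A|u|^{p-2}$ pointwise, where $A:=1+\lambda K^{r-p}$. Thus, writing $u:=w_{j,\epsilon,\lambda,K}(\cdot,0)$, the boundary trace satisfies $-k_s\partial_\nu w_{j,\epsilon,\lambda,K}\leq A|u|^{p-2}u$ on $\R^N\times\{0\}$, while the interior equation is unchanged.

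In the third stage I run Moser iteration on the extended problem. For $\beta\geq 1$ and $L>0$ I test the weak form against $\psi=\varphi_\beta(w)$, where $\varphi_\beta$ is a $C^1$ truncated version of $s\mapsto s\min(|s|,L)^{2(\beta-1)}$, and I use the chain-rule type inequality $|\nabla \Phi_\beta(w)|^2\leq C\beta^2|w|^{2(\beta-1)}|\nabla w|^2$ with $\Phi_\beta(s)=\int_0^s\sqrt{\varphi_\beta'(\tau)}\,d\tau$. The weighted Sobolev trace embedding (Lemma \ref{embedding1}), combined with the stage-two bound on the boundary term and H\"older's inequality with exponent pair adapted to the subcritical power $p-2<\tfrac{4s}{N-2s}$, yields the standard iteration step
\begin{equation*}
\|u\|_{L^{\beta \cdot 2^*_s}(\R^N)}^{2\beta}\leq C\,\beta^2\, A\,\|u\|_{L^{2\beta+p-2}(\R^N)}^{2\beta+p-2}+C\beta^2\|u\|_{L^{2\beta}(\R^N)}^{2\beta},
\end{equation*}
with $C=C(N,s,p)$ independent of $\epsilon,\lambda,K,\beta$; here the local boundedness estimate established in the Appendix provides the starting integrability beyond $L^{2^*_s}$. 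Iterating $\beta_n\to\infty$ starting from $\|u\|_{L^{2^*_s}}\leq C(\eta_k)$ and passing $L\to\infty$ gives
\begin{equation*}
\|u\|_{L^\infty(\R^N)}\leq C_k\,A^{\alpha}
\end{equation*}
for some $\alpha=\alpha(N,s,p)>0$ and $C_k$ depending only on $k,N,s,p$. The Poisson representation \eqref{convolution} together with $\int_{\R^N}P^s_y\,dx=1$ then propagates this bound to $\|w_{j,\epsilon,\lambda,K}\|_{L^\infty(\overline{\R^{N+1}_+})}\leq C_k A^\alpha$.

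To conclude, I fix any $K_0\geq(2C_k)^{1/(1-\alpha)}$ (if $\alpha<1$; otherwise adjust the exponent using the freedom in the iteration) and $K\geq K_0$, and then pick $\lambda_{k,K}>0$ so small that $\lambda_{k,K}K^{r-p}\leq 1$, which forces $A\leq 2$ and hence $C_k A^\alpha\leq K$. This yields \eqref{unibdd111}. The main obstacle is the careful bookkeeping in the Moser iteration on the extension: one has to ensure that the constant in front is polynomial in $\beta$ (not exponential), that the passage from the weighted interior norm to the trace norm via Lemma \ref{embedding1} does not spoil uniformity, and that the iteration base $L^{2^*_s}$ is indeed controlled uniformly in $\epsilon,\lambda,K$; the latter is the reason stage one must produce a $(\lambda,K)$-independent energy bound, and it is for this reason that one must work with the $p$-normalized Nehari combination rather than the $2$-normalized one.
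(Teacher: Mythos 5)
Your proposal is correct and follows essentially the same route as the paper: a $(\lambda,K)$-uniform energy bound from $\Phi_{\eps,\lambda,K}(w)\le\widetilde c_k$ and $\Phi_{\eps,\lambda,K}'(w)=0$, the pointwise bound $f_{\eps,\lambda,K}(x,|t|)\le (1+\lambda K^{r-p})|t|^{p-2}$ (up to the harmless $a/4$ term absorbed by $V\ge a$), a Moser iteration with the truncated test functions $w_L^{2(\beta-1)}w$ and the trace Sobolev inequality of Lemma \ref{embedding1}, propagation to $\overline{\R^{N+1}_+}$ via \eqref{convolution}, and finally choosing $K_0$ large and $\lambda_{k,K}$ so that $\lambda K^{r-p}\le 1$. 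Only cosmetic remarks: the appeal to the Appendix lemma for "starting integrability" is unnecessary (since $p<2^*_s$ the iteration starts directly from the $L^{2^*_s}$ trace bound, as in the paper), the claim that one must use the $\tfrac1p$-combination is not needed (the $\tfrac12$-combination of Lemma \ref{ps1} works), and the closing choice of $K_0$ should simply be $K_0\ge 2^{\alpha}C_k$ once $1+\lambda K^{r-p}\le 2$, rather than the $(2C_k)^{1/(1-\alpha)}$ formula.
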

\begin{proof}
From Theorem \ref{cvnbmiif9f8ufjhfy111}, we have that $\Phi_{\eps, \lambda, K}'(w_{j, \eps, \lambda, K})w_{j, \eps, \lambda, K}=0$ and $\Phi_{\eps, \lambda}(w_{j, \eps, \lambda, K})=c_{j ,\eps, \lambda, K} \leq \widetilde{c}_{k}$ for any $1 \leq j \leq k$. Using the definition of $f_{\eps, \lambda}$ and arguing as the proof of the boundedness of the Palais-Smale sequence in Lemma \ref{ps1} ,
we can deduce that there exists $C_k>0$ independent of $\lambda$ such that $\|w_{j,\eps, \lambda, K}\|_{1,s} \leq C_k$ for any $1 \leq j \leq k$.

We now employ the Moser iteration arguments to prove
\eqref{unibdd111}. For simplicity, we shall write $w_{\eps,
\lambda, K}=w_{j,\eps,\lambda, K}$. For $L>0$, we define
\begin{align*}
w_{\eps, \lambda, K, L}(x ,y):=\left\{
\begin{aligned}
&|w_{\eps,\lambda, K}(x, y)|, \quad \mbox{if} \,\, |w_{\eps, \lambda, K}(x, y)| \leq L,\\
&L, \hspace{1.5cm}\,\,\ \ \ \ \mbox{if} \,\, |w_{\eps, \lambda, K}(x, y)| > L.
\end{aligned}
\right.
\end{align*}
Let $\psi_{\eps, \lambda, K, L}:=w_{\eps,\lambda, K, L}^{2(\beta -1)} w_{\eps, \lambda, K}$, where $\beta>1$ is a constant to be determined later. Multiplying \eqref{equ23} by $\psi_{\eps, \lambda, L, K}$ and integrating on $\R^{N+1}_+$, we obtain that
\begin{align*}
&\int_{\R^{N+1}_+} y^{1-2s} \nabla w_{\eps,\lambda, K}\cdot \nabla \psi_{\eps,\lambda, K, L} \, dxdy + \int_{\R^N} V_{\eps} (x) |w_{\eps,\lambda, K}(x, 0)|^2 w_{\eps,\lambda, K, L}^{2(\beta -1)}(x, 0)\, dx\\
&= \int_{\R^N} f_{\eps, \lambda, K}(x, |w_{\eps, \lambda, K}(x, 0)|) |w_{\eps,\lambda, K}(x, 0)|^2 w_{\eps,\lambda, K, L}^{2(\beta -1)}(x, 0) \, dx.
\end{align*}
By the definition of $f_{\eps, \lambda, K}$, we know that $|f_{\eps, \lambda, K}(x, |t|)| \leq \frac a 4 + \left(1+\lambda K^{r-p}\right) |t|^{p-2}$ for any $x \in \R^N$ and $t \in \R$. Therefore, by the assumption that $V(x) \geq a$ for any $x \in \R^N$, we get that
\begin{align} \label{it1}
\int_{\R^{N+1}_+} y^{1-2s} \nabla w_{\eps,\lambda, K}\cdot \nabla \psi_{\eps,\lambda, K, L} \, dxdy \leq C_{\lambda, K} \int_{\R^N} |w_{\eps, \lambda, K}(x, 0)|^{p} w_{\eps,\lambda, K, L}^{2(\beta -1)} (x, 0)\, dx,
\end{align}
where $C_{\lambda, K}:=1+\lambda K^{r-p}$. Observe that
$$
\nabla w_{\eps,\lambda, K} \cdot \nabla \psi_{\eps,\lambda, K, L}= w_{\eps, \lambda, K, L}^{2(\beta-1)} |\nabla w_{\eps,\lambda, K}|^2 + 2(\beta -1) w_{\eps,\lambda, K, L}^{2(\beta-2)} w_{\eps,\lambda, K, L} w_{\eps,\lambda, K} \nabla w_{\eps,\lambda, K, L} \cdot \nabla w_{\eps,\lambda, K}
$$
and
\begin{align*}
|\nabla (w_{\eps,\lambda, K, L}^{\beta -1} w_{\eps,\lambda, K})|^2&=w_{\eps,\lambda, K, L}^{2(\beta-1)} |\nabla w_{\eps,\lambda, K}|^2 + 2(\beta -1) w_{\eps,\lambda, K, L}^{2(\beta-2)} w_{\eps,\lambda, K, L} w_{\eps, \lambda, K} \nabla w_{\eps,\lambda, K, L} \cdot \nabla w_{\eps,\lambda, K} \\
& \quad + (\beta -1)^2 w_{\eps,\lambda, K, L}^{2(\beta -2)} |w_{\eps,\lambda, K}|^2 |\nabla w_{\eps,\lambda, K, L}|^2.
\end{align*}
Thus we conclude that
$$
|\nabla (w_{\eps,\lambda, K, L}^{\beta -1} w_{\eps,\lambda, K})|^2 \leq \beta^2 \left(\nabla w_{\eps,\lambda, K} \cdot \nabla z_{\eps,\lambda, K, L}\right).
$$
It then follows from \eqref{it1} that
$$
\int_{\R^{N+1}_+} y^{1-2s} |\nabla (w_{\eps,\lambda, K, L}^{\beta -1} w_{\eps,\lambda, K})|^2 \, dxdy \leq C_{\lambda, K}\beta^2\int_{\R^N} |w_{\eps,\lambda, K}(x, 0)|^{p} w_{\eps,\lambda, K, L}^{2(\beta -1)}(x, 0) \, dx.
$$
Let $z_{\eps, \lambda, K, L}:=w_{\eps,\lambda, K, L}^{\beta -1} w_{\eps, \lambda, K}$. Using Lemma \ref{embedding1} and H\"older's inequality, we then have that
\begin{align*}
\left(\int_{\R^N} |z_{\eps, \lambda, K, L}(x, 0)|^{2^*_s} \, dx\right)^{\frac{2}{2^*_s}} & \leq C_{\lambda, K}\beta^2\int_{\R^N} |w_{\eps,\lambda, K}(x, 0)|^{p} w_{\eps,\lambda, K, L}^{2(\beta -1)}(x, 0) \, dx\\
& \leq C_{\lambda, K} \beta^2 \left(\int_{\R^N}|w_{\eps, \lambda, K} (x, 0) |^{2^*_s}\, dx\right)^{\frac{p-2}{2^*_s}} \|z_{\eps, \lambda, K, L}(\cdot, 0)\|_{L^{\alpha_s^*}}^2,
\end{align*}
where 
$$
\alpha^*_s:=\frac{22^*_s}{2^*-p+2}<2^*_s.
$$
Letting $L \to \infty$ and applying Fatou's Lemma, we then get that
\begin{align} \label{it2}
\|w_{\eps, \lambda, K}(\cdot, 0)\|_{L^{2^*_s \beta}(\R^N)} \leq \left(C_k C_{\lambda, K}\right)^{\frac{1}{2\beta}} \beta^{\frac{1}{\beta}} \|w_{\eps, \lambda, K}(\cdot, 0)\|_{L^{\alpha^*_s \beta}(\R^N)},
\end{align}
where we used Lemma \ref{embedding1} and the fact that $\|w_{\eps, \lambda, K}\|_{1,s} \leq C_k$.
Setting $\beta:=2^*_s/\alpha^*_s>1$ and iterating $m$ times, we then derive from\eqref{it2} that
$$
\|w_{\eps, \lambda, K}(\cdot, 0)\|_{L^{2^*_s \beta^m}(\R^N)} \leq \left(C_k C_{K, \lambda} \right)^{\beta_{1, m}} \beta^{\beta_{2, m}} \|w_{\eps, \lambda, K}(\cdot, 0)\|_{L^{2^*_s}(\R^N)},
$$
where
$$
\beta_{1, m}:=\sum_{i=1}^{m} \frac{1}{2\beta^{i}}, \quad \beta_{2, m}:=\sum_{i=1}^{m}\frac{i}{\beta^i}.
$$
Letting $m \to \infty$, we then have that
$$
\|w_{\eps, \lambda, K}(\cdot, 0)\|_{L^{\infty}(\R^N)} \leq \left(C_k C_{\lambda, K}\right)^{\beta_{1}} \beta^{\beta_{2}},
$$
where
$$
\beta_{1}:=\sum_{i=1}^{\infty} \frac{1}{2\beta^{i}}<\infty, \quad \beta_{2}:=\sum_{i=1}^{\infty}\frac{i}{\beta^i}<\infty.
$$
From the definition of $C_{\lambda, K}$, then there exists $K_0>0$ such that, for any $k \in \N$ and $K \geq K_0$, there exists $\lambda_{k, K}>0$ such that, for any $0 < \lambda <\lambda_{k, K}$, it holds that $\|w_{\eps, \lambda, K}(\cdot, 0)\|_{L^{\infty}(\R^N)} \leq K$. Using \eqref{convolution}, we then obtain the result of the lemma. Thus the proof is completed.
\end{proof}

\subsection{Proof of Theorem \ref{wejgh77rtff1}} As an immediate consequence of Lemma \ref{unibdd111}, we are able to show the proof of Theorem \ref{wejgh77rtff1}.

\noindent{\bf Proof of Theorem \ref{wejgh77rtff1}.}  Benefitting from Lemma \ref{unibdd111}, we find that
$$
|f_{\eps,\lambda, K}(x, |w_{j,\eps, \lambda, K}(\cdot, 0)|)| \leq C |w_{j,\eps, \lambda, K}(\cdot, 0)|^{p-2}.
$$
This suggests that the problem we consider is reduced to the Sobolev subcritical case. Then we can closely follow the outline for the proof of Theorem \ref{wejgh77rtff111} to complete the proof.
\hfill$\Box$

\section {Appendix}\label{appendix}

\begin{lem}\label{cmvnbghyyt755534e}
Let $w\in X^{1, s}(Q_1(0))$ be a nonnegative solution to the equation
\begin{align} \label{equ61}
\left\{
\begin{aligned}
-\mbox{div}(y^{1-2s} \nabla w)&=0  \hspace{3cm} \,\,\,\mbox{in} \,\, Q_1(0),\\
-k_s \frac{\partial w}{\partial {\nu}}&=-a(x) w + b(x)
\qquad \quad \mbox{on} \,\, B_1(0),
\end{aligned}
\right.
\end{align}
where $a\in L^{N/2s}(B_1(0))$, $b \in L^\infty(B_1(0))$ and $Q_r(0):=B_r(0) \times (0, r)$ for $r >0$.
If there exists a constant $\delta>0$ such that $\|a\|_{L^{N/2s}(B_1(0))}<\delta$, then
$$
\|w\|_{L^{\infty}(Q_{1/2}(0))}\leq C(\|w\|_{L^{2}(y^{1-2s}, Q_1(0))}+\|b\|_{L^\infty(B_1(0))}).
$$
\end{lem}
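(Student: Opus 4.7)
The plan is to carry out a Moser iteration on the cylinders $Q_r(0) = B_r(0) \times (0,r)$ adapted to the degenerate $A_2$ weight $y^{1-2s}$, combining the trace Sobolev inequality from Lemma \ref{embedding1} with the bulk weighted Sobolev inequality of \cite[Proposition 3.1.1]{DMV} (the one used at equation \eqref{xmxnbdvf77dyetgdgdd}). The smallness condition $\|a\|_{L^{N/2s}(B_1(0))} < \delta$ is exactly what allows the boundary term $a(x)w$ to be absorbed into the gradient term.

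First, to handle the inhomogeneity, set $\bar w := w + \|b\|_{L^\infty(B_1(0))} + 1$ so that $\bar w \geq 1$ and the boundary condition becomes $-k_s\partial_\nu \bar w = -a\bar w + \tilde b$ with $|\tilde b| \leq C(1 + |a|)\bar w$. For $1/2 \leq r < R \leq 1$, $\beta \geq 1$, and $M > 0$, pick $\eta \in C^\infty_c(\R^{N+1})$ with $\eta \equiv 1$ on $\overline{Q_r(0)}$, $\eta \equiv 0$ off $\overline{Q_R(0)}$, and $|\nabla \eta| \leq C/(R-r)$. Test \eqref{equ61} against $\varphi := \eta^2 \bar w\, \bar w_M^{2(\beta-1)}$, where $\bar w_M := \min\{\bar w, M\}$. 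Expanding $\nabla \bar w \cdot \nabla \varphi$, applying Cauchy--Schwarz on the cross terms, and absorbing, one obtains the Caccioppoli-type inequality
\begin{align*}
\int_{Q_R} y^{1-2s}|\nabla(\eta \bar w\, \bar w_M^{\beta-1})|^2\,dxdy
&\leq \frac{C\beta^2}{(R-r)^2}\int_{Q_R} y^{1-2s} \bar w^2 \bar w_M^{2(\beta-1)}\,dxdy \\
&\quad + C\beta \int_{B_R} \eta^2 (1+|a|)\, \bar w^2 \bar w_M^{2(\beta-1)}\,dx.
\end{align*}

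The boundary integral containing $|a|$ is controlled by H\"older with exponents $N/(2s)$ and $N/(N-2s)$ and the trace Sobolev inequality of Lemma \ref{embedding1}, giving
\begin{align*}
\int_{B_R}\eta^2 |a|\, \bar w^2 \bar w_M^{2(\beta-1)}\,dx
\leq S^2\|a\|_{L^{N/2s}(B_1)}\int_{Q_R} y^{1-2s}|\nabla(\eta \bar w\, \bar w_M^{\beta-1})|^2\,dxdy.
\end{align*}
Choosing $\delta < (2CS^2)^{-1}$ lets one absorb this term at the first step $\beta = 1$, yielding an $L^{2^*_s}$ bound on $\bar w$ on a slightly smaller cylinder. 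For subsequent iterations, one writes $a = a_1 + a_2$ with $\|a_1\|_{L^{N/2s}}$ arbitrarily small (independent of $\beta$) and $a_2 \in L^\infty$, so the $a$-term can still be absorbed while the $a_2$-piece is merged into the first bulk integral. Combining with the bulk weighted Sobolev inequality
\begin{align*}
\left(\int_{Q_R} y^{1-2s}|\eta \bar w\, \bar w_M^{\beta-1}|^{2\gamma}\,dxdy\right)^{1/\gamma}
\leq C\int_{Q_R} y^{1-2s}|\nabla(\eta \bar w\, \bar w_M^{\beta-1})|^2\,dxdy,
\qquad \gamma := 1 + \tfrac{2}{N-2s},
\end{align*}
produces the reverse H\"older estimate
\begin{align*}
\|\bar w\|_{L^{2\beta\gamma}(y^{1-2s},Q_r)}
\leq \left(\tfrac{C\beta}{R-r}\right)^{1/\beta}\|\bar w\|_{L^{2\beta}(y^{1-2s},Q_R)}.
\end{align*}

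Iterating along $\beta_k = \gamma^k$ and $r_k = 1/2 + 2^{-k-1}$, letting $M \to \infty$ by monotone convergence, and using that $\|f\|_{L^p(y^{1-2s}, Q_{1/2})} \to \|f\|_{L^\infty(Q_{1/2})}$ as $p \to \infty$ (valid since $y^{1-2s}$ is a strictly positive, locally integrable $A_2$-weight on $Q_{1/2}$), one concludes
\begin{align*}
\|w\|_{L^\infty(Q_{1/2})}
\leq \|\bar w\|_{L^\infty(Q_{1/2})}
\leq C\|\bar w\|_{L^2(y^{1-2s}, Q_1)}
\leq C\bigl(\|w\|_{L^2(y^{1-2s}, Q_1)} + \|b\|_{L^\infty(B_1)}\bigr).
\end{align*}
The main technical obstacle is the first absorption step: it forces the smallness of $\|a\|_{L^{N/2s}}$, and one must coordinate the \emph{trace} Sobolev inequality (boundary) with the \emph{bulk} weighted Sobolev inequality (interior) so that the $\beta$-dependent constants remain harmless during iteration. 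The splitting $a = a_1 + a_2$ in the higher iterations is the key device that keeps the constants uniform in $\beta$ and prevents any additional smallness from being needed at levels $\beta > 1$.
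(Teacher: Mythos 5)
Your overall strategy coincides with the paper's: test against $\eta^2\bar w\,\bar w_M^{2(\beta-1)}$ (the paper uses $\eta^2(\overline{w}_m^{\beta}\overline{w}-k^{\beta+1})$ with $k=\|b\|_{L^\infty}$, which handles $b$ exactly as your shift does), estimate the boundary term containing $a$ by H\"older and the trace inequality of Lemma \ref{embedding1}, absorb it using the smallness of $\|a\|_{L^{N/2s}}$, convert to a reverse H\"older inequality via the weighted Sobolev inequality, and run the Moser iteration. The genuine gap is in the passage from your Caccioppoli inequality to the displayed reverse H\"older estimate, which involves only the weighted bulk norms $\|\bar w\|_{L^{2\beta}(y^{1-2s},Q_R)}$. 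After absorbing the (small part of the) $a$-term, you are still left with surface integrals over $B_R\times\{0\}$, namely $\int_{B_R}\eta^2\bar w^2\bar w_M^{2(\beta-1)}\,dx$ together with the $a_2$-piece, which is bounded by $T\int_{B_R}\eta^2\bar w^2\bar w_M^{2(\beta-1)}\,dx$; these cannot be ``merged into the first bulk integral,'' since that integral carries the weight $y^{1-2s}$ over $Q_R$ and the surface integrals carry no $y$-integration at all. Some trace-interpolation inequality is indispensable here; the paper uses \cite[Lemma 2.4]{TX}, which gives $C(1+\beta)\int_{B_1}|\eta z|^2\,dx\le\frac12\int_{Q_1}y^{1-2s}|\nabla(\eta z)|^2\,dxdy+C(1+\beta)^{\gamma}\int_{Q_1}y^{1-2s}|\eta z|^2\,dxdy$, and it is precisely this step that produces the $(1+\beta)^{\gamma}$ factors that the iteration can tolerate. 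Without such an inequality (or an equivalent), your reverse H\"older estimate does not follow from what precedes it.

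A second, smaller point: your splitting $a=a_1+a_2$ at levels $\beta>1$ does not achieve constants ``uniform in $\beta$'' in the sense you claim. To absorb at level $\beta$ you need $\|a_1\|_{L^{N/2s}}\lesssim\beta^{-1}$, so the truncation threshold $T=T(\beta,a)$ grows with $\beta$ at a rate governed by the particular function $a$; the constant in the reverse H\"older inequality then contains $T(\beta,a)$, you must still verify that the infinite product over the iteration converges, and the final bound depends on $a$ itself rather than only on $\delta$ — which is relevant because the lemma is invoked (e.g.\ in the proof of Lemma \ref{esta2}) along a sequence of coefficients with a constant required to be uniform. The paper does not introduce this splitting at all; it absorbs the $a$-term with the same smallness $\delta$ at every level after \eqref{ineqm} and pushes all $\beta$-dependence into the $(1+\beta)^{\gamma}$ factor coming from \cite[Lemma 2.4]{TX}. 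If you retain the splitting, you need to supply the convergence and uniformity argument explicitly.
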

\begin{proof}
For $k, m>0$, we set that $\overline{w}:=w+k$ and
\begin{align*}
\overline{w}_m:=\left\{
\begin{aligned}
&\overline{w}\quad & \mbox{if} \,\, w <m,\\
& m+k \quad & \mbox{if}\,\, w \geq m.
\end{aligned}
\right.
\end{align*}
Let $\eta \in C^{\infty}_0(Q_1(0) \cup B_1(0))$ be a truncation function. For $\beta>0$, we define 
$$
\psi:=\eta^2(\overline{w}_m^{\beta} \overline{w}-k^{\beta +1}).
$$
Multiplying \eqref{equ61} by $\psi$ and integrating on $Q_1(0)$, we have that
\begin{align} \label{testint}
\int_{Q_1(0)} y^{1-2s} \nabla w \cdot \nabla \psi \, dxdy + \int_{B_1(0)} a w \psi(x, 0) \, dx = \int_{B_1(0)} b \psi(x, 0)\, dx.
\end{align}
Observe that
$$
\nabla w \cdot \nabla \psi= \beta \eta^2 \overline{w}_m^{\beta-1}  \overline{w} (\nabla \overline{w}_m \cdot \nabla w) + \eta^2 \overline{w}_m^{\beta}  (\nabla \overline{w} \cdot \nabla w) + 2 \eta (\nabla \eta  \cdot \nabla w) (\overline{w}_m^{\beta} \overline{w}-k^{\beta +1}).
$$
In addition,  $\nabla \overline{w}_m=0$, $\overline{w}_m=\overline{w}$ in $\{w \geq m\}$ and $\nabla \overline{w}=\nabla w$. It then follows from \eqref{testint} that
\begin{align} \label{iteration1}
\begin{split}
& \hspace{-1cm}\int_{Q_1(0)} y^{1-2s} \eta^2 \overline{w}_m^{\beta} (\beta |\nabla \overline{w}_m|^2 + |\nabla \overline{w}|^2)\, dxdy \\
& \hspace{-1cm}=-2\int_{Q_1(0)} y^{1-2s} \eta (\nabla \eta  \cdot \nabla \overline{w})(\overline{w}_m^{\beta} \overline{w}-k^{\beta +1}) \, dxdy -\int_{B_1(0)} a w \eta^2(\overline{w}_m^{\beta} \overline{w}-k^{\beta +1})\, dx \\
& \hspace{-1cm}\quad + \int_{B_1(0)} b\eta^2(\overline{w}_m^{\beta} \overline{w}-k^{\beta +1})\, dx.
\end{split}
\end{align}
We now choose $k=\|b\|_{L^{\infty}(B_1(0))}$ if $b$ is not identically zero. Otherwise, we choose an arbitrary $k>0$ and eventually let $k\to 0^+$. This then infers that $ |b(x)|\eta^2 \overline{w}_m^{\beta} \overline{w} \leq \eta^2 \overline{w}_m^{\beta} |\overline{w}|^2$ for any $x \in \R^N$, where we used the definition of $\overline{w}$.
Note that $\overline{w}_m^{\beta} \overline{w}-k^{\beta +1}<\overline{w}_m^{\beta} \overline{w}$ and 
$$
2 \eta |(\nabla \eta  \cdot \nabla \overline{w})| \overline{w}_m^{\beta} \overline{w} \leq  \frac 12 \eta^2 \overline{w}_m^{\beta} |\nabla \overline{w}|^2 + 2 \overline{w}_m^{\beta} \overline{w}^2 |\nabla \eta|^2.
$$
Therefore, we deduce from \eqref{iteration1} that
\begin{align} \label{ineqm}
\begin{split}
\int_{Q_1(0)} y^{1-2s} \eta^2 \overline{w}_m^{\beta} (\beta |\nabla \overline{w}_m|^2 + |\nabla \overline{w}|^2)\, dxdy 
& \leq 4 \int_{Q_1(0)} y^{1-2s} \overline{w}_m^{\beta} \overline{w}^2 |\nabla \eta|^2 \, dxdy \\
&\quad + 2\int_{B_1(0)} |a| \eta^2 \overline{w}_m^{\beta} \overline{w}^2\, dx \\
& \quad + 2\int_{B_1(0)}\eta^2 \overline{w}_m^{\beta} \overline{w}^2\, dx.
\end{split}
\end{align}
Let $z:=\overline{w}_m^{\frac{\beta}{2}}\overline{w}$, there holds that
$$
|\nabla z|^2 \leq (1+ \beta) \overline{w}_m^{\beta} (\beta |\nabla \overline{w}_m|^2 + |\nabla \overline{w}|^2).
$$
Moreover, by H\"older's inequality and Lemma \ref{embedding1}, we know that
\begin{align*}
\int_{B_1(0)} |a(x)| \eta^2 \overline{w}_m^{\beta} \overline{w}^2\, dx 
&\leq  \|a\|_{L^{\frac{N}{2s}}(B_1(0))} \left(\int_{\R^N}|\eta z|^{2_s^*} \,dx\right)^{\frac{N-2s}{N}} \\
&\leq C \|a\|_{L^{\frac{N}{2s}}(B_1(0))}\int_{\R^N}y^{1-2s}|\nabla(\eta z)|^2 \,dx.
\end{align*}
Taking $\|a\|_{L^{{N}/{2s}}(B_1(0))}<\delta$ and utilizing \eqref{ineqm}, we then conclude that
\begin{align} \label{iteration}
\hspace{-1cm}\int_{Q_1(0)} y^{1-2s} |\nabla(\eta z)|^2 \, dxdy \leq C(1+\beta)  \left(\int_{Q_1(0)} y^{1-2s} z^2 |\nabla \eta|^2 \, dxdy +\int_{B_1(0)} |z \eta|^2 \, dx\right),
\end{align}
where $\delta>0$ is a small constant. Applying \cite[Lemma 2.4]{TX}, we obtain that
\begin{align*}
 C(1+\beta) \int_{B_1(0)} |z \eta|^2 \, dx \leq \frac 12 \int_{Q_1(0)} y^{1-2s} |\nabla(\eta z)|^2 \, dxdy  +  
 C(1+\beta)^{\gamma} \int_{Q_1(0)} y^{1-2s} |\eta z|^2 \, dxdy,
\end{align*}
where $\gamma>0$ is a constant depending only on $s$ and $N$. This along with \eqref{iteration} shows that
\begin{align*}
 \int_{Q_1(0)} y^{1-2s} |\nabla(\eta z)|^2 \, dxdy \leq C(1+\beta)^{\gamma}\int_{Q_1(0)} y^{1-2s} (\eta^2 +|\nabla \eta|^2) z^2 \, dxdy.
\end{align*}
It then yields from \cite[Lemma 2.2]{TX} that
\begin{align*}
\left(\int_{Q_1(0)}y^{1-2s}|\eta z|^{2\chi}\right)^{\frac{1}{\chi}} \leq C(1+\beta)^{\gamma}\int_{Q_1(0)} y^{1-2s} (\eta^2 +|\nabla \eta|^2) z^2 \, dxdy,
\end{align*}
where $\chi=(N+1)/N$ and $\chi>1$. For any $0<r<R \leq 1$, let $\eta \in C^{\infty}_0(Q_1(0) \cup B_1(0))$ be such that $\eta(x)=1$ for $x \in Q_r(0)$, $\eta(x)=0$ for $x \in Q_R(0)$ and $|\nabla \eta(x)| \leq {2}/(R-r)$ for $x \in Q_1(0) \cup B_1(0)$. By the definition of $z$ and the fact that $\overline{w}_m \leq \overline{w}$, we then derive that
\begin{align*}
\left(\int_{Q_r(0)}y^{1-2s}\overline{w}_m^{\alpha \chi}\right)^{\frac{1}{\chi}} \leq \frac{C(1+ \beta)^{\gamma}}{(R-r)^2} \int_{Q_R}y^{1-2s} \overline{w}^{\alpha} \, dxdy,
\end{align*}
where $\alpha:=2+\beta>2$. Letting $m \to \infty$, we then have that
$$
\|\overline{w}\|_{L^{\alpha \chi}(y^{1-2s}, Q_r)} \leq \left(\frac{C(1+ \beta)^{\gamma}}{(R-r)^2}\right)^{\frac {1}{\alpha}} \|\overline{w}\|_{L^{\alpha}(y^{1-2s}, Q_R)}.
$$
By the Moser iteration technique adapted to the proof of Lemma \ref{moseriteration}, we now arrive at
$$
\|\overline{w}\|_{L^{\infty}(Q_{1/2}(0))} \leq C \|\overline{w}\|_{L^2(y^{1-2s}, Q_1)}.
$$
This then gives rise to the result of the lemma, and the proof is completed.
\end{proof}

\end{document}